\numberwithin{equation}{section}
\newtheorem{theorem}{Theorem}[section]
\newtheorem{proposition}{Proposition}[section]
\newtheorem{lemma}{Lemma}[section]
\newtheorem{remark}{Remark}[section]
\newtheorem{definition}{Definition}[section]
\newtheorem{corollary}{Corollary}[section]
\renewcommand{\epsilon}{\varepsilon}
\newcommand{\abs}[1]{\left\vert #1\right\vert}
\newcommand{\1}[1]{{\mathbf 1}{\{#1\}}}
\newcommand{\R}{\mathbb{R}}
\newcommand{\N}{\mathbb{N}}
\newcommand{\Z}{\mathbb{Z}}
\newcommand{\T}{\mathcal{T}}
\newcommand{\reff}{R_{\text{eff}}}
\def\fff#1{&{{\pageref{#1}}}\cr}
\def\hfff#1{\label{#1}}
\title[]{Scaling limit for the ant in high-dimensional labyrinths}
\date{}
\author[B.~Ben Arous]{G\'erard Ben Arous}
\address{G\'erard Ben Arous, Courant Institute of Mathematical Sciences, 251 Mercer Street, New York University,
New York, 12012-1185, U.S.A.} \email{benarous@cims.nyu.edu}
\author[M.~Cabezas]{Manuel Cabezas}
\address{Manuel Cabezas\\ Pontificia Universidad Cat\'{o}lica de Chile, Facultad de Matem\'{a}ticas, Campus San Joaqu\'{i}n, Avenida Vicu\~{n}a Mackenna 4860, Santiago, Chile.} \email{mncabeza@mat.puc.cl}
\author[A.~Fribergh]{Alexander Fribergh}
\address{Alexander Fribergh\\Universit\'e de Montr\'eal, DMS\\
Pavillon Andr\'e-Aisenstadt\\     2920, chemin de la Tour Montréal (Qu\'ebec),  H3T 1J4} \email{fribergh@dms.umontreal.ca}
\keywords{Random walk, random environments,  Branching random walk, super-process, spatial tree} \subjclass[2000]{primary 60K37;
secondary 82D30}
\begin{document}

\begin{abstract}
We study here a detailed conjecture regarding one of the most important cases of anomalous diffusion, i.e the behavior of the "ant in the labyrinth".
It is natural to conjecture (see~\cite{Croydon_arc} and \cite{BJKS}) that the scaling limit for random walks on large critical random graphs exists in high dimensions, and is universal. This scaling limit is simply the natural Brownian Motion on the Integrated Super-Brownian Excursion. 

We give here a set of four natural sufficient conditions on the critical graphs and prove that this set of assumptions ensures the validity of this conjecture. The remaining future task is to prove that these sufficient conditions hold for the various classical cases of critical random structures, like the usual Bernoulli bond percolation, oriented percolation, spread-out percolation in high enough dimension.

In the companion paper ~\cite{BCFp}, we do precisely that in a first case, the random walk on the trace of a large critical branching random walk. We verify the validity of these sufficient conditions and thus obtain the scaling limit mentioned above, in dimensions larger than 14.
\end{abstract}

\maketitle

\section{Introduction}

Arguably one of the most important models in the study of anomalous diffusion is known as the "ant in the labyrinth", a term first coined by Gilles de Gennes in~\cite{deGennes1976}. This model refers to the study of the simple random walk on large critical, or infinite, percolation clusters on $\Z^d$. The physics literature on this topic is very rich and too broad to be covered in this introduction (see nevertheless \cite{havlin1987diffusion} and \cite{bouchaud1990anomalous}). The mathematical understanding is much more limited. A very important advance was achieved in 2009 by Gady Kozma and Assaf Nachmias in \cite{AO} when they proved that the Alexander-Orbach conjecture holds in high enough dimension, i.e.~that the spectral dimension is $4/3$, for critical bond percolation. This result was previously known for critical trees~\cite{Barlow_Kumagai} and for critical oriented percolation~\cite{BJKS}. A detailed discussion of the Alexander-Orbach conjecture can be found in~\cite{kumagai}.

Beyond the understanding of the critical exponent provided by the Alexander-Obach conjecture, not much is known. We strive here to understand the asymptotic behavior of random walks on large high-dimensional critical clusters in much more depth, and obtain a full description of its scaling limit. We put forward the conjecture that this scaling limit exists, and is universal in high enough dimensions, i.e does not depend on the precise nature of the percolation clusters. This scaling limit turns out to be the natural Brownian motion on Integrated Super-Brownian Excursion (denoted $B^{\text{ISE}}$), an object constructed by David Croydon in~\cite{Croydon_arc}. This universality conjecture is natural and was already suggested by David Croydon in~\cite{Croydon_arc} and the limiting object $B^{\text{ISE}}$ was proposed also as universal in~\cite{BJKS}.

Our main result (Theorem~\ref{thm_abstract}) gives the scaling limit of the simple random walk on general critical random graphs under a set of four natural conditions, which we conjecture hold quite generally. Even though we cannot prove, at this point, that they hold for bond percolation, we conjecture that they do.
We illustrate the abstract theorem proved here in the companion paper ~\cite{BCFp}. There we prove that our four sufficient conditions hold in an interesting but simpler case, i.e for the random walk on the trace of a large critical branching random walk, in dimensions larger than 14. We thus obtain that the scaling limit is indeed the $B^{\text{ISE}}$ in this case. 

The study of critical percolation in  high dimensions (currently meaning $d\geq 11$, see~\cite{fvdh1} and~\cite{fvdh2}) saw significant progress through the use of techniques known as lace expansion (for a recent survey see~\cite{PIMS}). Those techniques allowed a deep understanding of critical clusters in high dimensions (see~\cite{HaraSlade} or~\cite{TakashiVDHSlade}) and in particular opened the door to the proof of the Alexander-Orbach conjecture mentioned above. 

The technique of lace expansion was developed to study critical random environments in high dimensions. It is expected that several models such as critical branching random walks, oriented percolation, percolation and lattice trees have, in some sense, similar universal large scale behavior as explained in Section 6 of~\cite{van2006infinite}. It is thus natural to expect that the simple random walk on all of those random environments should have similar limiting behaviors. The goal of our work is to put this on a firm basis, and provide a general tool and a map for proofs of this universal scaling limit (the $B^{\text{ISE}}$) in the various important models of critical random clusters.

Our aim in this paper is thus to understand the natural conditions under which we can prove convergence of a simple random walk $(X^{G_n}_m)_{m\in \N}$ on large critical graphs $G_n$ towards the Brownian motion on the ISE. We isolate four conditions which are sufficient to obtain the expected limiting behavior. We, essentially, need to prove that for large $n$ (where $n$ quantifies size)
\begin{enumerate}
\item small parts of $G_n$, in the sense of the volume, are also small with respect to the intrinsic and  $\Z^d$ distances,
\item the graphs $G_n$ equipped with their intrinsic distances converge, in the finite dimensional sense, to the ISE,
\item the volume of $G_n$ is roughly uniformly distributed over the graph,
\item the resistance distance in $G_n$ is proportional to the intrinsic distance at a macroscopic level.
\end{enumerate}

Those conditions will be referred later as condition $(S)$, condition $(G)$, condition $(V)$ and condition $(R)$, standing respectively for "Skeleton approximation", "Graph convergence", "Volume uniform distribution", and "Resistance is linear".

As we mentioned, our aim in this paper is to provide a flexible theorem that will be applicable (or adaptable) in several models. To achieve this level of generality required we will have to introduce a certain amount of notations. The presentation of the model is done in Section~\ref{sect_abstract_cvg_thm} where the main theorem (Theorem~\ref{thm_abstract}) is stated.

The companion paper ~\cite{BCFp} provides an example of application of this abstract convergence theorem. There we show that the properly normalized random walk on the range of critical branching random walks converges to the Brownian motion on the ISE. This illustrates that the main result of this paper is indeed useful and may serve as a foundational step towards the analysis of the model of the ant in the labyrinth in other percolation models.

\subsection{Presentation of the main theorem}

The level of generality we aim for requires an important amount of notations. The detailed presentation of the model is done in Section~\ref{sect_abstract_cvg_thm}.

Despite this we will present the main theorem (Theorem~\ref{thm_abstract}) using some notations that will only be specified later. This is made in an effort to make the result appear first before focusing on a delicate construction of an object we call {\it skeleton of the graph},

The main idea behind our theorem is that large critical graphs $G_n$ are known, by lace expansion, to be tree-like, in the sense that there are no macroscopic loops for $n$ large. Hence, if we choose a certain integer $K$ and span $K$ points $V_i^n$ in an i.i.d.~fashion in the graphs $G_n$, it is highly likely that, for $n$ large and $K$ large (but independent of $n$), the random graph $G_n$ will be close (in distribution) to a random graph $\T^{(n,K)}$ which is a tree embedded in $\Z^d$. We call this graph the $K$-skeleton of $G_n$. Typically, we want to use points $V_i^n$ which are close to uniformly distributed. 

The construction of $\T^{(n,K)}$ is depicted in Figure 1. The bold points are the points $V_i^n$ selected to make the construction and the small points are the vertices of $\T^{(n,K)}$, the majority of which are not different from the points $V_i^n$ we spanned in order to build the tree. As mentioned $\T^{(n,K)}$ comes with a graph structure, spatial locations but it also comes with a natural metric (induced by $G_n$) and a measure given by projecting the volume of $G_n$ onto $\T^{(n,K)}$.

  \begin{figure}
  \includegraphics[width=0.7\linewidth]{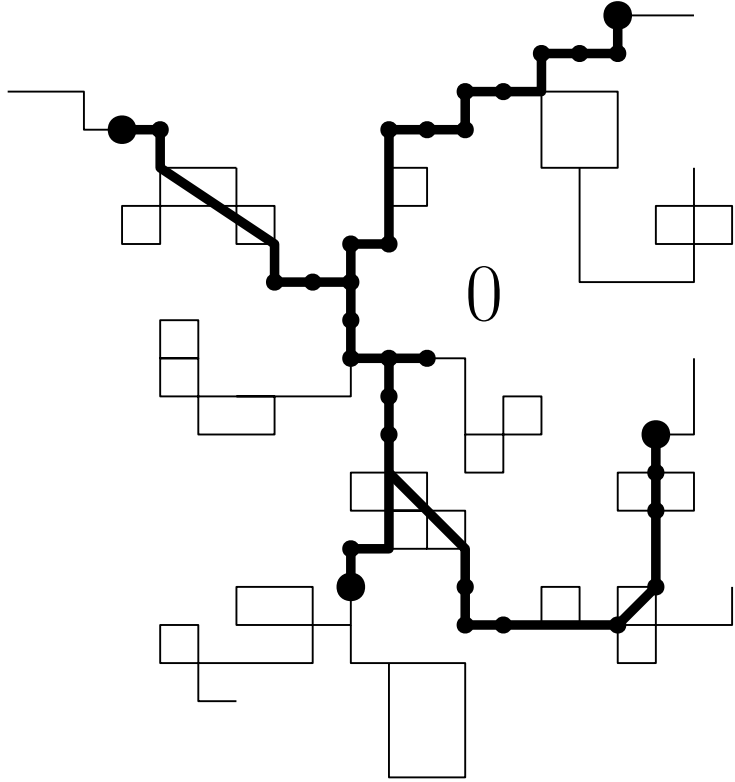}
  \caption{Construction of the $4$-skeleton of a graph where the points $V_1^n, \ldots, V_4^n$ are shown as thick circles.}
\end{figure}

\subsubsection{Condition $(S)$}

Condition $(S)$ (which is defined precisely in Section~\ref{sect_asympT_thin}) guarantees that the construction of the skeleton can be done and that it asymptotically is a good approximation, provided $K$ is large, of $G_n$.

\subsubsection{Condition $(G)$}

In $\T^{(n,K)}$, if we consider the points $(V_i^n)_{i\leq K}$ and the branching points of $\T^{(n,K)}$, we have a tree with at most $2K+1$ points. This tree is naturally equipped with a distance between those points, which means that $\T^{(n,K)}$ is a graph spatial tree (a term defined in Section~\ref{sect_graph_tree}). We rescale the distances on $\T^{(n,K)}$ by $n^{1/2}$.

By taking $K$ points uniformly at random on the ISE (see Section~\ref{def_ISE_sect}), we can construct an object called the $K$-ISE in the same way that we constructed $\T^{(n,K)}$.

Condition $(G)$ states that $\T^{(n,K)}$ converges weakly to the $K$-ISE as $n$ goes to infinity in a natural topology on graph spatial trees (see Definition~\ref{def_condG}).

\subsubsection{Condition $(V)$}

As mentioned previously $\T^{(n,K)}$ is equipped with a measure associated with the volume. This measure rescaled by $n$ is called $\mu^{(n,K)}$ (see Section~\ref{sect_mu}). Another natural measure on $\T^{(n,K)}$ is the Lebesgue measure rescaled to have mass 1 (recall that $\T^{(n,K)}$ is a finite tree with a distance). This measure is called $\lambda^{(n,K)}$ (see Section~\ref{sect_prop_skel}). 

Denote $\overrightarrow{\T^{(n,K)}_{x}}$ are the descendants of $x$ (including $x$ itself) in $\T^{(n,K)}$. Condition $(V)$ states that  there exists $\nu>0$ such that for $\epsilon>0$
\[
\limsup_{K\to \infty}\limsup_{n\to \infty} {\bf P}_n \Bigl[\sup_{x\in \T^{(n,K)}} \abs{\nu \lambda^{(n,K)}\Bigl(\overrightarrow{\T^{(n,K)}_{x}}\Bigr)-\mu^{(n,K)}\Bigl(\overrightarrow{\T^{(n,K)}_{x}}\Bigr)}>\epsilon\Bigr] =0.
\]

\subsubsection{Condition $(R)$}

Condition $(R)$ states that there exists $\rho>0$ such that for all $\epsilon>0$ and for all $i\in \N$
\[
\lim_{n\to \infty} {\bf P}_n\Bigl[ \abs{\frac{R^{G_n}(0,V_1^n)}{d^{G_n}(0,V_1^n)}-\rho}>\epsilon\Bigr]=0,
\]
where $R^{G_n}$ is the resistance distance in $G_n$ and $d^{G_n}$ is the intrinsic distance>

\subsection{Main result}

Even though we still need more details to defined properly our notation, we state our main theorem.
\begin{theorem}\label{thm_abstract}
Consider a sequence of  random graphs $(G_n,(V_i^n)_{i\in \N})_{n\in N}$ chosen under ${\bf P}_n$ which verifies conditions $(S)$, $(G)_{\sigma_d,\sigma_{\phi}}$, $(V)_{\nu}$ and $(R)_{\rho}$. Denoting $(X^{G_n}_m)_{m\in \N}$ the simple random walk on $G_n$ started at $0$, we have that 
\[
( n^{-1/4}X_{tn^{3/2}}^{G_n})_{t\geq 0} \to (\sigma_{\phi}\sqrt{\sigma_d} B^{ISE}_{(\rho \nu \sigma_d)^{-1}t})_{t\geq 0},
\]
the convergence is annealed and occurs in the topology of uniform convergence over compact sets.
\end{theorem}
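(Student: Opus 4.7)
The plan is to reduce the convergence of $X^{G_n}$ to a Croydon-type convergence theorem for random walks on weighted resistance-metric spaces, using the skeleton $\T^{(n,K)}$ as an intermediate object and letting $n\to\infty$ first, then $K\to\infty$.

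First, I would exploit the standard trace/time-change formalism from electrical network theory. The trace of $X^{G_n}$ on the vertex set $V(\T^{(n,K)})$ is itself a simple random walk on $\T^{(n,K)}$ viewed as a weighted graph, with conductances $c_e = 1/R^{G_n}(e)$ for each skeleton-edge $e$ (the edge resistance measured in the ambient graph $G_n$) and invariant measure $\mu^{(n,K)}$ (the volume of $G_n$ projected onto $\T^{(n,K)}$). The time-change relating $X^{G_n}$ to its skeleton trace is exactly the cumulative local time weighted by $\mu^{(n,K)}$, and is quantitatively controlled by the volume carried along the dangling pieces. The key point is that this reduction is exact for each fixed $K$: the full dynamics of $X^{G_n}$ is completely encoded by (i) the trace on $\T^{(n,K)}$, plus (ii) the excursions into $G_n\setminus \T^{(n,K)}$.

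Second, I would run the Croydon-type convergence on the trace for fixed $K$. Rescaling intrinsic distance by $n^{-1/2}$, spatial location by $n^{-1/4}$, measure by $n^{-1}$, and time by $n^{-3/2}$, conditions $(S)$, $(G)_{\sigma_d,\sigma_\phi}$, $(V)_\nu$ and $(R)_\rho$ together deliver, as $n\to\infty$, convergence of the spatial tree $\T^{(n,K)}$ to the $K$-ISE in the graph-spatial-tree topology, with (a) the rescaled measure $n^{-1}\mu^{(n,K)}$ converging to $\nu\lambda^{(K\text{-ISE})}$ by $(V)$, and (b) the rescaled resistance metric converging to $\rho$ times the tree distance by $(R)$ (noting that on the tree $\T^{(n,K)}$ resistance is additive, so $(R)$ along a spanning set upgrades to the whole metric on $\T^{(n,K)}$ via $(S)$ and $(G)$). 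The abstract convergence theorem of random walks on resistance forms then yields, for each $K$, the convergence of the rescaled skeleton trace to the natural Brownian motion on the $K$-ISE, embedded in $\R^d$ via the limiting spatial embedding; collecting the scalings gives the prefactor $\sigma_\phi\sqrt{\sigma_d}$ in space and the time-change $(\rho\nu\sigma_d)^{-1}$.

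Third, I would pass to the limit $K\to\infty$. Two things must be controlled simultaneously. The limiting processes $B^{K\text{-ISE}}$ converge, as $K\to\infty$, to $B^{\text{ISE}}$; this is built into Croydon's construction in~\cite{Croydon_arc} because the $K$-ISE approximates the ISE in the Gromov–Hausdorff–Prokhorov sense. On the discrete side, condition $(S)$ guarantees that for large $K$ the pieces of $G_n$ hanging off $\T^{(n,K)}$ have uniformly (in $n$) small volume, small intrinsic diameter, and small $\Z^d$-diameter, so that (i) excursions of $X^{G_n}$ into these pieces contribute negligibly to both displacement and elapsed time, and (ii) the time-change between $X^{G_n}$ and its skeleton trace is close to the one computed from $\mu^{(n,K)}$ alone. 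Combining these pieces through the standard exchange-of-limits argument (take $n\to\infty$ for each fixed $K$, then $K\to\infty$, justified by a tightness statement for $n^{-1/4}X^{G_n}_{\cdot n^{3/2}}$) yields the stated annealed convergence in uniform-on-compacts topology.

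The principal obstacle is the last step: controlling uniformly in $n$, over the trajectory on a macroscopic time interval, that the total displacement in space and time accumulated in off-skeleton excursions vanishes as $K\to\infty$. Condition $(S)$ is tailored for precisely this, but its application must be coupled to a quantitative bound on the number of skeleton visits up to time $tn^{3/2}$ (of order $K$-many macroscopic excursions plus many short ones) and to a uniform bound on excursion lengths obtained via resistance estimates. The rest of the argument is essentially an unpacking of the resistance-form machinery applied to the identifications furnished by $(G)$, $(V)$ and $(R)$.
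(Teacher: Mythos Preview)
Your three-step architecture (skeleton trace, Croydon-type convergence for fixed $K$, then $K\to\infty$) is exactly the paper's strategy, and your treatment of the spatial side is right: condition $(S)$ controls off-skeleton displacement, and $(R)$ upgrades from a spanning set to the full tree metric via the $\delta$-dense argument you describe (this is the paper's Section~5). The identification of $B^{K\text{-}ISE}\to B^{ISE}$ as $K\to\infty$ is also the paper's Proposition~\ref{prop_approx_bisek}.

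However, you have misidentified the principal obstacle. You say it is controlling the off-skeleton excursions uniformly in $n$, and that ``condition $(S)$ is tailored for precisely this''. In fact the spatial contribution of these excursions is the easy part. The hard part is the \emph{time} contribution: showing that $n^{-3/2}A^{(n,K)}(m(t))\to\nu t$, where $A^{(n,K)}(m)$ is the actual clock of $X^{G_n}$ at the $m$-th skeleton visit. Your sentence ``the time-change relating $X^{G_n}$ to its skeleton trace is exactly the cumulative local time weighted by $\mu^{(n,K)}$'' is only a heuristic, and establishing it is the bulk of the paper (Sections~6--7). Two issues are hidden in that sentence. First, the Croydon convergence you invoke (Proposition~\ref{prop_metrictoBrownian}) applies to the Brownian motion $B^{(n,K)}$ on $(\T^{(n,K)},d^{(n,K)}_{\text{res}},\lambda^{(n,K)}_{\text{res}})$ with the \emph{normalized Lebesgue} measure, not with $\mu^{(n,K)}$; the measure $\mu^{(n,K)}$ enters only through the random excursion durations of $X^{G_n}$ between skeleton cut-points, and these durations fluctuate. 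Second, to pass from the random durations to their means one needs a variance bound (the paper's Lemma~\ref{lem:aproximationofank}, via a second-moment estimate on commute times), and then the commute-time formula rewrites the mean time as $\sum_e l^{(n,K)}_t(e)\,\reff^{G_n}(e)\,\mu^{(n,K)}(e)$. Showing that the discrete edge local times $\reff^{G_n}(e)l^{(n,K)}_t(e)$ converge \emph{uniformly over $e$} to the continuous local time $L^{(n,K)}_t$ is itself a substantial piece of work (Section~6: tail bounds on edge crossings, a fourth-moment/optional-stopping argument on crossing counts between nearby edges, and a Billingsley-type maximal inequality to get spatial regularity). Only after all this does condition $(V)$ enter, to replace $\mu^{(n,K)}$ by $\nu\lambda^{(n,K)}$ in the integral and conclude linearity.

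In short: your outline is correct, but the ``resistance-form machinery'' you defer to does not directly handle a discrete walk whose speed measure is the projected volume $\mu^{(n,K)}$; the paper has to build that bridge by hand, and it is where most of the effort goes.
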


\begin{remark} The reader will have noticed that constants appear in conditions $(G)_{\sigma_d,\sigma_{\phi}}$, $(V)_{\nu}$ and $(R)_{\rho}$. The precise definition of those conditions is postponed to Section~\ref{GRRR}.
\end{remark}

\subsection{Discussing the universality of the Brownian motion on the ISE}

\subsubsection{Relation with the Alexander-Orbach conjecture}
Our main result, Theorem~\ref{thm_abstract}, can be seen as a scaling limit counterpart of the Alexander-Orbach conjecture (which identifies the scaling exponent). In particular, from our result, the exponent predicted by the Alexander-Orbach conjecture can be deduced, which, to the best of our knowledge, is a new result in the context of simple random walk on critical branching random walks (see the companion article~\cite{BCFp}).

 In this light, our abstract convergence theorem, Theorem~\ref{thm_abstract}, can be seen as more delicate version of~\cite{KM} which gave sufficient conditions to prove the Alexander-Orbach conjecture which were subsequently applied to prove that conjecture in the case of critical percolation (see~\cite{KN} and~\cite{HHH} for a generalization)

\subsubsection{A universal scaling limit up to some caveats}\label{sect_caveats}
We conjecture that the Brownian motion on the ISE is the scaling limit of the simple random walk on large critical percolation (clusters conditioned on size) in $\Z^d$ for large $d$. We actually believe that this processes is, in some sense, universal up to two points that we will describe next.
\begin{itemize}
\item One issue stems from the fact that the environment can be chosen to be large in several manners. Conditioning the environment to have a large cardinality or to reach a large distance from the origin will result in different scaling limits. This will necessarily result in different scaling limits for the walk on those environments. Nevertheless, all those scaling limits are super-Brownian motions under certain conditionings, of which the ISE is a particular example (we refer the reader to~\cite{Dawson}, \cite{Legallsnake} and~\cite{Perkins} for surveys on the super-Brownian motion). For any classical conditionings for large clusters, we do believe that methods developed in this paper would be sufficient to obtain a variant of Theorem~\ref{thm_abstract}, with an alternate Condition (G) and a limiting process which would be a Brownian motion on a certain conditioned super-Brownian motion. 
\item Another potential problem comes from the fact that certain models, namely oriented percolation in $\Z^+\times \Z^d$, have a directed nature to them. In this setting, the Brownian motion on the ISE is not a natural candidate for the scaling limit since this process is inherently isotropic. However, in this case the canonical scaling limit in this context is still intimately related to the Brownian motion on the ISE. Neglecting the minor issues related to the precise signification of ``large", the natural scaling limit is Brownian motion in the oriented ISE defined by $((d_{\phi_{\mathfrak{T}}(\mathfrak{T})}(B^{CRT}_t), \phi_{\mathfrak{T}}(B^{CRT}_t)))_{t\geq 0}$ (in the notations of Section~\ref{section_bmsbm}). As one may see from Proposition~\ref{propdef_BISE}, this is strongly tied to the definition of the Brownian motion on the ISE.
\end{itemize}

Those examples show that the scaling limit behaviors are more diverse than the scaling exponents which are universal among a large class of models.

\subsubsection{Random walks on infinite critical structures}

In general, the critical graphs that we are interested in are large but finite. It is possible to define infinite critical structures, for example, as mentioned in the introduction, this has been done for critical percolation in dimensions $d=2$ and $d\geq 11$ (the \emph{Incipient Infinite Cluster}). It is natural to wonder what would happen when considering diffusions on those infinite structures. Obviously the Brownian motion on the ISE cannot be the scaling limit, since this process is restricted to the ISE which is a finite object. However, we believe that the ideas we developed in this paper will prove to be sufficient for the analysis of such models. The natural counter-part of the Brownian motion on the ISE on an infinite structure would be the Brownian motion on the infinite canonical super-Brownian motion, indeed the infinite canonical super-Brownian motion is the natural scaling limit for infinite critical structures (see~\cite{van2006infinite}).

We also want to emphasize the links between the Brownian motion on the infinite canonical super-Brownian motion and Spatially Subordinated Brownian Motions (SSBM) introduced in~\cite{arous2015randomly}. Indeed, if the former object were defined we could show that its projection onto the backbone (unique infinite simple path) would be an SSBM. This ties in with recent work~\cite{arous2016scaling} showing that the projection onto the backbone of a simple random walk on the infinite critical Galton-Watson tree converges to an SSBM.

\subsubsection{What are high dimensions?}

The notion of high dimension is highly dependent on the model that is considered, this is already known from lace expansion (see for example~\cite{van2006infinite}).

In our analysis of diffusions on large critical structures, we are limited by two factors
\begin{enumerate}
\item our limiting process is only defined in $\Z^d$ for $d\geq 8$,
\item our methods of proof requires the typically distance between two consecutive cut-points is microscopic for large $n$. We believe this to be one of the main limiting factors.
\end{enumerate}

\subsection{Notations}

Given a graph $G$, we will denote $V(G)$ the set of its vertices and $E(G)$ the set of its edges. For $x\in G$ and $k\in \R$, we will write $B_G(x,k)$ for the ball of $k$ centered at $x$ in the natural metric induced by $G$.

The constants in this paper will typically be denoted $c$ (for lower bounds) and $C$ (for upper bounds) and implicitly assumed to be positive and finite. Their value may change from line to line.

This paper contains a significant amount of notation, so we decided to include a glossary of notation at the end of the paper to help the reader.

\section{The Brownian motion on the ISE}\label{section_bmsbm}

\subsection{Real trees and spatial trees}

Before defining the Brownian motion on the  ISE it is necessary to actually define the ISE. For this, we choose to introduce the formalism of real trees and spatial trees of which the ISE is the canonical random example. For this we follow, almost to the word, notes from Le Gall (see~\cite{LG}).

\subsubsection{Real trees}\label{sect_real_tree}

\begin{definition}
A metric space $(T,d_{T})$ is a real tree if the following two
properties hold for every $\sigma_1,\sigma_2\in T$.

\begin{enumerate}
\item There is a unique
isometric map
$f_{\sigma_1,\sigma_2}$ from $[0,d_{T}(\sigma_1,\sigma_2)]$ into $T$ such
that $f_{\sigma_1,\sigma_2}(0)=\sigma_1$ and $f_{\sigma_1,\sigma_2}(
d_{T}(\sigma_1,\sigma_2))=\sigma_2$.
\item If $q$ is a continuous injective map from $[0,1]$ into
$T$, such that $q(0)=\sigma_1$ and $q(1)=\sigma_2$, we have
$q([0,1])=f_{\sigma_1,\sigma_2}([0,d_{T}(\sigma_1,\sigma_2)]).$
\end{enumerate}

A rooted real tree is a real tree $(T,d_{T})$
with a distinguished vertex called the root.
\end{definition}

Let us consider a rooted real tree $(T,d)$.
The range of the mapping $f_{\sigma_1,\sigma_2}$ in (1) is denoted by
$[ \sigma_1,\sigma_2]$ (this is the line segment between $\sigma_1$
and $\sigma_2$ in the tree). 
In particular, for every $\sigma\in T$, $[ \text{root},\sigma]$ is the path 
going from the root to $\sigma$, which we will interpret as the ancestral
line of $\sigma$. More precisely we can define a partial order on the
tree by setting $\sigma\preccurlyeq \sigma'$
($\sigma$ is an ancestor of $\sigma'$) if and only if $\sigma\in [ \text{root},\sigma']$, and, $\sigma \prec \sigma'$ if $\sigma\preccurlyeq \sigma'$ and $\sigma \neq \sigma'$.

If $\sigma,\sigma'\in T$, there is a unique $\eta\in T$ such that
$[ \text{root},\sigma ] \cap [\text{root} ,\sigma']=[\text{root},\eta ]$. We write $\eta=\sigma\wedge \sigma'$ and call $\eta$ the most recent
common ancestor to $\sigma$ and $\sigma'$.

Finally, let us observe that for any three points $\sigma_1,\sigma_2,\sigma_3$ of a real tree $T$ there exists a unique branch-point $b^{T}(\sigma_1,\sigma_2,\sigma_3)\in T$ that satisfies $b^{T}(\sigma_1,\sigma_2,\sigma_3)\in \T=[\sigma_1,\sigma_2]\cap [\sigma_2,\sigma_3]\cap [\sigma_3,\sigma_1].$

There are collections of real trees that cannot be distinguished as metric
spaces. For compact rooted real
trees (which are the only type of real trees we consider in this paper) two rooted real trees are equivalent if and only if
there exists a root preserving isometry between them. For our purposes, this subtlety will not be relevant and we will not make any distinction between a tree and its equivalence class. See~\cite{LG} for more details.

\vspace{0.5cm}

{\it A way to construct real trees }

\vspace{0.5cm}

There is a simple way of constructing compact real trees. We consider a	 (deterministic) continuous function
$g:[0,\infty)\longrightarrow[0,\infty)$ with compact support
and such that $g(0)=0$ and $g(x)=0$ for $x$ large but $g$ is not identically zero.

For every $s,t\geq 0$, we set
\[
m_g(s,t)=\inf_{r\in[s\wedge t,s\vee t]}g(r),
\]
and
\[
d_g(s,t)=g(s)+g(t)-2m_g(s,t).
\]


We then introduce the equivalence relation
$s\sim t$ iff $d_g(s,t)=0$ (or equivalently iff $g(s)=g(t)=m_g(s,t)$). Let
$T_g$ be the quotient space
\begin{equation}\label{def_equiv_tree}
T_g=[0,\infty)/ \sim.
\end{equation}

Obviously the function $d_g$ induces a distance on $T_g$, and we keep the
notation $d_g$ for this distance. Viewing the equivalence class of $0$ as the root, this means we have the following (see~\cite{DuLG})

\begin{theorem}
\label{tree-deterministic}
The metric space $(T_g,d_g)$ is a rooted real tree.
\end{theorem}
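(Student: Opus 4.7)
The plan is to verify the two defining properties of a rooted real tree on $(T_g,d_g)$, taking the root to be the class $[0]$ of $0$. I would proceed in three stages: first check that $d_g$ descends to a genuine metric on $T_g$, then construct the isometric path $f_{\sigma_1,\sigma_2}$ explicitly, and finally deduce both the uniqueness in part (1) and the arc property (2) from the four-point (``$0$-hyperbolicity'') condition on $d_g$.

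For the metric structure, non-negativity of $d_g$ on $[0,\infty)$ follows from $m_g(s,t)\leq\min(g(s),g(t))$, and symmetry is obvious. The triangle inequality $d_g(s,u)\leq d_g(s,t)+d_g(t,u)$ reduces algebraically to $g(t)+m_g(s,u)\geq m_g(s,t)+m_g(t,u)$, which I would verify by case analysis on the ordering of $s,t,u$: when $t$ lies between $s$ and $u$ one has $m_g(s,u)=\min(m_g(s,t),m_g(t,u))$ and the bound follows from $g(t)\geq m_g(\cdot,t)$; otherwise one of $m_g(s,t)$ or $m_g(t,u)$ is already dominated by $m_g(s,u)$, and the remainder again uses $g(t)\geq m_g(\cdot,t)$. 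Since $d_g$ vanishes exactly on the pairs identified by $\sim$, it descends to a metric on the quotient $T_g$.

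For the geodesic, fix $\sigma_1,\sigma_2\in T_g$ with representatives $s_1\leq s_2$ and set $m:=m_g(s_1,s_2)$, $D:=d_g(\sigma_1,\sigma_2)=g(s_1)+g(s_2)-2m$. Using continuity of $g$ and the intermediate value theorem, define
\[
u(r):=\inf\{u\geq s_1:g(u)=g(s_1)-r\},\qquad r\in[0,g(s_1)-m],
\]
\[
v(r):=\sup\{v\leq s_2:g(v)=r+2m-g(s_1)\},\qquad r\in[g(s_1)-m,D],
\]
and let $f_{\sigma_1,\sigma_2}(r)$ be $[u(r)]$ on the first piece and $[v(r)]$ on the second. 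The defining infimum forces $g>g(s_1)-r'$ on $[s_1,u(r'))$, so for $r<r'$ in the first piece $m_g(u(r),u(r'))=g(s_1)-r'$, yielding $d_g(u(r),u(r'))=r'-r$. The symmetric argument handles the second piece, and in the mixed case any $t^{*}\in[s_1,s_2]$ with $g(t^{*})=m$ lies in $[u(r),v(r')]$ so $m_g(u(r),v(r'))=m$; hence $f_{\sigma_1,\sigma_2}$ is an isometry $[0,D]\to T_g$ sending $0\mapsto\sigma_1$ and $D\mapsto\sigma_2$.

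To finish, I would establish the four-point condition
\[
d_g(\sigma_1,\sigma_2)+d_g(\sigma_3,\sigma_4)\leq\max\bigl(d_g(\sigma_1,\sigma_3)+d_g(\sigma_2,\sigma_4),\,d_g(\sigma_1,\sigma_4)+d_g(\sigma_2,\sigma_3)\bigr).
\]
After cancelling the $g(s_i)$ terms this is equivalent to $m_g(s_1,s_2)+m_g(s_3,s_4)\geq\min(m_g(s_1,s_3)+m_g(s_2,s_4),\,m_g(s_1,s_4)+m_g(s_2,s_3))$, which I would verify combinatorially on the orderings of $s_1,\dots,s_4$ along $[0,\infty)$. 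Applied to the quadruple $(\sigma_1,\sigma_2,f(r),f'(r))$, the four-point condition forces $d_g(f(r),f'(r))=0$ for any two isometries $f,f'$ with the same endpoints, which gives uniqueness in (1). For (2), I would then invoke the standard characterization (see~\cite{DuLG}) that a geodesic metric space satisfying the four-point condition is a real tree. The main obstacle is precisely this last step: promoting the four-point condition to the topological statement that every continuous injective path coincides with the geodesic. The standard argument shows that in a $0$-hyperbolic geodesic space the projection onto a geodesic segment is continuous, so that any continuous path joining the endpoints of the segment must contain it; injectivity then forces equality of images.
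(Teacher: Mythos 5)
The paper does not actually prove this statement: Theorem~\ref{tree-deterministic} is quoted directly from \cite{DuLG}, so there is no in-text argument to compare against. Your proposal essentially reconstructs the proof from that reference, and its main steps are sound. The verification that $d_g$ descends to a metric is correct; the explicit geodesic $r\mapsto[u(r)]$, $r\mapsto[v(r)]$ is the standard one, and your computations of $m_g(u(r),u(r'))$, $m_g(v(r),v(r'))$ and of the mixed case are right (one should just record that $u(r)\leq s_2$ and $v(r')\geq s_1$, which follows from the intermediate value theorem applied on either side of a minimizer of $g$ on $[s_1,s_2]$, so that the mixed-case interval really sits inside $[s_1,s_2]$). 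The reduction of the four-point condition to $m_{12}+m_{34}\geq\min(m_{13}+m_{24},\,m_{14}+m_{23})$ and its check over orderings are standard, and applying the four-point inequality to the quadruple $(\sigma_1,\sigma_2,f(r),f'(r))$ does yield $D+d_g(f(r),f'(r))\leq D$, hence uniqueness in property (1).

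The one step that is not self-contained is the arc property (2). Continuity of the projection $\pi$ onto $[\sigma_1,\sigma_2]$ gives, via the intermediate value theorem, that $\pi\circ q$ is surjective onto the segment; it does not by itself force the path $q$ to pass through an interior point $p$ of the segment, since a priori $q$ could meet only $\pi^{-1}(p)\setminus\{p\}$. The clean way to close this is the disconnection argument: using the four-point condition one checks that $x\sim y\iff d_g(x,y)<d_g(x,p)+d_g(p,y)$ defines an equivalence relation on $T_g\setminus\{p\}$ whose classes are open and separate $\sigma_1$ from $\sigma_2$, so any continuous path between them must hit $p$; injectivity and compactness then identify $q([0,1])$ with the segment. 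Alternatively, citing the characterization of real trees as connected $0$-hyperbolic metric spaces (which is exactly what \cite{DuLG} does) is acceptable here and makes the argument complete.
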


We will call $T_g$  the real tree coded by $g$.

\subsubsection{Spatial trees}\label{sect_spatial_tree}

\begin{definition} A ($d$-dimensional) spatial tree is a pair $(T,\phi_{T})$ where 
$T$ is a real tree and $\phi_T$ is a continuous mapping from 
$\phi_T$ into $\R^d$. 
\end{definition}

\begin{remark}
Two spatial trees $(T,\phi_T)$ and $(T', \phi_{T'})$ are said to be equivalent if and only if there exists a root preserving isometry $\pi$ from $T$ to $T'$ such that $\phi_{T}=\phi_{T'}\circ \pi$. In Section~\ref{sect_condG}, we will define a topology on spatial trees, which will actually be a topology on the equivalence classes of spatial trees with respect to the previous relation. Nevertheless, for our purposes it does not pose a problem to identify a tree with its equivalence class.
\end{remark}

Let $T$ be a compact rooted real tree  with a metric $d$. We may consider the $\R^d$-valued Gaussian process
$(\phi_{T}(\sigma),\sigma\in T)$ whose distribution is characterized by
\begin{align*}
&E[\phi_{T}(\sigma)]=0\;,\\
&{\rm cov}(\phi_{T}(\sigma),\phi_{T}(\sigma'))=d(\text{root},\sigma\wedge \sigma')\,{\rm Id}\;,
\end{align*}
where ${\rm Id}$ denotes the $d$-dimensional identity matrix. 

This corresponds to a Brownian embedding of $T$ into $\R^d$. The formula for the covariance is easy to understand if we recall that $\sigma\wedge \sigma'$ is the most recent common ancestor to $\sigma$
and $\sigma'$, and so the ancestors of $\sigma$ and $\sigma'$ are the same up to
level $d(\text{root},\sigma\wedge \sigma')$. 

Under certain assumptions, that will be verified in our context (see (8) in~\cite{LG} for details), the process $(\phi_{T}(\sigma),\sigma\in T)$ has a continuous modification. We keep the notation $\phi_T$ for this modification. 

Given a real tree $T$, we denote by $Q_T$ the law of the spatial tree $(T,(\phi_{T}(\sigma),\sigma\in T))$ (provided it exists).

\subsubsection{Graph spatial trees}\label{sect_graph_tree}

Let us now present a notion introduced by Croydon in~\cite{Croydon_arc}.

\begin{definition}
If a spatial tree $(T,(\phi_T(\sigma),\sigma\in T))$ is such that $T$ is a finite tree with finite edge length, we say that $(T,(\phi_T(\sigma),\sigma\in T))$ is a graph spatial tree.
\end{definition}

 Given a graph spatial tree $(T,(\phi_T(\sigma),\sigma\in T))$, we can assign a probability measure $\lambda_{T}$  defined as the renormalized Lebesgue measure \hfff{lambdaT}(so that the $\lambda_{T}$-measure of a line segment in $T$ is proportional to its length).

\vspace{0.5cm}

{\it A simple way to construct graph spatial  trees }

\vspace{0.5cm}

There is a simple way to construct a rooted graph spatial tree from a rooted spatial tree $(T,d_{T},\phi_T)$. For this we consider a sequence $(\sigma_i)_{i\in \N}$ of elements of a real tree $T$. Fix $K\in \N$. We define the reduced subtree $T(\sigma_1,\ldots,\sigma_K)$ to be the graph tree with vertex set 
\[
V(T(\sigma_1,\ldots, \sigma_K)):=\{b^{T}(\sigma,\sigma',\sigma''):\sigma,\sigma',\sigma'' \in \{\text{root},\sigma_1,\ldots,\sigma_K\}\},
\]
and graph tree structure induced by the arcs of $T$, so that two elements $\sigma$ and $\sigma'$ of $V(T(\sigma_1,\ldots,\sigma_K))$ are connected by an edge if and only if $\sigma\neq \sigma'$ and also $[\sigma,\sigma']\cap V(T(\sigma_1,\ldots,\sigma_K))=\{\sigma,\sigma'\}$. We set the length of an edge $\{\sigma,\sigma'\}$ to be equal to $d_{T}(\sigma,\sigma')$ and we extend the distance linearly on that edge. This allows us to view $T(\sigma_1,\ldots,\sigma_K)$ as a graph spatial tree.

This spatial graph tree will be denoted $(T^{K,(\sigma_i)},d_{T^{K,(\sigma_i)}}, \phi_{T^{K,(\sigma_i)}})$. The associated normalized probability measure is denoted $\lambda_{\phi_{T^{K,(\sigma_i)}}(T^{K,(\sigma_i)})}$. The dependence on $\sigma$ will often be dropped in the notation when the context is clear.

\subsection{Definition of the $CRT$, the $ISE$ and the $B^{ISE}$} \label{def_ISE_sect}

In this section our goal is to introduce the canonical random object associated to real trees, spatial trees, graph spatial trees and motions of spatial trees.

\subsubsection{The continuum random tree (CRT)}

Denote by $({\bf e}_t)_{0\leq t\leq 1}$ a normalized Brownian excursion. Informally, $({\bf e}_t)_{0\leq
t\leq 1}$ is just a Brownian path started at the origin and conditioned to stay positive
over the time interval $(0,1)$, and to come back to $0$ at time $1$ (see e.g.~Sections 2.9 
and 2.12 of It\^o and McKean \cite{IM} for a discussion of the normalized excursion). We
extend 
the definition of ${\bf e}_t$ by setting ${\bf e}_t=0$ if $t>1$. Then the (random) function
${\bf e}$ satisfies the assumptions of Section~\ref{sect_real_tree} and we can thus consider the
random real tree $T_{\bf e}$.

\begin{definition}
\label{CRTdef}
The random real tree $T_{\bf e}$ is called the Continuum Random Tree (CRT) and will be  denoted $(\mathfrak{T},d_{\mathfrak{T}})$ \hfff{crt}. We write $\Xi$ to denote its law.
\end{definition}

The CRT was initially defined by Aldous \cite{Al1} with a different formalism,
but the preceding definition corresponds to Corollary 22 in \cite{Al3}, up to an unimportant
scaling factor $2$.

We can define a natural volume measure on $\mathfrak{T}$ by projecting the Lebesgue measure on $[0,1]$, i.e., for any open $A\subseteq \mathfrak{T}$, we set
\[
\lambda^{\mathfrak{T}}(A)=\text{Leb}\{t\in [0,1], [t]\in A\},
\]
where $[t]$ denotes the equivalence class of $t$ with respect to the relation defined at~\eqref{def_equiv_tree}.

One major motivation for studying the CRT is the fact that it occurs as the
scaling limit of large critical Galton-Watson trees. In particular, recalling the notations of the introduction, we have the following (see Theorem~3.1 in~\cite{LG} which is a simple consequence of Theorem 23 in~\cite{Al3})
\begin{theorem} 
Assume that we are given a critical offspring distribution $Z$ that has finite variance $\sigma^2>0$ and which is aperiodic. Denoting $\text{GW}_n$ a Galton-Watson tree conditioned to have cardinal $n$, we have that the rescaled real tree $(\text{GW}_n,\frac{\sigma}{2\sqrt n} d_{\text{GW}_n})$ converges to the CRT, where  the convergence occurs in distribution with the Gromov-Hausdorff topology. 
\end{theorem}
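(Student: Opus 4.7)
The plan is to realize both sides of the convergence as outputs of a common construction --- the real tree $T_g$ coded by a continuous function $g$ as in Theorem~\ref{tree-deterministic} --- and reduce the statement to a functional invariance principle for random walks conditioned to stay positive.

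First, I would encode $\text{GW}_n$ by its contour function $C_n:[0,2(n-1)]\to\R_{\ge 0}$, obtained by traversing the tree in depth-first order at unit speed (going across each edge once in each direction) and recording the distance to the root. The real tree $T_{C_n}$ coded by $C_n$ (after linear interpolation) is the geometric realization of $\text{GW}_n$ with edges realized as unit intervals, so $(\text{GW}_n,d_{\text{GW}_n})$ and $T_{C_n}$ are at Gromov--Hausdorff distance at most $\tfrac{1}{2}$, a gap that vanishes after rescaling by $\frac{\sigma}{2\sqrt n}$. By the scaling relation $T_{\lambda g}=\lambda\, T_g$, it then suffices to prove that $T_{\widetilde C_n}$ converges in Gromov--Hausdorff distance to the CRT, where $\widetilde C_n(t)=\frac{\sigma}{2\sqrt n}\, C_n(2(n-1)t)$ for $t\in[0,1]$.

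Second, I would establish convergence of the Lukasiewicz path $(S_k)_{0\le k\le n}$, the depth-first walk with i.i.d.\ steps distributed as $Z-1$, which under the conditioning $|\text{GW}_n|=n$ becomes a random walk with step law $Z-1$ conditioned on $\{S_n=-1,\ S_k\ge 0 \text{ for } 0\le k<n\}$. Donsker's theorem applied to the unconditioned bridge gives convergence of $n^{-1/2}S_{\lfloor nt\rfloor}$ to $\sigma$ times a standard Brownian bridge, and Vervaat's cyclic rerooting combined with the discrete cycle lemma transfer this convergence from bridges to excursions, yielding convergence of the rescaled Lukasiewicz path to $\sigma\,\mathbf{e}$. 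A classical comparison between contour and Lukasiewicz paths --- they differ only by heights of consecutively visited vertices and by a time change bounded by partial sums of offspring numbers --- then gives
\[
\bigl(\widetilde C_n(t)\bigr)_{t\in[0,1]}\;\longrightarrow\;(\mathbf{e}_t)_{t\in[0,1]}
\]
in distribution in the uniform topology on $C([0,1])$.

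Finally, the deterministic inequality $d_{GH}((T_g,d_g),(T_h,d_h))\le 2\|g-h\|_\infty$, immediate from $d_g(s,t)=g(s)+g(t)-2m_g(s,t)$, shows that the coding map $g\mapsto T_g$ is continuous in the uniform norm; combined with Skorokhod representation this yields the desired Gromov--Hausdorff convergence of $T_{\widetilde C_n}$ to $T_{\mathbf{e}}=\mathfrak{T}$. The main obstacle is the second step: passing from the Donsker principle for unconstrained walks to convergence of the \emph{conditioned} contour function. The route via Vervaat requires care in tracking the constant $\sigma/2$ (the factor $2$ from each edge being crossed twice during contour traversal, the $\sigma$ from Donsker) and in verifying that the time change between Lukasiewicz and contour trajectories is uniformly negligible; this is where finite variance and aperiodicity of $Z$ are used, respectively, to control the fluctuations of the offspring partial sums and to rule out arithmetic obstructions to the conditioning event $\{S_n=-1\}$.
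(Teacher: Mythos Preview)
The paper does not actually prove this theorem: it is stated as background and attributed to the literature, specifically ``Theorem~3.1 in~\cite{LG} which is a simple consequence of Theorem~23 in~\cite{Al3}''. There is therefore no proof in the paper to compare against.

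That said, your outline is precisely the standard argument one finds in the cited references, especially Le Gall's exposition: encode the conditioned Galton--Watson tree by its contour function, prove convergence of the rescaled contour function to the normalized Brownian excursion (via the Lukasiewicz path, the cycle lemma/Vervaat trick, and the contour--Lukasiewicz comparison), and then push this through the continuous coding map $g\mapsto T_g$ using the inequality $d_{GH}(T_g,T_h)\le 2\|g-h\|_\infty$. Your identification of the delicate step --- transferring Donsker from the free walk to the conditioned excursion and controlling the time change between the two encodings --- is accurate, and your tracking of where the constant $\sigma/2$ comes from is correct. So your proposal is sound and matches the approach of the references the paper invokes; there is simply nothing in the present paper itself to set it against.
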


\subsubsection{The integrated super-Brownian excursion (ISE)}

We will combine the CRT with $d$-dimensional Brownian motions started from $x=0$, in the
way explained in Section~\ref{sect_spatial_tree}. Precisely this means that we are
considering the probability measure on spatial trees defined by
\[
M=\int \Xi(d\mathfrak{T})\,Q_{\mathfrak{T}}.
\]

Recall the notation $\lambda_{\mathfrak{T}}$ for the uniform measure on $\mathfrak{T}$
(this makes sense $\Xi(d\mathfrak{T})$ a.s.).

\begin{definition}
The random probability measure $\lambda^{\phi_{\mathfrak{T}}(\mathfrak{T})}$ on $\R^d$ defined under $M$ by $\lambda^{\phi_{\mathfrak{T}}(\mathfrak{T})}:=\lambda^{\mathfrak{T}} \circ \phi_{\mathfrak{T}}^{-1}$ is called $d$-dimensional ISE (for Integrated Super-Brownian Excursion).
\end{definition}

Note that the topological support of ISE is the range 
of the spatial tree, and that ISE should be interpreted as the
uniform measure on this set. We will often abuse the terminology and write ISE to mean its topological support. We will write $\phi_{\mathfrak{T}}(\mathfrak{T})$ \hfff{ise} to designate this set.

The random measure
ISE was first discussed by Aldous~\cite{Al4}. It occurs in various 
asymptotics for models of statistical mechanics (see in particular~\cite{Sl1} and~\cite{Sl2}).

\subsubsection{The Brownian motion on the ISE: $B^{ISE}$}

We are now going to define a canonical dynamic on the ISE. For this we will start by discussing the Brownian motion on the CRT.

Let $(T,d_T)$ be any real tree. It was suggested by Aldous~\cite{Al2} that a Brownian motion on $(T,d_{T},\nu)$ should be a strong Markov process with continuous sample paths that is reversible with respect to its invariant measure $\nu$ and satisfies the following properties,
\begin{enumerate}
\item For $\sigma_1,\sigma_2 \in T$ with $\sigma_1\neq \sigma_2$, we have 
\[
P_{\sigma}^{T,\nu}(T_{\sigma_1}<T_{\sigma_2})=\frac{d_{T}(b^{T}(\sigma,\sigma_1,\sigma_2),\sigma_2)}{d_{T}(\sigma_1,\sigma_2)}, \qquad \forall \sigma \in T,
\]
where $T_{\sigma}:=\inf\{t>0,X_t^{T}=\sigma\}$ is the hitting time of $\sigma \in T$.
\item For $\sigma_1,\sigma_2 \in T$, the mean occupation measure for the process started at $\sigma_1$ and killed on hitting $\sigma_2$ has density
\[
2d_{T}(b^{T}(\sigma,\sigma_1,\sigma_2),\sigma_2)\nu(d\sigma) \qquad \forall \sigma \in T.
\]
\end{enumerate}

These properties guarantee the uniqueness of the Brownian motion on $(T,d_{T},\nu)$. 

The existence of such a process follows from techniques of resistance forms (see~\cite{Kigami_Harm} for an introduction on resistance forms). More specifically, it was proved in Section 6 of~\cite{Croydon_arc} that
\begin{proposition}\label{prop_def_process}
Let $(T,d_{T})$ be a compact real tree, $\nu$ be a finite Borel measure on $\T$ that satisfies $\nu(A)>0$ for every non-empty open set $A\subseteq T$, and $(\mathcal{E}_{T},\mathcal{F}_{T})$ be the resistance form associated with $(T,d_{T})$. Then $(\frac 12 \mathcal{E}_{T},\mathcal{F}_{T})$ is a local, regular Dirichlet form on $L^2(T, \nu)$, and the corresponding Markov process $B^{T,\nu}$ is the Brownian motion on $(T,d_{T},\nu)$.
\end{proposition}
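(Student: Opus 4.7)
The plan is to rely on the general theory of resistance forms, which allows one to trade the probabilistic construction for a purely analytic one. The first observation is that a compact real tree $(T,d_T)$ naturally carries an electrical network structure in which each arc of infinitesimal length $ds$ acts as a resistor of resistance $ds$; the series law along the unique arc $[\sigma,\sigma']$ then reproduces $d_T(\sigma,\sigma')$ as the effective resistance. To make this precise I would approximate $T$ by the nested family of reduced subtrees $T^{K,(\sigma_i)}$ introduced in Section~\ref{sect_graph_tree} with $(\sigma_i)$ dense in $T$, equip each such finite subtree with its canonical electrical network (edge resistance equal to edge length), and take a projective limit of the corresponding Dirichlet forms. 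Standard trace-compatibility for resistance forms on nested finite networks then produces $(\mathcal{E}_T,\mathcal{F}_T)$ as a resistance form on $T$ with the identity $R_{\mathcal{E}_T}(\sigma,\sigma')=d_T(\sigma,\sigma')$.

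Next I would verify that $(\tfrac12\mathcal{E}_T,\mathcal{F}_T)$ is a regular local Dirichlet form on $L^2(T,\nu)$. The key analytic input is the resistance estimate
\[
|u(\sigma)-u(\sigma')|^2 \leq d_T(\sigma,\sigma')\,\mathcal{E}_T(u,u),
\]
valid for any $u\in\mathcal{F}_T$, which shows that every finite-energy function is continuous on the compact set $T$. Together with the fact that piecewise-linear functions along the subtrees $T^{K,(\sigma_i)}$ form a point-separating subalgebra of $C(T)$, a Stone--Weierstrass argument gives that $\mathcal{F}_T\cap C(T)$ is uniformly dense in $C(T)$, hence $L^2(T,\nu)$-dense (using finiteness of $\nu$) and dense in $\mathcal{F}_T$ in its natural norm; this is regularity. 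The full-support hypothesis on $\nu$ ensures non-degeneracy. Locality is inherited from the tree geometry: if $u,v\in\mathcal{F}_T$ have disjoint closed supports then on every arc of $T$ at most one of them is non-constant, so the cross-energy already vanishes on each finite approximating network and passes to the limit. Fukushima's theorem then produces a $\nu$-symmetric Hunt process $B^{T,\nu}$ with continuous sample paths; the $\tfrac12$ factor is the usual normalisation tuning the speed to the Brownian convention used in Aldous' properties~(1)--(2).

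It remains to identify $B^{T,\nu}$ with the Brownian motion on $(T,d_T,\nu)$ in Aldous' sense. For property~(1), one exploits the classical correspondence between hitting probabilities of a reversible Markov process associated with a resistance form and voltages in the corresponding electrical network: $P^{T,\nu}_\sigma(T_{\sigma_1}<T_{\sigma_2})$ equals the voltage at $\sigma$ when $\sigma_1$ is held at $1$ and $\sigma_2$ at $0$. On a real tree such a voltage is harmonic on $T\setminus\{\sigma_1,\sigma_2\}$, constant on every off-arc branching from the geodesic $[\sigma_1,\sigma_2]$ and linear along that geodesic, so its value at $\sigma$ is $d_T(b^T(\sigma,\sigma_1,\sigma_2),\sigma_2)/d_T(\sigma_1,\sigma_2)$. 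For property~(2), the same electrical identification yields the Green kernel of the process killed at $\sigma_2$: a direct computation on each finite approximating subtree, passed to the limit, gives that the mean occupation measure of $B^{T,\nu}$ started at $\sigma_1$ and killed on hitting $\sigma_2$ has density $2\,d_T(b^T(\sigma,\sigma_1,\sigma_2),\sigma_2)$ against $\nu(d\sigma)$, the factor $2$ being exactly the one compensating the $\tfrac12$ in front of $\mathcal{E}_T$.

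The main obstacle will be the technical bookkeeping required to make the projective-limit construction really land on a regular Dirichlet form on $L^2(T,\nu)$ rather than on some proper extension or restriction, and to guarantee that continuity of the sample paths is not lost at the (possibly dense) branch points of $T$. Both are by now standard consequences of Kigami's framework once one knows that the resistance metric coincides with the intrinsic metric $d_T$; this is the feature of the tree setting that makes the whole argument transparent, and the probabilistic characterisations~(1)--(2) follow by transcribing the voltage-and-occupation computations that already hold verbatim on every finite subtree.
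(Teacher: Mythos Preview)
The paper does not prove this proposition at all: it is quoted verbatim as a result from Section~6 of Croydon~\cite{Croydon_arc}, with a further pointer to Kigami's resistance-form framework~\cite{Kigami_Harm}. Your sketch is therefore not to be compared against an in-paper proof but against the cited literature, and in that respect it is faithful: the construction of $(\mathcal{E}_T,\mathcal{F}_T)$ via compatible traces on nested finite subtrees, the identification of the resistance metric with $d_T$, regularity via the resistance estimate and Stone--Weierstrass, locality from the tree structure, and the verification of Aldous' properties~(1)--(2) through voltage/Green-kernel computations are exactly the ingredients Croydon assembles (building on Kigami). Your outline is correct and matches the intended argument; just be aware that in the context of this paper no proof is expected beyond the citation.
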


For $d\geq 8$, it can be proved that $\phi_{\mathfrak{T}}$ is injective from $\mathfrak{T}$ (the CRT)  to $\phi_{\mathfrak{T}}(\mathfrak{T})$ (the range of the ISE), see Proposition 3.5.~in~\cite{Croydon_arc}. This means $\phi_{\mathfrak{T}}$ is actually an isometry between $(\mathfrak{T},d_{\mathfrak{T}})$ and $(\phi_{\mathfrak{T}}(\mathfrak{T}),d_{\phi_{\mathfrak{T}}(\mathfrak{T})})$ which sends  $\lambda_{\mathfrak{T}}$ to $\lambda_{\phi_{\mathfrak{T}}(\mathfrak{T})}$.  Hence, $(\phi_{\mathfrak{T}}(\mathfrak{T}),d_{\phi_{\mathfrak{T}}(\mathfrak{T})})$ is a real tree and this allows us to define easily a process, which in the sense defined by Aldous, is the Brownian motion on the ISE in $\Z^d$ for $d\geq 8$.

\begin{proposition}\hfff{bise}\label{propdef_BISE} For $\Xi$-a.e.~$\mathfrak{T}$, the Brownian motion $B^{CRT}$ on $(\mathfrak{T},d_{\mathfrak{T}},\mu^{\mathfrak{T}})$ exists. Furthermore if $d\geq 8$, for $M$-a.e.~ $(\mathfrak{T},\phi)$, the Brownian motion $B^{ISE}$ on $(\phi_{\mathfrak{T}}(\mathfrak{T}),d_{\phi_{\mathfrak{T}}(\mathfrak{T})},\mu^{\phi(\mathfrak{T})})$ exists and, moreover, $B^{ISE}=\phi_{\mathfrak{T}}(B^{CRT})$.
\end{proposition}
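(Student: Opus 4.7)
The plan is to construct $B^{CRT}$ by a direct application of Proposition~\ref{prop_def_process} to the CRT, and then to lift the construction to the ISE by transporting the Dirichlet form through the Brownian embedding $\phi_{\mathfrak{T}}$, which, for $d\geq 8$, is an isometry onto its image.

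First, for the existence of $B^{CRT}$: I would check that the three hypotheses of Proposition~\ref{prop_def_process} hold $\Xi$-almost surely on $(\mathfrak{T},d_{\mathfrak{T}},\lambda^{\mathfrak{T}})$. Compactness of $\mathfrak{T}$ is immediate since $\mathfrak{T}=T_{\mathbf{e}}$ for a continuous excursion $\mathbf{e}$ on the compact interval $[0,1]$, so $\mathfrak{T}$ is the continuous image of $[0,1]$ under the quotient map. The measure $\lambda^{\mathfrak{T}}$ is finite, and one verifies that $\lambda^{\mathfrak{T}}(U)>0$ for every non-empty open $U\subseteq \mathfrak{T}$: if $[t_0]\in U$, then by continuity of $\mathbf{e}$ and of the canonical projection $[0,1]\to\mathfrak{T}$, the preimage of $U$ in $[0,1]$ contains a non-degenerate open interval around $t_0$, hence has positive Lebesgue measure. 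Proposition~\ref{prop_def_process} then yields the Brownian motion $B^{CRT}=B^{\mathfrak{T},\lambda^{\mathfrak{T}}}$.

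Next, for $d\geq 8$, I would invoke Proposition~3.5 of \cite{Croydon_arc}, stated in the discussion preceding the proposition, which asserts that $\phi_{\mathfrak{T}}$ is $M$-a.s.~injective. Combined with the covariance formula of Section~\ref{sect_spatial_tree}, injectivity implies $d_{\phi_{\mathfrak{T}}(\mathfrak{T})}(\phi_{\mathfrak{T}}(\sigma),\phi_{\mathfrak{T}}(\sigma'))=d_{\mathfrak{T}}(\sigma,\sigma')$, so $\phi_{\mathfrak{T}}$ is an isometry of compact real trees and $\lambda^{\phi_{\mathfrak{T}}(\mathfrak{T})}=\lambda^{\mathfrak{T}}\circ\phi_{\mathfrak{T}}^{-1}$ is a finite Borel measure on $\phi_{\mathfrak{T}}(\mathfrak{T})$, positive on non-empty open subsets (by transport of the analogous property on $\mathfrak{T}$). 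A second application of Proposition~\ref{prop_def_process} then produces $B^{ISE}=B^{\phi_{\mathfrak{T}}(\mathfrak{T}),\lambda^{\phi_{\mathfrak{T}}(\mathfrak{T})}}$.

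Finally, to identify $B^{ISE}$ with $\phi_{\mathfrak{T}}(B^{CRT})$, I would argue by uniqueness. The isometry $\phi_{\mathfrak{T}}$ pushes $(\tfrac{1}{2}\mathcal{E}_{\mathfrak{T}},\mathcal{F}_{\mathfrak{T}})$ on $L^{2}(\mathfrak{T},\lambda^{\mathfrak{T}})$ to $(\tfrac{1}{2}\mathcal{E}_{\phi_{\mathfrak{T}}(\mathfrak{T})},\mathcal{F}_{\phi_{\mathfrak{T}}(\mathfrak{T})})$ on $L^{2}(\phi_{\mathfrak{T}}(\mathfrak{T}),\lambda^{\phi_{\mathfrak{T}}(\mathfrak{T})})$, because resistance forms are intrinsic to the metric, and the measures correspond under $\phi_{\mathfrak{T}}$. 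Therefore the process $\phi_{\mathfrak{T}}(B^{CRT})$ is associated with the same Dirichlet form as $B^{ISE}$, and, starting from the same point, they coincide in distribution. Alternatively, one can verify directly that $\phi_{\mathfrak{T}}(B^{CRT})$ satisfies Aldous' two defining properties for Brownian motion on $(\phi_{\mathfrak{T}}(\mathfrak{T}),d_{\phi_{\mathfrak{T}}(\mathfrak{T})},\lambda^{\phi_{\mathfrak{T}}(\mathfrak{T})})$, the hitting-probability identity and the Green-function identity being preserved under the isometry.

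The main obstacle in this plan is the injectivity of $\phi_{\mathfrak{T}}$ for $d\geq 8$, which is the content of Croydon's Proposition~3.5 and is assumed here; all remaining points reduce to routine verifications that compactness, finiteness of the measure and positivity on open sets transport through the isometry, followed by the intrinsic nature of the resistance form that makes the identification $B^{ISE}=\phi_{\mathfrak{T}}(B^{CRT})$ automatic.
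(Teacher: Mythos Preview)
Your proposal is correct and follows precisely the approach the paper indicates: the paper does not give a formal proof of this proposition, but the paragraph preceding it explains that existence of $B^{CRT}$ comes from Proposition~\ref{prop_def_process}, that injectivity of $\phi_{\mathfrak{T}}$ for $d\geq 8$ (Croydon's Proposition~3.5) makes $\phi_{\mathfrak{T}}$ an isometry sending $\lambda_{\mathfrak{T}}$ to $\lambda_{\phi_{\mathfrak{T}}(\mathfrak{T})}$, and that this transports the construction to the ISE. Your verification of the hypotheses of Proposition~\ref{prop_def_process} and your uniqueness argument for the identification $B^{ISE}=\phi_{\mathfrak{T}}(B^{CRT})$ fill in exactly the details the paper leaves implicit.
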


Since the map $\phi_{\mathfrak{T}}:\mathfrak{T}\to \R^d$ is continuous for $M$-a.e.~spatial trees $(\mathfrak{T},\phi_{\mathfrak{T}})$ the law of $B^{ISE}$ is, $M$-a.s., a well-defined probability measure on $C(\R_+,\R^d)$.

\subsubsection{Approximating the ISE and the $B^{ISE}$ using graph spatial trees}\label{sect_KISE}

It will be useful for us to approximate the ISE (and the $B^{ISE}$) by a graph spatial tree (and a process on this graph spatial tree). Indeed, the topological structure of the graph spatial tree is much simpler which makes it easier to study. This is an idea that was already used by Croydon in Section 8 of~\cite{Croydon_arc} (building on ideas he developed in~\cite{Croydon_crt})

Consider $\mathfrak{T}$ a realization of the CRT and $(U_i)_{i\in \N}$ chosen according to $(\lambda^{\mathfrak{T}})^{\otimes \N}$.
Fix $K\in \N$ . We can use the construction described in Section~\ref{sect_graph_tree} to define a graph spatial tree, which we call $K$-ISE and denote it  $(\mathfrak{T}^{(K)},d_{\mathfrak{T}^{(K)}},\phi_{\mathfrak{T}^{(K)}})$, where $\mathfrak{T}^{(K)}$ is called $K$-CRT \hfff{kcrt}. We recall that this object comes with a probability measure $\lambda_{\phi_{\mathfrak{T}^{(K)}}(\mathfrak{T}^{(K)})}$. For the sake of simplicity we will denote $\lambda_{\phi_{\mathfrak{T}^{(K)}}(\mathfrak{T}^{(K)})}$ as $\lambda_{\frak{T}}^{(K)}$.

 It is also interesting to note that $\mathfrak{T}^{(K)}$ has no point of degree more than 3, indeed, by Theorem 4.6 in~\cite{DuLG}, it is known that $\Xi$-a.s.~for any $x\in \mathfrak{T}$ the set $\mathfrak{T}\setminus \{x\}$ has at most three connected components.

Once again, Proposition~\ref{prop_def_process} allows us to define a Brownian motion \hfff{bcrtk}
$B^{(K)}$ in $(\frak{T}^{(K)},d_{\frak{T}},\lambda_{\frak{T}}^{(K)})$, where $\lambda_{\frak{T}^{(K)}}$ is the Lebesgue measure in $\frak{T}^{(K)}$ normalized to be a probability measure. Also, we define the Brownian motion $B^{K-ISE}$ \hfff{bisek} on the $K$-ISE $(\phi^{(K)}(\mathfrak{T}^{(K)}),d_{\phi^{(K)}(\mathfrak{T}^{(K)})},\lambda_{\phi^{(K)}(\mathfrak{T}^{(K)})})$. It can be shown (in essence equation (8.3) of~\cite{Croydon_arc}) that

\begin{proposition} \label{prop_approx_bisek}
We have that $B^{K-ISE}$ converges to $B^{ISE}$ as $K\to \infty$, in distribution  in the topology of uniform convergence in $C(\R_+,\R^d)$ for $M \otimes (\lambda^{\mathfrak{T}})^{\otimes \N}$-a.e.~realization of $(\mathfrak{T},d_{\mathfrak{T}},\phi_{\mathfrak{T}},(U_i)_{i\in \N})$.
\end{proposition}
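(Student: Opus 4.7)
The plan is to transfer the convergence statement from $\R^d$ back to the underlying real trees, and then invoke a continuity principle for the Brownian-motion functional under Gromov-Hausdorff-Prokhorov convergence. As recalled just before Proposition~\ref{propdef_BISE}, for $d\geq 8$ the map $\phi_{\mathfrak{T}}$ is an isometry from $(\mathfrak{T},d_{\mathfrak{T}})$ onto its range; in particular its restriction to the subtree $\mathfrak{T}^{(K)}$ is an isometry onto the $K$-ISE, and it sends $\lambda_{\mathfrak{T}}^{(K)}$ to $\lambda_{\phi^{(K)}(\mathfrak{T}^{(K)})}$. By the characterization of Brownian motion on a real tree recalled before Proposition~\ref{prop_def_process}, this forces $B^{ISE}=\phi_{\mathfrak{T}}(B^{CRT})$ and $B^{K-ISE}=\phi_{\mathfrak{T}}(B^{(K)})$ (the latter seen as a process on $\mathfrak{T}^{(K)}\subseteq \mathfrak{T}$). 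Since $\phi_{\mathfrak{T}}$ is continuous on the compact CRT, the continuous mapping theorem reduces the claim to proving that $B^{(K)}\to B^{CRT}$ in distribution in $C(\R_+,\mathfrak{T})$ for the topology of uniform convergence on compact time intervals.

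\textbf{Convergence of the approximating metric measure trees.} Next, I would check that, almost surely under $\Xi\otimes(\lambda^{\mathfrak{T}})^{\otimes\N}$, the rooted metric measure trees $(\mathfrak{T}^{(K)},d_{\mathfrak{T}^{(K)}},\lambda_{\mathfrak{T}}^{(K)})$ converge to $(\mathfrak{T},d_{\mathfrak{T}},\lambda^{\mathfrak{T}})$ in the pointed Gromov-Hausdorff-Prokhorov sense as $K\to\infty$. For the metric part, the sequence $(U_i)$ is $\lambda^{\mathfrak{T}}$-a.s.\ dense in $\mathfrak{T}$ (separability together with the fact that $\lambda^{\mathfrak{T}}$ charges every non-empty open set), hence $\mathfrak{T}^{(K)}$ fills $\mathfrak{T}$ in Hausdorff distance. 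For the measure part, the normalized Lebesgue measure $\lambda_{\mathfrak{T}}^{(K)}$ on $\mathfrak{T}^{(K)}$ is close in Prokhorov distance to the empirical measure $K^{-1}\sum_{i\leq K}\delta_{U_i}$, which itself converges weakly to $\lambda^{\mathfrak{T}}$ by the Glivenko-Cantelli theorem on compact metric spaces. Combining the two gives the desired GHP convergence.

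\textbf{Continuity of the Brownian-motion functional.} Given GHP convergence of the metric measure trees, the sought-for process-level convergence is now precisely the content of equation~(8.3) of~\cite{Croydon_arc}, itself building on the framework for Brownian motions on real trees developed via resistance forms in~\cite{Croydon_crt}. The route is to embed the spaces $\mathfrak{T}^{(K)}$ and $\mathfrak{T}$ isometrically in a common compact metric space, use GHP convergence to transfer convergence of the associated resistance forms and reference measures, and then apply the general theorem that such convergence forces weak convergence of the associated Hunt processes in $C(\R_+,\cdot)$ with the uniform-on-compacts topology. Composing with the continuous map $\phi_{\mathfrak{T}}$ and applying the continuous mapping theorem produces $B^{K-ISE}=\phi_{\mathfrak{T}}(B^{(K)})\to\phi_{\mathfrak{T}}(B^{CRT})=B^{ISE}$ in the required topology.

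\textbf{Main obstacle.} The analytic core of the argument is the continuity of the Brownian motion under GHP convergence: resistance-form convergence is not automatic from metric measure convergence in general, and the tree structure is used in an essential way, e.g.\ through the explicit formulae for hitting probabilities and Green kernels recalled in the properties (1)--(2) preceding Proposition~\ref{prop_def_process}. A subsidiary delicate point is upgrading from finite-dimensional convergence to uniform convergence on compact time intervals; this requires tightness, which one obtains by comparing $B^{(K)}$ to the trace of $B^{CRT}$ on the (dense) subtree $\mathfrak{T}^{(K)}$ and controlling the excursions away from that subtree via the upper bound on excursion lengths provided by the diameter of the complementary bushes, which shrinks to zero as $K\to\infty$.
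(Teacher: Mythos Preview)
Your proposal is correct and matches the paper's approach: the paper does not give an independent proof but simply cites equation~(8.3) of~\cite{Croydon_arc}, which is exactly the reference you invoke in your third step. Your outline (reduce to the tree side via the isometry $\phi_{\mathfrak{T}}$, establish convergence of the reduced metric measure trees, then apply Croydon's continuity result) is a faithful unpacking of what that citation contains; one small quibble is that the intermediate claim that $\lambda_{\mathfrak{T}}^{(K)}$ is Prokhorov-close to the empirical measure $K^{-1}\sum_{i\le K}\delta_{U_i}$ is not quite the right route (the Lebesgue measure is spread along branches, not concentrated at leaves), but both measures do converge to $\lambda^{\mathfrak{T}}$, which is what is actually used.
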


\subsubsection{Local times of the $B^{ISE}$ and the $B^{K-ISE}$}

We recall the definition of local times for processes taking values on real trees.
\begin{definition}\label{def:localtime}
Let $T$ be an $\R$-tree equipped with a Borel measure $\mu$. Consider a process $(X_t)_{t\geq0}$ taking values in $T$. We say that a random process $(L_t(x))_{x\in T, t\geq0}$ is a local time for $X$ if
 \[\text{Leb}\{s\leq t:X_s\in A\}=\int_AL_t(x)\mu(dx), \]
 for any $A$, Borelian of $T$.
\end{definition}


    
 Let $(L^{(K)}_t(x))_{x\in\frak{T}^{(K)},t\geq0}$ be the local time of $B^{(K)}$ with respect to the measure $\lambda_{\frak{T}}^{(K)}$ and $(L_t(x))_{x\in\frak{T},t\geq0}$ be the local time of $B^{CRT}$ with respect to $\mu^{\frak{T}}$. It is a known fact that $L^{(n,K)}$ and $L^{(K)}$  \hfff{ltk} exists for almost every realization of $\frak{T}$, $\frak{T}^{(K)}$ and moreover, they can be chosen to be jointly continuous in $t$ and $x$ (see Lemma 3.3 in \cite{Croydon_crt}). 
 
Since $\Phi_{\frak{T}}$ is a isometry, it is elementary to see that the corresponding local times of $B^{K-ISE}$ and $B^{ISE}$ are just $(L^{(K)}_t(\Phi_{\frak{T}}^{-1}(x)))_{x\in\Phi_{\frak{T}}(\frak{T}^{(K)}),t\geq0}$ and $(L_t(\Phi^{-1}_{\frak{T}}(x)))_{x\in\Phi_{\frak{T}}(\frak{T}),t\geq0}$.

\section{Abstract convergence theorem}\label{sect_abstract_cvg_thm}

\subsection{Construction of the skeleton of a graph}\label{sect_constr_tnk}

\subsubsection{Decomposing the graph along cut-points}

Let $G$ be a rooted finite graph that is connected. 

\begin{definition}\label{def_cut_point}
We call cut-bond any edge $e\in E(G)$ whose removal disconnects G. By definition only one of the endpoints of a cut-bond is connected to the root and any such point is called a cut-point. \end{definition}
We denote $V_{\text{cut}}(G)$ the set of cut-points of $G$, which we assume  to be non-empty.

Let us now consider a sequence  $(x_i)_{i\in \N}$ of points $V_{\text{cut}}(G)$. Fix $K\in \N$, we construct the graph $G(K)$ in the following manner
\begin{enumerate}
\item the vertices of $G(K)$ are the set of all cut-points that lie on a path the root to an $x_i$ for $i\leq K$,
\item  two vertices of $G(K)$ are adjacent if there exists a path connecting them which does not use any cut-point.
\end{enumerate}

The new graph $G(K)$ will be rooted at $\text{root}^*$ \hfff{rootstar} which is the first cut-point on the path from the root to $x_0$.

It is elementary to notice that this graph is composed of complete graphs glued together, indeed the removal of all cut-bonds in $G$ results in a graph several connected components. Those connected components with the cut-bonds that link them to the root are called bubbles. All cut-points corresponding to cut-bonds with at least one end-point in the same bubble are inter-connected.

\begin{figure}
  \includegraphics[width=\linewidth]{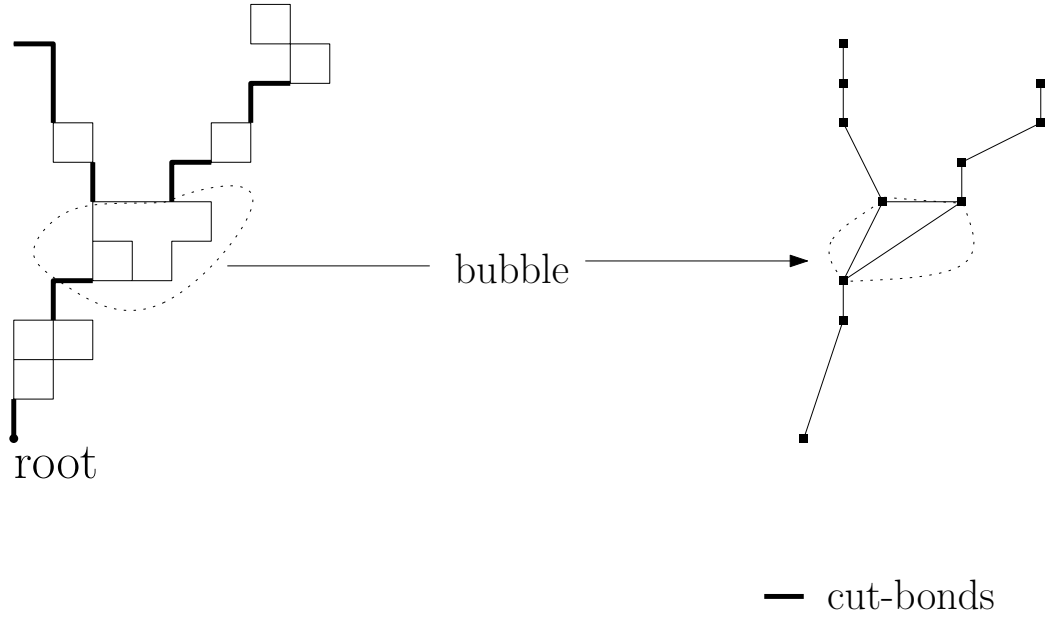}
  \caption{Construction of $G(K)$, obtained using  a sequence $(x_i)_{i\in \N}$ the covers the whole graph}
\end{figure}

\begin{definition}\label{def_thin}
We will say that a graph $G(K)$ is asymptotically tree-like if it does not contain any subgraph that is a complete graph apart from segments and triangles. 
\end{definition}

We would typically expect $G$ to be asymptotically tree-like if $G$ has many cut-bonds and the random variables $V_i$ are uniformly distributed. 

\subsubsection{Approximating a asymptotically tree-like graph by a graph spatial tree}

Let us assume that $G(K)$ is asymptotically tree-like. We are now going to perform a technical operation, that will be helpful to complete our proofs. In essence we are trying to build a graph spatial tree that will approximate $G(K)$ well.

We want to turn the triangles  present in $G(K)$ into stars in order to turn out asymptotically tree-like graph into a tree, this procedure will add one point for every triangle present in the graph $G(K)$.
 
\vspace{0.5cm}

{\it Step 1: Turning $G(K)$ into a tree $\T^{(G,K)}$}
 
\vspace{0.5cm}

 For every triangle $(x,y),(y,z),(z,x)\in E(G(K))$, we remove the edges $(x,y),(y,z),(z,x)$ and we introduce a new vertex $v_{x,y,z}$ and new edges $(x,v_{x,y,z})$, $(y,v_{x,y,z})$, $(z,v_{x,y,z})$.   We denote $\T^{(G,K)}$, the tree obtained by this construction. 
 
 We denote $V(\T^{(G,K)})$ the vertices of $\T^{(G,K)}$ and $V^*(\T^{(G,K)})$ the vertices which are not of the form $v_{x,y,z}$ (which are actually the vertices of $G(K)$). 
 
 Similarly, we denote $E(\T^{(G,K)})$ the edges of $\T^{(G,K)}$ and $E^*(\T^{(G,K)})$ \hfff{estar} the edges which are not of the form $(x,v_{x,y,z}),(y,v_{x,y,z}),(z,v_{x,y,z})$.

Finally, for $x,y\in V(\T^{(G,K)})$, we write $x\sim^*y$ if there exists no $z\in  V^*(\T^{(G,K)})$ which lies on the path from $x$ to $y$. This means that $x$ and $y$ were neighbours before the star-triangle transformation, or equivalently that they are connected by a bubble (see for example $x$, $y$ and $z$ in Figure 3).

\begin{figure}
  \includegraphics[width=\linewidth]{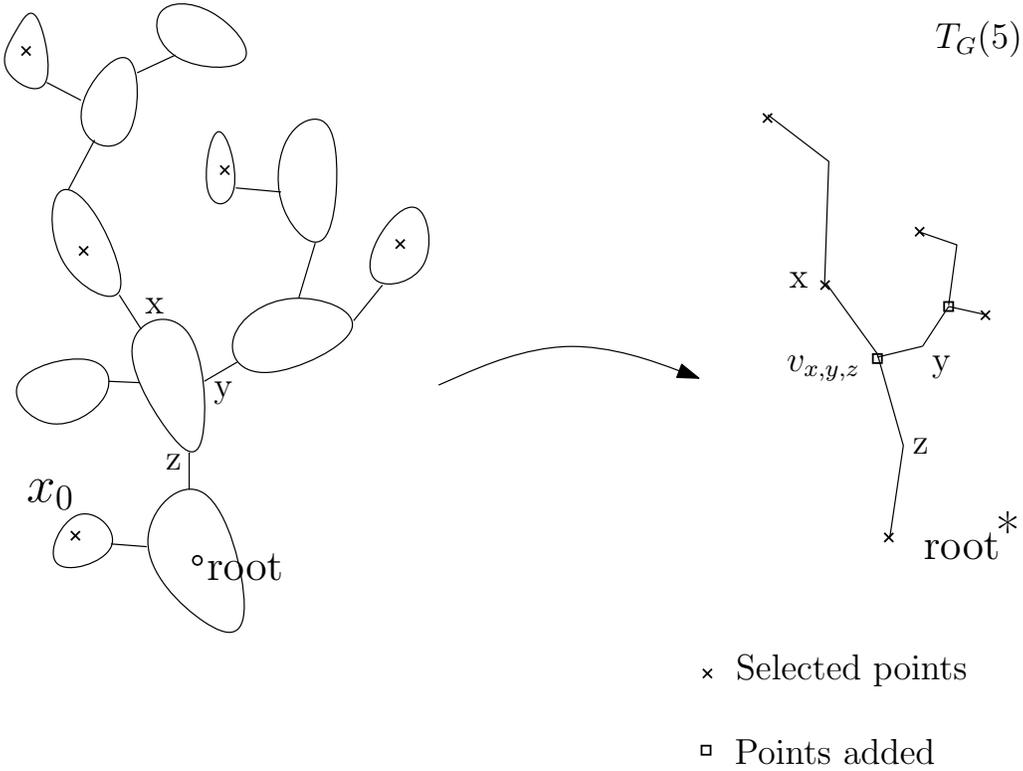}
  \caption{Construction of $T^{(G,5)}$ from a graph $G$. Even though the bubble of $x$, $y$ and $z$ has four neighbouring bubbles, the graph is still asymptotically-tree like because the bubble on the left does not lead to a selected point of the form $x_i$}
\end{figure}

 Since the tree $\T^{(G,K)}$ is rooted (at $\text{root}^*$) it comes with a natural notion of ancestry. For $x\in \T^{(G,K)}$, we denote $\overrightarrow{\T^{(G,K)}_{x}}$, the set of points of $\T^{(G,K)}$ which are descendants of $x$, including $x$.

\vspace{0.5cm}

{\it Step 2: Turning $\T^{(G,K)}$ into a real tree by adding a metric}

\vspace{0.5cm}

The tree $\T(G)$ comes with a natural metric by setting
\begin{enumerate}
\item for $(x,y)\in E^*(\T^{(G,K)})$, we set $d_{\T^{(G,K)}}(x,y)=d_{G}(x,y)$,
\item for any triple of edges $(x,v_{x,y,z}),(y,v_{x,y,z}),(z,v_{x,y,z})$, where $x$ is the ancestor of $y$ and $z$, we set \[d_{\T^{(G,K)}}(x,v_{x,y,z})=\frac{d_G(x,y)+d_G(x,z)-d_G(z,y)}{2},\] \[d_{\T^{(G,K)}}(y,v_{x,y,z})=\frac{d_G(x,y)+d_G(y,z)-d_G(x,z)}{2}\] and \[d_{\T^{(G,K)}}(z,v_{x,y,z})=\frac{d_G(x,z)+d_G(y,z)-d_G(x,y)}{2}.\]
Note that this assignment of distances keeps consistency in the sense that \[d_G(x,y)=d_{\T^{(G,K)}}(x,v_{x,y,z})+d_{\T^{(G,K)}}(y,v_{x,y,z}),\] \[d_G(x,z)=d_{\T^{(G,K)}}(x,v_{x,y,z})+d_{\T^{(G,K)}}(z,v_{x,y,z})\] and \[d_G(y,z)=d_{\T^{(G,K)}}(y,v_{x,y,z})+d_{\T^{(G,K)}}(z,v_{x,y,z}).\]
\item the distance grows linearly along an edge. 
\end{enumerate}

Our choice for the distances in the second part is arbitrary but it will not have an significant impact on our proof. It can be noted that this distance conserves the distance from $\text{root}^*$ to any point in $V^*(\T^{(G,K)})$.

\vspace{0.5cm}

{\it Step 3: Assigning a spatial location to the points in $\T^{(G,K)}$}

\vspace{0.5cm}

Finally we want to view our tree as a spatial tree embedded in $\R^d$, i.e.~we want to find an embedding of the edges into $\R^d$.

 Any vertex of $V^*(\T^{(G,K)})$ is assigned its original location in $G$. Moreover the vertices $v_{x,y,z}$ are mapped to the barycenter of  $x$, $y$ and $z$. We write $\phi^{G(K)}$ this map.
 
  If $(x,y)\in E(\T^{(G,K)})$, then the point $z$ which is at a $d_{\T^{(G,K)}}$-distance $\alpha d_{\T^{(G,K)}}(x,y)$ along the edge $(x,y)$ is mapped to the point which is at distance $\alpha d_{\Z^d}(\phi^{G(K)}(x),\phi^{G(K)}(y))$ along the $\R^d$-geodesic between $\phi^{G(K)}(x)$ and $\phi^{G(K)}(y)$. This extends  $\phi^{G(K)}$ to a map from $\T^{(G,K)}$ to $\R^d$. 
  
  In particular the notation $\phi^{G(K)}(e)$, for $e\in E(\T^{(G,K)})$, corresponds to a segment of $\R^d$.

\vspace{0.5cm}

\subsubsection{A  natural resistance metric on the skeleton}

Let us now endow $\T^{(G,K)}$ with a resistance metric. We refer the reader to~\cite{lyons2005probability} for a background on resistances, time reversibility and electrical network theory which are central notions for the remainder of the paper. 

First, for all $(x,y)\in E^*(\T^{(G,K)})$, we set $\reff^{\T^{(G,K)}}(x,y)$ as the effective resistance between $x$ and $y$ in the graph $G$ (where edges in $G$ have resistance $1$).

Let us consider a triangle $(x,y),(y,z),(z,x)\in E(G(K))$. We denote $P_x^G$ the law of a simple random walk on the graph $G$ started at $x$. For any set $A\in V(G)$, let 
\begin{equation}\label{eq:defofTT+}
T_A=\inf\{l\geq0: X_l\in A\}\quad \text{ and } \quad T^+_A=\inf\{l>0: X_l\in A \}.
\end{equation}
If $A=\{x\}$ we write $T_x,T^+_x$ instead of $T_{\{x\}}$, $T^+_{\{x\}}$.
 We set
\[\reff^{G(K)}(x,y)^{-1}:=\pi(x)P_{x}^G[T(y)<T(z)\wedge T^+(x)]\]
where $\pi$ is number of neighbors of $x$ in $G$, which is the invariant measure associated to unit resistances on $G$. Note that this procedure defines the resistances of all three edges corresponding to a triangle.
\[\reff^{G(K)}(x,z)^{-1}:=\pi(x)P_{x}[T_z<T_y\wedge T^+_x]\]
and
\[\reff^{G(K)}(y,z)^{-1}:=\pi(y)P_{y}[T_z<T_x\wedge T^+_y].\]

We could have defined the corresponding quantities $\reff^{G(K)}(y,x)$, $\reff^{G(K)}(z,x)$ and $\reff^{G(K)}(z,x)$ in an analogous way and, by time reversibility we would have obtained that $\reff^{G(K)}(x,y)=\reff^{G(K)}(y,x),\reff^{\omega_n}(x,z)=\reff^{G(K)}(z,x)$ and $\reff^{G(K)}(y,z)=\reff^{G(K)}(z,y)$.

Next, it only remains to define the values of $\reff$ for the edges of $\T^{(G,K)}$ containing the artificial vertex $v_{x,y,z}$ using the star-triangle transformation. That is
\[\reff^{\T^{(G,K)}}(x,v_{x,y,z})=\frac{\reff^{G(K)}(x,y)\reff^{G(K)}(x,z)}{\reff^{G(K)}(x,y)+\reff^{G(K)}(y,z)+\reff^{G(K)}(z,x)},\]
\[\reff^{\T^{(G,K)}}(y,v_{x,y,z})=\frac{\reff^{G(K)}(x,y)\reff^{G(K)}(y,z)}{\reff^{G(K)}(x,y)+\reff^{G(K)}(y,z)+\reff^{G(K)}(z,x)}\]
and
\[\reff^{\T^{(G,K)}}(z,v_{x,y,z})=\frac{\reff^{G(K)}(x,z)\reff^{G(K)}(y,z)}{\reff^{G(K)}(x,y)+\reff^{G(K)}(y,z)+\reff^{G(K)}(z,x)}.\]

Taking $x,y \in V(\T^{(G,K)})$, we know that there is a unique simple path $x_0,\ldots, x_l$ in $V(\T^{(G,K)})$ from $x$ to $y$ and this allows us to  set $\reff^{\T^{(G,K)}}(x,y)=\sum_{i=0}^{l-1} \reff^{\T^{(G,K)}}(x_i,x_{i+1})$. This is the natural definition in view of the law of resistance in series and the fact that points in $V^*(\T^{(G,K)})$ are cut-points.

The resistance defines a metric on $V(\T^{(G,K)})$ that we denote $d^{\text{res}}_{\T^{(G,K)}}$. Those definitions were chosen so as to have the following property.

\begin{lemma}\label{res_equiv_tnk}
For all $x,y \in V^*(\T^{(G,K)})$, we have
\[
\reff^{G}(x, y)=\reff^{\T^{(G,K)}}(x, y).
\]
\end{lemma}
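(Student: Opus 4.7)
The strategy is to exhibit $(\T^{(G,K)}, \reff^{\T^{(G,K)}})$ as an electrical network equivalent to $G$ when viewed through the terminals $V^*(\T^{(G,K)})$; since $\T^{(G,K)}$ is a tree, effective resistances between any two of its vertices sum along the unique path, and the lemma will follow. The equivalence is built from three classical network reductions performed on $G$: pruning of dead branches, additivity of effective resistance across a cut-vertex, and the $Y$-$\Delta$ (star-triangle) transformation.

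First I would observe that any cut-point $c$ of $G$ which does not lie in $V^*(\T^{(G,K)})$ is the base of a \emph{dead branch}: the side of $c$ opposite the root contains no $V^*$-vertex, for if it did then $c$ itself would lie on some root-to-$x_i$ path with $i\leq K$ and hence belong to $V^*$. Dead branches carry no net current between terminals in $V^*$ (any flow into such a branch at its attachment vertex must leave through the same vertex), so they may be removed without altering $\reff^G$ between pairs in $V^*$. After pruning, $G$ reduces to a gluing along cut-points in $V^*$ of bubbles, each containing between one and three $V^*$-terminals by the asymptotically tree-like hypothesis; bubbles with a single terminal are themselves dead branches and are likewise pruned.

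Next I would apply cut-vertex additivity $\reff^G(x,y)=\reff^G(x,c)+\reff^G(c,y)$ along those $V^*$-cut-points which separate $x$ from $y$ in the pruned graph, reducing $\reff^G(x,y)$ to a sum of bubble contributions along the path. A bubble with two $V^*$-terminals $u,v$ contributes $\reff^G(u,v)$, which by definition equals the resistance assigned to the corresponding edge of $E^*(\T^{(G,K)})$. For a bubble $B$ with three $V^*$-terminals $x,y,z$ (a triangle in $G(K)$), I would invoke uniqueness of the three-terminal $\Delta$-equivalent: viewed from $\{x,y,z\}$, the bubble $B$ is electrically equivalent to a $\Delta$-network whose three edge resistances are terminal observables. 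The formula $\reff^{G(K)}(x,y)^{-1}=\pi(x)P_x^G[T_y<T_z\wedge T_x^+]$ is such an observable, and a direct computation inside the $\Delta$ shows it equals the $x$-$y$ conductance there; hence $\reff^{G(K)}(x,y)$, $\reff^{G(K)}(y,z)$, $\reff^{G(K)}(x,z)$ are exactly the resistances of the $\Delta$-equivalent of $B$. The paper's formulas for $\reff^{\T^{(G,K)}}(x,v_{x,y,z})$, etc., are precisely the classical $\Delta$-to-$Y$ identities, so the star at $v_{x,y,z}$ is electrically equivalent to the $\Delta$, and hence to $B$ through its three terminals.

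Assembling the bubble contributions along the unique path from $x$ to $y$ in $\T^{(G,K)}$ yields $\reff^G(x,y)=\reff^{\T^{(G,K)}}(x,y)$. The main obstacle is the $Y$-$\Delta$ step: one must verify carefully that the hitting-probability definition of $\reff^{G(K)}$ really produces the conductances of the unique three-terminal $\Delta$-equivalent of a triangular bubble, which comes down to a short but non-trivial calculation relating random-walk hitting probabilities to currents at a three-terminal port.
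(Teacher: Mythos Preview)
Your proposal is correct and follows essentially the same route as the paper's proof: reduce by cut-point additivity to adjacent pairs in $G(K)$, where edges of $E^*(\T^{(G,K)})$ get their resistance by definition, and triangles are handled by checking that the hitting-probability formula $\reff^{G(K)}(x,y)^{-1}=\pi(x)P_x^G[T_y<T_z\wedge T_x^+]$ recovers the conductances of the three-terminal $\Delta$-equivalent, after which the star--triangle identity produces the $Y$ at $v_{x,y,z}$. The paper's argument is much terser (it simply cites the star--triangle transformation and the definition), whereas you spell out the dead-branch pruning and the three-terminal reduction explicitly; your added care about the $\Delta$-identification is exactly the point the paper leaves implicit, and your verification of it is correct.
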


\begin{proof}
Since all vertices of $V^*(\T^{(G,K)})$ are cut-points, our additive definition of $\reff^{\T^{(n,K)}}$ (corresponding to the law of resistances in series) ensures that the previous relation will be verified if we simply check the previous relation for  any $x,y \in V^*(\T^{(G,K)})$ which are part of a triangle. We know that the star-triangle transformation conserves resistances (see e.g.~\cite{lyons2005probability}) so all we need to check is that the resistances we defined for triangles respect the transition probabilities for the walk. This can be checked since for any $(x,y,z)$ which is a triangle in $G^{(K)}$ we have
\[
P_x^{G}[T_y<T_z\wedge T_x^+]=\pi(x)\reff^{G(K)}(x,y)^{-1},
\]
by the very definition of $\reff^{G(K)}$.

\end{proof}


\subsubsection{Adding a measure associated to the volume of the graph}\label{sect_mu}

We are going to add a measure to our graph $\T^{(G,K)}$.

For any $x\in G$, let $\pi^{(G,K)}(x)$ be the unique $v\in V^*(\T^{(G,K)})$ separating $x$ from the origin and such that for any $v'\in V^*(\T^{(G,K)})$ with $v'$ separating $x$ from the origin and $v'\neq v$ we have that $v' \prec v$. That is, when going from $\text{root}^*$ to $x$, the point $\pi^{(G,K)}(x)$ is the last cut-point crossed before reaching $x$. In the case where $x$ is not separated from the origin by a cut-bond, i.e.~$x$ is in the bubble of the origin, then we set $\pi^{(G,K)}(x)=\text{root}^*$ by convention.

Now for $x\in V^*(\T^{(G,K)})$, let $v_{\T^{(G,K)}}(x):=\#\{(y,z)\in E(G): \pi^{(G,K)}(y)=x \text{ and } y\neq x\}$ and use this to define a measure on $V^*(\T^{(G,K)})$.
\[
\mu^{(G,K)}:=\sum_{x\in V^*(\T^{(G,K)})} v_{\T^{(G,K)}}(x)\delta_x.
\]

\subsubsection{Another way of viewing $\T^{(G,K)}$ as graph spatial tree}\label{sect_mathfrak}

For our future purpose it will be convenient to be able to introduce a reduced version of $\T^{(G,K)}$ where we view it as a graph spatial tree with a number of vertices between $K+1$ and $2K+1$ (whereas $\T^{(G,K)}$ typically has a high number of points if $G$ is large). This distinction will be important for the Definition~\ref{def_condG}.

It will be a graph spatial tree  denoted $(\mathfrak{T}^{(G,K)},d_{\mathfrak{T}^{(G,K)}},\phi_{\mathfrak{T}^{(G,K)}})$ which is obtained by a  procedure similar to Section~\ref{sect_graph_tree}. In the notations of that section this spatial graph is $((\T^{(G,K)})^{K,(x_i)},d_{(\T^{(G,K)})^{K,(x_i)}}, \phi_{(\T^{(G,K)})^{K,(x_i)}})$.

In words this simply means we restrict the tree structure $\T^{(G,K)}$ to the subgraph obtained from the points $\text{root}^*$, $x_0,\ldots,x_K$ and the branching points that these points created. Hence, the vertices of $\mathfrak{T}^{(G,K)}$ are 
\[
V(\mathfrak{T}^{(G,K)}):=\{\text{root}^*,x_0,\ldots,x_K\} \cup (V(\T^{(G,K)})\setminus V^*(\T^{(G,K)})),
\]
 (the set on the right-hand side of the union being the branching points) and the edges $E(\mathfrak{T}^{(G,K)})$ are the ones naturally inherited from the tree structure of $\T^{(G,K)}$.

\begin{remark}\label{rem_not_depend} It is important to note that the distance, the resistance distance and the embedding we assign to $\mathfrak{T}^{(G,K)}$ coincide with those assigned to $\T^{(G,K)}$. This will allow us to use, e.g., $d_{\T^{(G,K)}}$ to signify $d_{\mathfrak{T}^{(G,K)}}$.\end{remark}

\subsection{Setting for the abstract theorem} \label{sect_model_abstract}


In this section, we will consider a sequence of random graphs $(G_n)_{n\in \N}$ chosen under a measure ${\bf P}_n$, which in practice will be  large critical structures. The $n$ will quantify the order of the volume of $G_n$, which in turn means (in the universality class we are interested in) that the intrinsic distances between points in $G_n$ is of the order of $n^{1/2}$ and the extrinsic distances are of the order $n^{1/4}$. Our eventual goal is to study $(X^{G_n}_m)_{m\in \N}$ which is the simple random walk on $G_n$.

Our construction will rely on a sequence of i.i.d.~random variables $(V_i^n)_{i\in \N}$ supported on cut-points. In practice this sequence should be asymptotically close to uniform random variables. 

\begin{definition}\label{def_augmented} \hfff{gv} For $n\in \N$, we say that $(G_n,(V_i^n)_{i\in \N})_{n\in \N}$ is a sequence of random augmented graphs under the measure ${\bf P}_n$. \end{definition}

Fix $K\in \N$. If the graph $G_n(K)$ constructed from $(G_n,(V_i^n)_{i\in \N})_{n\in N}$ is asymptotically tree-like, then the construction of the skeleton of the previous section can be carried out. In order to lighten the notations, we will write \hfff{mass} $G^{(n,K)}$, $\T^{(n,K)}$, $\mathfrak{T}^{(n,K)}$, $V^*(\T^{(n,K)})$, $\phi^{(n,K)}$, $v^{(n,K)}$, $\reff^{(n,K)}$ and $\pi^{(n,K)}$ for $G_n(K)$, $\T^{(G_n,K)}$, $\mathfrak{T}^{(G_n,K)}$, $V^*(\T^{(G_n,K)})$, $\phi^{G_n(K)}$,  $v_{\T_{G_n(K)}}$, $\reff^{\T^{(G_n,K)}}$ and $\pi^{(G_n,K)}$ and we also introduce the rescaled quantities $d^{(n,K)}(\cdot,\cdot)$, $d^{(n,K)}_{\text{res}}(\cdot,\cdot)$ and $\mu^{(n,K)}$ for $n^{-1/2}d_{\T^{(G_n,K)}}(\cdot,\cdot)$, $n^{-1/2}d_{\T^{(G_n,K)}}^{\text{res}}(\cdot,\cdot)$ and $n^{-1}\mu^{(G_n,K)}$. All those quantities were defined in Section~\ref{sect_constr_tnk}.

We recall that $d^{(n,K)}(\cdot,\cdot)$ (resp.~$\phi^{(n,K)}$) is a distance on (resp.~embedding of) $\mathfrak{T}^{(n,K)}$ because of Remark~\ref{rem_not_depend}.


\subsubsection{Condition $(S)$}\label{sect_asympT_thin}

For any $x\in V^*(T_n(K))$, we call $K$-sausage of $x$ the set $\{y\in G_n,\ \pi^{(n,K)}(y)=x\}$.

  \begin{figure}
  \includegraphics[width=\linewidth]{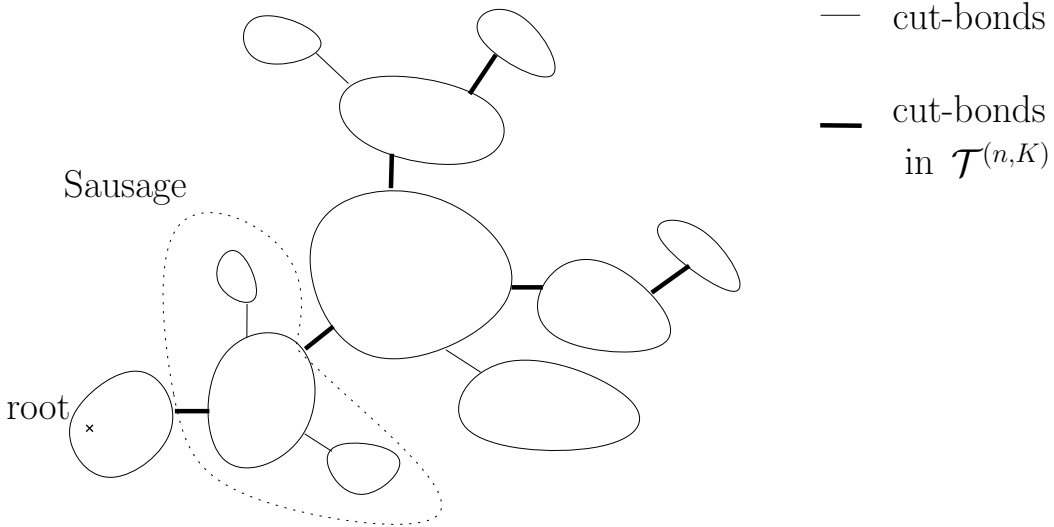}
  \caption{The sausage corresponding to a specific cut-bond.}
\end{figure}

 Note that a sausage is typically much large than the corresponding bubble because it also contains bubbles of $G_n$ which are not in $\T^{(n,K)}$ see Figure 4.
 We introduce\hfff{delta1}
\begin{equation}\label{eq:defofdelta}	
\Delta^{(n,K)}_{\Z^d}:=\max_{x\in V^*(\T^{(G_n,K)})} \text{Diam}_{\Z^d}(\{y\in V(G_n),\ \pi^{(n,K)}(y)=x\}),
\end{equation}
where $\text{Diam}_{\Z^d}(A):=\max\{d_{\Z^d}(x,y):x,y\in A\}$, for any $A\subset \Z^d$.
We also introduce\hfff{delta2}
\begin{equation}\label{eq:defofdeltaintr} 
\Delta^{(n,K)}_{G_n}:=\max_{x\in V^*(\T^{(G_n,K)})} \text{Diam}_{G_n}(\{y\in V(G_n),\ \pi^{(n,K)}(y)=x\}),
\end{equation}
where $\text{Diam}_{G_n}(A):=\min\{d_{G_n}(x,y):x,y\in A\}$ for any $A\subset \Z^d$, and $d_{G_n}$ is the graph distance in $G_n$. 
In our context, we want to extend the definition of asymptotically tree-like graphs (see Definition~\ref{def_thin}).
\begin{definition}\label{def_athin}
We say that a sequence of random augmented graphs $(G_n,(V_i^n)_{i\in \N})_{n\in N}$ verifies condition $(S)$ if  
\begin{enumerate}
\item for all $K\in \N$, we have
\[
\lim_{n\to \infty} {\bf P}_n[G^{(n,K)}\text{ is asymptotically tree-like}]=1.
\]
\item for all $\epsilon>0$, we have
\[ 
\lim_{K\to \infty} \sup_{n\in \N} {\bf P}_n\Bigl[n^{-1/4}\Delta^{(n,K)}_{\Z^d} >\epsilon\Bigr]=0
\]
and
\[ 
\lim_{K\to \infty} \sup_{n\in \N} {\bf P}_n\Bigl[n^{-1/2}\Delta^{(n,K)}_{G_n} >\epsilon\Bigr]=0.
\]

\end{enumerate}
\end{definition}

The first part of the definition states that it is unlikely for four larges branches to emanate from the same bubble. It should be noted that in the ISE this property is verified (see Section~\ref{sect_KISE}) The second part of the condition is related to the fact that there are no parts of $G_n$ that have macroscopic ($\Z^d$ and intrinsic) length but where there are no cut-points.

\begin{remark}\label{rem_abuse_tnk} If a sequence $(G_n,(V_i^n)_{i\in \N})_{n\in N}$ is asymptotically tree-like, then the notation $\T^{(n,K)}$ and $\mathfrak{T}^{(n,K)}$ make sense with probability going to $1$ since these objects can be constructed with the methods of Section~\ref{sect_constr_tnk}. The conditions which will involve $\T^{(n,K)}$ and $\mathfrak{T}^{(n,K)}$ (condition $(G)$ of definition~\ref{def_condG} and condition $(V)$ of definition~\ref{def_condV}) are all asymptotical in $n$. Hence, they are not affected by the fact that $\T^{(n,K)}$ is not defined on an event of small probability. We will thus allow ourselves a slight abuse of notation in the statement of these conditions. \end{remark}

\subsubsection{Key properties for the skeleton}\label{sect_prop_skel}

We will now explain why the skeleton of a graph is useful.

It is elementary to see that for all $K\in \N$, 
\begin{equation}\label{skeleton_good_approx}
\max_{\substack{x,y\in V^*(\T^{(n,K)}),\\  x\prec y, x\sim^*y}} \max_{\substack{z\in V(G_n(K)),\ \pi^{(n,K)}(z)=x \\ z'\in \phi^{(n,K)}([x,y])}} d_{\Z^d}(z,z') \leq 2\Delta^{(n,K)}_{\Z^d},
\end{equation}
which, for asymptotically tree-like graphs, means that the left side is non-macroscopic. Intuitively this means that every $K$-sausage is close to the corresponding edge in the $K$-skeleton of $G_n$. This proves the first interesting property of the skeleton: it is a good spatial approximation of asymptotically tree-like graphs.

Let us present a second interesting property of the skeleton. For this, we introduce $\lambda^{(n,K)}_{\text{res}}$  \hfff{lambdares} the Lebesgue measure on $(\T^{(n,K)},d^{(n,K)}_{\text{res}})$, normalized to have total mass $1$. This measure is well defined because $\T^{(n,K)}$ is  a graph spatial tree which can always be equipped with a Lebesgue probability measure associated with the resistance distance (see Section~
\ref{sect_graph_tree}). This naturally induces a Lebesgue probability measure on $\mathfrak{T}^{(n,K)}$ (see Section~\ref{sect_mathfrak} and Remark~\ref{rem_not_depend}).

Now, we can define $B^{(n,K)}$ be the Brownian motion on $(\mathfrak{T}^{(n,K)},d^{(n,K)}_{\text{res}},\lambda^{(n,K)}_{\text{res}})$ (which can be defined using Proposition~\ref{prop_def_process}). \hfff{bnk}

\begin{remark}\label{belowbnk}
Let us point out that the line segments of $\phi^{(n,K)}(\T^{(n,K)})$ may intersect once embedded into $\R^d$, however the Brownian motion $B^{(n,K)}$ is not impacted by those cycles. Indeed, because of the way the resistance metric is defined, this Brownian motion actually corresponds to a Brownian motion on a tree which has then been embedded.
\end{remark}

If $G^{(n,K)}$ is asymptotically tree-like, let $h^{(n,K)}_0:=0$ and define the successive times when $B^{(n,K)}$ moves to a different vertex of $V^*(\T^{(n,K)})$ \hfff{h}
\[
h^{(n,K)}_m:=\inf\left\{t\geq h^{(n,K)}_{m-1}:B^{(n,K)}_t\in V^*(\T^{(n,K)})\setminus \left\{ B^{(n,K)}_{h^{(n,K)}_{m-1}}\right\}\right\},
\]
for all $m\in\N$.
Furthermore set $A^{(n,K)}(0)=0$ and \hfff{ank}
\[
A^{(n,K)}(m):=\min \left\{l>A^{(n,K)}(m-1): X^{G_n}_l \in V^*(\T^{(n,K)})\setminus \left\{ X^{G_n}_{A^{(n,K)}(m-1)}\right\} \right\},
\] 
for all $m\in\N$,
where we recall that $(X^{G_n}_m)_{m\in \N}$ is the simple random walk on $G_n$.

The next lemma states that $X^{G_n}$ is a time change of a version of a $B^{(n,K)}$. Hence, it will be sufficient to understand the convergence of $B^{(n,K)}$ as $n$ goes to infinity and the time change linking $X^{G_n}$ and $B^{(n,K)}$ to prove a scaling limit result on $X_n$.
\begin{lemma}\label{lem:JtildeJ}
If $G^{(n,K)}$ is asymptotically tree-like, the processes $(B^{(n,K)}_{h^{(n,K)}_m})_{m\geq 0}$ and $(X^{G_n}_{A^{(n,K)}(m)})_{m\geq 0}$ on $V^*(\T^{(n,K)})$ have the same transition probabilities.
\end{lemma}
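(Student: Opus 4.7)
The plan is to show that both $(B^{(n,K)}_{h^{(n,K)}_m})_{m\ge 0}$ and $(X^{G_n}_{A^{(n,K)}(m)})_{m\ge 0}$ are Markov chains on $V^*(\T^{(n,K)})$ and that they possess identical one-step transition kernels. Markovianity follows in both cases from the strong Markov property of the underlying processes applied at the stopping times $h^{(n,K)}_m$ and $A^{(n,K)}(m)$, so the real content of the lemma is the equality of transition probabilities.

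The first step is to observe that, starting from any $x\in V^*(\T^{(n,K)})$, the next distinct $V^*$-visit is necessarily a $\sim^*$-neighbour of $x$. For $B^{(n,K)}$ this is because $\mathfrak{T}^{(n,K)}$ is a tree, so the tree path from $x$ to any other $V^*$-vertex must cross every intermediate $V^*$-vertex. For $X^{G_n}$ it uses that elements of $V^*$ are cut-points of $G_n$ (Definition~\ref{def_cut_point}): reaching a $V^*$-vertex situated beyond a $\sim^*$-neighbour $y'$ in the cut-point tree forces the walk to pass through $y'$ first.

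Fix $x\in V^*$ with $\sim^*$-neighbours $y_1,\ldots,y_m$. I would prove that both processes satisfy
\[
p(x,y_i)=\frac{1/\reff^{\T^{(n,K)}}(x,y_i)}{\sum_{j=1}^m 1/\reff^{\T^{(n,K)}}(x,y_j)},
\]
which by Lemma~\ref{res_equiv_tnk} equals the same expression with $\reff^{G_n}$ in place of $\reff^{\T^{(n,K)}}$. For $X^{G_n}$ I would decompose $p(x,y_i)=q_i/\sum_j q_j$, where $q_i:=P_x^{G_n}[T_{y_i}<T_x^+\wedge\min_{j\neq i}T_{y_j}]$ is the one-excursion probability of reaching $y_i$ before returning to $x$ or hitting any other $\sim^*$-neighbour. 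The cut-point property of $V^*$ forces a single excursion from $x$ to stay inside one bubble incident to $x$, making the constraint $T_{y_i}<\min_{j\neq i}T_{y_j}$ automatic for $y_j$'s lying in other bubbles; then in a triangle bubble $\pi^{G_n}(x)\,q_i=1/\reff^{G(K)}(x,y_i)$ by the very definition of $\reff^{G(K)}$, whereas in a simple-edge bubble $\pi^{G_n}(x)\,q_i=1/\reff^{G_n}(x,y_i)$ via the classical escape-probability formula $P_x[T_y<T_x^+]=1/(\pi(x)\reff(x,y))$ applied to the subnetwork separated by the cut-point. For $B^{(n,K)}$ I would compute the harmonic function $h(\sigma)=P_\sigma^{BM}[T_{y_i}<\min_{j\neq i}T_{y_j}]$ on $\mathfrak{T}^{(n,K)}$ equipped with the resistance metric: at each artificial vertex $v_{x,y,z}$ neighbouring $x$, the $Y$--$\Delta$ reduction (which inverts the very formulas defining $r_{xv}, r_{yv}, r_{zv}$) replaces the star by the equivalent triangle of resistances $\reff^{G(K)}(x,y),\reff^{G(K)}(x,z),\reff^{G(K)}(y,z)$, and the resulting reduced tree yields the same expression for $h(x)$.

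The main obstacle is handling the simultaneous presence of different bubble types (simple-edge and triangle) incident to $x$, and making precise that the cut-point decoupling of SRW contributions from different bubbles matches exactly the tree decoupling of BM contributions across different branches at $x$. Once one accepts the star--triangle invariance of hitting probabilities among the three triangle vertices, combined with the bespoke definition of $\reff^{G(K)}$ and Lemma~\ref{res_equiv_tnk}, the identification of the two transition kernels is direct, so the lemma reduces to a clean piece of electrical-network bookkeeping.
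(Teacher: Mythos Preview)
Your proposal is correct and follows essentially the same approach as the paper: both arguments reduce the claim to the fact that the transition probabilities of the two traces are determined in the same way by effective resistances, and then invoke Lemma~\ref{res_equiv_tnk} to identify $\reff^{G_n}$ with $\reff^{\T^{(n,K)}}$ on $V^*(\T^{(n,K)})$. The paper's proof is a one-line appeal to this principle, whereas you have unpacked the electrical-network bookkeeping (cut-point decoupling on the $G_n$ side, star--triangle reduction on the tree side) that makes the principle precise.
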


\begin{proof}
Considering that the transition probabilities, on vertices of $V^*(\T^{(n,K)})$, of the random walk $X^{G_n}$ and the Brownian motion $B^{(n,K)}$ are defined in the same manner in terms of the resistances on $G_n$ and $\T^{(n,K)}$ respectively. The result then follows from Lemma~\ref{res_equiv_tnk}.
\end{proof}

\subsection{Conditions for convergence towards $B^{ISE}$}\label{GRRR}

We are now going to introduce three other conditions that are the central hypotheses for our abstract theorem (Theorem~\ref{thm_abstract})

\subsubsection{Condition (G): asymptotic shape of the graph}\label{sect_condG}

Let us define a distance $D$ on graph spatial trees (defined in Section~\ref{sect_graph_tree}). Here, we follow Section 7 of~\cite{Croydon_arc}.

For $(T,d,\phi)$ a graph spatial tree, write $T^*$ for the rooted shape of the tree (the ordered graph tree without edge lengths) and $\abs{e_1},\ldots,\abs{e_l}$ for the lengths of the edges (according to the lexicographical order of $T^*$).

Take two graph spatial trees $(T,d,\psi)$ and $(T',d',\psi ')$. If $T^*\neq T'^*$, then  we set $d_1(T,T')=\infty$ and otherwise we set
\begin{equation}\label{eq:defofd1}
d_1(T,T'):=\sup_{i} \abs{\abs{e_i}-\abs{e_i'}}.
\end{equation}

Now if $T^*=T'^*$, we have a homeomorphism $\Upsilon_{T,T'}:\psi(T)\to \psi(T')$ such that  if $x\in \psi(T)$ is at a distance $\alpha \abs{e}$ along the edge $e$ is mapped to the point $x'\in \psi'(T')$ which is at distance $\alpha \abs{e'}$ along the corresponding edge $d'$. We then set 
\[
d_2(T,T'):=\sup_{x\in \psi(T)} d_{\R^d}(\psi(x), \psi'(\Upsilon_{T,T'}(x))).
\]
This yields a metric 
\begin{equation}\label{eq:defofD}
D((T,d,\psi),(T',d',\psi ')):=(d_1(T,T')+d_2(T,T'))\wedge 1
\end{equation}
 on graph spatial trees. The importance of this metric stems from the following result, see Lemma 7.1.~in~\cite{Croydon_arc} (recall that $\lambda_\T$ is the normalized Lebesgue measure on the tree).

\begin{proposition}\label{prop_metrictoBrownian}
Suppose $((T_n^{'},d_n^{'},\psi_n^{'}))_{n\in \N}$ is a sequence of graph spatial trees that converge with respect to the metric $D$ to a graph spatial tree $(T^{'},d^{'},\psi^{'})$. For each $n$, let $B^n$ be the Brownian motion on the spatial tree $(T_n^{'},d_n^{'},\lambda_{\T_n^{'}})$ and let $B$ be the Brownian motion on the spatial tree $(T^{'},d^{'},\lambda_{\T^{'}})$ (defined with Proposition~\ref{prop_def_process}). Then $(\psi_n(B^n))_{n\in \N}$ converges to $(\psi(B))$ in distribution for the topology of uniform convergence on compact sets.
\end{proposition}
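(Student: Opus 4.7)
I would break the argument into three steps, mirroring the structure of the metric $D$.

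\emph{Step 1: Reduction to a common shape.} Because $D$ is truncated by $1$, once $D((T_n',d_n',\psi_n'),(T',d',\psi')) < 1$ we must have $d_1(T_n',T')<\infty$, which forces the rooted shapes $T_n'^{*}$ and $T'^{*}$ to coincide. For all large $n$, call this common shape $T^{*}$, with edges $e_1,\dots,e_l$; then the edge lengths satisfy $|e_i^n|\to |e_i|$. The homeomorphism $\Upsilon_n:=\Upsilon_{T',T_n'}\colon T'\to T_n'$ is affine on each edge with slope $|e_i^n|/|e_i|\to 1$, and by the definition of $d_2$ we get $\sup_{x\in T'}|\psi_n'(\Upsilon_n(x))-\psi'(x)|=d_2(T_n',T')\to 0$.

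\emph{Step 2: Convergence of the Brownian motions on the fixed tree $T'$.} Let $\tilde B^n:=\Upsilon_n^{-1}(B^n)$; this is a continuous strong Markov process on $T'$. I would then show $\tilde B^n\to B$ in distribution in $C(\R_+,T')$ for uniform convergence on compacts. There are two natural routes. Route (a), hands-on: describe both processes via the pair (embedded continuous-time Markov chain on $V^{*}(T^{*})$, excursions off $V^{*}(T^{*})$). The transition probabilities and mean holding times of the embedded chain are determined, via series/parallel resistance computations, by the edge-length vector $(|e_i^n|)$, hence are continuous in this vector; the excursions on each edge are reflected Brownian excursions whose laws also depend continuously on $|e_i^n|$. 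Coupling the underlying driving Brownian increments and Walsh-type branching coin flips across $n$ then yields a pathwise approximation of $B$ by $\tilde B^n$ on any finite time interval. Route (b), abstract: the resistance forms $\mathcal{E}_n$ corresponding to the metric $d_n'\circ\Upsilon_n$ on $T'$ converge to $\mathcal{E}$ in the Mosco sense as the edge-length vector converges, and the normalized Lebesgue measures $\lambda_{T_n'}\circ\Upsilon_n^{-1}$ converge weakly to $\lambda_{T'}$; the general convergence theorem for processes associated with resistance forms (see \cite{Croydon_arc}, building on Kigami) then yields the desired weak convergence of the associated Hunt processes.

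\emph{Step 3: Transferring to the spatial embeddings.} Using Skorokhod's representation on the space $C(\R_+,T')$, realize $\tilde B^n\to B$ almost surely uniformly on compacts. For any $T>0$,
\[
\sup_{t\le T}\bigl|\psi_n'(B^n_t)-\psi'(B_t)\bigr|
= \sup_{t\le T}\bigl|\psi_n'(\Upsilon_n(\tilde B^n_t))-\psi'(B_t)\bigr|
\le d_2(T_n',T') \;+\; \sup_{t\le T}\bigl|\psi'(\tilde B^n_t)-\psi'(B_t)\bigr|,
\]
where the first term tends to zero by Step 1 and the second tends to zero by the continuity of $\psi'$ on the compact tree $T'$ together with Step 2. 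This gives the announced convergence.

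\emph{Main obstacle.} The real work is concentrated in Step 2. While the finiteness of the shape $T^{*}$ keeps matters tractable (in contrast to the CRT case), one must handle carefully the branching behavior of the Brownian motion at vertices of $V^{*}(T^{*})$ of degree $\geq 3$, where the process is of Walsh type: the key point is that the local clock and the branching probabilities depend continuously on the edge lengths, which can be established either by direct excursion-theoretic computations or by appealing to the Dirichlet/resistance-form convergence machinery. All other aspects—the reduction to a common shape and the transfer through $\psi_n'$—are soft consequences of the definition of $D$.
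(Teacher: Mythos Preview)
Your proposal is a reasonable sketch of a proof, but you should know that the paper does not prove this proposition at all: it is stated without proof, with the sentence ``see Lemma~7.1 in~\cite{Croydon_arc}'' immediately preceding it. The result is imported wholesale from Croydon's paper and used as a black box throughout.

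That said, your outline is essentially the architecture of Croydon's own argument: reduce to a common shape (his Lemma~7.1 begins by noting that for $n$ large the shapes agree), use the edge-length convergence to couple the Brownian motions on the abstract tree (Croydon does this via Dirichlet-form/resistance-form methods rather than the hands-on Walsh-type excursion decomposition you sketch in Route~(a), so your Route~(b) is closer to what actually happens), and then pass to the embeddings via the $d_2$ bound and continuity of $\psi'$. The one place where your Route~(a) would require real care is exactly where you flag it---the branching behavior at vertices of degree $\ge 3$---and this is precisely why the resistance-form approach is cleaner: it bypasses any explicit description of the process at branch points and reduces everything to convergence of quadratic forms and measures, which here is immediate from $|e_i^n|\to|e_i|$.
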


This distance allows us to define our first condition (relevant definitions can be found at Definition~\ref{def_augmented}, Definition~\ref{def_athin}, Remark~\ref{rem_abuse_tnk} and Section~\ref{sect_KISE}).

\begin{definition}\label{def_condG} Condition (G): We say that a sequence of asymptotically tree-like random augmented graphs $(G_n,(V_i^n)_{i\in \N})_{n\in \N}$ satisfies condition $(G)_{\sigma_d,\sigma_{\phi}}$ if there exists $\sigma_d, \sigma_{\phi}>0$ such that for all $K\in \N$, the sequence of graph spatial trees $((\mathfrak{T}^{(n,K)},d^{(n,K)}(\cdot,\cdot),n^{-1/4}\phi^{(n,K)}))_{n\in \N}$ converges weakly to a $K$-ISE $(\mathfrak{T}^{(K)},\sigma_{d} d_{\mathfrak{T}^{(K)}},\sigma_{\phi} \sqrt{\sigma_{d} } \phi_{\mathfrak{T}^{(K)}})$ in the topology induced by $D$.
\end{definition}

This condition states that for all $K\in \N$, the $K$-skeleton of our sequence of random graphs resembles the $K$-skeleton of the ISE.
The constant $\sigma_d$ is the ratio between the graph distance in $G_n$ and the canonical distance in the CRT. The constant $\sigma_{\phi}$ is the diffusivity of the embedding per unit of graph-distance. 

\begin{remark} It is very important to stress that in condition (G), the topology induced by $D$ imposes a condition on the convergence of the distances of only a finite number of points (between $1+K$ and $1+2K$). This is where the distinction between $\mathfrak{T}^{(n,K)}$ and $\T^{(n,K)}$ makes a big difference. \end{remark}

\begin{remark} The requirement that $T$ and $T'$ have the same shape seems to be very strong, but we believe that in practice this condition will always be verified. We recall that we only require that the skeleton trees, as in Figure 3, (which a finite number of points between $1+K$ and $1+2K$) have the same shape when $n$ is large. \end{remark}

\subsubsection{Condition (V): Uniform distribution of the volume}

 Recalling that $\lambda^{(n,K)}_{\text{res}}$ is the natural Lebesgue probability measure on the graph spatial tree $\T^{(n,K)}$ (see beginning of Section~\ref{sect_prop_skel}), that $\overrightarrow{\T^{(n,K)}_{x}}$ are the descendants of $x$ (including $x$ itself) in $\T^{(n,K)}$ \hfff{arrowt} and the definition of $\mu^{(n,K)}$ in Section~\ref{sect_mu}. Our second condition states that

\begin{definition}\label{def_condV} Condition (V): We say that a sequence of asymptotically tree like random augmented graphs  $(G_n,(V_i^n)_{i\in \N})_{n\in N}$ satisfies condition $(V)_{\nu}$ if  there exists $\nu>0$ such that for $\epsilon>0$
\[
\limsup_{K\to \infty}\limsup_{n\to \infty} {\bf P}_n \Bigl[\sup_{x\in \T^{(n,K)}} \abs{\nu \lambda^{(n,K)}\Bigl(\overrightarrow{\T^{(n,K)}_{x}}\Bigr)-\mu^{(n,K)}\Bigl(\overrightarrow{\T^{(n,K)}_{x}}\Bigr)}>\epsilon\Bigr] =0.
\]
\end{definition}

Intuitively this condition says that the volume of the graph is asymptotically uniformly distributed over the graph and that the total volume is of order $\nu n$. 

\subsubsection{Condition (R): the linearity of the resistance}

\begin{definition} Condition (R): We say that a sequence of random augmented graphs $(G_n,(V_i^n)_{i\in \N})_{n\in N}$ verifying condition $(S)$  satisfies condition $(R)_{\rho}$ if there exists $\rho>0$ such that for all $\epsilon>0$ and for all $i\in \N$
\[
\lim_{n\to \infty} {\bf P}_n\Bigl[ \abs{\frac{R^{G_n}(0,V_i^n)}{d^{G_n}(0,V_i^n)}-\rho}>\epsilon\Bigr]=0.
\]
\end{definition}

Intuitively this condition says that the resistance distance is asymptotically almost proportional to the graph distance.

\subsection{Sketch of proof}\label{sect_abstract_thm}

Let us discuss informally how the proof of the abstract theorem works.

Firstly, we can notice that, since the graphs verify condition $(S)$, the graph $G_n$ is uniformly close to its $K$-skeleton $\T^{(n,K)}$ for $K$ large by~\eqref{skeleton_good_approx}. Hence, in order to understand the simple random walk on $(X^{G_n}_m)_{m\geq 0}$ it is enough to look at the random walk at times when it visits $V^*(\T^{(n,K)})$, which we write $(X^{G_n}_{A^{(n,K)}(m)})_{m\geq 0}$.

Then, we can use Lemma~\ref{lem:JtildeJ} to see that $(X^{G_n}_{A^{(n,K)}(m)})_{m\geq 0}$ has the same law as $(B^{(n,K)}_{h^{(n,K)}_m})_{m\geq 0}$. Using this, we can couple $X^{G_n}$ and $B^{(n,K)}$ in such a way that $B^{(n,K)}_t$ is close to $n^{-1/2}{X}^{G_n}_{A^{(n,K)}(m(t))}$ for a certain function $m(t)$.

Now, $B^{(n,K)}$ is the Brownian motion on $(\T^{(n,K)},d^{(n,K)}_{\text{res}},\lambda^{(n,K)}_{\text{res}})$. By condition $(R)$, we have that $d^{(n,K)}_{\text{res}} \approx \rho d^{(n,K)}$. Hence, by Proposition~\ref{prop_metrictoBrownian} and condition $(G)$, it can be shown that $\phi_{\T_n}(B^{(n,K)}_t)$ becomes close, for $n$ large, to $\sigma_{\phi} \sqrt{\sigma_d}B^{K-ISE}_{(\rho\sigma_d)^{-1}t}$. That, together with Proposition~\ref{prop_approx_bisek}, we get that, for $K$ and $n$ large, $\phi_{\T_n}(B^{(n,K)}_t)$ is close to $\sigma_{\phi} \sqrt{\sigma_d}B^{ISE}_{(\rho\sigma_d)^{-1}t}$

The only remaining task is to estimate the time-change $A^{(n,K)}(m(t))$ between $X^{G_n}$ and $B^{(n,K)}$. If we prove that $n^{-3/2}A^{(n,K)}(m(t))\approx  \nu t$, then a simple computation shows that the theorem follows. This time-change accounts for the time spent by the random walk  crossing edges of $E^*(\T^{(n,K)})$ until the coupled Brownian motion $B^{(n,K)}$ has moved for a time $t$.

For each edge $e\in E^*(\T^{(n,K)})$, by time $t$ the random walk $X^{G_n}$ will cross $l^{(n,K)}_t(e)$ times the bubble corresponding to the edge $e$ where $l^{(n,K)}_{t}(e)$ is the edge local time of  $B_t^{(n,K)}$. Since $l^{(n,K)}_t(e)$ is very large, we would like to say that the time spent crossing $e$ should be close to its expected value. Because of the tree structure of $\T^{(n,K)}$, a crossing in one direction is following by a crossing in the other, and this allows us to apply the commute time formula (see~\cite{chandra1996electrical}) which gives us an exact value for the expected time of a back and forth crossing. This means that
\[
A^{(n,K)}(m(t))\approx \sum_{e\in E(\T^{(n,K)})}l^{(n,K)}_{t}(e)\reff^n(e)\mu^{(G_n,K)}(e),
\]
where one recognizes the quantities appearing in the commute time formule (the factor $2$ is cancelled by the fact that the number of back and forth crossings of $e$ is only, roughly, half of the local time at $e$).

After showing that for $n$ large, 
\begin{enumerate}
\item $n^{-1/2}l^{(n,K)}_{t}(e)\reff^n(e)$ is close to the local time $\tilde{L}^{(K)}_t$ of $\sigma_{\phi} \sqrt{\sigma_d}B^{ISE}_{(\rho\sigma_d)^{-1}t}$, 
\item $ n^{-1} \mu^{(G_n,K)}$ is close to $\nu \lambda^{(K)}_{\frak{T}}$ by $(V)_{\nu}$,
\end{enumerate}
we can show that 
\[
n^{-3/2}A^{(n,K)}(m(t))\approx  \nu \int_{\frak{T}^{(K)}}  \tilde{L}_t^{(K)}(x) \lambda^{(K)}_{\frak{T}}(dx) =\nu t,
\]
where the last equality is a simple consequence of the definition of local time, see Definition~\ref{def:localtime}.

\section{Proof of the abstract theorem}

This first section is devoted to the proof of the main theorem. The proof will rely on resistance and local time estimates that are deferred to later sections in this paper (see Section~\ref{sect_resistance_estimate} and Section~\ref{s:AnKislinear} respectively).

\subsection{Convergence of the image of $B^{(n,K)}$ towards $B^{K-ISE}$}

By Assumption $(G)_{\sigma_d,\sigma_{\phi}}$ and the Skorohod representation Theorem, we can assume that, for all $K\in\N$, the augmented random graphs $(G_n, (V^n_i)_{i = 1,\cdots,K})_{n\in\N}$ and $\mathfrak{T}^{(K)}$ are defined in a common probability space $(\Omega^{(K)},\mathcal{F}^{(K)},{\mathbf{P}}^{(K)})$ and that  
\begin{equation}\label{eq:couplinggeometry}
(\mathfrak{T}^{(n,K)},d^{(n,K)},n^{-1/4}\phi_{\T_n})\stackrel{n\to\infty}{\to}(\mathfrak{T}^{(K)}, \sigma_d d_{\mathfrak{T}^{(K)}},\sigma_{\phi}\sqrt{\sigma_d}\phi_{\mathfrak{T}^{(K)}}) \qquad {\mathbf{P}}^{(K)}\text{-a.s.},
\end{equation}
in the topology induced by $D$, where $D$ is the natural topology on graph spatial trees, (see \eqref{eq:defofD}).

Recall that $B^{(n,K)}$ is the Brownian motion in $(\frak{T}^{(n,K)},d^{(n,K)}_{\text{res}},\lambda^{(n,K)}_{\text{res}})$ as in Proposition \ref{prop_def_process}. Our main objective in this subsection is to show that $n^{-1/4}\phi_{\T_n}(B^{(n,K)})$ scales to $B^{K-ISE}$ as $n\to\infty$.  More precisely
\begin{lemma} \label{lem:BnktoBk}
 Under \eqref{eq:couplinggeometry} and condition $(R)_{\rho}$, for all $K\in\N$
\begin{equation}
(n^{-1/4}\phi_{\T_n}(B^{(n,K)}_t))_{t\geq0}\stackrel{n\to\infty}{\to} ( \sigma_{\phi} \sqrt{\sigma_d}B^{K-ISE}_{(\rho\sigma_d)^{-1}t})_{t\geq0},
\end{equation}
in distribution in the space $C(\R_+,\R^d)$ endowed with the topology of uniform convergence over compact subsets, $\mathbf{P}^{(K)}$-almost surely.
\end{lemma}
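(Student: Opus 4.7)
The plan is to combine three ingredients: the Skorohod coupling \eqref{eq:couplinggeometry} supplied by condition $(G)_{\sigma_d,\sigma_\phi}$, an upgrade of condition $(R)_\rho$ to an edgewise proportionality on the skeleton, and Proposition~\ref{prop_metrictoBrownian} together with a Brownian-time-rescaling identity. I would first show that the graph spatial tree equipped with the resistance metric converges in $D$ to the same limit tree but with its metric multiplied by $\rho$, and then read off the Brownian motion limit.

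The main step is to show that, for each edge $(u,v)$ of $\mathfrak{T}^{(n,K)}$,
\[
d^{(n,K)}_{\text{res}}(u,v) - \rho\, d^{(n,K)}(u,v) \to 0
\]
in probability as $n\to\infty$. Since $\mathfrak{T}^{(n,K)}$ has at most $2K+1$ vertices, I would argue this edge by edge. When both endpoints lie in $V^*(\mathfrak{T}^{(n,K)})$, Lemma~\ref{res_equiv_tnk} identifies $n^{1/2}d^{(n,K)}_{\text{res}}(u,v)$ with $R^{G_n}(u,v)$; using that cut-points enforce series-additivity for both $R^{G_n}$ and $d^{G_n}$, one can rewrite this as a difference of the quantities $R^{G_n}(0,V^n_i)$ and $d^{G_n}(0,V^n_i)$ at appropriate selected cut-points, to which condition $(R)_\rho$ applies and gives the proportionality up to $o(n^{1/2})$ error. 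When one endpoint is an artificial branching vertex $v_{x,y,z}$, I would combine the star-triangle definitions of both $d^{(n,K)}$ and $d^{(n,K)}_{\text{res}}$ with the trivial inequality $R^{G_n}\leq d^{G_n}$ and the bubble-diameter bound $d^{G_n}(x,y)=o(n^{1/2})$ (in probability) supplied by condition $(S)$, so that the short star arms contribute $o(1)$ to both metrics, and the macroscopic part reduces to the previous case.

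Combining this edgewise estimate with the $D$-convergence of $(\mathfrak{T}^{(n,K)},d^{(n,K)},n^{-1/4}\phi_{\T_n})$ from \eqref{eq:couplinggeometry} will yield, after a further Skorohod extraction,
\[
(\mathfrak{T}^{(n,K)},d^{(n,K)}_{\text{res}},n^{-1/4}\phi_{\T_n}) \to (\mathfrak{T}^{(K)},\rho\sigma_d\, d_{\mathfrak{T}^{(K)}},\sigma_\phi\sqrt{\sigma_d}\,\phi_{\mathfrak{T}^{(K)}})
\]
in $D$, almost surely. Proposition~\ref{prop_metrictoBrownian} then gives the distributional convergence, uniformly on compact sets, of $n^{-1/4}\phi_{\T_n}(B^{(n,K)})$ to $\sigma_\phi\sqrt{\sigma_d}\,\phi_{\mathfrak{T}^{(K)}}(\widetilde{B}^{(K)})$, where $\widetilde{B}^{(K)}$ is the Brownian motion on $(\mathfrak{T}^{(K)},\rho\sigma_d\, d_{\mathfrak{T}^{(K)}},\lambda_{\mathfrak{T}^{(K)}})$. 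Rescaling the metric on a real tree by a constant $c>0$ multiplies the associated resistance form by $1/c$ and hence slows the Brownian motion by a time factor $c$, so $\widetilde{B}^{(K)}_t = B^{(K)}_{(\rho\sigma_d)^{-1}t}$ with $B^{(K)}$ the $K$-CRT Brownian motion. The isometry statement of Proposition~\ref{propdef_BISE}, applied to the $K$-ISE, identifies $\phi_{\mathfrak{T}^{(K)}}(B^{(K)})$ with $B^{K-\text{ISE}}$, yielding the claimed limit.

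The hard part will be the edgewise step: condition $(R)_\rho$ is given only for pairs of the form $(0,V^n_i)$, and I must propagate it to every edge of the skeleton, in particular to the short edges attached to star-triangle branching vertices whose endpoints are not selected cut-points. The interplay of Lemma~\ref{res_equiv_tnk}, cut-point additivity, the trivial resistance bound, and condition $(S)$ is precisely what is needed, and this is the content of the resistance estimates deferred to Section~\ref{sect_resistance_estimate}.
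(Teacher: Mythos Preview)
Your overall architecture matches the paper's: establish $D$-convergence of the resistance-metric skeleton (the paper isolates this as Lemma~\ref{lem:graphspatialresistancetreeconvergence}), invoke Proposition~\ref{prop_metrictoBrownian}, and then identify the Brownian motion on $(\mathfrak{T}^{(K)},\rho\sigma_d\, d_{\mathfrak{T}^{(K)}},\lambda^{(K)}_{\mathfrak{T}})$ with the time-rescaled $B^{(K)}$. The paper does the last identification by checking property~(2) above Proposition~\ref{prop_def_process}; your resistance-form scaling argument is an acceptable shortcut.

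Where your sketch diverges from the paper is in the mechanism for the edgewise step, and there the specific reasoning you give does not work:
\begin{itemize}
\item Condition~$(S)$ bounds $n^{-1/2}\Delta^{(n,K)}_{G_n}$ only in the limit $K\to\infty$, uniformly in $n$; it says nothing for fixed $K$ as $n\to\infty$. So the claimed ``bubble-diameter bound $d^{G_n}(x,y)=o(n^{1/2})$ supplied by condition~$(S)$'' is unavailable at the level of Lemma~\ref{lem:BnktoBk}, which is a fixed-$K$ statement.
\item Cut-point additivity expresses $R^{G_n}(\text{root}^*,V_i^n)$ and $d^{G_n}(\text{root}^*,V_i^n)$ as path sums, but this does not isolate a single edge of $\mathfrak{T}^{(n,K)}$ ending at a branching vertex: recovering $d^{(n,K)}(\text{root}^*,b)$ from root-to-leaf data would also require $d^{(n,K)}(V_i^n,V_j^n)$, and condition~$(R)_\rho$ gives no control on $R^{G_n}(V_i^n,V_j^n)$.
\end{itemize}

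What Section~\ref{sect_resistance_estimate} actually does is a $\delta$-dense argument: for fixed $K$ and $\delta>0$, use condition~$(G)$ (Lemma~\ref{epsnet}) to find $K'\geq K$ such that $(V_i^n)_{i\le K'}$ is $\delta$-dense in $\mathcal{T}^{(n,K)}$; apply condition~$(R)_\rho$ with a union bound to the $K'$ selected points (Lemma~\ref{dnetresis}); and then use the net density together with the trivial bound $d^{(n,K)}_{\text{res}}\le d^{(n,K)}$ to propagate the proportionality uniformly over $\mathcal{T}^{(n,K)}$ (Lemma~\ref{res_2a}). Condition~$(S)$ enters only through Lemma~\ref{bubble_zero}, to pass from $0$ to $\text{root}^*$. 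With Lemma~\ref{res_2a} in hand, edge lengths follow from $|e|_{\text{res}}=d^{(n,K)}_{\text{res}}(\text{root}^*,v)-d^{(n,K)}_{\text{res}}(\text{root}^*,u)$ for $u\prec v$, and the rest of your plan goes through verbatim.
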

The proof of the Lemma above relies on the following result.
 \begin{lemma}\label{lem:graphspatialresistancetreeconvergence} Under \eqref{eq:couplinggeometry} and condition $(R)_{\rho}$, for all $K\in\N$
   \[(\frak{T}^{(n,K)},d_{\text{res}}^{(n,K)},n^{-1/4}\phi^{(n,K)})\stackrel{D}{\to}(\frak{T}^{(K)},\sigma_d \rho d_{\frak{T}^{(K)}}, \sigma_{\phi} \sqrt{\sigma_d}\phi^{\frak{T}^{(K)}}),\]
  as $n\to\infty$, $\mathbf{P}^{(K)}$-almost surely.
 \end{lemma}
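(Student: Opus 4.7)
The metric $D$ compares graph spatial trees through three quantities: agreement of shape, edge-length differences (the $d_1$ piece), and the uniform $\R^d$-discrepancy of the embeddings under the fraction-matching homeomorphism $\Upsilon$ (the $d_2$ piece). Since the underlying rooted labelled tree $\mathfrak{T}^{(n,K)}$ and the embedding $\phi^{(n,K)}$ are unchanged between the two triples being compared, and since $\Upsilon$ matches fractional positions along corresponding edges and is therefore independent of the absolute edge metric, shape stability and the $d_2$-convergence are inherited immediately from \eqref{eq:couplinggeometry}. Only the $d_1$ piece under the new metric $d^{(n,K)}_{\text{res}}$ requires a genuinely new argument.

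By additivity of both $d^{(n,K)}$ and $d^{(n,K)}_{\text{res}}$ along tree paths in $\T^{(n,K)}$, the convergence of edge lengths reduces to proving, for every vertex $w$ of the common eventual shape,
\[
n^{-1/2}\, R^{\T^{(n,K)}}(\text{root}^*, w_n) \longrightarrow \rho\, \sigma_d\, d_{\mathfrak{T}^{(K)}}(\text{root}, w),
\]
where $w_n$ denotes the corresponding vertex in $\mathfrak{T}^{(n,K)}$. For the marked leaves $w_n = V_i^n$, I would combine Lemma~\ref{res_equiv_tnk} (identifying $R^{\T^{(n,K)}}(\text{root}^*, V_i^n)$ with $R^{G_n}(\text{root}^*, V_i^n)$) with condition $(R)_\rho$; the bound $R^{G_n}(0,\text{root}^*) \le d^{G_n}(0,\text{root}^*) \le \Delta^{(n,K)}_{G_n}$, controlled by $(S)$, absorbs the discrepancy from replacing $\text{root}^*$ by $0$, and $(G)_{\sigma_d,\sigma_\phi}$ provides the convergence $n^{-1/2}\, d^{G_n}(0,V_i^n) \to \sigma_d\, d_{\mathfrak{T}^{(K)}}(\text{root}, U_i)$.

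The substantive step is at the branching vertices. Each branching $w_n$ of $\mathfrak{T}^{(n,K)}$ is realized as $V_i^n \wedge V_j^n$ for some $i,j \le K$, and when it lies in $V^*(\T^{(n,K)})$ cut-point additivity in $G_n$ yields
\[
2\, R^{G_n}(\text{root}^*, w_n) = R^{G_n}(\text{root}^*, V_i^n) + R^{G_n}(\text{root}^*, V_j^n) - R^{G_n}(V_i^n, V_j^n),
\]
together with the same identity for $d^{G_n}$. Condition $(R)_\rho$ covers the first two terms, so the only missing ingredient is $R^{G_n}(V_i^n, V_j^n)/d^{G_n}(V_i^n, V_j^n) \to \rho$. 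I would obtain this by exploiting the density of the auxiliary i.i.d.\ sequence $(V_l^n)_{l \in \mathbb{N}}$ together with the monotonicity $R^{G_n} \le d^{G_n}$: for any $\delta > 0$, for $K'$ large enough some $V_k^n$ with $k \le K'$ lies within $\delta n^{1/2}$ of $w_n$ in $d^{G_n}$ (a uniform-coverage statement reflecting the same approximate uniformity underlying $(V)_\nu$), and by monotonicity is automatically within $\delta n^{1/2}$ in $R^{G_n}$ as well; $(R)_\rho$ applied to $V_k^n$ then transfers the ratio to $w_n$ up to arbitrarily small error. The artificial star vertices $v_{x,y,z}$ introduced by the star--triangle transformation carry only resistance of order $\Delta^{(n,K)}_{G_n}$, which is subleading by $(S)$. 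The main obstacle is precisely this extension of $(R)_\rho$ from the anchor pairs $(0, V_i^n)$ to the branching vertices; the density-plus-monotonicity approximation is where the real technical content sits, and once it is in place the remaining assembly from vertex-to-root resistances to edge lengths is purely algebraic.
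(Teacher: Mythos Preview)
Your density-plus-monotonicity argument for the edge lengths is essentially the paper's approach: it is exactly what is packaged as Lemma~\ref{res_2a} (proved in Section~\ref{sect_resistance_estimate} via Lemma~\ref{epsnet} and Lemma~\ref{dnetresis}), and the paper simply invokes that lemma. One small mis-attribution: the $\delta$-density of $(V_l^n)_{l\le K'}$ in $\T^{(n,K)}$ is not a consequence of $(V)_\nu$ but of $(G)_{\sigma_d,\sigma_\phi}$, via comparison with the corresponding statement on the CRT (Lemma~\ref{epsnet0}).

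There is, however, a genuine gap in your treatment of the $d_2$ piece. Your claim that ``$\Upsilon$ matches fractional positions along corresponding edges and is therefore independent of the absolute edge metric'' is not correct here. An edge of the reduced tree $\mathfrak{T}^{(n,K)}$ is a concatenation of many sub-edges of the full skeleton $\T^{(n,K)}$, and the embedding $\phi^{(n,K)}$ is defined piecewise-linearly along those sub-edges (Step~3 of Section~\ref{sect_constr_tnk}), not linearly along the whole $\mathfrak{T}^{(n,K)}$-edge. Consequently, the fractional position of a fixed point $x\in\T^{(n,K)}$ along its $\mathfrak{T}^{(n,K)}$-edge depends on whether one measures with $d^{(n,K)}$ or with $d^{(n,K)}_{\text{res}}$: if the resistance is not distributed proportionally to graph distance along the sub-edges, the two fractions differ, and so the fraction-matching homeomorphisms $\Upsilon_{n,K}$ (built from $d^{(n,K)}$) and $\hat{\Upsilon}_{n,K}$ (built from $d^{(n,K)}_{\text{res}}$) are different maps. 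The paper handles this by proving, again from Lemma~\ref{res_2a}, that $\sup_{x}d_{\frak{T}^{(K)}}(\Upsilon_{n,K}(x),\hat{\Upsilon}_{n,K}(x))\to 0$, after which the $d_2$-convergence follows from \eqref{eq:couplinggeometry} and the continuity of $\phi_{\frak{T}^{(K)}}$. The good news is that the missing ingredient is precisely the uniform statement you already sketched for $d_1$; once you upgrade your argument to $\sup_{x\in\T^{(n,K)}}|d^{(n,K)}_{\text{res}}(\text{root}^*,x)-\rho d^{(n,K)}(\text{root}^*,x)|\to 0$, both $d_1$ and $d_2$ follow.
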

 \begin{remark}
 By Lemma \ref{lem:graphspatialresistancetreeconvergence}, we know that for $n$ large enough, we have that $\frak{T}^{(K)}$ and $\frak{T}^{(n,K)}$ are composed of a finite number of edges  $(e(i))_{i=1,\dots,s}$ and $(e^n(i))_{i,\dots,s}$ respectively, where $s$ is the number of edges of $\frak{T}^{(K)}$ and $\frak{T}^{(n,K)}$. Then by a simple union bound, we have that, for all $K\in\N$ and $\epsilon>0$,
 \begin{equation}\label{eq:srt2}
 \lim_{n\to\infty}\mathbf{P}^{(K)}\left[\max_{i=1,\dots,s}\abs{\frac{d^{(n,K)}_{\text{res}}(e^n(i))}{ d_{\frak{T}}(e(i))}-\sigma_d\rho}\geq\epsilon\right]=0.
 \end{equation}
 In other words, the random variable $\max_{i=1,\dots,s}\abs{\frac{d^{(n,K)}_{\text{res}}(e^n(i))}{ d_{\frak{T}}(e(i))}-\sigma_d\rho}$ converges to $0$ in $\mathbf P^{(K)}$-probability. Therefore, by the Skorohod representation theorem we can assume that 
 \begin{equation}
 \max_{i=1,\dots,s}\abs{\frac{d^{(n,K)}_{\text{res}}(e^n(i))}{ d_{\frak{T}}(e(i))}-\sigma_d\rho}\stackrel{n\to\infty}{\to}0, \quad \mathbf{P}^{(K)}\text{-almost surely}.
 \end{equation}
 \end{remark}
 \begin{proof}[Proof of Lemma \ref{lem:graphspatialresistancetreeconvergence}]
  First, note that by \eqref{eq:couplinggeometry}, $\frak{T}^{(n,K)}$ is homeomorphic to $\frak{T}^{(K)}$ for $n$ large enough. Moreover, the homeomorphism can be chosen to be lexicographical-order preserving and it induces a correspondence between the edges $(e_i^{n,K})_{i}$ of $\frak{T}^{(n,K)}$ and the edges $(e_i)_{i}$ of $\frak{T}^{(K)}$ as in Section \ref{sect_condG}. By \eqref{eq:couplinggeometry} and Lemma \ref{res_2a} we get 
\[
  \lim_{n\to\infty}\sup_i\abs{\abs{e_i^{n,K}}_{d^{(n,K)}_{\text{res}}}-\rho\sigma_d\abs{e_i}_{d_{\frak{T}^{(K)}}}}=0,\quad\mathbf{P}^{(K)}\text{-a.s},
\]
  where $\abs{\cdot}_{d^{(n,K)}_{\text{res}}},\abs{\cdot}_{d_{\frak{T}^{(K)}}}$ denote the length with according  to $d^{(n,K)}_{\text{res}},d_{\frak{T}^{(K)}}$ respectively.  
  That is, $(\frak{T}^{(n,K)},d^{(n,K)}_{\text{res}})$ converges under $d_1$ to $(\frak{T}^{(K)},\sigma_d\rho d_{\frak{T}^{(K)}})$, $\mathbf{P}^{(K)}$- almost surely, where $d_1$ is as in \eqref{eq:defofd1}. Let $\hat{\Upsilon}_{n,K}$ be the lexicographical-order preserving homeomorphism between $(\frak{T}^{(n,K)},d^{(n,K)}_{\text{res}})$ and $(\frak{T}^{(K)},\rho\sigma_dd_{\frak{T}^{(K)}})$ which is linear along the edges.
  It remains to show that
  \begin{equation}\label{eq:Gfordres}
  \lim_{n\to\infty }\sup_{x\in\frak{T}^{(n,K)}}d_{\R^d}(n^{-1/4}\phi_{\T_n}(x),\sigma_{\phi}\sqrt{\sigma_d}\phi(\hat{\Upsilon}_{n,K}(x)))=0,\quad\mathbf{P}^{(K)}\text{-a.s.}
  \end{equation} 
  Let  $\Upsilon_{n,K}$ the lexicographical-order preserving homeomorphism between $(\frak{T}^{(n,K)},d^{(n,K)})$ and $(\frak{T}^{(K)},\sigma_dd_{\frak{T}^{(K)}})$ which is linear along the edges. By  \eqref{eq:couplinggeometry} we have
  \begin{equation}\label{eq:Gford}
  \lim_{n\to\infty }\sup_{x\in\frak{T}^{(n,K)}}d_{\R^d}(n^{-1/4}\phi_{\T_n}(x),\sigma_{\phi}\sqrt{\sigma_d}\phi(\Upsilon_{n,K}(x)))=0,\quad\mathbf{P}^{(K)}\text{-a.s.}
  \end{equation} 
  It can be deduced from Lemma  \ref{res_2a} and \eqref{eq:couplinggeometry} that \begin{equation}
  \lim_{n\to\infty} \sup_{x\in\frak{T}^{(n,K)}} d_{\frak{T}^{(K)}}(\Upsilon_{n,K}(x),\hat{\Upsilon}_{n,K}(x))=0.
  \end{equation}
  That, plus \eqref{eq:Gford} and the continuity of $\phi_{\frak{T}^{(n,K)}}$ yields \eqref{eq:Gfordres}.
 
 \end{proof}
  \begin{proof}[Proof of Lemma \ref{lem:BnktoBk}]
  Recall from Section \ref{sect_KISE} that $B^{(K)}$ is the Brownian motion in $(\frak{T}^{(K)},d_{\frak{T}},\lambda^{(K)}_{\frak{T}})$, where $\lambda^{(K)}_{\frak{T}}$ is the Lebesgue measure according to $d_{\frak{T}}$ normalized to become a probability measure.
  Let $\bar{B}^{(K)}_t:=B^{(K)}_{(\rho\sigma_d)^{-1}t}$. We will start showing that 
  \begin{equation}\label{eq:barBequivalentdefinition}
  \bar{B}^{(K)}\text{ is the Brownian motion in }(\frak{T}^{(K)}, \rho \sigma_d d_{\frak{T}}, \lambda^{(K)}_{\frak{T}}).
  \end{equation} For this is enough to check the properties given above Proposition \ref{prop_def_process}.
  Among them the only one which is not trivial is property $(2)$, which we will check now.
 
  Let $E_x,\bar{E}_x$ denote expectation with respect to the law of $B^{(K)}, \bar{B}^{(K)}$ started at $x$. Let $T_x:=\inf\{s:B^{(K)}_s=x\}$ 
  and $\bar{T}_x:=\inf\{s:\bar{B}^{(K)}_s=x\}$.
  It follows that, for any Borelian $A$ of $\frak{T}^{(K)}$
  \begin{align*}
  &\bar{E}_x[\text{Leb}\{s\leq \bar{T}_y: \bar{B}^{(K)}_s\in A \}]
  =E_x [\rho \sigma_d\text{Leb} \{ s\leq T_y: B^{(K)}_s\in A\}]\\
  = &\rho \sigma_d\int_A 2 d_{\frak{T}}(b^{\frak{T}}(x,y,z),y) \lambda^{(K)}_{\frak{T}}(dz)=\int_A 2 \rho \sigma_dd_{\frak{T}}(b^{\frak{T}}(x,y,z),y) \lambda^{(K)}_{\frak{T}}(dz)
  \end{align*} 
  where the first equality follows from the definition of $\bar{B}^{(K)}$ and the second equality follows from the fact that $B^{(K)}$ is the Brownian motion in $(\frak{T}^{(K)},d_{\frak{T}},\lambda^{(K)}_{\frak{T}})$. This shows property $(2)$ in the definition of Brownian motion. Therefore \eqref{eq:barBequivalentdefinition} holds.

 Therefore, by \eqref{eq:barBequivalentdefinition}, we can deduce from  Proposition \ref{prop_metrictoBrownian} and Lemma \ref{lem:graphspatialresistancetreeconvergence} that
 \begin{equation}\mathbf
 (\phi_{\T_n}(B^{(n,K)}_t))_{t\geq0}\text{ converges to }(\sqrt{\sigma_d}\sigma_{\phi}\phi(\bar{B}^{(K)}_{t}))_{t\geq0}.
 \end{equation}
   Hence, the lemma follows from the definition of $\bar{B}^{(K)}$ and the fact that $B^{ISE-K}_t=\phi(B^{(K)}_t)$ (which holds by definition).
   \end{proof}

\subsection{Coupling between $X^{G_n}$ and $B^{(n,K)}$} \label{sect:couplingrwbm}

In this section we will introduce a coupling between $X^{G_n}$ and $B^{(n,K)}$ (the latter being defined above Remark~\ref{belowbnk}).

 The construction is performed in two steps. The first one is to use Lemma \ref{lem:JtildeJ} and $B^{(n,K)}$ to construct a process $J^{(n,K)}$ taking values in $V^*(\T^{(n,K)})$, which has the law of trace of $X^{G_n}$ on $V^*(\T^{(n,K)})$.  Given $B^{(n,K)}$, let 
\begin{equation}\label{eq:defofJnK}
J^{(n,K)}_m:=B^{(n,K)}_{h^{(n,K)}_m} \qquad \text{for all }m\in\N,
\end{equation}
where
$h^{(n,K)}_0:=0$ and 
\begin{equation}\label{eq:defofhnk}
h^{(n,K)}_{m+1}:=\min\left\{t>h^{(n,K)}_m:B^{(n,K)}_t\in V(\T^{(n,K)})-\left\{B^{(n,K)}_{h^{(n,K)}_m}\right\} \right\}.
\end{equation}
By Lemma \ref{lem:JtildeJ}, $(J^{(n,K)}_m)_{m\in\N}$ has the same transition probabilities as $(B^{(n,K)}_{h^{(n,K)}_m})_{m\in\N}$.
Before continuing with the construction of the coupling, let us state here the following result which explains the relationship between $J^{(n,K)}$ and $B^{(n,K)}$.
Let 
\begin{equation}\label{eq:defofmt}
m(t):=\min\{m\in\N: h^{(n,K)}_m\geq t\}.
\end{equation}
\begin{lemma}\label{lem:jnktobnk}
 If the random augmented graphs $(G_n,(V^i_n)_{i\in\N})_{n\in\N}$ verifies condition $(S)$, then
\[\lim_{K\to\infty}\limsup_{n\to\infty} \mathbf{P}^{(K)}\left[\sup_{t\in\R} d_{\R^d}(n^{-1/4}\phi_{\T_n}(J^{(n,K)}_{m(t)}),n^{-1/4}\phi_{\T_n}(B^{(n,K)}_t))\geq \epsilon \right] =0,\]
for all $\epsilon>0$,
\end{lemma}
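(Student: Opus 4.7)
The plan is to bound $d_{\R^d}(\phi_{\T_n}(B^{(n,K)}_t), \phi_{\T_n}(J^{(n,K)}_{m(t)}))$ uniformly in $t$ by the spatial diameter of a single excursion of $B^{(n,K)}$ between consecutive hits of $V^*(\T^{(n,K)})$. I would then show this diameter is $O(\Delta^{(n,K)}_{\Z^d})$ thanks to \eqref{skeleton_good_approx}, so that condition $(S)$ finishes the job after the $n^{-1/4}$ rescaling.

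First I would observe that by definition of $h^{(n,K)}_{m}$ in \eqref{eq:defofhnk}, on the interval $[h^{(n,K)}_{m(t)-1}, h^{(n,K)}_{m(t)}]$ containing $t$, the motion $B^{(n,K)}$ is confined to $\mathcal{S}$, the connected component of $\T^{(n,K)} \setminus (V^*(\T^{(n,K)}) \setminus \{J^{(n,K)}_{m(t)-1}\})$ containing $J^{(n,K)}_{m(t)-1}$. This component is a ``star'' around the center vertex $J^{(n,K)}_{m(t)-1}$: it consists of that vertex together with portions of all tree edges emanating from it towards its $\sim^*$-neighbors in $V^*(\T^{(n,K)})$ (possibly passing through a branch point $v_{x,y,z}$ coming from a triangle of $G^{(n,K)}$). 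Both $B^{(n,K)}_t$ and $J^{(n,K)}_{m(t)}=B^{(n,K)}_{h^{(n,K)}_{m(t)}}$ lie in the closure $\overline{\mathcal{S}}$.

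Next I would bound the spatial diameter of $\phi_{\T_n}(\overline{\mathcal{S}})$. For any $y\in V^*(\T^{(n,K)})$ with $y\sim^* J^{(n,K)}_{m(t)-1}$, apply \eqref{skeleton_good_approx} to the tree-edge joining $y$ to $J^{(n,K)}_{m(t)-1}$, taking the sausage reference point to be whichever of the two is the ancestor; since both endpoints lie on $\phi^{(n,K)}([x,y])$ and each is in its own sausage, this yields $d_{\Z^d}(\phi_{\T_n}(y),\phi_{\T_n}(J^{(n,K)}_{m(t)-1})) \leq C\Delta^{(n,K)}_{\Z^d}$. For branch vertices $v_{x,y,z}$ adjacent to the center, $\phi_{\T_n}(v_{x,y,z})$ is the barycenter of three such nearby $V^*$-vertices and satisfies the same bound. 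Since $\phi_{\T_n}$ is linear along each edge of $\T^{(n,K)}$, every point of $\phi_{\T_n}(\overline{\mathcal{S}})$ is within $C'\Delta^{(n,K)}_{\Z^d}$ of $\phi_{\T_n}(J^{(n,K)}_{m(t)-1})$ in $\ell^1$-distance and hence in Euclidean distance.

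Combining the two steps and rescaling by $n^{-1/4}$ gives, on the event that $G^{(n,K)}$ is asymptotically tree-like,
\[
\sup_{t\geq 0} d_{\R^d}\!\left(n^{-1/4}\phi_{\T_n}(J^{(n,K)}_{m(t)}), n^{-1/4}\phi_{\T_n}(B^{(n,K)}_t)\right) \leq 2 C' n^{-1/4}\Delta^{(n,K)}_{\Z^d}.
\]
By condition $(S)$ (Definition~\ref{def_athin}), the probability of the complement tends to $0$ as $n\to\infty$ (part (1)), while $\lim_{K\to\infty}\sup_n \mathbf{P}_n[n^{-1/4}\Delta^{(n,K)}_{\Z^d}>\epsilon/(2C')]=0$ (part (2)), and the claim follows. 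The main obstacle is the verification of the spatial diameter bound for the star $\mathcal{S}$ itself: one must keep track of the star-triangle transformation and argue that the artificial branch vertices $v_{x,y,z}$, which have no preimage in $G_n$, are nevertheless controlled by $\Delta^{(n,K)}_{\Z^d}$ through the barycenter formula combined with \eqref{skeleton_good_approx}.
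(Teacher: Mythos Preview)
Your proof is correct and follows the same strategy as the paper: bound the spatial displacement during a single excursion of $B^{(n,K)}$ between consecutive hits of $V^*(\T^{(n,K)})$ by a constant multiple of $\Delta^{(n,K)}_{\Z^d}$, then invoke condition $(S)$. The paper's argument is terser --- it simply asserts that $\phi_{\T_n}(J^{(n,K)}_{m(t)})$ and $\phi_{\T_n}(B^{(n,K)}_t)$ lie in the same or contiguous bubbles and bounds the distance by $2\Delta^{(n,K)}_{\Z^d}$ --- whereas you carry out the diameter bound for the star $\overline{\mathcal{S}}$ explicitly and in particular handle the artificial branch vertices $v_{x,y,z}$ via the barycenter formula, a detail the paper leaves implicit.
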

\begin{proof}[Proof of Lemma \ref{lem:jnktobnk}]
It follows from the coupling between $J^{(n,K)}$ and $B^{(n,K)}$ that, for all $t\geq0$, $\phi_{\T_n}(J^{(n,K)}_{m(t)})$ and $\phi_{\T_n}(B^{(n,K)}_t)$ are either in the same bubble or in two different, but contiguous bubbles. Therefore 
\[d_{\R^d}(\phi_{\T_n}(J^{(n,K)}_{m(t)}),\phi_{\T_n}(B^{(n,K)}_t))\leq 2 \Delta^{(n,K)}_{\Z^d}.\]
Hence, the result follows immediately from the assumption that the sequence of graphs verifies condition $(S)$.
\end{proof}
 The second step of the coupling consists in completing the path of $\phi_{\T_n}(J^{(n,K)})$ between successive visits of vertices of $\T^{(n,K)}$ to obtain a version of $X^{G_n}$. Starting from $X^{G_n}_0=0$, construct $X^{G_n}$ up to the first hitting time of $V^\ast(\T^{(n,K)})$ by sampling a simple random walk on $G_n$ conditioned on $X^{G_n}_{A^{(n,K)}(0)}=\phi_{\T_n}(J_0^{n,K})=\phi_{\T_n}(\text{root}^\ast)$, where $A^{(n,K)}(0):=\min\{X^{G_n}_l\in \phi_{\T_n}(V^\ast(\T^{(n,K)}))\}$ and otherwise independent of $B^{(n,K)}$.
 It is clear that we can repeat this procedure in such a way that
\begin{equation}\label{eq:coupling}
X^{G_n}_{A^{(n,K)}(m)}=\phi_{\T_n}(J^{(n,K)}_m)\qquad \text{for all }m\text{ such that }J^{(n,K)}_m\in V^\ast(\T^{(n,K)}),
\end{equation}
where
\[A^{(n,K)}(m+1):=\min \left\{l>0: X^{G_n}_l \in \phi_{\T_n}(V^\ast(\T^{(n,K)}))-\left\{ Y^{n,K}_{A^{(n,K)}(m)}\right\} \right\}.\]  
It follows from the construction that $X^{G_n}$ has the law of a simple random walk on $G_n$. The reader should note that the actual definition of $X^{G_n}$ depends on $K$, but we are mainly interested in its law, which is independent of $K$, so we will not emphasize the $K$ dependence in the notation.
Let 
\begin{equation}\label{eq:defofTnK}
S^{(n,K)}(m):=\min\{k\geq 0: A^{(n,K)}(k)\geq m\} \quad m\in\N,
\end{equation}
be the (generalized) inverse of $A^{(n,K)}$. We extend the domain of $S^{(n,K)}$ to $\mathbb{R}_+$ by linear interpolation.
 Under the coupling constructed above, and using the same reasoning as in the proof of Lemma \ref{lem:jnktobnk} we obtain:
\begin{lemma}\label{lem:XtoJ}  If the random augmented graphs $(G_n,(V^i_n)_{i\in\N})_{n\in\N}$ verify condition $(S)$, then for all $\epsilon>0$
\[\lim_{K\to\infty}\limsup_{n\to\infty}\mathbf{P}^{(K)}\left[\sup_{m\in \N}d_{\R^d}( n^{-1/4}X^{G_n}_{m}, n^{-1/4}\phi_{\T_n}(J^{(n,K)}_{S^{(n,K)}(m)})) \geq\epsilon\right]=0.\]
\end{lemma}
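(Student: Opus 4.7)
The strategy mirrors the proof of Lemma~\ref{lem:jnktobnk}: for every integer $m$ I will show that $X^{G_n}_m$ and $\phi_{\T_n}(J^{(n,K)}_{S^{(n,K)}(m)})$ lie in nearby $K$-sausages, so that their $\Z^d$-distance is controlled by a constant multiple of $\Delta^{(n,K)}_{\Z^d}$, and I then invoke the second part of condition $(S)$ to conclude.

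First, I unpack the notation. For $m\in\N$, set $k:=S^{(n,K)}(m)$, which by \eqref{eq:defofTnK} and the integer-valuedness of $A^{(n,K)}$ is an integer satisfying $A^{(n,K)}(k-1)<m\leq A^{(n,K)}(k)$ (using the convention $A^{(n,K)}(-1)=-1$ to cover $k=0$). The coupling \eqref{eq:coupling} gives
\[
\phi_{\T_n}(J^{(n,K)}_{S^{(n,K)}(m)})=\phi_{\T_n}(J^{(n,K)}_k)=X^{G_n}_{A^{(n,K)}(k)},
\]
so it suffices to bound $d_{\R^d}(X^{G_n}_m, X^{G_n}_{A^{(n,K)}(k)})$. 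Writing $v:=X^{G_n}_{A^{(n,K)}(k-1)}$ and $v':=X^{G_n}_{A^{(n,K)}(k)}$, both of which belong to $V^*(\T^{(n,K)})$ and are $\sim^*$-neighbors in $\T^{(n,K)}$ by the very definition of $A^{(n,K)}$, the trajectory on $[A^{(n,K)}(k-1), A^{(n,K)}(k)]$ avoids $V^*(\T^{(n,K)})\setminus\{v\}$ until its final step; the cut-point structure therefore forces $X^{G_n}_m$ to remain in the $K$-sausage of $v$. Applying \eqref{skeleton_good_approx} to the pair $v\sim^* v'$ and the definition \eqref{eq:defofdelta} of $\Delta^{(n,K)}_{\Z^d}$,
\[
d_{\R^d}(X^{G_n}_m, X^{G_n}_{A^{(n,K)}(k)})\leq d_{\Z^d}(X^{G_n}_m, v)+d_{\Z^d}(v, v')\leq 4\,\Delta^{(n,K)}_{\Z^d}.
\]

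Taking the supremum over $m\in\N$ and rescaling yields the deterministic bound $\sup_{m\in\N} d_{\R^d}\bigl(n^{-1/4}X^{G_n}_m,\, n^{-1/4}\phi_{\T_n}(J^{(n,K)}_{S^{(n,K)}(m)})\bigr)\leq 4\,n^{-1/4}\Delta^{(n,K)}_{\Z^d}$, and the second part of condition $(S)$ then delivers the desired vanishing of the probability. I do not anticipate any serious obstacle: this is essentially the same sausage-confinement argument as in Lemma~\ref{lem:jnktobnk}, phrased in terms of the random-walk time rather than the Brownian time. The only minor care needed is the case $k=0$ (i.e., $m\leq A^{(n,K)}(0)$), which is handled identically with $v=\text{root}^*=J^{(n,K)}_0$, the walk remaining in the sausage of $\text{root}^*$ until $V^*(\T^{(n,K)})\setminus\{\text{root}^*\}$ is first reached.
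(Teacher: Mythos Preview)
Your proposal is correct and follows essentially the same approach as the paper, which simply states that the lemma is obtained ``using the same reasoning as in the proof of Lemma~\ref{lem:jnktobnk}.'' One minor imprecision: the walk between $A^{(n,K)}(k-1)$ and $A^{(n,K)}(k)$ need not stay in the sausage of $v$ alone---it may also enter the sausage of the parent of $v$ (if it steps across the cut-bond below $v$)---but this only changes the constant in front of $\Delta^{(n,K)}_{\Z^d}$ and does not affect the argument; the paper's own proof of Lemma~\ref{lem:jnktobnk} handles this by saying the two points are ``in the same bubble or in two different, but contiguous bubbles.''
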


\subsection{Asymptotic linearity of the time change}\label{sect_timechange}

The following important lemma shows that $t\mapsto A^{(n,K)}(m(t))$ can be rescaled to converge to a linear function.
\begin{proposition}\label{l:Ankislinear} Consider a sequence of random augmented graphs $(G_n,(V_i^n)_{i\in \N})_{n\in \N}$ which verifies condition $(S)$  and  condition $(G)_{\sigma_d,\sigma_{\phi}}$, $(V)_\nu$ and $(R)_{\rho}$. Then for each $t\geq0,\epsilon>0$,
\[\lim_{K\to\infty}\limsup_{n\to\infty}\mathbf{P}^{(K)}\left[|n^{-3/2}A^{(n,K)}(m(t))-\nu  t|\geq\epsilon\right]=0.\]
\end{proposition}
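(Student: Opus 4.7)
The plan is to decompose $A^{(n,K)}(m(t))$ edge by edge using the tree structure of $\T^{(n,K)}$, apply the commute time formula to estimate the expected time $X^{G_n}$ spends in each sausage, and then identify the limit via conditions $(R)_\rho$ and $(V)_\nu$ together with Lemma~\ref{lem:BnktoBk} on the scaling of $B^{(n,K)}$. This is essentially the strategy outlined in Section~\ref{sect_abstract_thm}.

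First, using the coupling of Section~\ref{sect:couplingrwbm} and the tree-like structure provided by condition $(S)$, each crossing of an edge $e \in E^*(\T^{(n,K)})$ by $B^{(n,K)}$ corresponds to an excursion of the simple random walk $X^{G_n}$ through the sausage associated with $e$. Because $G^{(n,K)}$ is asymptotically tree-like, consecutive crossings in opposite directions pair up into round-trips, whose expected durations are given by the commute time formula of~\cite{chandra1996electrical}: each round-trip across $e$ takes in expectation $2R^{(n,K)}(e)\cdot\mu^{(G_n,K)}(\text{sausage of }e)$ steps (triangles introduced by the star--triangle transformation are handled analogously via their three branches). Letting $N^{(n,K)}_t(e)$ denote the number of crossings of $e$ by $B^{(n,K)}$ by time $t$, a second-moment argument (each edge being crossed of order $n^{1/2}$ times, with only $O(K)$ edges in total) should yield
\[
A^{(n,K)}(m(t)) \approx \sum_{e \in E^*(\T^{(n,K)})} N^{(n,K)}_t(e)\cdot R^{(n,K)}(e)\cdot\mu^{(G_n,K)}(\text{sausage of } e),
\]
with the factor $2$ of the commute formula being absorbed by the fact that the number of round-trips is roughly half the number of one-way crossings.

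Next, $N^{(n,K)}_t(e)\cdot R^{(n,K)}(e)$ should be identified, up to lower-order terms, with the edge local time $l^{(n,K)}_t(e)$ of $B^{(n,K)}$ at $e$. By Lemmas~\ref{lem:BnktoBk} and~\ref{lem:graphspatialresistancetreeconvergence} (where condition $(R)_\rho$ pins down the limiting constant $\rho\sigma_d$ of the resistance metric), combined with continuity of local times under the convergence of graph spatial trees, the rescaled quantity $n^{-1/2}l^{(n,K)}_t(e)$ converges to the edge local time $\tilde L^{(K)}_t(e)$ of the scaled limiting Brownian motion $\sigma_\phi\sqrt{\sigma_d}B^{K\text{-ISE}}_{(\rho\sigma_d)^{-1}t}$. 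Simultaneously, condition $(V)_\nu$ yields that $n^{-1}\mu^{(G_n,K)}(\text{sausage of } e) \to \nu\lambda^{(K)}_{\mathfrak{T}}(e)$ uniformly in $e$. Combining these ingredients,
\[
n^{-3/2}A^{(n,K)}(m(t)) \;\longrightarrow\; \nu \int_{\mathfrak{T}^{(K)}}\tilde L^{(K)}_t(x)\,\lambda^{(K)}_{\mathfrak{T}}(dx) \;=\; \nu t,
\]
where the last equality is the defining property of local time (Definition~\ref{def:localtime}) applied to $A=\mathfrak{T}^{(K)}$. Since the limit $\nu t$ is independent of $K$, the iterated limit in the proposition evaluates to $\nu t$, as required.

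The main obstacle is the concentration step that replaces actual sausage traversal times by their expected values. Because each sausage is typically visited of order $n^{1/2}$ times and individual traversal times have second moments controlled by resistance-type quantities, one expects the single-edge fluctuation error to be of order $(n^{1/2})^{1/2}\cdot(\text{traversal variance})^{1/2}$; to survive the scaling by $n^{-3/2}$ and a union bound over the $O(K)$ edges of $\T^{(n,K)}$, one needs second-moment bounds on sausage excursion times that are uniform in the sausage geometry. Providing such bounds, together with tracking the exact normalisation of the edge local time $l^{(n,K)}_t(e)$ through the star--triangle transformation, is the technical heart of Sections~\ref{sect_resistance_estimate} and~\ref{s:AnKislinear} deferred to later in the paper.
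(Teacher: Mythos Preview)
Your proposal follows the heuristic outline of Section~\ref{sect_abstract_thm} and identifies the correct three-step structure: replace actual excursion times by their expectations, invoke the commute time formula, then pass to the limit using condition $(V)_\nu$ and the convergence of local times. This is exactly the decomposition the paper makes precise via Lemmas~\ref{lem:aproximationofank}, \ref{lem:commutetime}, and \ref{lem:hatankislinear}.

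There is, however, a genuine gap in your understanding of the combinatorics. You write that $\T^{(n,K)}$ has ``only $O(K)$ edges in total'' and that a union bound over these edges suffices. This is false: it is $\mathfrak{T}^{(n,K)}$ that has between $K+1$ and $2K+1$ vertices, while $\T^{(n,K)}$ contains \emph{all} cut-points on the paths from the root to $V_1^n,\ldots,V_K^n$, and the sum defining $\tilde A^{(n,K)}$ runs over the (typically order-$n^{1/2}$) edges of $E^*(\T^{(n,K)})$. A naive union bound over that many edges does not survive, and your proposed concentration scheme (``single-edge fluctuation of order $(n^{1/2})^{1/2}$ times a second moment, then union bound over $O(K)$ edges'') breaks down. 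The paper handles this in two ways you do not anticipate: for the concentration step (Lemma~\ref{lem:aproximationofank}) it uses a martingale argument whose variance is controlled globally via Corollary~\ref{variance_bound} and, crucially, condition $(S)$ through $\Delta^{(n,K)}_{G_n}$; for the local-time step it proves \emph{uniform} convergence of $n^{-1/2}\reff^{G_n}(e)l^{(n,K)}_t(e)$ over all edges of $\T^{(n,K)}$ (Proposition~\ref{l:convergenceoflocaltimes}), which is the content of Section~\ref{section_cvg_local_time} and requires a chaining/maximal-inequality argument (Lemma~\ref{lem:modulusofcontinuity}) rather than a union bound.

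A secondary point: your claim that ``$N^{(n,K)}_t(e)\cdot R^{(n,K)}(e)$ should be identified, up to lower-order terms, with the edge local time $l^{(n,K)}_t(e)$'' conflates two objects --- $N^{(n,K)}_t(e)$ already \emph{is} the discrete edge local time $l^{(n,K)}_t(e)$, and what must be shown is that $n^{-1/2}\reff^{G_n}(e)l^{(n,K)}_t(e)$ is close to the \emph{continuous} local time $L^{(n,K)}_t$ of $B^{(n,K)}$; this is not automatic from Lemma~\ref{lem:BnktoBk} alone and is precisely the content of Proposition~\ref{l:convergenceoflocaltimes}.
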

The proof of this lemma is one of the main tasks of  this paper and is postponed to Section~\ref{s:AnKislinear}.
For the next pages we will assume Lemma \ref{l:Ankislinear} and deduce the main theorem (Theorem \ref{thm_abstract}).
Recall the definition of $T^{(n,K)}$ from \eqref{eq:defofTnK}.
\begin{corollary}\label{cor:TnK}
 Assume the hypotheses of Lemma \ref{l:Ankislinear}. For all $\eta,T>0$, let
\[\frak{B}_{n,K}^c:=\{m(\nu^{-1}t-\eta)\leq S^{(n,K)}(tn^{3/2})\leq m(\nu^{-1}t+\eta)\text{ for all }t\leq T\}.\]
Then
\[\lim_{K\to\infty}\limsup_{n\to\infty}\mathbf{P}^{(K)}\left[\frak{B}_{n,K}\right]=0.
\]
\end{corollary}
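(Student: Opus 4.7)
The plan is to deduce the sandwich bound on $S^{(n,K)}(tn^{3/2})$ directly by inverting the pointwise linearity supplied by Proposition~\ref{l:Ankislinear}. The key structural observation is that the three maps involved---$k\mapsto A^{(n,K)}(k)$ (non-decreasing by construction), $r\mapsto S^{(n,K)}(r)$ (its generalized inverse, also non-decreasing), and $t\mapsto m(t)$ (non-decreasing by \eqref{eq:defofmt})---are all monotone. This monotonicity is what will allow us to upgrade the pointwise control of Proposition~\ref{l:Ankislinear} at finitely many grid times to uniform control over $[0,T]$.

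Concretely, I would fix a partition $0=t_0<t_1<\cdots<t_N=T$ with mesh size at most $\nu\eta/2$, and apply Proposition~\ref{l:Ankislinear} at each of the shifted times $\nu^{-1}t_i \pm \eta/2$ (with the convention $m(s)=0$ for $s\leq 0$) with tolerance $\epsilon=\nu\eta/4$. A union bound over the $2(N+1)$ events then produces an event $\mathcal{E}_{n,K}$ with $\lim_K\limsup_n\mathbf{P}^{(K)}[\mathcal{E}_{n,K}^c]=0$ on which, simultaneously for every $i$,
\[
A^{(n,K)}\bigl(m(\nu^{-1}t_i-\eta/2)\bigr) < t_i n^{3/2} < A^{(n,K)}\bigl(m(\nu^{-1}t_i+\eta/2)\bigr).
\]
Working on $\mathcal{E}_{n,K}$ and using the defining equivalence $S^{(n,K)}(r)>k\iff A^{(n,K)}(k)<r$, these inequalities immediately translate into $m(\nu^{-1}t_i-\eta/2)<S^{(n,K)}(t_in^{3/2})\leq m(\nu^{-1}t_i+\eta/2)$ for every $i$.

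For an arbitrary $t\in[0,T]$ I pick the unique interval $[t_{i-1},t_i]$ containing it. Monotonicity of $S^{(n,K)}$ gives $S^{(n,K)}(t_{i-1}n^{3/2})\leq S^{(n,K)}(tn^{3/2})\leq S^{(n,K)}(t_in^{3/2})$, and monotonicity of $m$ together with the mesh conditions $\nu^{-1}(t-t_{i-1})\leq \eta/2$ and $\nu^{-1}(t_i-t)\leq\eta/2$ upgrades the sandwich to $m(\nu^{-1}t-\eta)\leq S^{(n,K)}(tn^{3/2})\leq m(\nu^{-1}t+\eta)$, which is exactly the event $\frak{B}_{n,K}^c$. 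This yields the desired conclusion.

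All the genuine probabilistic work is already packaged inside Proposition~\ref{l:Ankislinear}; the argument above is a purely deterministic monotone-interpolation trick, and the only quantitative care one needs is to choose the mesh and tolerance to absorb both the discretization error and the fluctuation error inside $\eta$, which is why the constants $\nu\eta/2$ and $\eta/2$ appear as they do. For that reason I do not expect any genuine obstacle in this step beyond checking that these constants fit together consistently with the hypotheses of Proposition~\ref{l:Ankislinear}.
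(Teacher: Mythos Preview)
Your proof is correct and follows essentially the same route as the paper. The only cosmetic difference is the order of operations: the paper first upgrades Proposition~\ref{l:Ankislinear} to a uniform-in-$t$ statement (using that monotone functions converging pointwise to a continuous limit converge uniformly on compacts, which is exactly your discretization trick in disguise) and then inverts once, whereas you invert at finitely many grid points and interpolate afterward.
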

\begin{proof}[Proof of Corollary \ref{cor:TnK}]
We have that $\frak{B}_{n,K}=\frak{B}_{n,K}^1\cup\frak{B}_{n,K}^2$, where
\[\frak{B}_{n,K}^1:=\{S^{(n,K)}(tn^{3/2})>m(\nu^{-1}t +\eta)\text{ for some } t\leq T\}\]
 and
  \[\frak{B}_{n,K}^2:=\{S^{(n,K)}(tn^{3/2})<m(\nu^{-1}t -\eta)\text{ for some }t\leq T\}.\]
It follows directly from the definition of $S^{(n,K)}$ that on $\frak{B}^1_{n,K}$
 \[
\min\{k: A^{(n,K)}(k)\geq tn^{3/2}\}>m(\nu^{-1}t+\eta),
\]
for some $t\leq T$.
Therefore
\[
A^{(n,K)}(m(\nu^{-1}t+\eta))<n^{3/2}t\]
for some $t\leq T$.
That is
\begin{equation}\label{eq:coipos}
n^{-3/2} A^{(n,K)}(m(\nu^{-1}t+\eta))<t.
\end{equation}
for some $t\leq T$.
On the other hand, 
since, for all $n,K\in\N$, $t\mapsto A^{n,K}(m(t))$ is a monotone function and $t\mapsto \nu \rho t$ is continuous, it follows from Lemma \ref{l:Ankislinear} that
\begin{equation}\label{eq:AnKislinearsup}
\lim_{K\to\infty}\limsup_{n\to\infty}\mathbf{P}^{(K)}\left[\sup_{t\leq R}\abs{n^{-3/2}A^{(n,K)}(m(t))-\nu  t}\geq\epsilon\right]=0,
\end{equation}
for all $\epsilon,R>0$. Choosing $\epsilon<\nu\eta$ in the display above, it follows that the probability of the event in display \eqref{eq:coipos} goes to $0$ as $n\to\infty$ and then $K\to\infty$. Therefore
\[\lim_{K\to\infty}\lim_{n\to\infty}\mathbf{P}^{(K)}[\frak{B}_{n,K}^1]=0.\]
Analogous arguments give that
\[\lim_{K\to\infty}\lim_{n\to\infty}\mathbf{P}^{(K)}[\frak{B}_{n,K}^2]=0.\]
That finishes the proof
\end{proof}
\subsection{Proof of the Abstract Theorem}



\begin{proof}[Proof of Theorem \ref{thm_abstract}]
Let $T\geq0$ be fixed.
By Proposition \ref{prop_approx_bisek} we have that
\begin{equation}\label{eq:bisetobisek}
\mathbf{P}^{(K)}\left[\sup_{t\leq T}d_{\R^d}(B^{ISE}_{t},B^{K-ISE}_{t})\geq \epsilon\right]\to 0 \quad \text{as }K\to\infty,
\end{equation}
 for all $\epsilon>0$.
Also, by Lemma \ref{lem:BnktoBk}
\begin{equation}\label{bnktobisek}
\mathbf{P}^{(K)}\left[\sup_{t\leq T}d_{\R^d}(n^{-1/4}\phi_{\T_n}(B^{(n,K)}_{t}),\sigma_{\phi} \sqrt{\sigma_d} B^{K-ISE}_{(\rho\sigma_d)^{-1}t}))\geq\epsilon\right]\to0,
\end{equation}
as $n\to\infty$, for all $\epsilon>0$.
On the other hand, by Lemma \ref{lem:jnktobnk}
\begin{equation}\label{eq:bnktojnk}
\lim_{K\to\infty}\limsup_{n\to\infty}\mathbf{P}^{(K)}\left[\sup_{t\leq T} d_{\R^d}( n^{-1/4}\phi_{\T_n}(B^{(n,K)}_t),n^{-1/4}\phi_{\T_n}(J^{(n,K)}_{m(t)}))\geq \epsilon \right] =0,
\end{equation}
for all $\epsilon>0$.
Now, since by construction $\phi_{\T_n}(J^{(n,K)}_{m(t)})=X^{G_n}_{A^{(n,K)}(m(t))}$, by the three displays above we get that
\begin{equation}\label{eq:preconvergence}
\lim_{K\to\infty}\limsup_{n\to\infty} \mathbf{P}^{(K)}\left[\sup_{t\leq T} d_{\R^d}(n^{-1/4}X^{G_n}_{A^{(n,K)}(m(t))},\sigma_{\phi}\sqrt{\sigma_d}B^{ISE}_{(\rho\sigma_d)^{-1}t}) \geq \epsilon\right]=0,
\end{equation}
for all $\epsilon>0$.

Corollary \ref{cor:TnK}, together with the continuity of $B^{ISE}$ and \eqref{eq:preconvergence}
imply that
\begin{equation}\label{eq:precvg}
\lim_{K\to\infty}\limsup_{n\to\infty}\mathbf{P}^{(K)}\left[\sup_{t\leq T} d_{\Z^d}(n^{-1/4}X^{G_n}_{A^{(n,K)}(S^{(n,K)}(n^{3/2}t))},\sigma_{\phi}\sqrt{\sigma_d}B^{ISE}_{(\rho\nu\sigma_d)^{-1}t} )\geq \epsilon\right]=0,
\end{equation}
for all $\epsilon>0$.

On the other hand, by \eqref{eq:coupling} and Lemma \ref{lem:XtoJ} we have that
\begin{equation}\label{eq:XtoJ}
\lim_{K\to\infty}\limsup_{n\to\infty}\mathbf{P}^{(K)}\left[ \sup_{t\geq 0}d_{\Z^d}(n^{-1/4}X^{G_n}_{n^{3/2}t},n^{-1/4} X^{G_n}_{A^{(n,K)}(S^{(n,K)}(n^{3/2}t))} ) \geq \epsilon\right]=0,
\end{equation} 
for all $\epsilon >0$.
This, together with \eqref{eq:precvg}, proves the theorem. 

 \end{proof}

\section{Strengthened resistance estimates}\label{sect_resistance_estimate}

Our goal in this section is to turn the point to point resistance estimate from condition $(R)$ into resistances estimates which are essentially uniform over $\T^{(n,K)}$.

\subsection{Creating $\delta$-dense sets by spanning uniform points}

Fix $\delta>0$ and $K,K'\in \N$ and an augmented graph $(G,(V_i)_{i\in \N})$ which is asymptotically tree-like. Recalling the definition of $\pi^{(G,K)}$ in Section~\ref{sect_mu}, we say that $V_0,\ldots V_{K'}$ is $\delta$-dense in $\T^{(G,K)}$ if 
\begin{enumerate}
\item the set $\{\pi^{(G,K)}(V_{l})\text{  with }l \leq K'\}$ has at least one point on every edge of $\mathfrak{T}^{(G,K)}$.
\item If $x$, $y \in \{\pi^{(G,K)}(V_{l})\text{  with } l \leq K'\}$  are neighbours (in  the sense that on the unique  path between them there is no other point in $\{\pi^{(G,K)}(V_{l}) \text{ with }l \leq K'\}$) then there exists $i_x,i_y \leq K'$ such that $\pi^{(G,K)}(V_{i_y})=x$, $\pi^{(G,K)}(V_{i_y})=y$ and $d_{\T^{(G,K')}}(V_{i_x},V_{i_y})\leq \delta$.
\end{enumerate}

\begin{figure}
  \includegraphics[width=0.75\linewidth]{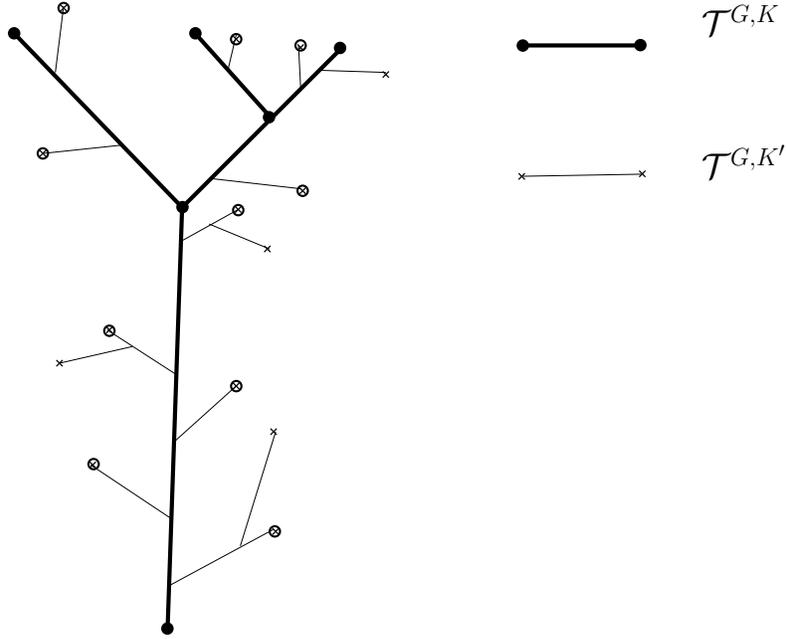}
  \caption{In a $\delta$-dense setting, the maximal distance between neighbouring points among the leaves  of $\T^{(G,K)}$ and the circled ones of $\T^{(G,K')}$ is less than $\delta$.}
\end{figure}

First let us show that spanning uniform points on the $K$-CRT creates $\delta$-dense sets with high probability.
\begin{lemma}\label{epsnet0}
Consider  $(\mathfrak{T},(U_i)_{i\in \N})$ a CRT with a sequence of uniformly chosen points (i.e.~chosen according to $(\lambda^{\mathfrak{T}})^{\otimes \N}$). Fix $\epsilon'>0$ and $\delta>0$, there exists $K'$ such that we have 
\[
M \otimes (\lambda^{\mathfrak{T}})^{\otimes \N}[U_0\ldots U_{K'} \text{ is not $\delta$-dense in }\mathfrak{T}^{(K)}] \leq \epsilon'.
\]
\end{lemma}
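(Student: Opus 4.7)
The plan is a covering/Borel–Cantelli argument carried out conditionally on the realisation of $\mathfrak{T}$ and of $U_0,\ldots,U_K$, which together determine $\mathfrak{T}^{(K)}$. Almost surely under $M\otimes (\lambda^{\mathfrak{T}})^{\otimes K}$, the $K$-CRT is a finite graph tree with positive-length edges, so it may be covered by finitely many arcs $I_1,\ldots,I_M$, each of diameter less than $\delta/6$. The goal is then to show that, for $K'$ large, each arc receives at least one ``shallow'' sample from $U_0,\ldots,U_{K'}$ with probability at least $1-\epsilon'$.

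For each piece $I_j$, I would introduce the open subset
\[
S_j:=\bigl\{U\in\mathfrak{T}:\pi^{(K)}(U)\in I_j^{\circ}\text{ and }d_{\mathfrak{T}}(U,I_j)<\delta/6\bigr\}
\]
and prove the core claim $\lambda^{\mathfrak{T}}(S_j)>0$ almost surely. This rests on the full topological support of the mass measure $\lambda^{\mathfrak{T}}$ on the CRT (a standard consequence of the normalised Brownian excursion representation, see \cite{DuLG}), together with the fact that hanging subtrees are a.s.~dense along every arc of $\mathfrak{T}^{(K)}$. Since $M$ is finite a.s., by first discarding an event of probability at most $\epsilon'/2$ we may assume $\min_j \lambda^{\mathfrak{T}}(S_j)\geq p_0$ for some deterministic $p_0>0$; then independence of the samples and a union bound give
\[
\PR\bigl[\exists\, j,\ U_l\notin S_j\text{ for all }l\leq K'\bigr]\leq M(1-p_0)^{K'+1},
\]
which is at most $\epsilon'/2$ once $K'$ is chosen sufficiently large.

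It remains to verify that, on the event ``every $S_j$ contains a sample'', the set $U_0,\ldots,U_{K'}$ is $\delta$-dense. Condition 1 is immediate: every edge of $\mathfrak{T}^{(K)}$ is covered by pieces, each of which now contains at least one projection. For Condition 2, the shallow witnesses $\{\pi^{(K)}(U_{l_j})\}$ form a $\delta/6$-dense subset of $\mathfrak{T}^{(K)}$; a pigeonhole argument on the partition then forces any neighbours $x,y$ of the projection set $P$ to lie in adjacent or coincident pieces, giving $d_{\mathfrak{T}^{(K)}}(x,y)\leq \delta/3$. Since $\mathfrak{T}^{(K')}$ is a subtree of $\mathfrak{T}$ with the induced metric and contains both witnesses, the triangle inequality yields $d_{\mathfrak{T}^{(K')}}(V_{i_x},V_{i_y})=d_{\mathfrak{T}}(V_{i_x},V_{i_y})\leq \delta/6+\delta/3+\delta/6\leq\delta$.

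The main obstacle is the positivity $\lambda^{\mathfrak{T}}(S_j)>0$: the argument is intuitive from the dense branching of the CRT, but a rigorous justification requires the full topological support of $\lambda^{\mathfrak{T}}$ and some care that the ``shallow'' constraint $d_{\mathfrak{T}}(U,I_j)<\delta/6$ does not collapse the set. A secondary delicate point is matching the witness requirement $\pi^{(K)}(V_{i_x})=x$ exactly when $x\in P$ is the projection of a ``deep'' sample with no nearby shallow cousin; this issue, which is painless in the graph setting because bubbles collect many cut-points, has to be handled in the CRT case using the structure of hanging subtrees and their intersection with the $\delta/6$-neighbourhood of the skeleton.
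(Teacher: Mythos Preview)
Your approach is correct and rests on the same three ingredients as the paper---compactness of $\mathfrak{T}$, full topological support of $\lambda^{\mathfrak{T}}$, and independence of the $U_i$---but the paper's argument is considerably shorter. Rather than partitioning $\mathfrak{T}^{(K)}$ into small arcs and engineering the shallow sets $S_j$, the paper works directly in the full CRT: it shows that $(B(U_i,\delta))_{i\in\N}$ almost surely covers $\mathfrak{T}$ (take a countable dense set, use $\lambda^{\mathfrak{T}}(B(x,\delta))>0$ and independence), extracts a finite subcover by compactness, and concludes. The key simplification is that once the $U_i$ form a $\delta$-net of all of $\mathfrak{T}$, shallowness comes for free: if $d_{\mathfrak{T}}(U_j,x)<\delta$ for some $x\in\mathfrak{T}^{(K)}$, then $\pi^{(K)}(U_j)$ lies on the geodesic $[U_j,x]$, so both $d_{\mathfrak{T}}(U_j,\pi^{(K)}(U_j))<\delta$ and $d_{\mathfrak{T}}(\pi^{(K)}(U_j),x)<\delta$ hold automatically. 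This bypasses your construction of $S_j$ and the positivity verification you flag as the main obstacle. As for your ``secondary delicate point''---matching $\pi^{(K)}(U_{i_x})=x$ exactly---it is a genuine nuisance in the literal definition of $\delta$-dense, and the paper glosses over it as well; in practice the only downstream use of the lemma (Lemma~\ref{res_2a}) needs merely that every point of the skeleton is within $\delta$ of some $U_i$, which both arguments deliver cleanly.
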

\begin{proof}
For $x\in\frak{T}, r>0$ we let $B(x,r):=\{y\in\frak{T}: d_\frak{T}(x,y)<\delta\}$. We claim that 
\begin{equation}\label{eq:covering}
(B(U_i,\delta))_{i\in\N} \text{ is almost surely a covering of } \mathfrak{T}.
\end{equation}
 First notice that $\mathfrak{T}$ is compact, because it is defined as a  quotient of $[0,1]$, which is itself compact. Therefore $\mathfrak{T}$ is separable, i.e., there exists a dense sequence $(x_i)_{i\in\N}$. 
 Hence, to show \eqref{eq:covering}, it is enough to show that 
 \begin{equation}\label{eq:density}
 P[(x_i)_{i\in\N}\subset\cup_{i\in\N}B(U_i,\delta)]=1.
 \end{equation}
 But the display above holds because $P[x_i\in B(U_j,\delta)]=\lambda_{\mathfrak{T}}(B(x_i,\delta))>0$ (This follows from the fact that $\lambda^{\mathfrak{T}}(A)>0$ for any open set $A$, which, in turn, follows directly from the definition of $\lambda^{\mathfrak{T}}$). Since the $(U_j)_{j\in\N}$ are independent we get that $P[x_i\in \cup_{j\in\N} B(U_j,\delta)]=1$. Moreover, since the $(x_i)_{i\in\N}$ are countable, the claim at \eqref{eq:density} holds true. This shows \eqref{eq:covering}.
 
 From display \eqref{eq:covering} and the fact that $\mathfrak{T}$ is compact  we  get
 \[P[\exists K :\mathfrak{T} \subset \cup_{i=1}^KB(U_i,\delta)]=1\]
 and therefore
 \[\lim_{K\to\infty}P[\mathfrak{T}\subset(B(U_i,\delta))_{i\leq K}]=1.\]
 Hence
 \[\lim_{K\to\infty}P[(U_i)_{i\leq K} \text{ is not a }\delta \text{ covering of }\mathfrak{T}]=0\]
 and this proves the lemma. 
 
\end{proof}

This result translates into the existence of dense sets for certain augmented random graphs.
\begin{lemma}\label{epsnet}
Consider a sequence of random augmented graphs $(G_n,(V_i^n)_{i\in \N})_{n\in N}$ which verifies condition $(S)$ and condition $(G)_{\sigma_d,\sigma_{\phi}}$.

Fix $\epsilon'>0$ and $\delta>0$, there exists $K'$ such that we have 
\[
  \limsup_{n\to \infty} {\bf P}_n[V_0^n\ldots V_{K'}^n \text{ is not $\delta$-dense in  }\T^{(n,K)}] \leq \epsilon'.
\]
\end{lemma}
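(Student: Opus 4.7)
My plan is to deduce this lemma from Lemma~\ref{epsnet0} by transporting the density property from the CRT side to the graph side via the $D$-topology convergence of $K'$-skeletons guaranteed by condition~$(G)_{\sigma_d,\sigma_\phi}$ applied at depth $K'\geq K$.

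First, given $\epsilon',\delta>0$, I will apply Lemma~\ref{epsnet0} with density parameter $\delta':=\delta/(2\sigma_d)$ and confidence $\epsilon'/2$ to obtain some $K'\geq K$ for which the uniform CRT points $(U_l)_{l\leq K'}$ are $\delta'$-dense in $\mathfrak{T}^{(K)}$ with $M\otimes(\lambda^{\mathfrak{T}})^{\otimes\mathbb N}$-probability at least $1-\epsilon'/2$. The constant $\delta'$ is chosen small enough to absorb both the scale factor $\sigma_d$ that will appear in the limiting metric and the approximation error that the Skorokhod coupling will introduce.

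Next, I will invoke condition~$(G)_{\sigma_d,\sigma_\phi}$ at depth $K'$ together with the Skorokhod representation theorem to realise on a common probability space the almost-sure $D$-topology convergence
\[
(\mathfrak{T}^{(n,K')},d^{(n,K')},n^{-1/4}\phi^{(n,K')})\;\longrightarrow\;(\mathfrak{T}^{(K')},\sigma_d\, d_{\mathfrak{T}^{(K')}},\sigma_\phi\sqrt{\sigma_d}\,\phi_{\mathfrak{T}^{(K')}}).
\]
Since the metric $D$ equals $1$ whenever two graph spatial trees differ in rooted ordered shape, for $n$ large the shapes of $\mathfrak{T}^{(n,K')}$ and $\mathfrak{T}^{(K')}$ must coincide, producing a canonical lexicographical-order-preserving bijection between vertices and edges. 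Under this bijection the selected points $V_l^n$ correspond to $U_l$ for every $l\leq K'$, the nested subtree $\mathfrak{T}^{(n,K)}\subset\mathfrak{T}^{(n,K')}$ corresponds to $\mathfrak{T}^{(K)}\subset\mathfrak{T}^{(K')}$, and the projections $\pi^{(n,K)}(V_l^n)$ correspond to the CRT projections of $U_l$ onto $\mathfrak{T}^{(K)}$.

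Finally, I will verify that $\delta'$-density of $(U_l)_{l\leq K'}$ in $\mathfrak{T}^{(K)}$ implies $\delta$-density of $(V_l^n)_{l\leq K'}$ in $\T^{(n,K)}$ for $n$ large. The edge-hitting property~(1) of the definition transports directly via the combinatorial bijection, since an edge of $\mathfrak{T}^{(n,K)}$ is hit by a projection $\pi^{(n,K)}(V_l^n)$ iff the corresponding edge of $\mathfrak{T}^{(K)}$ is hit by the corresponding CRT projection. The neighbour-closeness property~(2) follows from the edge-length part of the $D$-convergence: for any fixed pair of vertices the $d^{(n,K')}$-distance converges to $\sigma_d$ times the CRT tree distance, so the CRT bound $d_{\mathfrak{T}^{(K')}}(U_{i_x},U_{i_y})\leq\delta'=\delta/(2\sigma_d)$ lifts to a graph bound $\sigma_d\delta'+o(1)\leq\delta$ for $n$ large. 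Condition~$(S)$ is used only to ensure that $\mathfrak{T}^{(n,K')}$ is constructible on an event of probability tending to one, per Remark~\ref{rem_abuse_tnk}, and thus does not interfere with the limit. The main technical obstacle lies in the transport step: carefully checking that the combinatorial bijection produced by the $D$-convergence preserves not only the tree shape but also the projection structure (i.e.\ which branches hang off which edges of the $K$-subtree), so that property~(1) passes through. This is essentially automatic from the parallel constructions in Sections~\ref{sect_constr_tnk} and~\ref{sect_KISE}, but requires one to unwind both simultaneously.
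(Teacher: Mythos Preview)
Your approach is essentially the same as the paper's: both reduce to Lemma~\ref{epsnet0} by observing that the $\delta$-density event for $(V_l^n)_{l\le K'}$ in $\T^{(n,K)}$ is determined by the shape and finitely many edge-lengths of $\mathfrak{T}^{(n,K')}$, and then invoke condition~$(G)$ at depth $K'$ to transport the event to the CRT. The paper does this in one line by noting that the event is measurable with respect to a finite set of shape/distance conditions on $\mathfrak{T}^{(n,K')}$ (since for $i\le K$ the projections $\pi^{(n,K)}(V_i^n)$ appear as branch points of $\mathfrak{T}^{(n,K')}$) and applying weak convergence directly, whereas you unpack the same idea via Skorokhod representation and the explicit shape bijection; your scale factor $\delta/(2\sigma_d)$ is the correct one (the paper writes $\sigma_d\delta$, which appears to be a typo for $\delta/\sigma_d$, but this is immaterial since Lemma~\ref{epsnet0} applies for any positive parameter).
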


Let us emphasize the fact that there is no factor $n^{1/2}$ because the distance on $\T^{(n,K)}$ is already rescaled.

\begin{proof}
We can notice that the event $\{V_0^n\ldots V_{K'}^n \text{ is not $\delta$-dense in  }\T^{(n,K)}\}$ is measurable with respect to a finite set of conditions on the shape of $\mathfrak{T}^{(n,K')}$ and the distances within this graph, since for $i\leq K$ the points $\pi^{(G,K')}(V_i)$ are branch-points of $\mathfrak{T}^{(n,K')}$. Hence by condition $(G)$, we know that 
\begin{align*}
&  \lim_{n\to \infty} {\bf P}_n[V_0^n\ldots V_{K'}^n \text{ is not $\delta$-dense in  }\T^{(n,K)}] \\
=&M \otimes (\lambda^{\mathfrak{T}})^{\otimes \N}[U_0\ldots U_{K'} \text{ is not $\sigma_d\delta$-dense in }\mathfrak{T}^{(K)}] ,
 \end{align*}
 and hence the result follows from Lemma~\ref{epsnet0}.
\end{proof}

\subsection{Strengthened resistance estimate}

Let us start by a technical result stating, in essence, that the bubble containing $0$ is small.
\begin{lemma}\label{bubble_zero}
Consider a sequence of random augmented graphs $(G_n,(V_i^n)_{i\in \N})_{n\in \N}$ which verifies condition $(S)$.

For any $\epsilon>0$, we have that
\[
\limsup_{n\to\infty} {\bf P}_n\Bigl[\frac{d_{G_n}(0,\text{root}^*)}{n^{1/2}}>\epsilon\Bigr]=0.
\]
\end{lemma}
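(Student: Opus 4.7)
The plan is to exploit the second part of condition $(S)$ directly. The key observation is that $\text{root}^\ast$ is, by definition, the first cut-point on the path from $0$ to $V_0^n$, so $0$ is not separated from the origin by any cut-bond, which, by the convention set out in Section~\ref{sect_mu}, gives $\pi^{(n,K)}(0) = \text{root}^\ast$ for every $K \in \N$. Likewise $\pi^{(n,K)}(\text{root}^\ast) = \text{root}^\ast$, since a cut-point is trivially its own last-crossed cut-point. Consequently both $0$ and $\text{root}^\ast$ lie in the $K$-sausage of $\text{root}^\ast \in V^\ast(\T^{(n,K)})$.

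From this it follows that, for every $K$,
\[
d_{G_n}(0, \text{root}^\ast) \le \mathrm{Diam}_{G_n}(\{y\in V(G_n)\,:\,\pi^{(n,K)}(y)=\text{root}^\ast\}) \le \Delta^{(n,K)}_{G_n},
\]
on the event that $G^{(n,K)}$ is asymptotically tree-like (so that $\T^{(n,K)}$ is well defined). Hence, for any $\epsilon>0$ and any $K \in \N$,
\[
{\bf P}_n\Bigl[\tfrac{d_{G_n}(0,\text{root}^\ast)}{n^{1/2}}>\epsilon\Bigr]
\;\le\;
{\bf P}_n\bigl[n^{-1/2}\Delta^{(n,K)}_{G_n}>\epsilon\bigr] + {\bf P}_n\bigl[G^{(n,K)}\text{ is not asymptotically tree-like}\bigr].
\]

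Taking $\limsup_{n\to\infty}$ and using the first part of condition $(S)$ kills the second term for each fixed $K$; then sending $K\to\infty$ and invoking the second part of condition $(S)$ makes the first term vanish. There is no real obstacle here: the whole statement is essentially a tautological consequence of the sausage-diameter control built into condition $(S)$, once one notices that the pair $\{0,\text{root}^\ast\}$ is contained in a single $K$-sausage.
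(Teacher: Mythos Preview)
Your proof is correct and follows essentially the same route as the paper: observe that $\pi^{(n,K)}(0)=\text{root}^\ast$, bound $d_{G_n}(0,\text{root}^\ast)$ by $\Delta^{(n,K)}_{G_n}$, and then invoke the second part of condition $(S)$. Your version is slightly more careful in that you make explicit why $\text{root}^\ast$ itself lies in the sausage of $\text{root}^\ast$ and you separate out the asymptotically tree-like event, whereas the paper absorbs the latter into the convention of Remark~\ref{rem_abuse_tnk}; otherwise the arguments are identical.
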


\begin{proof}
Fix $\delta>0$. One can see that, by definition, $\pi^{(n,K)}(0)=\text{root}^*$ for all $K\geq 1$. Thus we can see that $d_{G_n}(0,\text{root}^*)\leq \Delta^{(n,K)}_{G_n}$ for all $K\in \N$. Hence, for all $K\in \N$
\[
\limsup_{n\to\infty}{\bf P}_n\Bigl[\frac{d_{G_n}(0,\text{root}^*)}{n^{1/2}}>\epsilon\Bigr] \leq \limsup_{n\to\infty}{\bf P}_n\Bigl[\frac{\Delta^{(n,K)}_{G_n}}{n^{1/2}}>\epsilon\Bigr],
\]
and the result follows by taking $K$ to infinity and using the second property of condition $(S)$. 
\end{proof}

We can also prove that
\begin{lemma}\label{bounded_dist}
Consider a sequence of random augmented graphs $(G_n,(V_i^n)_{i\in \N})_{n\in N}$ which verifies condition $(S)$  and condition $(G)_{\sigma_d,\sigma_{\phi}}$.

For any $\epsilon>0$ and $K\in \N$, there exists $C_{\epsilon,K}<\infty$ such that
\[
\limsup_{n\to \infty} {\bf P}_n\Bigl[\max_{i=1\ldots K}  n^{-1/2}d_{G_n}(0,V_i^{n})>C_{\epsilon,K}\Bigr]\leq \epsilon.
\]
\end{lemma}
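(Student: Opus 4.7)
The plan is to bound $d_{G_n}(0, V_i^n)$ by decomposing the path at the cut-point $\text{root}^*$ and controlling each piece separately: the part inside the bubble of the origin will be handled by Lemma~\ref{bubble_zero}, and the part along the skeleton will be controlled by condition $(G)_{\sigma_d,\sigma_\phi}$ together with the $\Xi$-almost sure compactness of the CRT.

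First I would work on the event where $G^{(n,K)}$ is asymptotically tree-like, which has probability tending to $1$ by condition $(S)$. On this event, for each $i \leq K$, $V_i^n \in V^*(\T^{(n,K)})$ by the construction of $G^{(n,K)}$, and the triangle inequality gives
\[
d_{G_n}(0, V_i^n) \leq d_{G_n}(0, \text{root}^*) + d_{G_n}(\text{root}^*, V_i^n).
\]
By the way the intrinsic metric $d_{\T^{(n,K)}}$ was defined in Step 2 of Section~\ref{sect_constr_tnk}, and since this distance preserves distances from $\text{root}^*$ to points of $V^*(\T^{(n,K)})$, the second term equals $d_{\T^{(n,K)}}(\text{root}^*, V_i^n)$.

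For the first term, Lemma~\ref{bubble_zero} directly yields $\limsup_n {\bf P}_n[n^{-1/2} d_{G_n}(0,\text{root}^*) > 1] = 0$, so this contribution is negligible. For the second term, I would observe that $d^{(n,K)}(\text{root}^*, V_i^n) = n^{-1/2} d_{\T^{(n,K)}}(\text{root}^*, V_i^n)$ is a continuous functional of the graph spatial tree $(\mathfrak{T}^{(n,K)},d^{(n,K)},n^{-1/4}\phi^{(n,K)})$ with respect to the metric $D$, since $\text{root}^*$ and $V_1^n, \ldots, V_K^n$ sit among the marked vertices of $\mathfrak{T}^{(n,K)}$. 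By condition $(G)_{\sigma_d,\sigma_\phi}$, the joint distribution of these $K$ distances converges weakly to that of $(\sigma_d d_{\mathfrak{T}^{(K)}}(\text{root}, U_i))_{i=1,\ldots,K}$, where the $U_i$ are i.i.d.\ uniform on the CRT $\mathfrak{T}$.

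Since the CRT is $\Xi$-almost surely compact, the random variable $\max_{i \leq K} \sigma_d d_{\mathfrak{T}^{(K)}}(\text{root}, U_i)$ is almost surely finite, so there exists $C_K < \infty$ with $\mathbb{P}[\max_{i \leq K} \sigma_d d_{\mathfrak{T}^{(K)}}(\text{root}, U_i) > C_K] \leq \epsilon$. Transferring this through the weak convergence (using Portmanteau applied to the open set $\{\max_i > C_K\}$, up to possibly enlarging $C_K$ slightly to avoid atoms), I would obtain $\limsup_n {\bf P}_n[\max_{i \leq K} d^{(n,K)}(\text{root}^*, V_i^n) > C_K] \leq \epsilon$, and combining with the bound on the bubble of $0$ gives the lemma with $C_{\epsilon,K} := C_K + 1$. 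I do not anticipate any real obstacle here: the argument is essentially a tightness transfer from the $K$-ISE via condition $(G)$, made possible because all the relevant points are marked vertices of the skeleton graph spatial tree.
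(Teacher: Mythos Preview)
Your proof is correct and follows essentially the same approach as the paper's: both decompose $d_{G_n}(0,V_i^n)$ at $\text{root}^*$, control the bubble part via Lemma~\ref{bubble_zero}, and control the skeleton part by pushing forward through condition~$(G)$ to the $K$-ISE and invoking the a.s.\ finiteness of $\max_{x\in\mathfrak{T}^{(K)}}d_{\mathfrak{T}^{(K)}}(\text{root},x)$. Your treatment is arguably a touch more careful in mentioning Portmanteau and the possible need to avoid atoms, but the structure and ideas are identical.
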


\begin{proof}
We claim that, for any $\epsilon>0$ and $K\in \N$,  there exists $C_{\epsilon,K}<\infty$ such that 
\begin{equation}\label{raaaaa}
M \otimes (\lambda^\T)^{\otimes \N}\Bigl[\max_{i=1\ldots K}  d_{\T^{(K)}}(0,U_i)>C_{\epsilon,K}\Bigr]\leq \epsilon.
\end{equation}
Indeed, 
\begin{equation}
M \otimes (\lambda^\T)^{\otimes \N}\Bigl[\max_{i=1\ldots K}  d_{\T^{(K)}}(0,U_i)>C\Bigr]\leq M\Bigl[\max_{x\in \T^{(K)}}  d_{\T^{(K)}}(0,x)>C\Bigr],
\end{equation}
and, since $\max_{x\in \T^{(K)}}  d_{\T^{(K)}}(0,x)<\infty$, we have that $M \Bigl[\max_{x\in \T^{(K)}}  d_{\T^{(K)}}(0,x)>C\Bigr]\to 0$ as $C\to\infty$, and this shows \eqref{raaaaa}.
Then we can see that 
\begin{align*}
 & {\bf P}_n\Bigl[\max_{i=1\ldots K}  n^{-1/2}d_{G_n}(0,V_i^{n})>2C_{\epsilon,K}\sigma_d \Bigr]\\
 \leq & {\bf P}_n\Bigl[\max_{i=1\ldots K}  n^{-1/2}d_{G_n}(0,\text{root}^*)>C_{\epsilon,K}\sigma_d\Bigr]+  {\bf P}_n\Bigl[\max_{i=1\ldots K}  d^{(n,K)}(\text{root}^*,V_i^{n})>C_{\epsilon,K}\sigma_d\Bigr],
 \end{align*}
 where the first term goes to $0$ by Lemma~\ref{bubble_zero}. The second term is an event defined in terms of a finite set of condition on distances between vertices in $\mathfrak{T}^{(n,K)}$ so we can use condition $(G)$  and~\eqref{raaaaa} to see that 
\begin{align*}
& \limsup_{n\to\infty}  {\bf P}_n\Bigl[\max_{i=1\ldots K}  d^{(n,K)}(\text{root}^*,V_i^{n})>C_{\epsilon,K}\sigma_d\Bigr]  \\
=& M \otimes (\lambda^\T)^{\otimes \N}\Bigl[\max_{i=1\ldots K}  d_{\T^{(K)}}(0,U_i)>C_{\epsilon,K}\Bigr] \\
\leq & \epsilon,
\end{align*}
for all $\epsilon>0$, which finishes the proof.
\end{proof}

We are now able to prove that the resistance-distance and the graph distance are equivalent when looking at the vertices $(V_i)_{i\leq K}$.
\begin{lemma}\label{dnetresis}
Consider a sequence of random augmented graphs $(G_n,(V_i^n)_{i\in \N})_{n\in N}$ which  verifies condition $(S)$  and condition $(G)_{\sigma_d,\sigma_{\phi}}$ and $(R)_{\rho}$.

Fix $\epsilon>0$ and $K\in \N$. We have
\[
\limsup_{n\to \infty} {\bf P}_n\Bigl[\max_{i=1\ldots K} \abs{ d_{\text{res}}^{(n,K)}(\text{root}^*,V_i^{n}) - \rho d^{(n,K)}(\text{root}^*,V_i^{n})}>\epsilon\Bigr]=0.
\]
\end{lemma}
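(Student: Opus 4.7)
The plan is to reduce the lemma to condition $(R)_\rho$, which controls the ratio $R^{G_n}(0,V_i^n)/d^{G_n}(0,V_i^n)$, by performing two translations: from the skeleton metrics $d^{(n,K)}_{\text{res}},d^{(n,K)}$ to their ambient counterparts on $G_n$, and from the base point $\text{root}^*$ to the true origin $0$.

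First I would identify the skeleton metrics with rescaled ambient metrics at the points in question. Since $\text{root}^*$ and each $V_i^n$ belong to $V^*(\T^{(n,K)})$, Lemma~\ref{res_equiv_tnk} gives $d^{(n,K)}_{\text{res}}(\text{root}^*,V_i^n)=n^{-1/2}R^{G_n}(\text{root}^*,V_i^n)$. For the intrinsic distance, I would observe that every cut-point on the path in $G_n$ from $\text{root}^*$ to $V_i^n$ is traversed by any geodesic (by definition of cut-point), so $d_{G_n}(\text{root}^*,V_i^n)$ equals the sum of graph distances between consecutive elements of $V^*(\T^{(n,K)})$ on that path; the star-triangle lengths prescribed in Section~\ref{sect_constr_tnk} are chosen precisely so that this sum equals $d_{\T^{(n,K)}}(\text{root}^*,V_i^n)$, giving $d^{(n,K)}(\text{root}^*,V_i^n)=n^{-1/2}d_{G_n}(\text{root}^*,V_i^n)$. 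The lemma is thereby reduced to showing
\[
n^{-1/2}\max_{i\leq K}\bigl|R^{G_n}(\text{root}^*,V_i^n)-\rho\,d_{G_n}(\text{root}^*,V_i^n)\bigr|\longrightarrow 0
\]
in ${\bf P}_n$-probability.

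Next I would shift the base point from $\text{root}^*$ to $0$. The effective resistance is trivially dominated by the graph distance, so $R^{G_n}(0,\text{root}^*)\leq d_{G_n}(0,\text{root}^*)$, and Lemma~\ref{bubble_zero} forces the right-hand side to be $o_P(n^{1/2})$. Two triangle inequalities then yield $R^{G_n}(\text{root}^*,V_i^n)=R^{G_n}(0,V_i^n)+o_P(n^{1/2})$ and $d_{G_n}(\text{root}^*,V_i^n)=d_{G_n}(0,V_i^n)+o_P(n^{1/2})$, uniformly over the finite range $i\leq K$.

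Finally, condition $(R)_\rho$ applied at each $i=1,\dots,K$ gives $R^{G_n}(0,V_i^n)/d_{G_n}(0,V_i^n)\to\rho$ in probability, while Lemma~\ref{bounded_dist} provides tightness of $n^{-1/2}d_{G_n}(0,V_i^n)$; multiplying these yields $n^{-1/2}|R^{G_n}(0,V_i^n)-\rho\,d_{G_n}(0,V_i^n)|\to 0$ in probability for each fixed $i$, and a union bound over the finite range $i\leq K$ closes the argument. The proof is essentially bookkeeping; the only mildly delicate point is the identification step, which crucially exploits that the star-triangle metric on $\T^{(G,K)}$ was designed to preserve pairwise graph distances between cut-points, so that no error is incurred when passing between $d^{(n,K)}$ and $d_{G_n}$ at vertices of $V^*(\T^{(n,K)})$.
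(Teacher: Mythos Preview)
Your proof is correct and follows essentially the same approach as the paper's own proof: both combine Lemma~\ref{bubble_zero} to pass between $0$ and $\text{root}^*$, Lemma~\ref{bounded_dist} for tightness of $n^{-1/2}d_{G_n}(0,V_i^n)$, condition $(R)_\rho$ with a union bound over $i\leq K$ to convert the ratio statement into an additive one, and the cut-point identification $d^{(n,K)}(\text{root}^*,V_i^n)=n^{-1/2}d_{G_n}(\text{root}^*,V_i^n)$, $d^{(n,K)}_{\text{res}}(\text{root}^*,V_i^n)=n^{-1/2}R^{G_n}(\text{root}^*,V_i^n)$. The only difference is cosmetic ordering: the paper does ratio-to-additive first and the identification last, whereas you do the identification first and the ratio-to-additive last.
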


\begin{proof}
Fix $\epsilon_1>0$ and $K\in \N$, by Lemma~\ref{bounded_dist} there exists $C_{\epsilon_1,K}<\infty$ such that
\[
\limsup_{n\to \infty} {\bf P}_n\Bigl[\max_{i=1\ldots K}  n^{-1/2}d_{G_n}(0,V_i^{n})>C_{\epsilon_1,K}\Bigr]\leq \epsilon_1,
\]
where we denote $A_n^c(\epsilon_1)$ the event inside the probability. Then, see that for all $\epsilon_1>0$ and $\epsilon>0$, we have
\begin{align*}
& \limsup_{n\to \infty} {\bf P}_n\Bigl[\max_{i=1\ldots K} n^{-1/2}\abs{ d^{\text{res}}_{G_n}(0,V_i^{n}) - \rho d_{G_n}(0,V_i^{n})}>\epsilon\Bigr] \\ 
\leq & \limsup_{n\to \infty} \Bigl( {\bf P}_n\Bigl[\max_{i=1\ldots K} \abs{\frac{d^{\text{res}}_{G_n}(0,V_i^{n})}{d_{G_n}(0,V_i^{n})}  - \rho  }>\frac{\epsilon}{n^{-1/2}\max_{i=1\ldots K} d_{G_n}(0,V_i^{n})},A_n(\epsilon_1)\Bigr] \\
& \qquad \qquad \qquad \qquad \qquad +{\bf P}_n[A_n^c(\epsilon_1)] \Bigl)
\\ \leq &  \limsup_{n\to \infty} {\bf P}_n\Bigl[\max_{i=1\ldots K}\abs{\frac{d^{\text{res}}_{G_n}(0,V_i^{n})}{d_{G_n}(0,V_i^{n})}  - \rho  }>\frac{\epsilon}{C_{\epsilon_1,K}}\Bigr]+\epsilon_1,
\end{align*}
by the previous equation. But from condition $(R)$ and a simple union bound, we see that for any $\epsilon_2>0$ we have
\[
\limsup_{n\to \infty} {\bf P}_n\Bigl[\max_{i=1\ldots K} \abs{ \frac{d^{\text{res}}_{G_n}(0,V_i^{n})}{ d_{G_n}(0,V_i^{n})} - \rho}>\epsilon_2\Bigr]=0,
\]
which combined with the previous equation, used with $\epsilon_2=\epsilon/ C_{\epsilon_1,K}$. yields that for any $\epsilon>0$
\[
\limsup_{n\to \infty} {\bf P}_n\Bigl[\max_{i=1\ldots K} n^{-1/2}\abs{ d^{\text{res}}_{G_n}(0,V_i^{n}) - \rho d_{G_n}(0,V_i^{n})}>\epsilon\Bigr]=0.
\]
by letting $\epsilon_1$ go to $0$ after taking $n$ to infinity. Using this and Lemma~\ref{bubble_zero} (and the fact that $d_{\text{res}}^{(n,K)}(\cdot,\cdot)\leq d^{(n,K)}(\cdot,\cdot)$ which follows from Rayleigh's monotonicity principle) we see that for any $\epsilon>0$ we have
\[
\limsup_{n\to \infty} {\bf P}_n\Bigl[\max_{i=1\ldots K} n^{-1/2}\abs{ d^{\text{res}}_{G_n}(\text{root}^*,V_i^{n}) - \rho d_{G_n}(\text{root}^*,V_i^{n})}>\epsilon\Bigr]=0.
\]

The result then follows from  the fact  that, since $\text{root}^*$ and $V_i^{n}$ are in $V^*(\T^{(n,K)})$ and thus correspond to cut-points in $G_n$, we have $n^{-1/2}d^{\text{res}}_{G_n}(\text{root}^*,V_i^{n})=d_{\text{res}}^{(n,K)}(\text{root}^*,V_i^{n})$ and  $n^{-1/2}d_{G_n}(\text{root}^*,V_i^{n})=d^{(n,K)}(\text{root}^*,V_i^{n})$ for all $i\leq K$.
\end{proof}

It allows us to prove our final result
\begin{lemma}\label{res_2a}
Consider a sequence of random augmented graphs $(G_n,(V_i^n)_{i\in \N})_{n\in N}$ which verifies condition $(S)$ and condition $(G)_{\sigma_d,\sigma_{\phi}}$ and $(R)_{\rho}$.

Fix $\epsilon>0$ and $K\in \N$. We have
\[
\limsup_{n\to \infty} {\bf P}_n\Bigl[\max_{x\in \T^{(n,K)}} \abs{ d_{\text{res}}^{(n,K)}(\text{root}^*,x) - \rho d^{(n,K)}(\text{root}^*,x)}>\epsilon\Bigr]=0.
\]
\end{lemma}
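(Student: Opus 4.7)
The plan is to bootstrap the pointwise estimate of Lemma~\ref{dnetresis} into a uniform estimate over $\T^{(n,K)}$ by enlarging our i.i.d.~sample from $K$ to some $K'\ge K$ so that the projected cut-points $\pi^{(n,K)}(V_l^n)$, $l\le K'$, become $\delta$-dense in $\T^{(n,K)}$ (via Lemma~\ref{epsnet}). The linear relation between $d^{(n,K')}_{\text{res}}$ and $d^{(n,K')}$ at the $V_i^n$, $i\le K'$, will then be transferred to every $x\in\T^{(n,K)}$ through a triangle inequality, with Rayleigh's monotonicity principle used to dominate resistance errors by graph-distance errors.

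Fix $\epsilon>0$ and $K\in\N$, and let $\delta>0$ be a parameter to be chosen below. Lemma~\ref{epsnet} produces $K'\ge K$ such that, with $\mathbf{P}_n$-probability tending to $1$, the set $\{V_0^n,\dots,V_{K'}^n\}$ is $\delta$-dense in $\T^{(n,K)}$. On this event, the definition of $\delta$-density yields two geometric facts. First, for every $x\in\T^{(n,K)}$ there exists $i^\ast\le K'$ with
\[
d^{(n,K)}\bigl(x,\pi^{(n,K)}(V_{i^\ast}^n)\bigr)\le \delta,
\]
since the edge of $\mathfrak{T}^{(n,K)}$ containing $x$ carries at least one projected vertex by part~(1), while part~(2), applied to consecutive projected vertices on (or straddling an endpoint of) that edge, forces every subsegment to have length at most $\delta$. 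Second, for this same $i^\ast$ one has
\[
d^{(n,K')}\bigl(V_{i^\ast}^n,\pi^{(n,K)}(V_{i^\ast}^n)\bigr)\le\delta,
\]
because $\pi^{(n,K)}(V_{i^\ast}^n)$ is an ancestor of $V_{i^\ast}^n$ in $\T^{(n,K')}$, hence lies on the unique tree-path from $V_{i^\ast}^n$ to any neighbouring density point $V_j^n$, which has total length at most $\delta$ by part~(2). Simultaneously, Lemma~\ref{dnetresis} applied with $K'$ in place of $K$ gives
\[
\lim_{n\to\infty}\mathbf{P}_n\Bigl[\max_{i\le K'}\bigl|d^{(n,K')}_{\text{res}}(\text{root}^\ast,V_i^n)-\rho d^{(n,K')}(\text{root}^\ast,V_i^n)\bigr|>\epsilon/2\Bigr]=0.
\]

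Intersect the preceding events. Since $\pi^{(n,K)}(V_{i^\ast}^n)$ is a cut-point of $G_n$ lying in $V^\ast(\T^{(n,K)})\subseteq V^\ast(\T^{(n,K')})$, Lemma~\ref{res_equiv_tnk} yields the identity $d^{(n,K)}_{\text{res}}(\text{root}^\ast,\pi^{(n,K)}(V_{i^\ast}^n))=d^{(n,K')}_{\text{res}}(\text{root}^\ast,\pi^{(n,K)}(V_{i^\ast}^n))$, and the analogue for the graph distance holds because both $d^{(n,K)}$ and $d^{(n,K')}$ reduce to $n^{-1/2}d_{G_n}$ on cut-points. Applying the triangle inequality twice—once to move from $x$ to $\pi^{(n,K)}(V_{i^\ast}^n)$ in $\T^{(n,K)}$, and once from $\pi^{(n,K)}(V_{i^\ast}^n)$ to $V_{i^\ast}^n$ in $\T^{(n,K')}$—together with Rayleigh's monotonicity principle $d_{\text{res}}\le d$ on both trees, yields
\[
\bigl|d^{(n,K)}_{\text{res}}(\text{root}^\ast,x)-\rho d^{(n,K)}(\text{root}^\ast,x)\bigr|\le 2(1+\rho)\delta+\epsilon/2.
\]
Choosing $\delta:=\epsilon/(4(1+\rho))$ and letting $n\to\infty$ concludes the proof.

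The main obstacle is the second geometric fact above: the $\delta$-density definition only constrains $d_{\T^{(n,K')}}(V_{i_x},V_{i_y})$ for \emph{neighbouring} density indices, so one must exploit the ancestry structure of $\T^{(n,K')}$ to deduce an estimate on the distance from $V_{i^\ast}^n$ to its own projection $\pi^{(n,K)}(V_{i^\ast}^n)$. It is precisely this observation that couples the finite-sample estimate of Lemma~\ref{dnetresis} with the uniform statement of the present lemma.
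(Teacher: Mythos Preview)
Your proof is correct and follows the same strategy as the paper: enlarge the sample to $K'$ via Lemma~\ref{epsnet}, apply Lemma~\ref{dnetresis} at level $K'$, and close with a triangle inequality together with Rayleigh's monotonicity $d_{\text{res}}\le d$. The paper's version is slightly more direct---it observes that every $v\in V(\T^{(n,K)})$ lies on the $\T^{(n,K')}$-geodesic joining a neighbouring pair $V_{i_x}^n,V_{i_y}^n$ and is therefore within $\delta$ of one of them in $d^{(n,K')}$, which bypasses your detour through $\pi^{(n,K)}(V_{i^\ast}^n)$ and the attendant ``second geometric fact'' (where, incidentally, ``any neighbouring density point'' should read ``an appropriately chosen ancestor-side neighbour'').
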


\begin{proof}
Fix $\epsilon>0$ 

 By Lemma~\ref{epsnet}, there exists $K'$  such that 
  any vertex in $V(\T^{(n,K)})$ is within a small $d^{(n,K)}$-distance of at least one $V^n_i$ with $i\leq K'$. More precisely, for any $v\in V(\T^{(n,K)})$, there exists $i(v)\leq K'$ with 
 \begin{equation}\label{eq:eleven}
 \bold{P}\left[\max_{v\in V(\T^{(n,K)})} d^{(n,K')}(v, V_{i(v)}^n)\geq \epsilon\right]\leq \epsilon.
 \end{equation} 
 

On the other hand
\begin{align*}
 &\abs{\rho d^{(n,K)}(\text{root}^\ast,v)- d^{(n,K)}_{\text{res}}(\text{root}^\ast,v)}\\
\leq &\abs{\rho d^{(n,K')}(\text{root}^\ast,v)-\rho d^{(n,K')}(\text{root}^\ast,V_{i(v)}^n)}\\
& \qquad + \abs{\rho d^{(n,K')}(\text{root}^\ast,V_{i(v)}^n)-d_{\text{res}}^{(n,K')}(\text{root}^\ast ,V_{i(v)}^n)}\\
&\qquad +\abs{d_{\text{res}}^{(n,K')}(\text{root}^\ast,V_{i(v)}^n)- d_{\text{res}}^{(n,K')}(\text{root}^\ast,v)} \\
\leq & \rho d^{(n,K')}(V_{i(v)}^n,v)+\abs{\rho d^{(n,K')}(\text{root}^\ast,V_{i(v)}^n)-d_{\text{res}}^{(n,K')}(\text{root}^\ast ,V_{i(v)}^n)}\\
& \qquad+d_{\text{res}}^{(n,K')}(v,V_{i(v)}^n).
\end{align*}

The second summand of the right hand side above can be controlled (uniformly on $v$) by Lemma \ref{dnetresis}, while the first and third summand can be controlled (also uniformly on $v$) recalling \eqref{eq:eleven} and the fact that  $d^{(n,K)}_{\text{res}}(\cdot,\cdot)\leq d^{(n,K)}(\cdot,\cdot)$.
\end{proof}

\subsection{No edge has macroscopic resistance}
The previous result and the existence of $\delta$-dense sets allow us to prove
\begin{lemma}\label{lem_no_macro_res}
Consider a sequence of random augmented graphs $(G_n,(V_i^n)_{i\in \N})_{n\in N}$ which verifies condition $(S)$ and condition $(G)_{\sigma_d,\sigma_{\phi}}$ and $(R)_{\rho}$.
 For $K \in \N$ and $\epsilon>0$ we have that,
\[
\limsup_{n\to \infty} {\bf P}_n\Bigl[\max_{e\in E(\T^{(n,K)})} R^{\T^{(n,K)}}(e) n^{-1/2} >\epsilon\Bigr]=0.
\]
\end{lemma}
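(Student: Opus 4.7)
The plan is to reduce the bound on edge resistances to a bound on edge intrinsic distances via Lemma~\ref{res_2a}, then to control those intrinsic distances using the $\delta$-dense sets produced by Lemma~\ref{epsnet}. Fix $\epsilon>0$ and $K\in\N$, and pick $\delta,\epsilon_1>0$ small enough that $\rho\delta+2\epsilon_1<\epsilon$. By Lemma~\ref{res_2a}, on an event whose probability tends to $1$ as $n\to\infty$ we have the uniform control
\[
\max_{x\in\T^{(n,K)}}\abs{d_{\text{res}}^{(n,K)}(\text{root}^*,x)-\rho\, d^{(n,K)}(\text{root}^*,x)}\leq\epsilon_1.
\]
For any edge $e=(u,v)\in E(\T^{(n,K)})$ with $u\prec v$, additivity of both metrics along the ancestral path in the tree $\T^{(n,K)}$ immediately yields
\[
n^{-1/2}R^{\T^{(n,K)}}(e)\;=\;d_{\text{res}}^{(n,K)}(u,v)\;\leq\;\rho\, d^{(n,K)}(u,v)+2\epsilon_1,
\]
so the task reduces to showing that $\max_{e\in E(\T^{(n,K)})} d^{(n,K)}(u,v)\leq\delta$ on a high-probability event.

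For the intrinsic bound I will apply Lemma~\ref{epsnet} with parameter $\delta$ to obtain $K'\geq K$ such that, with probability at least $1-\epsilon'$ for large $n$, the sample $V_0^n,\ldots,V_{K'}^n$ is $\delta$-dense in $\T^{(n,K)}$. On each edge $E$ of $\mathfrak{T}^{(n,K)}$, order the points $p_j:=\pi^{(n,K)}(V_{i_j}^n)$ that lie on $E$. For consecutive $p_j,p_{j+1}$ (those with no other $\pi^{(n,K)}(V_\ell^n)$ strictly between them), the second clause of $\delta$-density gives $d^{(n,K')}(V_{i_j}^n,V_{i_{j+1}}^n)\leq\delta$; since the $\T^{(n,K')}$-path between these preimages must pass through $p_j$ and $p_{j+1}$, and since $p_j,p_{j+1}$ are cut-points (so $d^{(n,K)}(p_j,p_{j+1})=d^{(n,K')}(p_j,p_{j+1})$), this forces $d^{(n,K)}(p_j,p_{j+1})\leq\delta$. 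Any edge $(u,v)$ of $\T^{(n,K)}$ lying strictly in the interior of $E$ has no $p_\ell$ strictly between $u$ and $v$ (by $\T^{(n,K)}$-adjacency), so $u$ and $v$ both sit in a single interval $[p_j,p_{j+1}]$ and $d^{(n,K)}(u,v)\leq\delta$.

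The remaining cases concern edges incident to an endpoint of $E$ that does not automatically lie in the $p_j$-set. When the endpoint is a $V_i^n$ with $i\leq K$, one has $\pi^{(n,K)}(V_i^n)=V_i^n$ and the interior argument applies verbatim. When the endpoint is an artificial vertex $v_{x,y,z}$, the three micro-edges $(x,v_{x,y,z}), (y,v_{x,y,z}), (z,v_{x,y,z})$ of $\T^{(n,K)}$ meeting at it have no interior vertex of $V^*(\T^{(n,K)})$, which forces $x,y,z$ themselves to enter the $p_j$-set by the first clause of $\delta$-density; pairs among $\{x,y,z\}$ are then $p_j$-neighbours (only the artificial $v_{x,y,z}$ lies between them), so the previous paragraph gives $d^{(n,K)}(x,y), d^{(n,K)}(x,z), d^{(n,K)}(y,z)\leq\delta$, and the explicit formulas defining the distance at $v_{x,y,z}$ then give an $O(\delta)$ bound on each of the three micro-edges incident to $v_{x,y,z}$. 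For the root endpoint, I supplement the density argument with Lemma~\ref{bubble_zero} to control the first micro-edge emanating from $\text{root}^*$. Collecting all cases on the intersection of the high-probability events produced by Lemmas~\ref{epsnet} and~\ref{res_2a} yields $n^{-1/2}R^{\T^{(n,K)}}(e)\leq\rho\delta+2\epsilon_1<\epsilon$ uniformly in $e$, as required.

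The main obstacle I expect is the root-boundary step: unlike the special vertices $V_i^n$ with $i\leq K$ and the artificial vertices $v_{x,y,z}$ (for which membership in the $p_j$-set is forced by density), the root $\text{root}^*$ has no such built-in guarantee, and the first micro-edge leaving it must be controlled by a more delicate combination of Lemma~\ref{bubble_zero} with the $\delta$-density and the behaviour of cut-points further along the edge $E$ attached to $\text{root}^*$.
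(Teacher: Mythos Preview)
Your approach is the same as the paper's: convert edge resistances to intrinsic edge lengths via Lemma~\ref{res_2a}, then bound those using $\delta$-density from Lemma~\ref{epsnet}. The paper avoids your case analysis by observing directly that every edge of $\T^{(n,K)}$ lies inside an edge of $\mathfrak{T}^{(n,K')}$, so that $\max_{x\sim y}d^{(n,K)}(x,y)$ is dominated by the maximal distance between neighbouring $V_i^n$'s in $\mathfrak{T}^{(n,K')}$; if that maximum exceeds $\epsilon/3$ the family $V_0^n,\ldots,V_{K'}^n$ fails to be $\epsilon/3$-dense, and Lemma~\ref{epsnet} finishes.

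On the root-boundary obstacle you flag: Lemma~\ref{bubble_zero} is not the right tool, since it controls $d_{G_n}(0,\text{root}^*)$ rather than the length of the first skeleton edge out of $\text{root}^*$. The paper's proof glosses over this point too, but the honest fix is to observe that the proof of Lemma~\ref{epsnet0} actually yields a $\delta$-\emph{covering} of $\mathfrak{T}$ by the $U_i$'s (not merely the pairwise $\delta$-density condition as stated), so after passing through condition~$(G)$ some $\pi^{(n,K)}(V_l)$ lands within $\delta$ of $\text{root}^*$; this closes the gap without appealing to Lemma~\ref{bubble_zero}.
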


\begin{proof}
We can notice that
\begin{align*}
\max_{e\in E(\T^{(n,K)})} R^{\T^{(n,K)}}(e) n^{-1/2} & = \max_{\substack{x,y\in V(\T^{(n,K)}) \\ x\sim y}} R^{\T^{(n,K)}}(x, y)n^{-1/2} \\
 &  = \max_{\substack{x,y\in V(\T^{(n,K)}) \\ x\sim y}} \abs{d_{\text{res}}^{(n,K)}(0,x)-d_{\text{res}}^{(n,K)}(0,y)},
 \end{align*}
 by the law of resistances in series applied  in the tree $\T^{(n,K)}$, which is composed of cut-points. Hence, using that $\rho\leq 1$,
 \begin{align*}
 & \limsup_{n\to \infty} {\bf P}_n\Bigl[\max_{e\in E(\T^{(n,K)})} R^{\T^{(n,K)}}(e) n^{-1/2} >\epsilon\Bigr] \\ 
 \leq &  2\limsup_{n\to \infty} {\bf P}_n\Bigl[\max_{x\in \T^{(n,K)}} \abs{ d_{\text{res}}^{(n,K)}(\text{root}^*,x) - \rho d_{\T^{(n,K)}}(\text{root}^*,x)}>\epsilon/3\Bigr] \\
  & \qquad \qquad + \limsup_{n\to\infty} {\bf P}_n\Bigl[\max_{\substack{x,y\in V(\T^{(n,K)}) \\ x\sim y}} \abs{d_{\T^{(n,K)}}(\text{root}^*,x)-d_{\T^{(n,K)}}(\text{root}^*,y)} >\epsilon/3 \Bigr] \\ 
  =& \limsup_{n\to\infty} {\bf P}_n\Bigl[\max_{\substack{x,y\in V(\T^{(n,K)}) \\ x\sim y}} \abs{d_{\T^{(n,K)}}(\text{root}^*,x)-d_{\T^{(n,K)}}(\text{root}^*,y)}>\epsilon/3 \Bigr],
  \end{align*}
  where we used Lemma~\ref{res_2a}.
  
  Now, since any edge in $\T^{(n,K)}$ is contained in an edge in $\mathfrak{T}^{(n,K')}$ (recall the definitions in Section~\ref{sect_mathfrak}) we can notice that for all $K'\geq K$ we have 
  \begin{align*}
&  \max_{\substack{x,y\in V(\T^{(n,K)}) \\ x\sim y}} \abs{d_{\T^{(n,K)}}(\text{root}^*,x)-d_{\T^{(n,K)}}(\text{root}^*,y)} \\
 \leq & \max_{\substack{i,j \leq K'\\ V_i^n \sim V_j^n\text{ in } \mathfrak{T}^{(n,K')}}}d_{\mathfrak{T}^{(n,K)}}(V_i^n,V_j^n),
  \end{align*}
  so 
   \begin{align}\label{fuuu}
 & \limsup_{n\to \infty} {\bf P}_n\Bigl[\max_{e\in E(\T^{(n,K)})} R^{\T^{(n,K)}}(e) n^{-1/2} >\epsilon\Bigr] \\ \nonumber
 \leq & \liminf_{K'\to \infty} {\bf P}_n\Bigl[\max_{\substack{i,j \leq K'\\ V_i^n \sim V_j^n\text{ in } \mathfrak{T}^{(n,K')}}} n^{-1/2}d_{\mathfrak{T}^{(n,K)}}(V_i^n,V_j^n)\leq \epsilon/3\Bigr].
 \end{align}

However, if $ \Bigl\{\displaystyle{\max_{\substack{i,j \leq K'\\ V_i^n \sim V_j^n\text{ in } \mathfrak{T}^{(n,K')}}}} n^{-1/2}d_{\mathfrak{T}^{(n,K)}}(V_i^n,V_j^n)>\epsilon/3\Bigr\}$ then the set $V_0^n,\ldots,V_{K'}^n$ is not $\epsilon/3$-dense. Hence, using Lemma~\ref{epsnet} we can see that right hand side of~\eqref{fuuu} is $0$ which proves the lemma
                             
\end{proof}

 \section{Convergence of local times}\label{section_cvg_local_time}

In this section we obtain estimates on local times that will be used to prove Proposition~\ref{l:Ankislinear}.

\subsection{Statement of results and organization of the section}
Recall the definition of $J^{(n,K)}$ from \eqref{eq:defofJnK}, $B^{(n,K)}$ above Remark~\ref{belowbnk} and that of $B^{(K)}$ from Section \ref{sect_KISE}. The aim of this subsection is to show that the discrete local times of $J^{(n,K)}$ are close to those of $B^{(n,K)}$. 

In all this section, we will assume that $(G_n,(V_i^n)_{i\in \N})_{n\in \N}$   verifies condition $(S)$  and condition $(G)_{\sigma_d,\sigma_{\phi}}$, $(V)_\nu$ and $(R)_{\rho}$.   

 Let $(L^{(n,K)}_t(x))_{x\in\frak{T}^{(n,K)},t\geq 0}$ be the a jointly continuous local time of $B^{(n,K)}$ with respect to the measure $\lambda^{(n,K)}_{\text{res}}$ according to Definition \ref{def:localtime} (for the existence of this process we refer to Lemma 3.3 in \cite{Croydon_crt}). 
 
 For all $e\in E(\T^{(n,K)})$, let $l^{(n,K)}_t(e)$ be the number of times that $J^{(n,K)}$ has crossed the vertex $e$ up to time $m(t)$ (defined in display \eqref{eq:defofmt}). Note that, by the coupling between $J^{(n,K)}$ and $B^{(n,K)}$, $l^{(n,K)}_t(e)$ coincides with the number of crossings of $e$ by $B^{(n,K)}$ up to time $t$. 
 
Similarly, for $x\in V^\ast(\T^{(n,K)})$, let 
\begin{equation}\label{eq:defoflvert}
l^{(n,K,\text{vert})}_m(x):=\sum_{i=0}^{m}1_{\{J^{(n,K)}_i=x\}}.
\end{equation}

 Recall from Section \ref{sect_condG} that $\Upsilon_{\frak{T}^{(K)},\frak{T}^{(n,K)}}$ is the homeomorphism between $(\frak{T}^{(K)},d_{\frak{T}})$ and $(\frak{T}^{(n,K)},d_{\text{res}}^{(n,K)})$ which is linear along the edges (and whose existence, for $n$ large enough, is guaranteed $\mathbf{P}^{(K)}$-a.s.~by display \eqref{eq:couplinggeometry} ). For the sake of simplicity we will denote 
 \begin{equation}\label{eq:defofupsilon}
\Upsilon_{n,K}:= \Upsilon_{\frak{T}^{(K)},\frak{T}^{(n,K)}}.
\end{equation}
Let $V^{\circ}(\T^{(n,K)})$ be the set of vertices which are not leaves\footnote{a leaf is a vertex of degree $1$}  nor branching points and whose two neighbors are in $V^\ast(\T^{(n,K)})$.

For each $v\in V^\circ(\T^{(n,K)})$ \hfff{vcirc}, let $e^-(v)\in E(\T^{(n,K)})$ be the only edge of the form $(v^-,v)$ for some $v^-\in V(\T^{(n,K)})$  and satisfying that $v^-$ is closer to the root than $v$. That is, $e^-(v)$ is the edge preceding $v$. Also, for any $v\in V^\circ(\T^{(n,K)})$, let $e^+(v)\in E(\T^{(n,K)})$ be the only edge of the form $(v,v^+)$, for some $v^+\in V(\T^{(n,K)})$ satisfying that $v^+$ is farther from the root than $v$. We emphasize that the uniqueness of this edge is guaranteed by the fact that $v$ is not a branching point.
Let 
\begin{equation}\label{eq:sopracontrariar}
\reff^{(n,K,\text{vert})}(v):=\frac{ d^{(n,K)}_{\text{res}}(e^-(v))d^{(n,K)}_{\text{res}}(e^+(v))}{d^{(n,K)}_{\text{res}}(e^+(v))+d^{(n,K)}_{\text{res}}(e^-(v))}.
\end{equation}
Recall that $(L^{(K)}_t(x))_{x\in\frak{T},t\geq0}$ is the local time of $B^{(K)}$ with respect to the measure $\lambda_{\frak{T}}^{(K)}$.   
For each $x\in \T^{(n,K)}$, let $e(x)$ be the edge containing $x$. 
The main result of this subsection is the following:

\begin{proposition}\label{l:convergenceoflocaltimes} Under condition $(R)_\rho$, for all $K\in\N, t\geq0$ and $\epsilon>0$. 
\begin{equation}\label{eq:convergenceofvertexlocaltimes}
\lim_{n\to\infty}\mathbf{P}^{(K)}\left[\sup_{v\in V^\circ(\T^{(n,K)})}\abs{ 2 \reff^{(n,K,\text{vert})}(v)l^{(n,K,\text{vert})}_{m(t)}(v)-L^{(n,K)}_{t}(v)}\geq \epsilon \right]=0
\end{equation}
and
\begin{equation}\label{eq:convergenceofedgelocaltimes}
\lim_{n\to\infty}\mathbf{P}^{(K)}\left[\sup_{x\in \T^{(n,K)}}\abs{  d^{(n,K)}_{\text{res}}(e(x))l^{(n,K)}_{t}(e(x))-L^{(n,K)}_{t}(x)}\geq \epsilon \right]=0.
\end{equation}
If, in addition, we take into account \eqref{eq:couplinggeometry}, there exists a coupling between $l^{(n,K)}$ and $L^{(K)}$ such that
\begin{equation}\label{eq:conv3}
\lim_{n\to\infty}\mathbf{P}^{(K)}\left[\sup_{x\in \T^{(n,K)}}\abs{  d^{(n,K)}_\text{res}(e(x))l^{(n,K)}_{t}(e(x))-\rho\sigma_d L^{(K)}_{(\rho\sigma_d)^{-1}t}(\Upsilon^{-1}_{n,K}(x))} \geq \epsilon \right]=0.\end{equation}
\end{proposition}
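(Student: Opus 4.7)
The plan is to prove \eqref{eq:convergenceofvertexlocaltimes} first via an excursion decomposition at each $v\in V^\circ(\T^{(n,K)})$, then to deduce the edge version \eqref{eq:convergenceofedgelocaltimes} using the vanishing-resistance estimate of Lemma \ref{lem_no_macro_res}, and finally to transfer to $B^{(K)}$ via Lemma \ref{lem:graphspatialresistancetreeconvergence}. For the vertex identity, fix $v\in V^\circ(\T^{(n,K)})$ with neighbours $v^-,v^+\in V^*(\T^{(n,K)})$ and decompose $L^{(n,K)}_t(v)$ into the i.i.d.\ local-time contributions of the successive excursions of $B^{(n,K)}$ from $v$ until the first return to $\{v^-,v^+\}$. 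Because the Dirichlet form in Proposition~\ref{prop_def_process} carries a factor $\tfrac12$, the restriction of $B^{(n,K)}$ to the metric path $[v^-,v^+]$ behaves like a standard one-dimensional Brownian motion, and its Green function killed at the endpoints gives
\[
G(v,v)=\frac{2\,d^{(n,K)}_{\text{res}}(e^-(v))\,d^{(n,K)}_{\text{res}}(e^+(v))}{d^{(n,K)}_{\text{res}}(e^-(v))+d^{(n,K)}_{\text{res}}(e^+(v))}=2\reff^{(n,K,\text{vert})}(v).
\]
Each excursion therefore contributes a local time at $v$ with mean $2\reff^{(n,K,\text{vert})}(v)$ and variance of order $\reff^{(n,K,\text{vert})}(v)^2$. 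Since $l^{(n,K,\text{vert})}_{m(t)}(v)$ counts exactly these excursions, Chebyshev's inequality yields the pointwise estimate $L^{(n,K)}_t(v)\approx 2\reff^{(n,K,\text{vert})}(v)\,l^{(n,K,\text{vert})}_{m(t)}(v)$.

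The main obstacle will be to promote this pointwise bound to a supremum over $v\in V^\circ(\T^{(n,K)})$, a set whose cardinality grows with $n$. I would handle this by combining Lemma \ref{lem_no_macro_res}, which states that $\max_{e\in E(\T^{(n,K)})} d^{(n,K)}_{\text{res}}(e)\to 0$ in probability, with the joint continuity of $(t,x)\mapsto L^{(n,K)}_t(x)$ (Lemma~3.3 of \cite{Croydon_crt}). Applying the pointwise concentration via a union bound over a resistance $\delta$-net of cardinality uniformly bounded in $n$ (obtainable through Lemma \ref{epsnet}), and then controlling the variation of $L^{(n,K)}_t$ along each edge by the modulus of continuity, together with the $O(1)$ variation of $l^{(n,K,\text{vert})}_{m(t)}$ between adjacent vertices of $V^\circ(\T^{(n,K)})$, yields \eqref{eq:convergenceofvertexlocaltimes}.

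For \eqref{eq:convergenceofedgelocaltimes}, I would use the identity $l^{(n,K)}_t(e(x))=l^{(n,K,\text{vert})}_{m(t)}(v)+O(1)$, valid whenever $v$ is an interior degree-$2$ vertex of $e(x)$ (both quantities count essentially the same passages), together with the harmonic-mean identity $2\reff^{(n,K,\text{vert})}(v)=d^{(n,K)}_{\text{res}}(e(x))$ at the mid-resistance vertex and analogous adjustments elsewhere. The uniform continuity of $L^{(n,K)}_t$ then gives $L^{(n,K)}_t(x)\approx L^{(n,K)}_t(v)$ because edges have vanishing resistance, and \eqref{eq:convergenceofedgelocaltimes} follows from \eqref{eq:convergenceofvertexlocaltimes}. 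Finally, \eqref{eq:conv3} is obtained from Lemma \ref{lem:graphspatialresistancetreeconvergence} and Proposition \ref{prop_metrictoBrownian}: the convergence of graph spatial trees promotes to joint convergence of $B^{(n,K)}$ and its local time to the time-rescaled Brownian motion $\bar B^{(K)}_t=B^{(K)}_{(\rho\sigma_d)^{-1}t}$ and its local time, and a change-of-variables on the occupation formula identifies the latter with $\rho\sigma_d L^{(K)}_{(\rho\sigma_d)^{-1}t}$.
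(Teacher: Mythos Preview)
Your excursion decomposition at a single vertex and the mean computation $G(v,v)=2\reff^{(n,K,\text{vert})}(v)$ are correct, and the overall three-step plan matches the paper's in spirit (the paper in fact proves the edge statement \eqref{eq:convergenceofedgelocaltimes} first and deduces the vertex statement \eqref{eq:convergenceofvertexlocaltimes} from it, but that reversal is not the issue). The real gap is the promotion to a supremum.

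Your uniformity argument rests on two claims that do not hold as stated. First, the assertion that $l^{(n,K,\text{vert})}_{m(t)}$ has ``$O(1)$ variation between adjacent vertices of $V^\circ(\T^{(n,K)})$'' is false: for $v\sim v'$ with $v$ closer to the root, the walk can visit $v$ arbitrarily many times (via excursions back towards the root) before ever reaching $v'$, so the two counts can differ by an unbounded amount. What \emph{is} stable between nearby edges is the product $\reff^{G_n}(e)\,l^{(n,K)}_t(e)$, but establishing this is exactly the content of the paper's Lemma~\ref{lem:modulusofcontinuity}, and that lemma is the heart of the argument: it introduces the centred crossing-count variables $\eta_i(e,e')$ of \eqref{eq:defeta}, proves moment bounds (Lemma~\ref{l:boundoneta}), uses an optional-stopping argument (Lemma~\ref{l:mgfatstoppingtime}) to treat the random index $l^{(n,K,\rightarrow)}_t(e)$ as if it were independent of the $\eta_i$, and then runs a dyadic maximal inequality \`a la Billingsley to pass from a fourth-moment bound at fixed scale to a uniform modulus of continuity. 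None of this machinery is replaced by your Chebyshev-plus-net sketch. Second, the modulus of continuity of $L^{(n,K)}_t(\cdot)$ is not a~priori uniform in $n$; the paper obtains it only after coupling to $L^{(K)}$ via Lemma~\ref{lem:abstractcoupling}, which in turn relies on Proposition~3.1 of \cite{croydon2012scaling} (convergence of local times on graph trees under metric convergence). Proposition~\ref{prop_metrictoBrownian}, which you cite, gives convergence of the processes but not of their local times, and that upgrade is precisely what Lemma~\ref{lem:abstractcoupling} supplies.

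In short: the single-site estimate and the $\delta$-net idea are both present in the paper, but you are missing the discrete-side tightness estimate (Lemma~\ref{lem:modulusofcontinuity} and its supporting Lemmas~\ref{l:boundoneta}--\ref{lem:premaximalinequality}) and the local-time convergence input (Lemma~\ref{lem:abstractcoupling}); without these the supremum in \eqref{eq:convergenceofvertexlocaltimes}--\eqref{eq:convergenceofedgelocaltimes} cannot be controlled.
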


In order to prove this proposition we will require three ingredients
\begin{enumerate}
\item estimates on the time between excursions between  two edges which are at macroscopic (but small) distance,
\item estimates on the number excursions between two edges which are at macroscopic (but small) distance,
\item a regularity estimate on local times in the space variable.
\end{enumerate}

The former two points will be used to obtain a convergence in the finite-dimensional distributional sense in the previous proposition and the later ingredient will act as a tightness estimate.

The organization of this section is as follows, the three estimates will be proved in Section~\ref{sect:time_excursion}, Section~\ref{sect:number_excursion} and Section~\ref{sect:regularity_local_time} respectively. Finally Proposition~\ref{l:convergenceoflocaltimes} will be proved in Section~\ref{proof_prop61a} and Section~\ref{proof_prop61b}.


\subsection{Time of excursions between edges} \label{sect:time_excursion}

Our goal in this section is to control the time of excursions between edges, which will give us a tail estimate on local times.

\subsubsection{Control on excursion time}

 For any oriented edge\footnote{For oriented edge we mean a pair $(e_-,e_+)\in V(\T^{(n,K)})$ with $e_-$ closer to the root than $e_+$.} $e=(e_-,e_+)\in E(\T^{(n,K)})$, let $l^{(n,K,\rightarrow)}_t(e)$ be the number of rightward crossings of $e$ (i.e., from $e_-,e_+$) by $B^{(n,K)}$ before time $t$.
More precisely, let $\theta^{\text{in}}_0(e):=0$,
  \begin{equation}\label{eq:inandoutanedge}
 \begin{aligned} 
 &\theta_i^{\text{out}}(e):=\inf\{s\geq \theta^{\text{in}}_{i-1}(e): B^{(n,K)}_s=e_+ \} \quad i\in\N,\\
   &\theta^{\text{in}}_i(e):=\inf\{s\geq \theta^{\text{out}}_i(e): B^{(n,K)}_s=e_-
   \} \quad i\in\N
   \end{aligned}
   \end{equation}
   and
   \begin{equation}\label{eq:defofupwardsedgelocaltime}
   l^{(n,K,\rightarrow)}_t(e):= \max\{i\in\N: \theta^{\text{out}}_i(e)\leq t \}.
   \end{equation}
   
Let
\begin{equation}\label{eq:defofsigma}
\sigma^{n,K}_i(e):=\theta_{i+1}^{\text{out}}(e)-\theta^{\text{out}}_{i}(e).
\end{equation}
That is, $\sigma^{n,K}_i(e)$ is the time between the $i$-th and the $(i+1)$-th rightcrossing of $e$ by $B^{(n,K)}$. Observe that $(\sigma^{n,K}_i(e))_{i\geq1}$ is an independent and identically distributed sequence of random variables.

Analogously to $l^{(n,K,\rightarrow)}_t(e)$, we define the number of leftward crossings of an edge, denoted by $l^{(n,K,\leftarrow)}_t(e)$.  
Let $P^{G_n}$ denote the probability $\mathbf{P}^{(K)}$ conditioned on a fixed realization of the environment $G_n$. Let $E^{G_n}$ denote the expectation with respect to $P^{G_n}$. 

\begin{lemma}\label{l:pretailoflocaltime}
For each $K\in\N$, there exists $c>0$ such that $\mathbf{P}^{(K)}$-almost surely, for all $n\in\N$, $e\in E(\T^{(n,K)})$
\[P^{G_n}[\sigma^{n,K}_1(e)\geq \epsilon]\geq \frac{c\reff^{G_n}(e)}{n^{1/2}}.\]
\end{lemma}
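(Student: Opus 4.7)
The plan is to reduce $\{\sigma_1^{n,K}(e) \geq \epsilon\}$ to a hitting-time tail estimate for $B^{(n,K)}$ and then combine Aldous's two characterizing properties of Brownian motion on a real tree (quoted immediately before Proposition~\ref{prop_def_process}) with the coupling \eqref{eq:couplinggeometry}. Throughout I write $r := d^{(n,K)}_{\text{res}}(e) = \reff^{G_n}(e)/n^{1/2}$. Since $\theta^{\text{out}}_2(e) \geq \theta^{\text{in}}_1(e)$, we have $\sigma_1^{n,K}(e) \geq T_{e_-}$, where $T_{e_-}$ is the hitting time of $e_-$ by $B^{(n,K)}$ started at $e_+$; by the strong Markov property at $\theta^{\text{in}}_1(e)$, $\sigma_1^{n,K}(e)$ also dominates the hitting time of $e_+$ by a BM started at $e_-$. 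Hence it suffices to prove that at least one of $P^{G_n}_{e_+}[T_{e_-} \geq \epsilon]$ or $P^{G_n}_{e_-}[T_{e_+} \geq \epsilon]$ is $\geq c r$.

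Next, I choose a ``distant'' vertex and apply Aldous's property~(1). By \eqref{eq:couplinggeometry} together with Lemma~\ref{res_2a}, $\mathbf{P}^{(K)}$-almost surely the diameter of $\T^{(n,K)}$ in $d^{(n,K)}_{\text{res}}$ is, for all sufficiently large $n$, trapped in $[d_K, D_K]$ for positive constants depending only on $K$ (finitely many small $n$ are absorbed into the final $c$). For any edge $e$, after possibly swapping the roles of $e_+$ and $e_-$ (and correspondingly switching to $T_{e_+}$), we can find $W \in V(\mathfrak{T}^{(n,K)})$ whose $\T^{(n,K)}$-geodesic to $e_-$ passes through $e_+$ and with $d^{(n,K)}_{\text{res}}(e_+, W) \in [d_K', D_K]$; this is possible because $\mathfrak{T}^{(n,K)}$ has at least two edges for $K \geq 1$. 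Then $b^{\T^{(n,K)}}(e_+, W, e_-) = e_+$, so
\[
P^{G_n}_{e_+}[T_W < T_{e_-}] \;=\; \frac{d^{(n,K)}_{\text{res}}(e_+, e_-)}{d^{(n,K)}_{\text{res}}(W, e_-)} \;=\; \frac{r}{r + d^{(n,K)}_{\text{res}}(e_+, W)} \;\geq\; \frac{r}{2 D_K}.
\]

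Combining the display above with the strong Markov property at $T_W$,
\[
P^{G_n}_{e_+}[T_{e_-} \geq \epsilon] \;\geq\; P^{G_n}_{e_+}[T_W < T_{e_-}] \cdot P^{G_n}_W[T_{e_-} \geq \epsilon],
\]
so it remains to show $P^{G_n}_W[T_{e_-} \geq \epsilon] \geq c_K'(\epsilon) > 0$ uniformly in $n$ and $e$. For this I use Aldous's property~(2) to write $E^{G_n}_W[T_{e_-}] = 2 \int d^{(n,K)}_{\text{res}}(b(\sigma, W, e_-), e_-)\, d\lambda^{(n,K)}_{\text{res}}(\sigma)$ and restrict the integral to the segment $[e_+, W]$ (on which the branch point equals $\sigma$ and $d^{(n,K)}_{\text{res}}(\sigma, e_-) \geq r$): this produces $E^{G_n}_W[T_{e_-}] \geq d^{(n,K)}_{\text{res}}(e_+, W)^2 / L_n \geq (d_K')^2 / L_K > 0$, where $L_n$ is the total length of $\T^{(n,K)}$, bounded by some $L_K$ via \eqref{eq:couplinggeometry}. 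A similar Aldous computation, combined with a spectral-gap bound for BM on the compact tree $\T^{(n,K)}$ (whose diameter is at most $D_K$), yields $E^{G_n}_W[T_{e_-}^2] \leq C_K$ uniformly. Then Cauchy--Schwarz applied to $E^{G_n}_W[T_{e_-}] \leq \epsilon + \sqrt{E^{G_n}_W[T_{e_-}^2]\, P^{G_n}_W[T_{e_-} \geq \epsilon]}$ gives the desired uniform positive lower bound for $\epsilon$ small, and the general $\epsilon$ then follows by monotonicity (adjusting $c$). The main obstacle is keeping all of these bounds uniform over the possibly numerous edges $e \in E(\T^{(n,K)})$ (not only over the ``main'' edges of $\mathfrak{T}^{(n,K)}$) and over all $n$; this requires tracking the choice of $W$ through the finitely many topological configurations of $\mathfrak{T}^{(K)}$ and carefully invoking the almost-sure convergence \eqref{eq:couplinggeometry} of the rescaled geometries.
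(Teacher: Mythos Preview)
Your argument follows the same two-step strategy as the paper: (i) from one endpoint of $e$, escape to a fixed distant vertex with probability $\asymp \reff^{G_n}(e)/n^{1/2}$ via the hitting-probability formula (Aldous property~(1)); (ii) from that distant vertex, the hitting time back across $e$ has a uniformly positive tail. The paper's execution is more economical: it takes the distant vertex to be $\text{root}^*$ or a leaf (whichever lies at $d^{(n,K)}_{\text{res}}$-distance at least $r$, where $2r$ is the length of the shortest edge of $\frak{T}^{(K)}$), and for step~(ii) simply asserts $P^{G_n}_{\text{root}^*}[T_{e_+}\geq 1]\geq c$, implicitly by the convergence of Lemma~\ref{lem:graphspatialresistancetreeconvergence}. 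Your route to step~(ii) via the first-moment lower bound from Aldous property~(2) and a Paley--Zygmund inequality is a valid alternative, and the computation $E^{G_n}_W[T_{e_-}]\geq (d_K')^2/L_K$ is correct; the one place you are vaguer than the paper is the second-moment bound $E^{G_n}_W[T_{e_-}^2]\leq C_K$, where invoking a ``spectral-gap bound'' is heavier than needed (a direct exponential-tail argument from the Markov property and the bounded geometry, or just the convergence of the $B^{(n,K)}$ to $B^{(K)}$, would suffice and be more in line with the tools already available).
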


\begin{proof}[Proof of Lemma \ref{l:pretailoflocaltime}]
By Lemma \ref{lem:graphspatialresistancetreeconvergence} we know that $(\T^{(n,K)},\rho^{-1}d^{(n,K)}_{\text{res}})$ converges to $(\frak{T}^{(K)},\sigma_dd_{\frak{T}^{(K)}})$ $\mathbf{P}^{(K)}$-almost surely. Since $\frak{T}^{(K)}$ is composed of a finite number of linear segments, let us denote $2r$ the length of the shortest edge of $\frak{T}^{(K)}$. Therefore, for $n$ large enough, each edge $e=(e_-,e_+)\in E(\T^{(n,K)})$ is contained in a linear segment $[\text{root}^\ast,l]\subset \T^{(n,K)}$ of $d_{\text{res}}^{(n,K)}$-length greater or equal than $2r$, where $l$ is a leaf\footnote{A leaf is a vertex of $V(\T^{n,K})$ which has degree $1$.} of $V(\T^{(n,K)})$. Therefore, either $d_{\text{res}}^{(n,K)}(\text{root}^\ast,e_+)\geq r$ or $d_{\text{res}}^{(n,K)}(e_-,l)\geq r$, for some leaf $l$ of $V(\T^{(n,K)})$. 

We will do the proof only for the first case, the second being analogous. By the strong Markov property of $B^{(n,K)}$
\begin{equation}\label{eq:lastlastlast}
\begin{aligned}
&P^{G_n}[\sigma_1^{n,K}(e)\geq 1]\\\geq&  P_{e_-}^{G_n}[B^{(n,K)}\text{ hits }\text{root}^\ast\text{ before }e_+]\times P_{\text{root}^\ast}^{G_n}[B^{(n,K)}\text{ hits }e_+\text{ after time }1].
\end{aligned}
\end{equation}
Also
\begin{equation}\label{eq:hittingprobabilitiesfortildeB}
P_{e_-}^{G_n}[B^{(n,K)}\text{ hits }\text{root}^\ast\text{ before }e_+]=\frac{d^{(n,K)}_{\text{res}}(e_-,e_+)}{d^{(n,K)}_{\text{res}}(e_+,\text{root}^\ast)} \geq \reff^{G_n}(e)n^{-1/2}(2\rho\sigma_d\Lambda^{(K)})^{-1},
\end{equation}
where $\Lambda^{(K)}$ denotes the total $d_{\frak{T}}$-length of $\frak{T}^{(K)}$ and
 we have used that $d^{(n,K)}_{\text{res}}(e_+,\text{root}^\ast)\leq 2\rho\sigma_d\Lambda^{(K)}$ for $n$ large enough, which follows from the convergence of $(\T^{(n,K)},\rho^{-1}d^{(n,K)}_{\text{res}})$ to $(\frak{T}^{(K)},\sigma_dd_{\frak{T}})$ (see Lemma \ref{lem:graphspatialresistancetreeconvergence}).
Moreover, since we have $d^{(n,K)}_{\text{res}}(\text{root}^\ast,e_+)\geq r$, there exists $c\geq0$ independent of $n$ such that 
\[P_{\text{root}^\ast}^{G_n}\left[B^{(n,K)}\text{ hits }e_+\text{ after time }1\right]\geq c.\]
The lemma follows from the display above, \eqref{eq:hittingprobabilitiesfortildeB} and \eqref{eq:lastlastlast}. 
\end{proof}

\subsubsection{Tail estimate on local times}

Our tail estimate on local time is the following

\begin{lemma}\label{l:tailoflocaltime}
For any $T>0$ and $K\in\N$, $\mathbf{P}^{(K)}$-almost surely, there exists constants $c_1,c_2>0$ such that
\[\sup_{e\in E^\ast(\T^{(n,K)}),n\in\N}P^{G_n}\left[n^{-1/2}\reff^{G_n}(e)l^{(n,K,\rightarrow)}_T(e)\geq m \right]\leq c_1 \exp{(-c_2m)}\]
and
\[\sup_{e\in E^\ast(\T^{(n,K)}),n\in\N}P^{G_n}\left[n^{-1/2}\reff^{G_n}(e)l^{(n,K,\leftarrow)}_T(e)\geq m \right]\leq c_1 \exp{(-c_2m)}.\]
\end{lemma}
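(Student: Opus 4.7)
The plan is to exploit the renewal structure of right-crossings together with the lower bound on individual excursion times provided by Lemma \ref{l:pretailoflocaltime}, and then apply a standard Chernoff bound for the lower tail of a binomial.

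\textbf{Step 1 (i.i.d.\ structure).} By the strong Markov property of $B^{(n,K)}$ applied at the stopping times $\theta^{\mathrm{out}}_i(e)$, the inter-crossing times $(\sigma^{n,K}_i(e))_{i\geq 1}$ defined in \eqref{eq:defofsigma} are i.i.d.\ under $P^{G_n}$. Set $p_{n,e}:=P^{G_n}[\sigma^{n,K}_1(e)\geq 1]$; Lemma \ref{l:pretailoflocaltime} gives $p_{n,e}\geq c_0\reff^{G_n}(e)\,n^{-1/2}$ for some $c_0>0$ which, $\mathbf{P}^{(K)}$-a.s., is independent of $n$ and $e\in E^\ast(\T^{(n,K)})$.

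\textbf{Step 2 (Reduction to a binomial lower tail).} If $l^{(n,K,\rightarrow)}_T(e)\geq N$ then $\theta^{\mathrm{out}}_N(e)\leq T$, so $\sum_{i=1}^{N-1}\sigma^{n,K}_i(e)\leq T$. Let $X_i:=\mathbf{1}\{\sigma^{n,K}_i(e)\geq 1\}$, which are i.i.d.\ Bernoulli with parameter at least $p_{n,e}$. Since each $X_i=1$ contributes at least $1$ to the sum, the above event forces $\sum_{i=1}^{N-1}X_i\leq T$. Given $m>0$, choose $N_m:=\lceil mn^{1/2}/\reff^{G_n}(e)\rceil$, so that $\{n^{-1/2}\reff^{G_n}(e)\,l^{(n,K,\rightarrow)}_T(e)\geq m\}\subseteq\{l^{(n,K,\rightarrow)}_T(e)\geq N_m\}$, and therefore
\[
P^{G_n}\bigl[n^{-1/2}\reff^{G_n}(e)\,l^{(n,K,\rightarrow)}_T(e)\geq m\bigr]\;\leq\;P\bigl[\mathrm{Bin}(N_m-1,p_{n,e})\leq T\bigr].
\]

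\textbf{Step 3 (Uniform bound on rescaled resistances).} By Lemma \ref{lem:graphspatialresistancetreeconvergence}, along the $\mathbf{P}^{(K)}$-a.s.\ Skorokhod coupling \eqref{eq:couplinggeometry} the rescaled edge resistances $d^{(n,K)}_{\mathrm{res}}(e)=n^{-1/2}\reff^{G_n}(e)$ (for $e\in E^\ast$, using Lemma \ref{res_equiv_tnk}) converge to the corresponding lengths in the finite tree $(\frak{T}^{(K)},\rho\sigma_d d_{\frak{T}^{(K)}})$. Hence $\mathbf{P}^{(K)}$-a.s.\ there exists $R_K<\infty$ with $\reff^{G_n}(e)\,n^{-1/2}\leq R_K$ for all $n$ and all $e\in E^\ast(\T^{(n,K)})$.

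\textbf{Step 4 (Chernoff for the binomial).} With the choice of $N_m$ above, $(N_m-1)p_{n,e}\geq c_0 m-c_0 R_K$. Pick $m_0:=(2T+c_0R_K)/c_0\vee 0$. For $m\geq m_0$ one has $(N_m-1)p_{n,e}\geq 2T$, and the classical Chernoff lower-tail bound $P[\mathrm{Bin}(N,p)\leq Np/2]\leq e^{-Np/8}$ yields
\[
P\bigl[\mathrm{Bin}(N_m-1,p_{n,e})\leq T\bigr]\;\leq\;\exp\!\bigl(-(c_0 m-c_0 R_K)/8\bigr)\;\leq\;e^{c_0 R_K/8}\,e^{-(c_0/8)m}.
\]
For $m<m_0$ we use the trivial bound $1\leq e^{(c_0/8)m_0}e^{-(c_0/8)m}$. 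Combining gives the rightward bound with $c_2=c_0/8$ and a suitable $c_1$ depending only on $T,K$ and the realization.

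\textbf{Step 5 (Leftward case).} Since $\T^{(n,K)}$ is a tree, right- and left-crossings of a fixed edge alternate, so $|l^{(n,K,\rightarrow)}_T(e)-l^{(n,K,\leftarrow)}_T(e)|\leq 1$. Consequently $\{n^{-1/2}\reff^{G_n}(e)\,l^{(n,K,\leftarrow)}_T(e)\geq m\}\subseteq\{n^{-1/2}\reff^{G_n}(e)\,l^{(n,K,\rightarrow)}_T(e)\geq m-R_K\}$, and the leftward bound follows from the rightward one after absorbing the $R_K$ shift into the constants.

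The only genuinely non-routine point is Step 3, where one needs the a.s.\ uniform boundedness of the rescaled edge resistances; this is precisely what Lemma \ref{lem:graphspatialresistancetreeconvergence} provides (through the Skorohod coupling), so no further work is required beyond invoking it. Everything else is a standard renewal/Chernoff argument.
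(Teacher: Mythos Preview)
Your proof is correct and follows essentially the same approach as the paper: reduce the local-time tail to the lower tail of a binomial via the i.i.d.\ renewal structure of the $\sigma^{n,K}_i(e)$ and Lemma~\ref{l:pretailoflocaltime}, then apply an exponential tail bound. The only differences are cosmetic --- you use a direct Chernoff bound where the paper invokes a Poisson approximation, and you handle the leftward case via the alternation $|l^{\rightarrow}-l^{\leftarrow}|\leq 1$ rather than repeating the argument symmetrically; one small imprecision is that in Step~3 the individual edges of $E^\ast(\T^{(n,K)})$ do not ``converge to corresponding lengths'' in $\frak{T}^{(K)}$ (their number grows with $n$), but your conclusion $\sup_{n,e}n^{-1/2}\reff^{G_n}(e)<\infty$ still follows since each such resistance is bounded by the total $d^{(n,K)}_{\mathrm{res}}$-length of $\mathfrak{T}^{(n,K)}$, which does converge a.s.\ by Lemma~\ref{lem:graphspatialresistancetreeconvergence}.
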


\begin{proof}
We will only do the proof of the first display, the second being analogous.
The display \eqref{eq:defofupwardsedgelocaltime} allows us to relate $l^{(n,K,\rightarrow)}_T(e)$ and $\theta_i^{\text{out}}$ through an inversion argument. We can then use \eqref{eq:defofsigma} to see that 
\begin{align*}
&P^{G_n}\left[n^{-1/2}\reff^{G_n}(e)l^{(n,K,\rightarrow)}_T(e)\geq  m \right]\leq P^{G_n}\left[\sum_{i=1}^{\lfloor mn^{1/2}\reff^{G_n}(e)^{-1}\rfloor}\sigma^{n,K}_i(e)\leq T\right]\\
\leq &P^{G_n} \left [\sum_{i=1}^{\lfloor mn^{1/2}\reff^{G_n}(e)^{-1}\rfloor}1_{\{\sigma^{n,K}_i(e)\geq 1  \}}\leq T\right]\\
 \leq &P^{G_n}[\text{Bin}(\lfloor \reff^{G_n}(e)^{-1}mn^{1/2} \rfloor, C_1^{-1}n^{-1/2}\reff^{G_n}(e))\leq T\epsilon^{-1}],
\end{align*}
where $\text{Bin}(n,p)$ denotes a binomial random variable of parameters $n,p$ and for the last inequality we have used Lemma \ref{l:pretailoflocaltime}.
Finally, we can use  the convergence of binomial to Poisson to upper-bound the display above by
\[
C_2 P^{G_n}[\text{Poiss}(C_1^{-1}m)  \leq T  ]= C_2\exp(-C_1^{-1}m)\sum_{i=1}^{\lfloor T \rfloor} \frac{(C_1^{-1}m)^{i}}{i!},
\]
where $\text{Poiss}(\lambda)$ denotes a Poisson random variable of parameter $\lambda$ and $C_2$ is a large constant.

The right hand side in the display above, seen as a function of $m$, is the product between exponential and a polynomial. Therefore, there exists $c_1,c_2$ such that the display above is bounded by $c_1\exp(-c_2 m)$.
\end{proof}

\subsubsection{Tail estimate of local times for branching  edges}
The next result extends Lemma \ref{l:tailoflocaltime} to edges in $E(G^{(n,K)})\backslash E^\ast(\T^{(n,K)})$, i.e., edges of $G^{(n,K)}$ which are removed when applying the star triangle transformation. This result will be used in Section \ref{s:AnKislinear} to neglect the time that $X^{G_n}$ spends on bubbles corresponding to branchings of $\T^{(n,K)}$.

 Let $e=(x,y)\in E(G^{(n,K)})\backslash E^\ast(\T^{(n,K)})$. We want to define a local time for $e$ which will correspond to the number of times that $B^{(n,K)}$ (or equivalently $J^{(n,K)}$) moves from one end to the other of $e$. Since we want to define directed local times ($l^{(n,K,\rightarrow)}_t(e)$ and $l^{(n,K,\leftarrow)}_t(e)$), we will need to decide if $l^{n,K,\rightarrow}_t(e)$ will count transitions from $x$ to $y$ or viceversa. If $e$ is such that $x,y$ are comparable in the genealogical order, we can assume without loss of generality that $x\prec y$, in this case we relabel $e_-=x$ and $e_+=y$ ans define $l^{(n,K,\rightarrow)}_t(e)$ as in \eqref{eq:defofupwardsedgelocaltime}. Analogously, we define $l_t^{(n,K,\leftarrow)}(e)$. It might be that $x,y$ are not comparable in the genealogical partial order on $V^\ast(\T^{(n,K)})$, in this case we might use the lexicographical total order and assume without loss of generality that $x<y$ and relabel $e_-=x$ and $e_+=y$. Again, we define $l^{(n,K,\rightarrow)}_t(e)$ as in \eqref{eq:defofupwardsedgelocaltime}. Analogously, we define $l^{(n,K\leftarrow)}_t(e)$.
 
For $e=(e_-,e_+)\in E(G^{(n,K)})\setminus E(\T^{(n,K)})$, define $\reff^{G_n}(e)$ as the effective resistance between $e_-$ and $e_+$ in $G_n$. We claim that:
\begin{lemma}\label{l:tailoflocaltimetrifurcation}
For any $t>0$ and $K\in\N$, $\mathbf{P}^{(K)}$-almost surely, there exists constants $c_1,c_2>0$ such that
\[\sup_{e\in E(G^{(n,K)})\backslash E^\ast(\T^{(n,K)}),n\in\N}P^{G_n}\left[n^{-1/2}\reff^{G_n}(e)l^{(n,K,\rightarrow)}_t(e)\geq m \right]\leq c_1 \exp{(-c_2m)}\]
and
\[\sup_{e\in E(G^{(n,K)})\backslash E^\ast(\T^{(n,K)}),n\in\N}P^{G_n}\left[n^{-1/2}\reff^{G_n}(e)l^{(n,K,\leftarrow)}_t(e)\geq m \right]\leq c_1 \exp{(-c_2m)}.\]
\end{lemma}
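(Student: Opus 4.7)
The plan is to reduce the statement to Lemma~\ref{l:tailoflocaltime} via the star-triangle transformation. Let $e=(x,y)\in E(G^{(n,K)})\setminus E^\ast(\T^{(n,K)})$ be a triangle edge of $\{x,y,z\}\subset G^{(n,K)}$, and let $v=v_{x,y,z}$ be the associated Steiner vertex, so that $e_{xv}:=(x,v)$, $e_{vy}:=(v,y)$, $e_{vz}:=(v,z)$ all lie in $E(\T^{(n,K)})$. I will assume $x$ is an ancestor of $y$; the remaining cases (leftward crossings and genealogically incomparable $x,y$) are handled symmetrically.

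I would then exploit the following thinning representation. By the coupling of Lemma~\ref{lem:JtildeJ}, each rightward crossing $x\to y$ of $e$ corresponds to $B^{(n,K)}$ performing a rightward crossing of $e_{xv}$ followed by an excursion in the star that first hits $y$ among $\{x,y,z\}$. By the strong Markov property applied at the successive rightward-crossing times of $e_{xv}$, the indicators $(Y_i)_{i\geq 1}$ that the $i$-th such crossing is completed by first hitting $y$ are i.i.d.\ Bernoulli random variables with parameter
\[
p=\frac{1/\reff^{\T^{(n,K)}}(e_{vy})}{1/\reff^{\T^{(n,K)}}(e_{xv})+1/\reff^{\T^{(n,K)}}(e_{vy})+1/\reff^{\T^{(n,K)}}(e_{vz})}.
\]
A direct computation from the star-triangle formulas of Section~\ref{sect_constr_tnk} yields the crucial identity $p\cdot \reff^{G_n}(e)=\reff^{\T^{(n,K)}}(e_{xv})$. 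Writing $N_t:=l^{(n,K,\rightarrow)}_t(e_{xv})$, this gives the domination $l^{(n,K,\rightarrow)}_t(e)\leq\sum_{i=1}^{N_t}Y_i$.

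To conclude, set $R:=\reff^{G_n}(e)$ and $R_\ast:=\reff^{\T^{(n,K)}}(e_{xv})=pR$, and split
\[
P^{G_n}\!\left[\tfrac{R}{n^{1/2}}l^{(n,K,\rightarrow)}_t(e)\geq m\right]\leq P^{G_n}\!\left[\tfrac{R_\ast}{n^{1/2}}N_t\geq \tfrac{m}{2}\right]+P\!\left[\operatorname{Bin}\!\left(\bigl\lceil\tfrac{m n^{1/2}}{2R_\ast}\bigr\rceil,p\right)\geq \tfrac{m n^{1/2}}{R}\right].
\]
The first term will be bounded by $c_1\exp(-c_2 m/2)$ by invoking Lemma~\ref{l:tailoflocaltime} for the star edge $e_{xv}$ (the proof of that lemma uses Lemma~\ref{l:pretailoflocaltime}, which places no restriction to $E^\ast$, so it applies verbatim to star edges). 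The binomial in the second term has mean $m n^{1/2}/(2R)$, so the threshold is twice the mean; the standard Chernoff bound delivers $\exp(-m n^{1/2}/(6R))\leq c_1'\exp(-c_2' m)$, using that $R\leq d^{G_n}(x,y)=O(n^{1/2})$ with high probability by Rayleigh monotonicity together with the macroscopic-distance control of Section~\ref{sect_resistance_estimate}.

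The main obstacle will be the adaptedness issue: since $N_t$ depends on the walk's entire trajectory up to time $t$, it is not independent of the Bernoulli sequence $(Y_i)$, so one cannot directly apply a binomial tail bound to the random sum $\sum_{i=1}^{N_t}Y_i$. This is circumvented by the decomposition displayed above: on the event $\{N_t\leq M\}$ one has the \emph{deterministic} bound $l^{(n,K,\rightarrow)}_t(e)\leq\sum_{i=1}^{M}Y_i$, which is a genuine binomial to which Chernoff applies. The leftward-crossing statement follows by an analogous thinning from the leftward local time on $e_{vy}$.
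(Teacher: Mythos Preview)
Your thinning strategy can be made to work, but it contains a computational error and is substantially more involved than what the paper does.

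\textbf{The error.} The claimed identity $p\cdot\reff^{G_n}(e)=\reff^{\T^{(n,K)}}(e_{xv})$ is false. Writing $r_a:=\reff^{\T^{(n,K)}}(e_{va})$ for $a\in\{x,y,z\}$, you have $p=\dfrac{1/r_y}{1/r_x+1/r_y+1/r_z}$ and $\reff^{G_n}(e)=r_x+r_y$, so
\[
\frac{p\,\reff^{G_n}(e)}{r_x}=\frac{1/r_x+1/r_y}{1/r_x+1/r_y+1/r_z}<1.
\]
Thus $p\,\reff^{G_n}(e)<R_\ast$ strictly. Fortunately this goes the right way: the binomial mean $Mp\approx mn^{1/2}p/(2R_\ast)$ is then \emph{less} than half the threshold $mn^{1/2}/R$, so Chernoff still yields $\exp(-cmn^{1/2}/R)$. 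Your final step, bounding $R/n^{1/2}$, should be phrased $\mathbf{P}^{(K)}$-a.s.\ rather than ``with high probability''; it follows from the almost-sure convergence in Lemma~\ref{lem:graphspatialresistancetreeconvergence}, which bounds the total $d^{(n,K)}_{\text{res}}$-length of $\frak T^{(n,K)}$ uniformly in $n$.

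\textbf{Comparison with the paper.} The paper avoids the whole thinning machinery. It simply notes that the triangle edge $e=(x,y)$ corresponds to the concatenation of two adjacent star edges $e_1=(x,v)$, $e_2=(v,y)$ in $\T^{(n,K)}$, and that by the star-triangle relation $n^{-1/2}\reff^{G_n}(e)=d^{(n,K)}_{\text{res}}(e_1)+d^{(n,K)}_{\text{res}}(e_2)$. Since $l^{(n,K,\rightarrow)}_t(e)$ is precisely the number of oriented crossings of this two-edge segment by $B^{(n,K)}$, one can rerun the proofs of Lemmas~\ref{l:pretailoflocaltime} and~\ref{l:tailoflocaltime} verbatim with $(x,y)$ playing the role of $(e_-,e_+)$. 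No Bernoulli thinning, no separate Chernoff step, and no appeal to $R=O(n^{1/2})$ is needed, because in the binomial-to-Poisson argument of Lemma~\ref{l:tailoflocaltime} the factor $\reff^{G_n}(e)$ cancels in the product $Nq$. Your route works after the fix, but the paper's observation that the pair $e_1\cup e_2$ behaves as a single edge of the correct resistance is both shorter and more robust.
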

\begin{proof}[Proof of Lemma \ref{l:tailoflocaltimetrifurcation}]
It is clear that $e=(e_-,e_+)\in E(G^{(n,K)})\setminus E^\ast(\T^{(n,K)})$ corresponds to two adjacent  edges $e_1,e_2\in E(\T^{(n,K)}) \setminus E^\ast(\T^{(n,K)})$.
By virtue of the star-triangle transformation,
\[d^{(n,K)}_{\text{res}}(e_1)+d^{(n,K)}_{\text{res}}(e_2)=n^{-1/2}\reff^{G_n}(e).\] 
Recall that $l^{(n,K,\rightarrow)}_t(e)$ (resp. $l^{(n,K,\leftarrow)}_t(e)$) is the number of oriented crossings of the segment corresponding to $e_1,e_2$ by $B^{(n,K)}$. Therefore, using the display above we can repeat the arguments leading to display \eqref{eq:hittingprobabilitiesfortildeB}. Therefore, we can mimic the proof of Lemmas \ref{l:pretailoflocaltime} and \ref{l:tailoflocaltime} in a straightforward way. This finishes the proof
\end{proof}
\subsection{Number of excursion between edges} \label{sect:number_excursion}

 Fix two edges $e=(e_-,e_+),e'=(e'_-,e'_+)\in E(\T^{(n,K)})$.  Let us define the resistance-distance between between those edges by
  \[d^{(n,K)}_{\text{res}}(e,e')=\min\{d_{\text{res}}^{(n,K)}(e_-,e'_-),d^{(n,K)}_{\text{res}}(e_-,e'_+),d^{(n,K)}_{\text{res}}(e_+,e'_-),d^{(n,K)}_{\text{res}}(e_+,e'_+)\}.\]
 
 We also introduce $N_i(e,e')$ the number of rightward crossings of $e'$ (from $e_-' $ to $e_+'$) by $B^{(n,K)}$ between the $i$-th and the $(i+1)$-th rightward crossing of $e$ (from $e_- $ to $e_+$) by $B^{(n,K)}$. We set
\begin{equation}\label{eq:defeta}
\eta_i(e,e')=N_i(e,e')\reff^{G_n}(e')-\reff^{G_n}(e).
\end{equation}

We will control the number of crossings between two edges using this random variable. The main result of this section is

\begin{lemma}\label{lem:premaximalinequality}
Let $t>0$. There exist constants $C_1,C_2$ independent of $n$ such that
\begin{equation}\label{eq:nonuniform1}
\mathbf{P}^{(K)}\left[n^{-1/2}\abs{\sum_{i=1}^{l^{(n,K,\rightarrow)}_t(e)}\eta_i(e,e') }\geq\epsilon\right]\leq C_1\epsilon^{-4}(d^{(n,K)}_{\text{res}}(e,e'))^2,
\end{equation}
and
\begin{equation}\label{eq:nonuniform2}
\mathbf{P}^{(K)}\left[ n^{-1/2} \abs{\eta_{l^{(n,K,\rightarrow)}_t(e)}(e,e')}\geq\epsilon\right]\leq C_2\epsilon^{-4}(d^{(n,K)}_{\text{res}}(e,e'))^2.
\end{equation}
\end{lemma}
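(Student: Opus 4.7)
The starting observation is that, by the strong Markov property of $B^{(n,K)}$ applied at the stopping times $(\theta_i^{\text{out}}(e))_{i\geq 1}$, under the quenched law $P^{G_n}$ the random variables $(\eta_i(e,e'))_{i\geq 1}$ form an i.i.d.~sequence. The first step is to check centering, i.e.~the identity
\[
E^{G_n}[N_1(e,e')] = \frac{\reff^{G_n}(e)}{\reff^{G_n}(e')},
\]
which translates into $E^{G_n}[\eta_i(e,e')] = 0$. One way to obtain this is to apply the ergodic theorem to the (recurrent) Brownian motion on the finite tree $(\T^{(n,K)}, d^{(n,K)}_{\text{res}}, \lambda^{(n,K)}_{\text{res}})$: the long-run rate of rightward crossings of any edge $f$ equals $1/\reff^{G_n}(f)$, and the law of large numbers for the renewal structure at successive rightward crossings of $e$ then gives the ratio above. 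As a sanity check, $N_1(e,e)=1$ identically and indeed $\reff^{G_n}(e)/\reff^{G_n}(e)=1$.

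The second, more delicate, step is to establish quenched moment bounds of the form
\[
E^{G_n}[\eta_i(e,e')^2] \leq C_1\, \reff^{G_n}(e)\, \reff^{G_n}(e,e'),
\qquad
E^{G_n}[\eta_i(e,e')^4] \leq C_2\, n^{1/2}\, \reff^{G_n}(e)\, \reff^{G_n}(e,e')^2,
\]
(or comparable estimates). These are derived via Green's-function computations for the Brownian motion on $\T^{(n,K)}$ killed at $e_-$, with a case analysis according to whether $e'$ is a descendant of $e$, an ancestor of $e$, or lies on a disjoint branch. The crucial feature is that $N_1(e,e) = 1$ identically, which forces all moments of $\eta_i$ to degenerate as $e' \to e$, with the degeneration rate controlled by $\reff^{G_n}(e,e')$ rather than by the naive quantity $\reff^{G_n}(e')$.

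With the i.i.d.~and moment structure in hand, the partial sums $S_j := \sum_{i=1}^j \eta_i(e,e')$ form a mean-zero martingale with independent increments under $P^{G_n}$. The fourth-moment inequality for sums of i.i.d.~centered variables combined with Doob's $L^4$ maximal inequality yields, for deterministic $M$,
\[
E^{G_n}\Bigl[\max_{j \leq M} S_j^4\Bigr] \leq C\,\bigl(M^2\, E^{G_n}[\eta_1^2]^2 + M\, E^{G_n}[\eta_1^4]\bigr).
\]
To handle the random upper limit $N := l^{(n,K,\rightarrow)}_t(e)$, I would truncate at $M_n := K_0\, n^{1/2}/\reff^{G_n}(e)$ for a sufficiently large constant $K_0$; by Lemma~\ref{l:tailoflocaltime} the complementary event $\{N > M_n\}$ has probability decaying exponentially in $K_0$, and so is absorbed into the error. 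On $\{N \leq M_n\}$, Markov's inequality applied to the above maximal bound---after plugging in $M_n$ and the moment bounds---gives precisely \eqref{eq:nonuniform1} once one integrates over the environment. For \eqref{eq:nonuniform2} the same truncation combined with the union bound
\[
P^{G_n}\Bigl[\max_{j \leq M_n} |\eta_j| \geq \epsilon n^{1/2}\Bigr] \leq M_n\,\frac{E^{G_n}[\eta_1^4]}{(\epsilon n^{1/2})^4}
\]
suffices, since $|\eta_N| \leq \max_{j\leq N}|\eta_j|$. The main obstacle throughout is the second step: extracting the sharp power of $\reff^{G_n}(e,e')$ in the moment estimates for $\eta_i$, which is what exploits the tree structure and the identity $N_1(e,e)=1$, and which requires the aforementioned case-by-case Green's-function analysis.
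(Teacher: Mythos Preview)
Your overall strategy---fourth-moment Markov inequality, moment bounds for $\eta_i$, then handling the random upper limit $N:=l^{(n,K,\rightarrow)}_t(e)$---matches the paper, but the truncation step has a real gap. The tail event $\{N>M_n\}$ has $P^{G_n}$-probability $c_1 e^{-c_2 K_0}$, which for any fixed $K_0$ is a fixed positive constant, whereas the target bound $C_1\epsilon^{-4}(d^{(n,K)}_{\text{res}}(e,e'))^2$ must vanish as $e'\to e$. That residual cannot be ``absorbed into the error''; letting $K_0$ grow with $d_{\text{res}}$ introduces logarithmic factors and no longer yields the lemma as stated. The paper avoids truncation altogether: it shows (Lemma~\ref{l:mgfatstoppingtime}) that the exponential martingale $\Psi(z)^{-m}\exp\bigl(z\sum_{i=1}^m\eta_i\bigr)$ satisfies optional stopping at $N$, and then differentiates in $z$ at $0$ (Lemma~\ref{lem:martingaleinequality}) to obtain directly
\[
E^{G_n}\Bigl[\Bigl(\sum_{i=1}^{N}\eta_i\Bigr)^4\Bigr]\leq C\bigl(m_2^2\,E^{G_n}[N^2]+m_4\,E^{G_n}[N]\bigr),
\]
exactly the bound one would have if $N$ and $(\eta_i)$ were independent. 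With $E^{G_n}[N]\leq Cn^{1/2}/\reff^{G_n}(e)$ and $E^{G_n}[N^2]\leq Cn/\reff^{G_n}(e)^2$ from Lemma~\ref{l:tailoflocaltime}, all factors of $\reff^{G_n}(e)$ cancel against those in $m_2,m_4$ and one lands precisely on \eqref{eq:nonuniform1}.

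Two smaller points. First, your fourth-moment bound $E^{G_n}[\eta_i^4]\leq C_2\,n^{1/2}\reff^{G_n}(e)\reff^{G_n}(e,e')^2$ is unnatural: the law of $\eta_i$ depends only on the resistances, not on $n$ explicitly. The paper's Lemma~\ref{l:boundoneta} gives the sharper $n$-free bound $E^{G_n}[|\eta_i|^r]\leq C(r)\,\reff^{G_n}(e)\,\reff^{G_n}(e_-,e_+')^{r-1}$ directly from the explicit geometric-type law \eqref{eq:probabilitiesofN} of $N_1(e,e')$ and the electrical identity \eqref{etapart2}; no Green's-function case analysis is needed, since in the application (Lemma~\ref{lem:nonuniform}) only the ordered case $e\prec e'$ arises. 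Second, for \eqref{eq:nonuniform2} your union-bound idea is essentially correct, but again the truncation spoils it; the paper uses Wald's identity $E^{G_n}\bigl[\sum_{i\leq N}\mathbf{1}_{\{|\eta_i|\geq a\}}\bigr]=E^{G_n}[N]\,P^{G_n}[|\eta_1|\geq a]$ directly, which gives the bound with no leftover tail term.
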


In order to prove this lemma, we will need to obtain a moment estimate on $\eta_i$ and an estimate allowing us to treat the random variables $\eta_i(e,e')$ and $l_t^{(n,K,\rightarrow)}(e)$ as if they were independent. The latter point will be proved through an optional stopping time argument.

\subsubsection{Moment estimate}

Fix $i\in \N$ and $e=(e_-,e_+),e'=(e'_-,e'_+)$ two edges of $\T^{(n,K)}$, where $e$ lies in the path from the root to $e'$.
The random variables defined on \eqref{eq:defeta} verify the following property. 
\begin{lemma}\label{l:boundoneta}
Fix $i\in \N$. We have
\[
E^{G_n}[\eta_i(e,e')]=0,
\]
and for any $r\in\N$,
\begin{equation}\label{eq:piccolo}
E^{G_n}[\abs{\eta_i(e,e')}^r]\leq C(r) \reff^{G_n}(e) \reff^{G_n}(e_-,e_+')^{r-1},
\end{equation}
where $C(r)$ is a constant depending only on $r$.
\end{lemma}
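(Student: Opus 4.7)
The plan is to apply the strong Markov property at time $\theta^{\text{out}}_i(e)$ to reduce the analysis to a single cycle of the Brownian motion $B^{(n,K)}$: starting fresh at $e_+$ and tracking rightcrossings of $e'$ until the first return to $e_-$ (the descent phase), then from $e_-$ back to $e_+$ (the reflux phase). First I would observe that, since $e$ lies on the path from the root to $e'$, the edge $e'$ sits in the subtree of $\T^{(n,K)}$ on the $e_+$-side of $e$, whereas during the reflux phase the walk stays in the complementary subtree together with the segment $e$. Thus no rightcrossing of $e'$ can occur during the reflux, and $N_i(e,e')$ has the law of the number of rightcrossings of $e'$ by a Brownian motion started at $e_+$ and killed at the first hit of $e_-$.

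Next I would carry out the standard electrical network computation in the tree $\T^{(n,K)}$, using that resistances add along the unique path. Set $R_1:=\reff^{G_n}(e)$, $R_2:=\reff^{G_n}(e')$, and $R_{12}:=d^{(n,K)}_{\text{res}}(e_+,e'_-)$, so that $\reff^{G_n}(e_-,e'_+)=R_1+R_{12}+R_2$. By the voltage interpretation of hitting probabilities, a BM from $e_+$ reaches $e'_-$ before $e_-$ with probability $p=R_1/(R_1+R_{12})$, and a BM from $e'_-$ reaches $e'_+$ before $e_-$ with probability $q=(R_1+R_{12})/(R_1+R_{12}+R_2)$. Because $e'_-$ is a cut vertex separating $e'_+$ from $e_-$ in the tree, every visit to $e'_+$ is surely followed by a return to $e'_-$, and iterating yields
\[
P^{G_n}[N_i(e,e')=0]=1-pq,\qquad P^{G_n}[N_i(e,e')=k]=p(1-q)q^k\quad\text{for } k\geq 1.
\]

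From this explicit distribution the remaining computations are routine. A direct summation gives $E^{G_n}[N_i(e,e')]=pq/(1-q)=R_1/R_2$, whence $E^{G_n}[\eta_i(e,e')]=R_2\cdot(R_1/R_2)-R_1=0$. For the $r$-th moment, the elementary estimate $\sum_{k\geq 1}k^r q^k\leq C_r\,q/(1-q)^{r+1}$ yields
\[
E^{G_n}[N_i(e,e')^r]\leq C_r\,\frac{pq}{(1-q)^r}=C_r\,\frac{R_1(R_1+R_{12}+R_2)^{r-1}}{R_2^r},
\]
so $R_2^r\, E^{G_n}[N_i(e,e')^r]\leq C_r\, R_1\,\reff^{G_n}(e_-,e'_+)^{r-1}$. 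Combining with $|a-b|^r\leq 2^{r-1}(|a|^r+|b|^r)$ and absorbing $R_1^r\leq R_1\,\reff^{G_n}(e_-,e'_+)^{r-1}$ delivers \eqref{eq:piccolo}. No individual step is particularly delicate; the only point that warrants care is the identification of the Bernoulli-geometric law of $N_i(e,e')$, which hinges on the tree structure of $\T^{(n,K)}$ guaranteeing that $e'_-$ is the unique gateway from $e'_+$ back toward $e_-$.
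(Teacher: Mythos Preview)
Your proof is correct and follows essentially the same route as the paper: both identify the law of $N_i(e,e')$ as a shifted geometric distribution via the tree structure (the paper writes $P[N_i\geq k]$, you write the full pmf), compute the mean through the electrical-network hitting probabilities to get $E[N_i]=\reff^{G_n}(e)/\reff^{G_n}(e')$, and then bound the $r$-th moment by controlling $E[N_i^r]$. The only cosmetic differences are that the paper phrases the mean computation via the potential/current identity $i(e)=i(e')=C_{\text{eff}}(e_-,e'_+)$ rather than your explicit $R_1,R_{12},R_2$ parametrization, and for the moment bound the paper splits $\eta_i$ into its positive and negative parts and uses factorial moments of $N_i$, whereas you use $|a-b|^r\leq 2^{r-1}(|a|^r+|b|^r)$ together with $\sum_{k\geq 1}k^r q^k\leq C_r q/(1-q)^{r+1}$---both yield the same estimate up to the value of $C(r)$.
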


\begin{proof}
Since any path from $e_+'$ to $e_-$ has to go through $e_-'$, we can see that for $k\geq 1$ we have
\begin{equation}\label{eq:probabilitiesofN}
P^{G_n}[N_i(e,e')\geq k]=P^{G_n}_{e_+}[T_{e_+'}<T_{e_-}]P^{G_n}_{e_-'}[T_{e_-}>T_{e_+'}]^{k-1},
\end{equation}
where $P^{G_n}_x$ denotes the law associated to $P^{G_n}$ conditioned of $J^{(n,K)}_0=x$ and thus
\begin{equation}\label{etapart1}
E^{G_n}[N_i(e,e')]=\frac{P^{G_n}_{e_+}[T_{e_+'}<T_{e_-}]}{P^{G_n}_{e_-'}[T_{e_-}<T_{e_+'}]}.
\end{equation}

Using the terminology of electrical networks (see~\cite{lyons2005probability}), we denote $u(\cdot)$ the potential associated to the resistances of $(\T^{(n,K)},d^{(n,K)}_{\text{res}})$ with boundary conditions given by $u(e_-)=1$ and $u(e_+')=0$. It is then a standard fact that
\[
P_{e_+}^{G_n}[T_{e_+'}<T_{e_-}]=u(e_-)-u(e_+) \text{ and } P_{e_-'}^{G_n}[T_{e_-}<T_{e_+'}]=u(e_-')-u(e_+').
\]

Since any path from $e_-$ to $e_+'$ has to go through $e_+$ and $e_-'$, we know that $i(e)=i(e')=C_{\text{eff}}(e_-, e_+')$. This leads to
\begin{equation}\label{etapart2}
\reff^{G_n}(e)^{-1}P_{e_+}^{G_n}[T_{e_+'}<T_{e_-}]=i(e)=C_{\text{eff}}(e_-, e_+')=i(e')=\reff^{G_n}(e')^{-1}P^{G_n}_{e_-'}[T_{e_-}<T_{e_+'}],
\end{equation}
and, using~\eqref{etapart1},
\begin{equation}\label{eq:expectationofN}
E^{G_n}[N_i(e,e')]=\frac{\reff^{G_n}(e)}{\reff^{G_n}(e')},
\end{equation}
which proves the first part of the lemma.

Now we turn our attention to the proof of the bound on the $r$-th moment of $\eta_i(e,e')$. Let us start by noticing that 
\begin{equation}\label{wwww}
\begin{aligned}
E^{G_n}[\abs{\eta_i(e,e')}^r]&=E^{G_n}[\abs{\eta_i(e,e')}^r\1{\eta_i(e,e') <0}]\\&+E^{G_n}[\abs{\eta_i(e,e')}^r\1{\eta_i(e,e') \geq 0}] \\ 
         &\leq \reff^{G_n}(e)^r +E^{G_n}[\eta_i(e,e')^r\1{\eta_i(e,e') \geq 0}] \\ 
         & \leq  \reff^{G_n}(e) \reff^{G_n}(e_-,e_+')^{r-1}+E^{G_n}[\eta_i(e,e')^r\1{\eta_i(e,e') \geq 0}] ,
\end{aligned}
\end{equation}
where we used that $\eta_i(e,e')\geq -\reff^{G_n}(e)$ and $\reff^{G_n}(e)\leq \reff^{G_n}(e_-,e_+')$. Let us simply observe that $\eta_i(e,e')\leq \reff^{G_n}(e')N_i(e,e')$. For an integer-valued random variable, we denote $E[(X)_r]:=E[X(X-1)\cdots (X-(r-1))]$ the $r$-th factorial moment of $X$. An elementary computation shows that
\[
E^{G_n}[(N_i(e,e'))_r]=r! P_{e_+}^{G_n}[T_{e_+'}<T_{e_-}] \frac{(1-P_{e_-'}^{G_n}[T_{e_-}<T_{e_+'}])^{r-1}}{P_{e_-'}^{G_n}[T_{e_-}<T_{e_+'}]^r},
\]
from the first $r$-th factorial moments, it is clear that we can recover the $r$-th moment. In particular, we can obtain that
\[
E^{G_n}[N_i(e,e')^r] \leq C(r) \frac{P_{e_+}^{G_n}[T_{e_+'}<T_{e_-}] }{P_{e_-'}^{G_n}[T_{e_-}<T_{e_+'}]^r}.
\]

Now, we can use that $\eta_i(e,e') \reff^{G_n}(e')^{-1}\1{\eta_i(e,e')\geq 0} \leq N_i(e,e')$ to see that
\[
E^{G_n}[(\eta_i(e,e') \reff^{G_n}(e')^{-1})^r\1{\eta_i(e,e')\geq 0}]\leq C(r) \frac{P_{e_+}^{G_n}[T_{e_+'}<T_{e_-}] }{P_{e_-'}^{G_n}[T_{e_-}<T_{e_+'}]^r}.
\]

Recalling~\eqref{etapart2}, we see that
\[
E^{G_n}[(\eta_i(e,e') \reff^{G_n}(e')^{-1})^r\1{\eta_i(e,e')\geq 0}]\leq C(r) \frac{\reff^{G_n}(e)}{\reff^{G_n}(e')^r} \reff^{G_n}(e_-,e_+')^{r-1},
\]
which finally leads to
\[
E^{G_n}[\eta_i(e,e')^r\1{\eta_i(e,e')\geq 0}]\leq C(r) \reff^{G_n}(e) \reff^{G_n}(e_-,e_+')^{r-1}.
\]

This concludes the proof of the lemma along with~\eqref{wwww}.
\end{proof}

\subsubsection{Optional stopping time theorem}
We want to apply Lemma \ref{lem:martingaleinequality} in the appendix to $(\eta_i(e,e'))_{i\in\N}$ and the stopping time $l_t^{(n,K,\rightarrow)}(e)$. Note that the estimate in Lemma~\ref{lem:martingaleinequality} would be similar to that obtained if $(\eta_i(e,e'))_{i\in\N}$ and $l_t^{(n,K,\rightarrow)}(e)$ were independent. The next result is to check the hypothesis of Lemma \ref{lem:martingaleinequality}.
Let $\Psi(z):=E[\exp(z\eta_1(e,e'))]$ and
\[
M_m(z):=\Psi(z)^{-m}\exp\left(z\sum_{i=1}^m \eta_i(e,e') \right).
\]
\begin{lemma}\label{l:mgfatstoppingtime}
For all $e,e'\in E(\T^{(n,K)})$ and  $t\geq0$ we have
\[
E^{G_n}\left[M_{l^{(n,K,\rightarrow)}_t(e)}(z)\right]=1,
\]
for all $z$ in a neighborhood of $0$, $\mathbf{P}^{(K)}$-almost surely.
\end{lemma}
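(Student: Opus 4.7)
The plan is to recognize $(M_m(z))_{m \geq 0}$ as an exponential martingale built from the i.i.d.\ sequence $(\eta_i(e,e'))_{i \geq 1}$, apply the bounded-stopping-time optional stopping theorem at $\tau \wedge N$ for $\tau := l_t^{(n,K,\rightarrow)}(e)$, and then pass $N \to \infty$ by balancing the $L^2$-growth of $M_N(z)$ against the exponential tail of $\tau$ supplied by Lemma~\ref{l:tailoflocaltime}. Set $\mathcal{F}_m := \sigma(B^{(n,K)}_s : s \leq \theta_{m+1}^{\text{out}}(e))$. By the strong Markov property applied at the successive times $\theta_i^{\text{out}}(e)$, the excursions of $B^{(n,K)}$ from $e_+$ back to $e_+$ (through $e_-$) are i.i.d., so the same is true of $(\eta_i(e,e'))_{i \geq 1}$, with $\eta_{m+1}$ independent of $\mathcal{F}_m$. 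The moment estimate in Lemma~\ref{l:boundoneta} makes $\Psi(z) = E^{G_n}[e^{z\eta_1}]$ finite in a (graph-dependent) disk $|z| < z_0$, and there $(M_m(z))$ is a nonnegative martingale; since $E^{G_n}[\eta_1] = 0$, Jensen's inequality gives $\Psi(z) \geq 1$ for real $z \in (-z_0, z_0)$.

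Next, $\tau$ is an $\mathcal{F}$-stopping time because $\{\tau \geq m+1\} = \{\theta_{m+1}^{\text{out}}(e) \leq t\} \in \mathcal{F}_m$, and $\tau < \infty$ a.s.\ since $B^{(n,K)}$ is continuous on a finite graph spatial tree and so crosses any given edge only finitely many times by time $t$. Applying the bounded optional stopping theorem gives $E^{G_n}[M_{\tau \wedge N}(z)] = 1$ for every $N \in \N$, which we decompose as
\[
1 = E^{G_n}\bigl[M_\tau(z) \mathbf{1}_{\{\tau \leq N\}}\bigr] + E^{G_n}\bigl[M_N(z) \mathbf{1}_{\{\tau > N\}}\bigr].
\]
For real $z$ the first summand converges monotonically to $E^{G_n}[M_\tau(z)]$, so it remains to show that the second summand vanishes.

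The main technical step, and the main obstacle, is this last vanishing. I would apply Cauchy--Schwarz:
\[
E^{G_n}\bigl[M_N(z) \mathbf{1}_{\{\tau > N\}}\bigr] \leq E^{G_n}[M_N(z)^2]^{1/2}\, P^{G_n}[\tau > N]^{1/2}.
\]
The i.i.d.\ structure yields $E^{G_n}[M_N(z)^2] = \bigl(\Psi(2z)/\Psi(z)^2\bigr)^N =: \alpha(z)^N$ with $\alpha(0) = 1$, while Lemma~\ref{l:tailoflocaltime} (or Lemma~\ref{l:tailoflocaltimetrifurcation} if $e \notin E^\ast(\T^{(n,K)})$) gives $P^{G_n}[\tau > N] \leq c_1 \exp\bigl(-c_2 N \reff^{G_n}(e)\, n^{-1/2}\bigr)$. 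Shrinking $z_0$ further, if needed, so that $\alpha(z)^{1/2} < \exp\bigl(c_2 \reff^{G_n}(e)\, n^{-1/2}/2\bigr)$ on $\{|z| < z_0\}$ --- a choice allowed for $\mathbf{P}^{(K)}$-almost every realization --- makes the product decay geometrically in $N$. Hence the second summand tends to zero and we conclude $E^{G_n}[M_\tau(z)] = 1$ on $\{|z| < z_0\}$.
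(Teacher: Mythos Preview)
Your argument is correct and follows essentially the same route as the paper: recognize $(M_m(z))$ as an exponential martingale, use optional stopping, and control the remainder via Cauchy--Schwarz together with the exponential tail of $l_t^{(n,K,\rightarrow)}(e)$ from Lemma~\ref{l:tailoflocaltime}, shrinking $z$ so that $\Psi(2z)^{1/2}/\Psi(z)$ is beaten by the exponential decay. One minor imprecision: Lemma~\ref{l:boundoneta} gives moment bounds, not directly the finiteness of $\Psi(z)$; the paper instead computes $\Psi(z)$ explicitly from the geometric-type law~\eqref{eq:probabilitiesofN} of $N_1(e,e')$, which is the cleanest way to see that $\Psi$ is finite near $0$ and that $\Psi(z)\to 1$ as $z\to 0$.
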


\begin{proof}[Proof of Lemma \ref{l:mgfatstoppingtime}]
Observe that $(M_m(z))_{m\in\N}$ is a martingale adapted to $(\mathcal{F}_m)_{m\in\N}$,
 where $\mathcal{F}_m$ is the $\sigma$-algebra generated by $B^{(n,K)}$ up to time $\theta^{\text{out}}_{m+1}$. Indeed, since $(\eta_i(e,e'))_{i\in\N}$ is i.i.d.~ and adapted to $(\mathcal{F}_i)_{i\in\N}$, we have that
\[
E^{G_n}[M_{m+1}(z)|\mathcal{F}_m]=M_m (z).
\]

Therefore, to show that $(M_m(z))_{m\in\N}$ is a martingale it suffices to show that $E^{G_n}[M_m(z)]<\infty$ for all $m\in\N$. 
 We have 
\begin{equation}\label{eq:definitionofpsi}
\Psi(z)=\exp(-z\reff^{G_n}(e)) \sum_{i=0}^{\infty} P^{G_n}[N_1(e,e')=i]\exp(zi\reff^{G_n}(e')).
\end{equation}
Using \eqref{eq:probabilitiesofN} we get that the display above equals
\begin{align*}
&\exp(-z\reff^{G_n}(e))\times
\Big[ P_{e_+}^{G_n}[T_{e_-}<T_{e'_+}]\\+&P^{G_n}_{e_+}[T_{e_-}>T_{e'_+}]\frac{P^{G_n}_{e'_-}[T_{e_-}<T_{e'_+}]}{P^{G_n}_{e'_-}[T_{e_-}>T_{e'_+}]}\sum_{k=1}^{\infty} (\exp(z\reff^{G_n}(e'))P^{G_n}_{e'_-}[T_{e_-}>T_{e'_+}])^k\Big] \end{align*}
For $z$ small enough the geometric series converges and moreover
\begin{equation}\label{eq:psitoone}
\Psi(z)\to1 \quad \text{as }z\to 0^+. 
\end{equation} 
Finally, since $\Psi(z)<\infty$ for $z$ small enough, we get that 
\[E^{G_n}[M_k(z)]=\Psi(z)^{-k}E^{G_n}\left[\exp\left(z\sum_{i=1}^k \eta_i(e,e')\right)\right]=\frac{\Psi(z)^k}{\Psi(z)^k}=1\] for $z$ small enough, where the last equality follows from the independence of the $(\eta_i(e,e'))_{i\in\N}$.
Therefore, we have showed that $(M_m(z))_{m\in \N}$ is a martingale for $z$ small enough.

For any $t\geq0$, $l^{(n,K,\rightarrow)}_t(e)$ is a stopping time relative to $(\mathcal{F}_i)_{i\in\N}$. We will apply the optional stopping theorem to $M_k(z)$ at $l^{(n,K,\rightarrow)}_t(z)$. 
It is not hard to see that the optional stopping theorem (see e.g., \cite[Theorem 2.2, \S 7]{bookdoob1953stochastic}) holds provided
\begin{enumerate}
\item $l^{(n,K,\rightarrow)}_t(e) < \infty$, $P^{G_n}$-almost surely,
\item $E^{G_n}[M_{l^{(n,K,\rightarrow)}_t(e)}(z)]<\infty$,
\item $E^{G_n}[M_m(z)1_{\{l^{(n,K,\rightarrow)}_t(e)>m\}}]\to0$ as $m\to\infty$.
\end{enumerate} 

Condition $(1)$ is clear.
We turn our attention to the proof of displays $(2)$ and $(3)$. 
Using Cauchy-Schwarz inequality,
\[E^{G_n}\left[M_m(z)1_{\{l^{(n,K,\rightarrow)}_t(e)\geq m\}}\right] \leq E^{G_n}[M_m(z)^2]^{1/2}P^{G_n}[l^{(n,K,\rightarrow)}_t(e)\geq m]^{1/2}.\]
  The right hand side of the display above equals
  \[ \left[\frac{\Psi(2z)^{1/2}}{\Psi(z)}\right]^m P^{G_n}[l^{(n,K,\rightarrow)}_t(e)\geq m]^{1/2}.\]
Moreover, Lemma \ref{l:tailoflocaltime} implies that the display above is upper bounded by
\[ \left[\frac{\Psi(2z)^{1/2}}{\Psi(z)}\right]^m C\exp(-cm), \]
for some positive constants $C,c$. Moreover, display \eqref{eq:psitoone} implies that $\frac{\Psi(2z)^{1/2}}{\Psi(z)}\to1$ as $z\to0^+$, therefore, for $z$ small enough, we have
\begin{equation}\label{eq:cauchy}
E^{G_n}\left[M_m(z)1_{\{l^{(n,K,\rightarrow)}_t(e)\geq m\}}\right] \leq C\exp(-c'm),
\end{equation}
 for some positive constants $C,c'$. 
Condition $(3)$ follows immediately from the display above.

It just remains to check condition $(2)$. Observe that
\[E^{G_n}\left[M_{l_t^{(n,K,\rightarrow)}(e)}(z)\right]=\sum_{m=0}^{\infty}E^{G_n}[M_m(z) 1_{\{l_t^{(n,K,\rightarrow)}(e)=m\}}].\]
Using display \eqref{eq:cauchy} we get that the display above is bounded above by
\[\sum_{m=0}^{\infty}C \exp(c'm)<\infty.\]
This shows condition $(2)$.
Therefore, the optional stopping theorem holds for the martingale $(M_k(z))_{k\in\N}$ at the stopping time $l^{(n,K,\rightarrow)}_t(e)$ when $z$ is sufficiently small. 
Hence 
\[E^{G_n}\left[ M_{l_t^{(n,K,\rightarrow)}(e)}(z)\right]= E^{G_n}\left[ M_0(z) \right]=1. \]
This yields the claim of the lemma.
\end{proof}

\subsubsection{Proof of Lemma \ref{lem:premaximalinequality}}

 We will do the proof of Lemma \ref{lem:premaximalinequality} using Lemma 
 \ref{l:boundoneta}, Lemma~\ref{l:tailoflocaltime} and Lemma~\ref{l:mgfatstoppingtime}. 

\begin{proof}[Proof of Lemma \ref{lem:premaximalinequality}]
 We first do the proof of \eqref{eq:nonuniform1} and then we will prove \eqref{eq:nonuniform2}.
 
{\it First step:  Proof of display \eqref{eq:nonuniform1}.}

\noindent We have
\begin{equation}\label{eq:fourthmomentdominates}
P^{G_n}\left[ n^{-1/2}\left|\sum_{i=1}^{l_t^{(n,K,\rightarrow)}(e)}\eta_i(e,e')\right| \geq \epsilon/3 \right] \leq \epsilon^{-4} 3^4 n^{-2}E\left[\left(\sum_{i=1}^{l_t^{(n,K,\rightarrow)}(e)}\eta_i(e,e')\right)^4 \right]. 
\end{equation}
To control the expectation in the right hand side above, we follow an argument from \cite[Lemma 1.6]{borodin1982asymptotic} which consist in the use of Lemma \ref{lem:martingaleinequality} in the appendix.

 Lemma \ref{l:mgfatstoppingtime} and Lemma~\ref{lem:martingaleinequality} give that there exists $C$ such that
\begin{equation}\label{eq:fourthmoment}
\begin{aligned}
&E^{G_n}\left[\left(\sum_{i=1}^{l_t^{(n,K,\rightarrow)}(e)}\eta_i(e,e')\right)^4 \right] \\
\leq &C\left( E^{G_n}[\eta_1(e,e')^4]E^{G_n}[l^{(n,K,\rightarrow)}_t(e)] + E^{G_n}[\eta_1(e,e')^2]^2E^{G_n}[l^{(n,K,\rightarrow)}_t(e)^2] \right)
\end{aligned}
\end{equation}
 Note that Lemma \ref{l:tailoflocaltime} implies that there exists constants $C_1,C_2$ such that 
 \begin{equation}\label{eq:localtimemoments}
 E^{G_n}[l^{(n,K,\rightarrow)}_t(e)]\leq C_1 \frac{n^{1/2}}{\reff^{G_n}(e)}\quad \text{and} \quad E^{G_n}[l^{(n,K,\rightarrow)}_t(e)^2]\leq C_2\frac{n}{ \reff^{G_n}(e)^{2}}.
 \end{equation}
 
Using the bounds above together with the bounds on $E^{G_n}[\eta_i(e,e')^k]$ of Lemma \ref{l:boundoneta} in display \eqref{eq:fourthmoment}, all the instances of $\reff^{G_n}(e)$ cancel and, recalling that $d^{(n,K)}_{\text{res}}(x,y):=n^{-1/2}\reff^{G_n}(x,y)$ we find that there exists $C$ such that 
\[
 E^{G_n}\left[\left(\sum_{i=1}^{l^{(n,K,\rightarrow)}_t(e)}\eta_i(e,e')\right)^4\right] 
 \leq  Cn^2(d^{(n,K)}_{\text{res}}(e_-,e'_+)^2+d^{(n,K)}_{\text{res}}(e_-,e'_+)^{3}).
 \]
 
Therefore, under our assumption that $d^{(n,K)}_{\text{res}}(e_-,e'_+) \leq 1$ (and therefore, $d^{(n,K)}_{\text{res}}(e_-,e'_+)^3<d^{(n,K)}_{\text{res}}(e_-,e'_+)^2$)
 \begin{equation}\label{eq:fourthmomentisdominated}
 E^{G_n}\left[\left(\sum_{i=1}^{l^{(n,K,\rightarrow)}_t(e)}\eta_i(e,e')\right)^4\right]\leq Cn^2 d^{(n,K)}_{\text{res}}(e_-,e'_+)^2,
 \end{equation}
 for some constant $C$,
which, together with \eqref{eq:fourthmomentdominates} yield \eqref{eq:nonuniform1}.

 {\it Second step:  Proof of display \eqref{eq:nonuniform2}.}
 
\noindent 
We have \begin{equation}\label{eq:controlofthemaximum}
P^{G_n}\left[n^{-1/2}\left|\eta_{l^{(n,K,\rightarrow)}_t(e)}(e,e')\right|\geq \epsilon/3  \right]\leq E^{G_n}\left[ \sum_{i=1}^{l_t^{(n,K,\rightarrow)}(e)} 1_{\{|\eta_i(e,e') |\geq n^{1/2} \epsilon/3 \}}. \right]
\end{equation}
 By Wald's identity, the right hand side of \eqref{eq:controlofthemaximum} equals
 \begin{align*}
& E^{G_n}[l^{(n,K,\rightarrow)}_t(e)]P^{G_n}[|\eta_1(e,e')|\geq n^{1/2}\epsilon/3]\\
 \leq &\epsilon^{-4}3^4 n^{-2} E^{G_n}[l^{(n,K,\rightarrow)}_t(e) ] E^{G_n}[\eta_1(e,e')^4].
 \end{align*}
Since by assumption we have $d^{(n,K)}_{\text{res}}(e_-,e'_+)<1$, we get  $d^{(n,K)}_{\text{res}}(e_-,e'_+)^3\leq d^{(n,K)}_{\text{res}}(e_-,e'_+)^2$. Therefore, from the display above~\ref{eq:localtimemoments} and \eqref{eq:piccolo}, we get that \eqref{eq:nonuniform2} holds.
  This finishes the proof of the lemma.
\end{proof}

\subsection{Regularity of  local times}\label{sect:regularity_local_time}

Fix two edges $e=(e_-,e_+),e'=(e'_-,e'_+)\in E(\T^{(n,K)})$ and recall that
 \[d^{(n,K)}_{\text{res}}(e,e')=\min\{d_{\text{res}}^{(n,K)}(e_-,e'_-),d^{(n,K)}_{\text{res}}(e_-,e'_+),d^{(n,K)}_{\text{res}}(e_+,e'_-),d^{(n,K)}_{\text{res}}(e_+,e'_+)\}.\]
 
The main result of this section is the following
\begin{lemma}\label{lem:modulusofcontinuity} Let  $t>0$, $K\in\N$ and $\epsilon>0$. We have
\[\lim_{\delta\to0}\limsup_{n\in\N}\mathbf{P}^{(K)}\left[\sup_{\stackrel{e,e'\in E(\T^{(n,K)})}{d^{(n,K)}_{\text{res}}(e,e')\leq\delta}} n^{-1/2}\abs{R^{G_n}_{\text{eff}}(e)l^{(n,K)}_t(e)-R^{G_n}_{\text{eff}}(e')l^{(n,K)}_t(e')}\geq\epsilon\right]=0.\]
\end{lemma}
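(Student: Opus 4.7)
The plan starts from a telescoping identity that bounds the difference of $Y_n(e):=n^{-1/2}R^{G_n}_{\text{eff}}(e)l^{(n,K)}_t(e)$ between an ancestor edge $e^\ast$ and a descendant edge $e$ of $\T^{(n,K)}$. Decomposing the trajectory of $B^{(n,K)}$ up to time $t$ into excursions between consecutive rightward crossings of $e^\ast$, each such excursion contributes $N_i(e^\ast,e)$ rightward crossings of $e$, and the definition~\eqref{eq:defeta} of $\eta_i$ gives
\[
R^{G_n}_{\text{eff}}(e)l_t^{(n,K,\rightarrow)}(e)-R^{G_n}_{\text{eff}}(e^\ast)l_t^{(n,K,\rightarrow)}(e^\ast) = \sum_{i=1}^{L}\eta_i(e^\ast,e)+O\bigl(|\eta_L(e^\ast,e)|+R^{G_n}_{\text{eff}}(e^\ast)\bigr),
\]
where $L=l_t^{(n,K,\rightarrow)}(e^\ast)$ and the error term accounts for the incomplete excursion still in progress at time $t$. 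On a tree one has $l^{(n,K)}_t(e)=2l_t^{(n,K,\rightarrow)}(e)+O(1)$, while $n^{-1/2}R^{G_n}_{\text{eff}}(e^\ast)$ is uniformly $o(1)$ by Lemma~\ref{lem_no_macro_res}. Feeding both the main sum and the boundary $\eta$-term into Lemma~\ref{lem:premaximalinequality} then yields, for every ancestor--descendant pair with $d^{(n,K)}_{\text{res}}(e^\ast,e)\leq\delta$,
\[
\mathbf{P}^{(K)}\bigl[|Y_n(e)-Y_n(e^\ast)|\geq\epsilon\bigr]\leq C\epsilon^{-4}\delta^2+o(1).
\]
A general pair $(e,e')$ at distance $\leq \delta$ is reduced to two such ancestor--descendant pairs by inserting the edge of $\T^{(n,K)}$ at the most recent common ancestor of $e$ and $e'$: this edge lies on the unique path joining them in the tree, and the individual-edge contributions are negligible by Lemma~\ref{lem_no_macro_res}.

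To upgrade this pointwise estimate to the uniform supremum, I propose a dyadic chaining along $\T^{(n,K)}$. Lemma~\ref{lem:graphspatialresistancetreeconvergence} ensures that the total $d^{(n,K)}_{\text{res}}$-length of $\T^{(n,K)}$ is eventually bounded by some $\Lambda(K)<\infty$, so for each $k\geq 0$ one can build a nested $\delta 2^{-k}$-net $\mathcal{N}_k\subset E(\T^{(n,K)})$ with $|\mathcal{N}_k|\leq C(K)2^k/\delta$, in such a way that each edge of $\mathcal{N}_{k+1}$ is paired with a unique ancestor in $\mathcal{N}_k$ at $d^{(n,K)}_{\text{res}}$-distance $\leq \delta 2^{-k}$. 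Choosing tolerances $\epsilon_k:=\epsilon 2^{-\alpha k}$ for some $\alpha\in (0,1/4)$, the ancestor bound above yields probability $\leq C\delta^2\epsilon^{-4}2^{-(2-4\alpha)k}$ per link; union-bounding over the $O(2^k/\delta)$ links at scale $k$ gives a total exceptional probability
\[
O\Bigl(\delta\epsilon^{-4}\sum_{k\geq 0}2^{-(1-4\alpha)k}\Bigr)=O(\delta\epsilon^{-4})+o(1),
\]
which vanishes as $\delta\to 0$ (after letting $n\to\infty$). On the complementary event the chaining bound $|Y_n(e)-Y_n(\pi_0(e))|\leq\sum_k\epsilon_k=O(\epsilon)$ holds simultaneously for every $e$, where $\pi_0(e)$ denotes the ancestor of $e$ in $\mathcal{N}_0$, and combined with the common-ancestor reduction this closes the proof.

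The main technical obstacle is the construction, uniformly in $n$, of the dyadic ancestor-preserving nets $\mathcal{N}_k$ with the stated cardinality bound. This rests on Lemma~\ref{res_2a} and condition $(G)$, which force the macroscopic shape of $\T^{(n,K)}$ in the resistance metric to be close to that of the $K$-CRT and hence guarantee the existence of the required nested covers. The moment input from Lemma~\ref{lem:premaximalinequality} is borderline Kolmogorov-type (its fourth-moment bound is only quadratic in $d^{(n,K)}_{\text{res}}$), which is exactly enough for uniform continuity via chaining but leaves no room for a cruder partitioning argument.
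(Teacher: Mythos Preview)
Your proposal is correct and follows essentially the same route as the paper. Both arguments reduce to rightward crossings, express $Y_n(e)-Y_n(e^\ast)$ via the $\eta_i$ sums and a boundary term controlled by Lemma~\ref{lem_no_macro_res}, feed Lemma~\ref{lem:premaximalinequality} into a dyadic chaining, and exploit that the fourth-moment bound quadratic in $d^{(n,K)}_{\text{res}}$ is just barely summable. The only organizational difference is that the paper resolves your ``main technical obstacle'' (building ancestor-preserving nets uniformly in $n$) by first covering $\T^{(n,K)}$ with at most $K$ root-to-leaf segments $I_i$ and carrying out the dyadic splitting along each linear segment separately, where the genealogical order is total and the construction is straightforward; your abstract Kolmogorov chaining on the whole tree with MRCA reduction is an equivalent packaging of the same idea.
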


 Recalling the definition of $l^{(n,K,\text{vert})}_t(x)$ in \eqref{eq:defoflvert} (and the definition of $e^+(x),e^+(x)$ just above \eqref{eq:sopracontrariar}), we have that
 \begin{equation}\label{eq:vertupdown}
l^{(n,K,\text{vert})}_{m(t)}(x)= l^{(n,K,\rightarrow)}_{t}(e^-(x))+l^{(n,K,\leftarrow)}_{t}(e^+(x))
 \end{equation}
 The proof of Lemma \ref{lem:modulusofcontinuity} relies on the following result.
\begin{lemma}\label{lem:nonuniform}
Let  $t>0$, $K\in\N$ and $\epsilon>0$. We have
\[ 
\lim_{\delta\to0}\limsup_{n\in\N} \mathbf{P}^{(K)}\left[\sup_{\stackrel{e,e'\in E^\ast(\T^{(n,K)})}{d^{(n,K)}_{\text{res}}(e,e')\leq\delta}} n^{-1/2}\abs{R^{G_n}_{\text{eff}}(e)l^{(n,K,\rightarrow)}_t(e)-R^{G_n}_{\text{eff}}(e')l^{(n,K,\rightarrow)}_t(e')}\geq\epsilon \right]=0.
\]
\end{lemma}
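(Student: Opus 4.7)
The strategy is a Kolmogorov-type dyadic chaining on the metric space $(E^*(\T^{(n,K)}),d^{(n,K)}_{\text{res}})$ applied to the process $F_n(e):=n^{-1/2}\reff^{G_n}(e)l^{(n,K,\rightarrow)}_t(e)$, exploiting the fact that by Lemma~\ref{lem:graphspatialresistancetreeconvergence} the total $d^{(n,K)}_{\text{res}}$-length of $\T^{(n,K)}$ is bounded by some $\Lambda_K<\infty$, uniformly in $n$ for $n$ large, $\mathbf{P}^{(K)}$-almost surely.

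First I would handle comparable pairs $e=(e_-,e_+),e'=(e'_-,e'_+)\in E^*(\T^{(n,K)})$ with $e$ on the root-to-$e'$ path. Writing $L:=l^{(n,K,\rightarrow)}_t(e)$, the definition \eqref{eq:defeta} gives
\[
\sum_{i=1}^L\eta_i(e,e')=\reff^{G_n}(e')\sum_{i=1}^L N_i(e,e')-L\,\reff^{G_n}(e).
\]
Since $B^{(n,K)}$ starts at the root and must cross $e$ rightward to reach $e'$, no rightward crossings of $e'$ occur before $\theta^{\text{out}}_1(e)$; and since $t\in[\theta^{\text{out}}_L(e),\theta^{\text{out}}_{L+1}(e))$, $\sum_{i=1}^L N_i(e,e')$ over-counts $l^{(n,K,\rightarrow)}_t(e')$ by at most $N_L(e,e')$. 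Using the identity $\reff^{G_n}(e')N_L(e,e')=\eta_L(e,e')+\reff^{G_n}(e)$, this yields
\[
\abs{F_n(e')-F_n(e)}\leq n^{-1/2}\Bigl(\,\abs{\sum_{i=1}^L\eta_i(e,e')}+\abs{\eta_L(e,e')}+\reff^{G_n}(e)\Bigr).
\]
Lemma~\ref{lem:premaximalinequality} controls the first two terms and Lemma~\ref{lem_no_macro_res} controls $n^{-1/2}\sup_e\reff^{G_n}(e)$ uniformly in $e$, so for any $\epsilon>0$,
\begin{equation}\label{eq:chain_tail}
\mathbf{P}^{(K)}\bigl[\,\abs{F_n(e')-F_n(e)}\geq\epsilon\bigr]\leq C\epsilon^{-4}d^{(n,K)}_{\text{res}}(e,e')^2+q_n(\epsilon),
\end{equation}
with $q_n(\epsilon)\to0$ as $n\to\infty$ uniformly in $e,e'$. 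For a general (possibly incomparable) pair $e,e'\in E^*(\T^{(n,K)})$, the most recent common ancestor edge $e''$ in $\T^{(n,K)}$ satisfies $\max(d^{(n,K)}_{\text{res}}(e'',e),d^{(n,K)}_{\text{res}}(e'',e'))\leq d^{(n,K)}_{\text{res}}(e,e')$, and two applications of \eqref{eq:chain_tail} with the triangle inequality give the same bound in general.

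I would then conclude by dyadic chaining. For $k\in\N$, select a $2^{-k}$-net $\mathcal{N}_k\subset E^*(\T^{(n,K)})$ of cardinality at most $C_K 2^k$ (possible since the total $d^{(n,K)}_{\text{res}}$-length is at most $\Lambda_K$), and let $\pi_k(e)\in\mathcal{N}_k$ be nearest to $e$. Fix $\gamma\in(0,1/4)$, set thresholds $\epsilon_k:=2^{-k\gamma}$, and apply \eqref{eq:chain_tail} together with a union bound over the $O(2^k)$ links at scale $k$ (each at distance $O(2^{-k})$): the probability that any scale-$k$ link exceeds $\epsilon_k$ is bounded by $C_K 2^{k(4\gamma-1)}+o_n(1)$, which is summable in $k$. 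Telescoping then gives, for $n$ large,
\[
\mathbf{P}^{(K)}\Bigl[\sup_{\substack{e,e'\in E^*(\T^{(n,K)})\\ d^{(n,K)}_{\text{res}}(e,e')\leq\delta}}\abs{F_n(e)-F_n(e')}\geq C_K\delta^{\gamma}\Bigr]\leq C_K\delta^{1-4\gamma}+o_n(1),
\]
and letting $\delta\to0$ proves the lemma. The main obstacle is the chaining step on the tree-indexed process: one must construct the nets $\mathcal{N}_k$ inside $E^*(\T^{(n,K)})$ (avoiding star--triangle auxiliary edges) and handle the uniform error coming from $\reff^{G_n}(e)$ in the identity above, both of which are ultimately handled by the uniform length bound from Lemma~\ref{lem:graphspatialresistancetreeconvergence} and by Lemma~\ref{lem_no_macro_res}.
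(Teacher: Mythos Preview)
Your chaining strategy is in the same spirit as the paper's, but there is a genuine gap where you pass from the pairwise bound to the dyadic telescoping. Your pairwise inequality $\abs{F_n(e')-F_n(e)}\leq n^{-1/2}\bigl(\abs{\sum_{i=1}^L\eta_i}+\abs{\eta_L}+\reff^{G_n}(e)\bigr)$ is correct, but the residual $n^{-1/2}\reff^{G_n}(e)$ does \emph{not} scale with $d^{(n,K)}_{\text{res}}(e,e')$: it is the resistance of the single edge $e$ and is only controlled by $n^{-1/2}\max_e\reff^{G_n}(e)$. Hence your tail bound has the form $C\epsilon^{-4}d^2+q_n(\epsilon)$ with $q_n(\epsilon)=\mathbf{P}^{(K)}\bigl[n^{-1/2}\max_e\reff^{G_n}(e)\geq c\epsilon\bigr]$. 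When you union-bound over the $\sim2^k$ links at scale $k$ with thresholds $\epsilon_k=2^{-k\gamma}\to0$, this term contributes $2^kq_n(\epsilon_k)$, which you record as ``$+\,o_n(1)$''. It is not: for each fixed $n$ one has $q_n(\epsilon_k)\to1$ as $k\to\infty$, so the sum over scales is not controlled and the claimed bound $C_K\delta^{1-4\gamma}+o_n(1)$ does not follow.

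The paper's remedy is Lemma~\ref{lem:etatrick}: a case analysis on the position of $B^{(n,K)}_t$ relative to an ordered chain $e^1\prec\cdots\prec e^k$ shows that the residual $\max_j\reff^{G_n}(e^j)$ appears \emph{once} in the bound for $\abs{F_n(e^1)-F_n(e^k)}$, not once per link (the intermediate contributions telescope). One then covers each root-to-leaf segment of $\T^{(n,K)}$ by $O(\delta^{-1})$ sub-intervals of resistance-length $\leq\delta$ and runs the dyadic decomposition inside each; the $\eta$-sums contribute $C\epsilon^{-4}\delta$ in total, while only $O(\delta^{-1})$ residual terms appear, and these are killed by sending $n\to\infty$ \emph{before} $\delta\to0$ via Lemma~\ref{lem_no_macro_res}. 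A secondary issue: your common-ancestor-edge reduction for incomparable pairs needs care, since the branch point may be an artificial star--triangle vertex whose incident edges are not in $E^*(\T^{(n,K)})$; the paper sidesteps this by working on each of the at most $K$ root-to-leaf segments separately, where all edges are comparable.
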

Next, we assume Lemma \ref{lem:nonuniform} and deduce Lemma \ref{lem:modulusofcontinuity} from it. 
\begin{proof}[Proof of Lemma \ref{lem:modulusofcontinuity}]
We need to get an estimate as that of Lemma \ref{lem:nonuniform} but with $l^{(n,K)}_t(e)$ instead of $l^{(n,K,\rightarrow)}_t(e)$.
Since for any $e\in E(\T^{(n,K)})$ and $t\geq0$, either $l^{(n,K)}_t(e)=2l^{(n,K,\rightarrow)}_t(e)$ or $l^{(n,K)}_t(e)=2l^{(n,K,\rightarrow)}_t(e)-1$, we have
\begin{align*}
&\abs{\reff^{G_n}(e)l^{(n,K)}_t(e)- \reff^{G_n}(e')l^{(n,K)}_t(e')}\\
 \leq &2 \abs{\reff^{G_n}(e) l^{(n,K,\rightarrow)}_t(e)-\reff^{G_n}(e')l^{(n,K,\rightarrow)}(e')} + \max\{\reff^{G_n}(e),\reff^{G_n}(e')\}.
\end{align*}
Therefore, the lemma follows from Lemma \ref{lem:nonuniform} and Lemma \ref{lem_no_macro_res}.
\end{proof}

The rest of the section will be devoted to the proof of Lemma \ref{lem:nonuniform}. This proof consists of two steps
\begin{enumerate}
\item we link the variations in local times to the random variables $\eta_i$ introduced at~\eqref{eq:defeta}, (see Lemma~\ref{lem:etatrick})
\item we apply a maximal inequality argument, which is inspired from~~\cite{billingsley2009convergence}.
\end{enumerate}

\subsubsection{Linking variations in local times to $\eta_i$}

Let $e=(e_-,e_+),e'=(e'_-,e'_+)\in E(\T^{(n,K)})$ be two edges such that $e$ is in the path from the root to $e'$.
 We start by relating $R_{\text{eff}}^{G_n}(e)l^{(n,K,\rightarrow)}_t(e)-R^{G_n}_{\text{eff}}(e')l^{(n,K,\rightarrow)}_t(e')$ to the behavior of the partial sums of $\eta_i(e,e')$, where $\eta_i(e,e')$ is defined as in \eqref{eq:defeta}.
 \begin{lemma}\label{lem:etatrick}
 Let $e^1\prec e^2\prec \cdots e^k$ be an ordered sequence of edges in $E(\T^{(n,K)})$, where $\prec$ denotes genealogical order. 
 We have that
\begin{equation}
 \begin{aligned}
 &\abs{\reff^{G_n}(e^1)l_t^{(n,K,\rightarrow)}(e^1)-\reff^{G_n}(e^k)l_t^{(n,K,\rightarrow)}(e^k)}\\\leq &\max_{i=1,\dots,k}\{\reff^{G_n}(e^i)\}+ \sum_{j=1}^{k-1}\abs{\sum_{i=1}^{l_t^{(n,K,\rightarrow)}(e^j)}\eta_i(e^j,e^{j+1})} + \sum_{j=1}^{k-1}\abs{\eta_{l^{(n,K,\rightarrow)}_t(e^j)}(e^j,e^{j+1})}.
 \end{aligned}
 \end{equation}
\end{lemma}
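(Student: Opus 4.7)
The plan is to prove a sharp one-step inequality between consecutive pairs $(e^j,e^{j+1})$ and then telescope via the triangle inequality.

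For the one-step bound, fix $e\prec e'$ in $E(\T^{(n,K)})$ and set $M:=l_t^{(n,K,\rightarrow)}(e)$. Since $B^{(n,K)}$ lives on the tree $\T^{(n,K)}$ and $e$ is an ancestor of $e'$, every rightward crossing of $e'$ must happen during some excursion of $B^{(n,K)}$ above $e$; in particular none occurs before $\theta^{\text{out}}_1(e)$. Consequently, the total number of rightward crossings of $e'$ over the completed excursion interval $[\theta^{\text{out}}_1(e),\theta^{\text{out}}_{M+1}(e))$ is exactly $\sum_{i=1}^M N_i(e,e')$. Combining this with $\theta^{\text{out}}_M(e)\leq t<\theta^{\text{out}}_{M+1}(e)$ yields
\[
0\leq \sum_{i=1}^M N_i(e,e')-l_t^{(n,K,\rightarrow)}(e')\leq N_M(e,e'),
\]
the upper bound being valid because the overcount consists precisely of rightward crossings of $e'$ that occur inside the unfinished $M$-th excursion above $e$.

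Multiplying by $\reff^{G_n}(e')$ and substituting the definition $\eta_i(e,e')=N_i(e,e')\reff^{G_n}(e')-\reff^{G_n}(e)$ transforms this into
\[
0\leq \sum_{i=1}^M\eta_i(e,e')+M\reff^{G_n}(e)-\reff^{G_n}(e')l_t^{(n,K,\rightarrow)}(e')\leq \eta_M(e,e')+\reff^{G_n}(e),
\]
and rearrangement gives the one-step bound
\[
\bigl|\reff^{G_n}(e)l_t^{(n,K,\rightarrow)}(e)-\reff^{G_n}(e')l_t^{(n,K,\rightarrow)}(e')\bigr|\leq \Bigl|\sum_{i=1}^M\eta_i(e,e')\Bigr|+|\eta_M(e,e')|+\reff^{G_n}(e).
\]

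Writing $A_j:=\reff^{G_n}(e^j)l_t^{(n,K,\rightarrow)}(e^j)$, the triangle inequality $|A_1-A_k|\leq \sum_{j=1}^{k-1}|A_j-A_{j+1}|$ combined with the one-step bound applied to each summand delivers the two $\eta$-sums on the right-hand side of the stated inequality. The residual resistance contribution collapses from the naive sum $\sum_{j=1}^{k-1}\reff^{G_n}(e^j)$ down to $\max_{i=1,\dots,k}\reff^{G_n}(e^i)$ by analysing the one-sided defect $\delta_j:=\sum_i\eta_i(e^j,e^{j+1})+l_t^{(n,K,\rightarrow)}(e^j)\reff^{G_n}(e^j)-\reff^{G_n}(e^{j+1})l_t^{(n,K,\rightarrow)}(e^{j+1})\geq 0$: at the frozen time $t$ the process occupies a single vertex of $\T^{(n,K)}$, and the tree structure together with the genealogical chain $e^1\prec\cdots\prec e^k$ forces $\delta_j=0$ for every $j$ whose edge $e^j$ is not crossed rightward by the path from $\text{root}^\ast$ to $B^{(n,K)}_t$; the remaining contributions are absorbed into a single $\reff^{G_n}(e^i)$.

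The main obstacle is the crossing-count in the first step: the clean bound $\sum_i N_i(e,e')-l_t^{(n,K,\rightarrow)}(e')\leq N_M(e,e')$ is what keeps the one-step error of size $O(\reff^{G_n}(e))$ rather than something larger, and it rests on the tree topology plus the genealogical ordering forcing every rightward crossing of $e'$ to lie inside an excursion above $e$. The subsequent reduction $\sum_j\reff^{G_n}(e^j)\rightsquigarrow\max_i\reff^{G_n}(e^i)$ is essentially bookkeeping, but does require the position-at-time-$t$ argument just sketched.
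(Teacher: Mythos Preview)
Your one-step inequality is correct and is essentially the same observation the paper uses: with $M=l_t^{(n,K,\rightarrow)}(e)$ and $e\prec e'$, one has
\[
0\;\le\;\sum_{i=1}^{M}N_i(e,e')-l_t^{(n,K,\rightarrow)}(e')\;\le\;N_M(e,e'),
\]
which after multiplication by $\reff^{G_n}(e')$ yields $0\le\delta(e,e')\le \reff^{G_n}(e)+\eta_{M}(e,e')$.

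The gap is in your reduction of $\sum_{j=1}^{k-1}\reff^{G_n}(e^j)$ to $\max_i \reff^{G_n}(e^i)$. You argue that $\delta_j=0$ whenever $B^{(n,K)}_t$ does not lie above $e^j$; that is true. But it does \emph{not} force all but one of the $\delta_j$ to vanish: if $B^{(n,K)}_t$ sits above $e^k_+$ (the paper's case~(2)), then \emph{every} $e^j$ is on the root-to-$B_t$ path and every $\delta_j$ may be strictly positive. Your crude upper bound $\delta_j\le \reff^{G_n}(e^j)+|\eta_{l_t}(e^j,e^{j+1})|$ then only gives $\sum_j\delta_j\le \sum_j\reff^{G_n}(e^j)+\sum_j|\eta_{l_t}|$, and the claim that ``the remaining contributions are absorbed into a single $\reff^{G_n}(e^i)$'' has no justification.

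What makes the paper's argument go through is a \emph{sharper} one-step bound in the sub-case where $B^{(n,K)}_t$ lies above $e'_+$: then at least one rightward crossing of $e'$ in the current excursion has already occurred by time $t$, so the overcount is at most $N_M(e,e')-1$ rather than $N_M(e,e')$, giving
\[
0\;\le\;\delta(e,e')\;\le\;\reff^{G_n}(e)-\reff^{G_n}(e')+\eta_{M}(e,e').
\]
When summed over $j=1,\dots,k-1$ the resistance part \emph{telescopes} to $\reff^{G_n}(e^1)-\reff^{G_n}(e^k)$, which is at most $\max_i\reff^{G_n}(e^i)$. The mixed case (your position-at-time-$t$ idea) then combines this telescoping estimate for indices below $i^*$ with $\delta_j=0$ for indices above $i^*$, plus the single coarse bound at $j=i^*$. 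Your sketch captures the last two pieces but omits the telescoping piece, which is precisely what controls the situation when many $\delta_j$ are simultaneously nonzero.
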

\begin{proof}[Proof of Lemma \ref{lem:etatrick}]
We will split the proof in two parts. First we will get a display similar to the one in the lemma in the case $k=2$ (two-edge comparison). After that, we will use the two-edge comparison to get the multiple edge comparison in the Lemma.\\
\emph{Two-edge comparison:}\\
 Let $e,e'\in E(\T^{(n,K)})$ be a pair of edges satisfying $e\prec e'$.
First of all, observe that, by definition
\[
0\leq\reff^{G_n}(e)l_t^{(n,K,\rightarrow)}(e)-\reff(e')l_t^{(n,K,\rightarrow)}(e')+\sum_{i=1}^{l_t^{(n,K,\rightarrow)}(e)}\eta_i(e,e').
\]
Therefore
 \begin{equation}\label{eq:estosdiasterribles}
 \begin{aligned}
 &-\eta_{l^{(n,K,\rightarrow)}_t(e)}(e,e')\\\leq&\reff^{G_n}(e)l_t^{(n,K,\rightarrow)}(e)-\reff(e')l_t^{(n,K,\rightarrow)}(e')+\sum_{i=1}^{l_t^{(n,K,\rightarrow)}(e)-1}\eta_i(e,e').
 \end{aligned}
 \end{equation}
 Now we will distinguish between three cases: 
 \begin{enumerate}
 \item  $e_-\npreceq B^{(n,K)}_t$, i.e., $B^{(n,K)}$ is to the left of $e$ at time $t$.
\item $e'_+\preceq B^{(n,K)}_t$, i.e., $B^{(n,K)}$ is to the right of $e'$ at time $t$.
\item $e_-\preceq B^{(n,K)}_t$ but $e'_+\npreceq B^{(n,K)}_t$, i.e., $B^{(n,K)}$ is between $e_-$ and $e'_+$ by time $t$.
\end{enumerate} 
  In case $(1)$, $B^{(n,K)}$ cannot right-cross $e'$ without first right-crossing $e$ (since $e\prec e'$ and the tree structure of $\T^{(n,K)}$). Therefore, all the right-crossings of $e'$ corresponding to the last right-crossing of $e$ (in the definition of $N_i(e,e')$) have already been performed. Hence, we have that
 \begin{equation}\label{eq:pterodactilo1} 
 \reff^{G_n}(e)l_t^{(n,K,\rightarrow)}(e)-\reff^{G_n}(e')l_t^{(n,K,\rightarrow)}(e')=-\sum_{i=1}^{l_t^{(n,K,\rightarrow)}(e)}\eta_i(e,e').
 \end{equation}
 In case $(2)$, since $e\prec e'$, there has been at least $1$ right-crossing of $e'$ after the last right-crossing of $e$. Therefore, we have that 
 \begin{equation}
 \begin{aligned}
 &\reff^{G_n}(e)l_t^{(n,K,\rightarrow)}(e)-\reff(e')l_t^{(n,K,\rightarrow)}(e')+\sum_{i=1}^{l_t^{(n,K,\rightarrow)}(e)-1}\eta_i(e,e')\\\leq &\reff^{G_n}(e)-\reff^{G_n}(e').
 \end{aligned}
 \end{equation}
   The last two display together with \eqref{eq:estosdiasterribles} yield that in the cases $(1)$ and $(2)$
 \begin{equation}\label{eq:pterodactilo2}
 \begin{aligned}
 0&\leq \reff^{G_n}(e)l_t^{(n,K,\rightarrow)}(e)-\reff(e')l_t^{(n,K,\rightarrow)}(e')+\sum_{i=1}^{l_t^{(n,K,\rightarrow)}(e)}\eta_i(e,e')\\
 &\leq \reff^{G_n}(e)-\reff^{G_n}(e')+\eta_{l^{(n,K,\rightarrow)}_t(e)}(e,e').
 \end{aligned}
 \end{equation}
 In case $(3)$ we only get that
 \[
 \begin{aligned}
 &-\eta_{l^{(n,K,\rightarrow)}_t(e)}(e,e')\\ 
 \leq&\reff^{G_n}(e)l_t^{(n,K,\rightarrow)}(e)-\reff^{G_n}(e')l_t^{(n,K,\rightarrow)}(e')+\sum_{i=1}^{l_t^{(n,K,\rightarrow)}(e)-1}\eta_i(e,e')\\\leq &\reff^{G_n}(e),
 \end{aligned}
 \]
 or, equivalently,
 \begin{equation}
 \begin{aligned}
0&\leq\reff^{G_n}(e)l_t^{(n,K,\rightarrow)}(e)-\reff^{G_n}(e')l_t^{(n,K,\rightarrow)}(e')+\sum_{i=1}^{l_t^{(n,K,\rightarrow)}(e)}\eta_i(e,e')\\&\leq \reff^{G_n}(e) + \eta_{l^{(n,K,\rightarrow)}_t(e)}(e,e').
 \end{aligned}
 \end{equation}
 \emph{Multiple-edge comparison:}\\
 Let $e^1\prec e^2\prec \cdots e^k$ be an ordered sequence of edges.
 In this part we also distinguish between three cases: 
 \begin{enumerate}
 \item $e^1_-\npreceq B^{(n,K)}_t$, i.e., $B^{(n,K)}$ is to the left of $e^1_-$ at time $t$. 
 \item $e^k_+ \preceq B^{(n,K)}_t$, i.e., $B^{(n,K)}$ is to the right of $e^k_+$ at time $t$.
 \item  There exists an index $i^\ast\in\{1,\cdots,k-1\}$ that satisfies $e^{i^\ast}_-\preceq B^{(n,K)}_t$ and $e^{i^\ast+1}_+\npreceq B^{(n,K)}_t$, i.e., $B^{(n,K)}$ is between $e_-^1$ and $e^k_+$ at time $t$.
 \end{enumerate}
    In all the previous cases, 
 \begin{equation}\label{eq:fuentedepterodactilos}
\begin{aligned}
&\reff^{G_n}(e^1)l_t^{(n,K,\rightarrow)}(e^1)-\reff^{G_n}(e^k)l_t^{(n,K,\rightarrow)}(e^k)+\sum_{j=1}^{k-1}\sum_{i=1}^{l^{(n,K,\rightarrow)}_t(e^j)}\eta_i(e^j,e^{j+1})\\
=&\sum_{j=1}^{k-1}\left( \reff^{G_n}(e^j)l_t^{(n,K,\rightarrow)}(e^j)-\reff^{G_n}(e^{j+1})l_t^{(n,K,\rightarrow)}(e^{j+1})+\sum_{i=1}^{l^{(n,K,\rightarrow)}_t(e^j)}\eta_i(e^j,e^{j+1})\right).
\end{aligned} 
 \end{equation}
 In case $(1)$, by display \eqref{eq:pterodactilo1}, all the terms of the sum in the right hand side of \eqref{eq:fuentedepterodactilos} are equal to $0$. Hence, 
 \begin{equation}
 \abs{\reff^{G_n}(e^1)l_t^{(n,K,\rightarrow)}(e^1)-\reff^{G_n}(e^k)l_t^{(n,K,\rightarrow)}(e^k)}\leq \sum_{j=1}^{k-1}\abs{\sum_{i=1}^{l^{(n,K,\rightarrow)}_t(e^j)}\eta_i(e^j,e^{j+1})},
 \end{equation}
 and the claim of the lemma holds.\\
 In case $(2)$, by \eqref{eq:pterodactilo2}, each term of the sum in the right hand side of \eqref{eq:fuentedepterodactilos} is between $0$ and $\reff^{G_n}(e^j)-\reff^{G_n}(e^{j+1})+\eta_{l^{(n,K,\rightarrow)}_t(e^j)}(e^j,e^{j+1})$. Hence,
 \begin{equation}
\begin{aligned}
0&\leq \reff^{G_n}(e^1)l_t^{(n,K,\rightarrow)}(e^1)-\reff^{G_n}(e^k)l_t^{(n,K,\rightarrow)}(e^k)+\sum_{j=1}^{k-1}\sum_{i=1}^{l^{(n,K,\rightarrow)}_t(e^j)}\eta_i(e^j,e^{j+1})\\
&\leq \sum_{j=1}^{k-1} \left(\reff^{G_n}(e^j)-\reff^{G_n}(e^{j+1})\right)+\sum_{j=1}^{k-1} \eta_{l^{(n,K,\rightarrow)}_t(e^j)}(e^j,e^{j+1})\\
&=\reff^{G_n}(e^1)-\reff^{G_n}(e^k)+\sum_{j=1}^{k-1} \eta_{l^{(n,K,\rightarrow)}_t(e^j)}(e^j,e^{j+1}),
\end{aligned} 
 \end{equation} 
 where, in the last equality we have used the telescopic property of the sum.
 Hence
 \begin{equation}
 \begin{aligned}
 &\abs{\reff^{G_n}(e^1)l_t^{(n,K,\rightarrow)}(e^1)-\reff^{G_n}(e^k)l_t^{(n,K,\rightarrow)}(e^k)}\\\leq &\abs{\reff^{G_n}(e^1)-\reff^{G_n}(e^k)}+ \sum_{j=1}^{k-1}\abs{\sum_{i=1}^{l^{(n,K,\rightarrow)}_t(e^j)}\eta_i(e^j,e^{j+1})} + \sum_{j=1}^{k-1}\abs{\eta_{l^{(n,K,\rightarrow)}(e_j)}(e_j,e_{j+1})},
 \end{aligned}
 \end{equation}
 and the claim of the lemma holds.\\
 In case $(3)$, we simply analyze separately the indices from $1$ to $i^\ast-1$, the index $i^\ast$ and the indices from $i^{\ast}+1$ to $k-1$ in \eqref{eq:fuentedepterodactilos} to get
 \begin{equation}
\begin{aligned}
0&\leq \reff^{G_n}(e^1)l_t^{(n,K,\rightarrow)}(e^1)-\reff^{G_n}(e^k)l_t^{(n,K,\rightarrow)}(e^k)+\sum_{j=1}^{k-1}\sum_{i=1}^{l^{(n,K,\rightarrow)}_t(e^j)}\eta_i(e^j,e^{j+1})\\
&\leq \sum_{j=1}^{i^{\ast}-1} \left(\reff^{G_n}(e^j)-\reff^{G_n}(e^{j+1})\right)+\sum_{j=1}^{i^{\ast}-1} \eta_{l^{(n,K,\rightarrow)}_t(e^j)}(e^j,e^{j+1})\\
&\ \ + \reff^{G_n}(e^{i^\ast})+ \eta_{l^{(n,K,\rightarrow)}_t(e^{i^\ast})}(e^{i^\ast},e^{i^\ast+1})\\
&=\reff^{G_n}(e^k)+\sum_{j=1}^{i^{\ast}} \eta_{l^{(n,K,\rightarrow)}_t(e^j)}(e^j,e^{j+1}),
\end{aligned} 
 \end{equation} 
 Hence, we get that
 \begin{equation}
 \begin{aligned}
 &\abs{\reff^{G_n}(e^1)l_t^{(n,K,\rightarrow)}(e^1)-\reff(e^k)l_t^{(n,K,\rightarrow)}(e^k)}\\
 \leq & \sum_{j=1}^{k-1} \abs{\sum_{i=1}^{l^{(n,K,\rightarrow)}_t(e^j)}\eta_i(e^j,e^{j+1}) }+ \sum_{j=1}^{k-1} \abs{\eta_{l^{(n,K,\rightarrow)}_t(e)}(e^j,e^{j+1})}+ \reff^{G_n}(e^k),
 \end{aligned}
 \end{equation}
 which is the claim of the lemma. This finishes the proof.
\end{proof}

\subsubsection{Maximal inequality - proof of Lemma~\ref{lem:nonuniform}}

Let us now prove Lemma \ref{lem:nonuniform}.

\begin{proof}[Proof of Lemma \ref{lem:nonuniform}]

For the sake of simplicity, we will cover $\T^{(n,K)}$ with a finite number of linear segments and then work with each segment separately. Consider $h_{n,K}(i), i=1,\dots,K$, the set of leaves of $\T^{(n,K)}$ and  the intervals $I_i$ whose endpoints are the root and $h_{n,K}(i)$. 

Let $I^\star$ be a sub-interval of $I_i$ and $e^1\prec e^2\prec \cdots\prec e^{l}$ be the edges of $I^\star$ labeled in increasing genealogical order.
 Let 
 \begin{equation}
 \nu(e^j):=\reff^{G_n}(e^{j-1})+\reff^{G_n}(e^j)+\reff^{G_n}(e^{j+1})
\end{equation}
  for all $1<j<l$.
  It is not hard to see that 
  \begin{equation}\label{eq:ledebounacancion}
  \nu(I^\star):=\sum_{j=2}^{l-1}\nu(e^j)\leq 3\reff^{G_n}(I^\star),
  \end{equation}
  where for an interval $I$ of $\T^{(n,K)}$ (whose endpoints $I_-$ and $I_+$ are vertices of $\T^{(n,K)}$) we define $\reff^{G_n}(I)$ as $\reff^{G_n}(I_-,I_+)$.
  
Our first goal is to construct, for each $k\in\N$, a decomposition of the interval $I^{\star}$ into $2^k$ sub-intervals $I^{k,j}, j=1,\dots 2^k$, each one satisfying $\reff^{G_n}(I^{k,j})\leq \nu(I^\star) 2^{-k}$. This construction will have the following property: For every pair of edges $e,e'$ there exists a chain of edges between $e$ and $e'$ where every two consecutive edges in the chain are the left and right-most edges of some interval $I^{k,j}$ in the decomposition. Moreover, only a fixed number (indeed 2) of intervals of each level $k$ appear in the chain. Therefore, we will be able to control $\abs{\reff^{G_n}(e)l^{(n,K,\rightarrow)}_t(e)-\reff^{G_n}(e')l^{(n,K,\rightarrow)}_t(e')}$ by controlling the same quantity for edges which are left and right-most edges of some $I^{k,j}$.
The decomposition will be constructed by induction on $k$ and using the following result: Let $1\leq j_1<j_2\leq l$ be indices with $j_2-j_1\geq 2$ and $I^\circ:=\{e^j\in E(I_i): j_1\leq j\leq j_2\}$ be a sub-interval of $I^\star$ such that 
\begin{equation}\label{eq:splittrickhipo}
\nu(e^{j_1+1})+\nu(e^{j_1+2})+\cdots+\nu(e^{j_2-1})\leq \frac{\nu(I^\star)}{2^k}\end{equation}
for some $k\geq0$. Define 
\[i^\text{mid}:=\min\left\{i\geq j_1+1: \sum_{j=j_1+1}^{i}\nu(e^j)\geq \frac{\nu(I^\star)}{2^{k+1}}\right\}.\]
We will distinguish between four cases 
\begin{enumerate}
\item $j_1+1<i^\text{mid}<j_2-1$.
\item $i^\text{mid}\geq j_2$.
\item $i^\text{mid}=j_2-1$.
\item $i^{\text{mid}}=j_1+1$.
\end{enumerate}
In case $(1)$, a simple analysis shows that
\begin{equation}\label{eq:splittrick}
\begin{aligned}
&\nu(e^{j_1+1})+\nu(e^{j_1+2})+\cdots+\nu(e^{i^{\text{mid}}-1})\leq \frac{\nu(I^\star)}{2^{k+1}}\\
\text{and}\\
&\nu(e^{i^\text{mid}+1})+\nu(e^{i^\text{mid}+2})+\cdots+\nu(e^{j_2-1})\leq \frac{\nu(I^\star)}{2^{k+1}}.
\end{aligned}
\end{equation}
In this case we define 
\[I^{\circ,\text{left}}:=\{e^i:j_1\leq i \leq i^{\text{mid}}\},\] 
\[I^{\circ,\text{right}}:=\{e^i:i^\text{mid}\leq i\leq j_2\}.\]
Note that $I^{\circ,\text{left}}$ and $I^{\circ,\text{right}}$ satisfy \eqref{eq:splittrick}, which is analogous to the condition \eqref{eq:splittrickhipo} satisfied by $I^{\circ}$ but with $k+1$ in place of $k$.
In particular, a straightforward consequence of \eqref{eq:splittrick} is that
\begin{equation}\label{eq:splittrick2}
\reff^{G_n}(I^{\circ,\text{left}})\leq\frac{\nu(I^\star)}{2^{k+1}}
\quad \text{and}\quad\reff^{G_n}(I^{\circ,\text{right}})\leq\frac{\nu(I^\star)}{2^{k+1}}.  
\end{equation}
In case $(2)$ we have
\[\nu(e^{j_1+1})+\nu(e^{j_1+2})+\cdots+\nu(e^{j_2-1})\leq \frac{\nu(I^\star)}{2^{k+1}}.\]
In this case we define $I^{\circ,\text{left}}=I^{\circ,\text{right}}=I^\circ$. Observe that in this case $I^{\circ,\text{left}}$ and $I^{\circ,\text{right}}$ also satisfy \eqref{eq:splittrick} and \eqref{eq:splittrick2}.\\
In case $(3)$ we have that
\[\nu(e^{j_1+1})+\nu(e^{j_1+2})+\cdots+\nu(e^{j_2-2})\leq \frac{\nu(I^\star)}{2^{k+1}}\] 
and we can only get that
\[\nu(e^{j_2-1})\leq \frac{\nu(I^\star)}{2^{k}}.\]
In this case we define $I^{\circ,\text{left}}=\{e^i:j_1\leq i \leq j_2-1\}$ and $I^{\circ,\text{right}}:=\{e^{j_2-1},e^{j_2}\}$. We have
\[
\reff^{G_n}(I^{\circ,\text{left}})\leq \frac{\nu(I^\star)}{2^{k+1}} \quad \text{and} \quad \reff^{G_n}(I^{\circ,\text{right}})\leq \frac{\nu(I^\star)}{2^{k}}.
\]
In case $(4)$, we get that
\[\nu(e^{j_1+2})+\nu(e^{j_1+3})+\cdots+\nu(e^{j_2-2})\leq \frac{\nu(I^\star)}{2^{k+1}}\]
and 
\[\nu(e^{j_1+1})\leq \frac{\nu(I^\star)}{2^{k}}.\]
In this case we define $I^{\circ,\text{right}}:=\{e^{j_1},e^{j_1+1}\}$ and $I^{\circ,\text{left}}=\{e^i:j_1+1\leq i \leq j_2\}$. We have
\[
\reff^{G_n}(I^{\circ,\text{left}})\leq \frac{\nu(I^\star)}{2^{k}} \quad \text{and} \quad \reff^{G_n}(I^{\circ,\text{right}})\leq \frac{\nu(I^\star)}{2^{k+1}}.
\]

Note that, except when $I^{\circ,\text{left}}$ (resp. $I^{\circ,\text{right}}$) is composed of only two edges, we can use the result in \eqref{eq:splittrick} to decompose recursively $I^{\circ,\text{left}}$ (resp. $I^{\circ,\text{right}}$) into two intervals in the same way we have decomposed $I^{\circ}$. We do not decompose intervals composed of only two edges. By successively applying this procedure, we get for each $k\in\N$ a family of at most $2^k$ intervals $I^{i,k},i=1\dots j(k)\leq 2^k$ which are labeled in increasing genealogical order. Note that if $I^{i,k}$ is composed of more than two intervals, it satisfies
\begin{equation}\label{eq:gb}
\reff^{G_n}(I^{i,k})\leq \frac{\nu(I^\star)}{2^{k}},
\end{equation}
whereas, if $I^{i,k}$ is composed of two intervals, it satisfies
\begin{equation}\label{eq:bb}
\reff^{G_n}(I^{i,k})\leq \frac{\nu(I^\star)}{2^{k-1}}.
\end{equation}

Let $e^{i,k,\text{left}}$ and $e^{i,k,\text{right}}$ the leftmost and rightmost edges of $I^{i,k}$. Note that $e^{i,k,\text{right}}=e^{i+1,k,\text{left}}$ (for any index $i$ such that both $I^{i,k}$ and $I^{i+1,k}$ have more than two edges). The key observation is the following: for any pair of edges $e\prec e'$ in $I^\star$, let $k_\text{mid}:=\min\{k  \in\N: \exists i_\text{mid} \text{ with } e\preceq e^{k_\text{mid},i_\text{mid},\text{left}}, e^{k_\text{mid},i_\text{mid},\text{right}}\preceq e' \}$. That is, $I^{i_\text{mid},k_\text{mid}}$ is the interval of minimum level which lies between $e$ and $e'$. It is not hard to see that there exists a (finite) sequence of  levels $k_1^->k_2^-\dots>k_{l^-}^-$ and indices $i_1^-,i_2^-,\dots,i_{l^-}^-$ which satisfy $e^{i_1^-,k_1^-,\text{left}}=e$, $e^{i_{l^-}^-,k_{l^-}^-,\text{right}}=e^{i_{\text{mid}},k_\text{mid},\text{left}}$ and
\begin{equation}
e^{i_j^-,k_j^-,\text{right}}=e^{i_{j+1}^-,k_{j+1}^-,\text{left}} \quad \forall j=1\dots l^--1.
\end{equation}
Similarly, there exists a sequence of levels $k_1^+<k_2^+\dots<k_{l^+}^+$ and indices $i_1^+,i_2^+,\dots,i_{l^+}^+$ which satisfy $e^{i_1^+,k_1^+,\text{left}}=e^{i_{\text{mid}},k_\text{mid},\text{right}}$, $e^{i_{l^+}^+,k_{l^+}^+,\text{right}}=e'$,  and
\begin{equation}
e^{i_j^+,k_j^+,\text{right}}=e^{i_{j+1}^+,k_{j+1}^+,\text{left}} \quad \forall j=1\dots l^+-1.
\end{equation}
Therefore, we have constructed a chain of edges 
\[
\begin{aligned}
e=&e^{i_1^-,k_1^-,\text{left}}\preceq e^{i_1^-,k_1^-,\text{right}}=e^{i_2^-,k_2^-,\text{left}}\preceq\dots\preceq e^{i_{l^-}^-,k_{l^-}^-,\text{right}}=e^{i_{\text{mid}},k_\text{mid},\text{left}}\\
&\preceq e^{i_{\text{mid}},k_\text{mid},\text{right}}=e^{i_1^+,k_1^+,\text{left}}\preceq e^{i_1^+,k_1^+,\text{right}}=e^{i_2^+,k_2^+,\text{left}}\preceq\dots \preceq e^{i_{l^+}^+,k_{l^+}^+,\text{right}}=e',
\end{aligned}
\]
which connect $e$ to $e'$ and all of them are endpoints of intervals of the decomposition. 

This construction is depicted in figure 7. The edges of the chain  are numbered from 1 to 5 in this specific case. Note that the edges of the chain (except for the first and the last, i.e., from 2 to 4) are those which lie in the overlap of different intervals of the decomposition, i.e., they are the right-most edge of one interval and the left-most edge of the next interval. That is explicit in the figure for edge 4.

\begin{figure}\label{fig:maximalinequality}
  \includegraphics[scale=0.7]{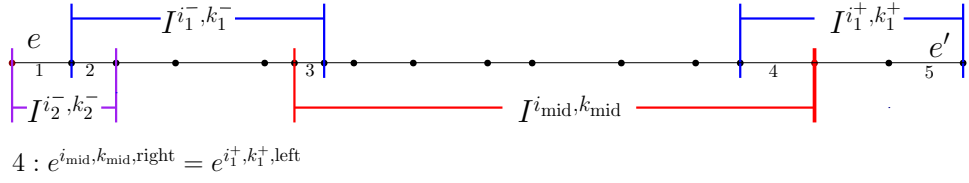}
  \caption{Chain of edges}
\end{figure}

Moreover, since both $e^{i_j^-,k_j^-,\text{left}}$ and $e^{i_{j}^+,k_{j}^-,\text{right}}$ are contained in $I^{i_j^-,k_j^-}$, we get from \eqref{eq:gb} and \eqref{eq:bb} that
\begin{equation}\label{eq:unsoloamor}
\reff^{G_n}(e^{i_j^-,k_j^-,\text{left}}_-,e^{i_j^-,k_j^-,\text{right}}_+)\leq \frac{\nu(I^\star)}{2^{k_j^--1}}.
\end{equation}

Similarly
\begin{equation}\label{eq:unsoloamor2}
\reff^{G_n}(e^{i_j^+,k_j^+,\text{left}}_-,e^{i_j^+,k_j^+,\text{right}}_+)\leq \frac{\nu(I^\star)}{2^{k_j^+-1}}.
\end{equation}

Setting
 \begin{align*}
\mathrm{M}^{k}(I^\star):=
 \max_{j=1,\dots,j(k)-1} \biggl\{&\abs{\sum_{i=1}^{l_t^{(n,K,\rightarrow)}(e^{j,k,\text{left}})}\eta_i(e^{j,k,\text{left}},e^{j,k,\text{right}})}  +\\& \abs{\eta_{l^{(n,K,\rightarrow)}_t(e^{j,k,\text{left}})}(e^{j,k,\text{left}},e^{j,k,\text{right}})}\biggl\}
 \end{align*}
we get, by Lemma \ref{lem:etatrick} that
 \begin{equation}\label{eq:pickyalex}
 \sup_{e,e'\in E(I^\star)} \abs{R^{G_n}_{\text{eff}}(e)l^{(n,K,\rightarrow)}_t(e)-R^{G_n}_{\text{eff}}(e')l^{(n,K,\rightarrow)}_t(e')}\leq  \sum_{k=1}^{\infty}2\mathrm{M}^k(I^\star)+\max_{e\in E(I_i)}\reff^{G_n}(e),
 \end{equation}
 where the $2$ in front of $\mathrm{M}^k(I^\star)$ appears since there might be up to two intervals of the same level in the chain of edges which join $e$ to $e'$(one to the right of $I^{i_{\text{mid}},k_{\text{mid}}}$ and the other to the left of $I^{i_{\text{mid}},k_{\text{mid}}}$).
 Let $\theta$ be a constant in $(0,1)$ and $C$ such that $C\sum_{k=1}^\infty \theta^k=1/2$. Then
\begin{equation}\label{eq:escaramujo}
  \mathbf{P}^{(K)}\left[\sum_{k=1}^{\infty}2 \mathrm{M}^k(I^\star)\geq \epsilon \right]
 \leq \sum_{k=1}^\infty \mathbf{P}^{(K)}\left[  \mathrm{M}^k(I^\star) \geq C \theta^k\epsilon \right].
  \end{equation}
  Using Lemma \ref{lem:premaximalinequality}, \eqref{eq:unsoloamor} and \eqref{eq:unsoloamor2} we get that
  \begin{equation}
  \begin{aligned}
  \mathbf{P}^{(K)}&\left[ \mathrm{M}^k(I^\star) \geq C \theta^k\epsilon \right]\\
  \leq\sum_{j=1}^{2^k-1} \mathbf{P}^{(K)}&\Bigg[\abs{\sum_{i=1}^{l_t^{(n,K,\rightarrow)}(e^{j,k,\text{left}})}\eta_i(e^{j,k,\text{left}},e^{j,k,\text{right}})}\\
  & +\abs{\eta_{l^{(n,K,\rightarrow)}_t(e^{j,k,\text{left}})}(e^{j,k,\text{left}},e^{j,k,\text{right}})}\geq C \theta^k\epsilon \Bigg] \\
  \leq  2^{k}\mathbf{P}^{(K)}&\left[\abs{\sum_{i=1}^{l_t^{(n,K,\rightarrow)}(e^{j,k,\text{left}})}\eta_i(e^{j,k,\text{left}},e^{j,k,\text{right}})} \geq \frac{C}{2} \theta^k\epsilon \right]\\
  +2^k\mathbf{P}^{(K)}&\left[\abs{\eta_{l^{(n,K,\rightarrow)}_t(e^{j,k,\text{left}})}(e^{j,k,\text{left}},e^{j,k,\text{right}})}\geq \frac{C}{2} \theta^k\epsilon \right]\\
  \leq 2^{k}(C_1+&C_2) \left(\frac{C}{2}\theta^k\epsilon\right)^{-4} \left(\frac{\nu(I^\star)}{2^{k-1}}\right)^2
  \end{aligned}
  \end{equation}
Therefore, the right hand side of \eqref{eq:escaramujo} is bounded above by
\begin{equation}
\begin{aligned}
&\sum_{k=1}^\infty4(C_1+C_2) 2^k\left(\frac{C}{2}\theta^k\epsilon\right)^{-4} \left(\frac{\nu(I^\star)}{2^{k}}\right)^2\\
= & 4(C_1+C_2) \left(\frac{C}{2}\right)^{-4} \epsilon^{-4} \nu(I^\star)^2\sum_{k=1}^\infty \left(\frac{1}{2\theta^{4}}\right)^k.
\end{aligned}
\end{equation}
It is possible to choose $\theta\in (0,1)$ such that $(2\theta^{4})^{-1}<1$.
Therefore, from the display above we get that there exists a constant $C_3$ such that
\begin{equation}
 \mathbf{P}^{(K)}\left[\sum_{k=1}^{\infty} \mathrm{M}^k(I^\star)\geq \epsilon \right]\leq C_3 \epsilon^{-4}(\nu(I^\star))^2.
\end{equation}
Moreover, by \eqref{eq:ledebounacancion} and setting $C_4=9C_3$ we get that
\begin{equation}
 \mathbf{P}^{(K)}\left[\sum_{k=1}^{\infty} \mathrm{M}^k(I^\star)\geq \epsilon \right]\leq C_4\epsilon^{-4} (\reff^{G_n}(I^\star))^2.
\end{equation}
Recall that $I_i$ is one of elements in the decomposition of $\T^{(n,K)}$ into linear segments. 
Let $(I^{(\delta,j)}_i)_{j=1}^{l(\delta)}$ be a covering of $I_i$ by sub-intervals of resistance-length between $\delta/2$ and $\delta$, where $l(\delta)=l(\delta,n,K)$ is the number of sub-intervals in the covering. By Lemma \ref{lem:graphspatialresistancetreeconvergence} and Lemma \ref{lem_no_macro_res} we have that we can choose the covering in a way that 
\begin{equation}\label{eq:jovensoldado}
\sup_{n\in \N,\delta>0}\delta l(\delta,n,K)<\infty
\end{equation} 
We have that, for any $i\leq K$
\begin{equation}\label{eq:evadejadesercostilla}
\begin{aligned}
&\mathbf{P}^{(K)}\left[\sup_{\stackrel{e,e'\in E(I_i)}{d^{(n,K)}_{\text{res}}(e,e')\leq\delta}} n^{-1/2}\abs{R^{G_n}_{\text{eff}}(e)l^{(n,K,\rightarrow)}_t(e)-R^{G_n}_{\text{eff}}(e')l^{(n,K,\rightarrow)}_t(e')}\geq\epsilon \right]\\
\leq&\sum_{j=1}^{l(\delta)}\mathbf{P}^{(K)}\left[\sup_{e,e'\in E(I^{(\delta,j)}_i)} n^{-1/2}\abs{R^{G_n}_{\text{eff}}(e)l^{(n,K,\rightarrow)}_t(e)-R^{G_n}_{\text{eff}}(e')l^{(n,K,\rightarrow)}_t(e')}\geq\frac{\epsilon}{2} \right]\\
\leq & \sum_{j=1}^{l(\delta)}\mathbf{P}^{(K)}\left[\sum_{k=1}^\infty 2M^k(I^{(\delta,j)}_i)+ \max_{e\in E(I^{(\delta,j)}_i)}\reff^{G_n}(e) \geq\frac{\epsilon}{2} \right]\\
\leq& \sum_{j=1}^{l(\delta)}\left[\mathbf{P}^{(K)}\left[\sum_{k=1}^\infty 2M^k(I^{(\delta,j)}_i) \geq\frac{\epsilon}{4} \right]+\mathbf{P}^{(K)}\left[ \max_{e\in E(I^{(\delta,j)}_i)}\reff^{G_n}(e) \geq\frac{\epsilon}{4} \right]\right]\\
\leq & \sum_{j=1}^{l(\delta)} C_4\left(\frac{\epsilon}{8}\right)^{-4} \delta^2+\sum_{j=1}^{l(\delta)}\mathbf{P}^{(K)}\left[ \max_{e\in E(I^{(\delta,j)}_i)}\reff^{G_n}(e) \geq\frac{\epsilon}{4} \right]\\
\leq& l(\delta)\delta C_4\left(\frac{\epsilon}{8}\right)^{-4} \delta+l(\delta)\mathbf{P}^{(K)}\left[ \max_{e\in E(\T^{(n,K)})}\reff^{G_n}(e) \geq\frac{\epsilon}{4} \right],
\end{aligned}
\end{equation}
where we have used \eqref{eq:pickyalex} between line $2$ and $3$.
By display \eqref{eq:jovensoldado} we have that
\begin{equation}
\lim_{\delta\to0}\limsup_{n\in\N}l(\delta)\delta C_4\left(\frac{\epsilon}{8}\right)^{-4} \delta=0.
\end{equation}
Moreover, by Lemma \ref{lem_no_macro_res} we have that, for each $\delta$ fixed,
\begin{equation}
\lim_{n\to\infty} l(\delta)\mathbf{P}^{(K)}\left[ \max_{e\in E(\T^{(n,K)})}\reff^{G_n}(e) \geq\frac{\epsilon}{4} \right]=0.
\end{equation}
The two displays above together with \eqref{eq:evadejadesercostilla} imply that
\begin{equation}
\lim_{\delta\to0}\limsup_{n\in\N}\mathbf{P}^{(K)}\left[\sup_{\stackrel{e,e'\in E(I_i)}{d^{(n,K)}_{\text{res}}(e,e')\leq\delta}} n^{-1/2}\abs{R^{G_n}_{\text{eff}}(e)l^{(n,K,\rightarrow)}_t(e)-R^{G_n}_{\text{eff}}(e')l^{(n,K,\rightarrow)}_t(e')}\geq\epsilon \right]=0.
\end{equation}
  
 Since the number of linear segments $I_i$ in the decomposition of $\T^{(n,K)}$ is at most $K$, the claim of the Lemma follows from the display above.
\end{proof}

\subsection{Proof of the convergence of local times}

\subsubsection{Finite-dimensional convergence of local times}

The strategy to prove Proposition \ref{l:convergenceoflocaltimes} is to get the convergence at a single site/edge and then to apply Lemma \ref{lem:modulusofcontinuity} to get uniformity.
The following lemma takes care of the convergence at a single site/edge.
 For any edge $e\in E(\T^{(n,K)})$, let  $x_e\in \T^{(n,K)}$ be the midpoint of $e$. 
\begin{lemma}\label{lem:convergenceoflocaltimessingle}
For all $t,\epsilon>0$ and $K\in\N$
\[
\lim_{n\to\infty}\sup_{e\in E^\ast(\T^{(n,K)})}\mathbf{P}^{(K)}\left[ \abs{ n^{-1/2}\reff^{G_n}(e)l^{(n,K)}_t(e)-L^{(n,K)}_t(x_e) }  \geq\epsilon\right]=0.
\]
\end{lemma}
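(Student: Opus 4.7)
The plan is to decompose $L^{(n,K)}_t(x_e)$ into independent increments over the one-way crossings of $e$, bound the resulting martingale fluctuations via Chebyshev, and promote the estimate to uniformity in $e$ using Lemma~\ref{lem_no_macro_res}. Set $\tau_0:=\inf\{s\ge 0:B^{(n,K)}_s\in\{e_-,e_+\}\}$ and, for $k\ge 1$, let $\tau_k$ be the first time after $\tau_{k-1}$ at which $B^{(n,K)}$ visits the endpoint of $e$ opposite to $B^{(n,K)}_{\tau_{k-1}}$. Define $X_k:=L^{(n,K)}_{\tau_k}(x_e)-L^{(n,K)}_{\tau_{k-1}}(x_e)$; by the strong Markov property and the symmetry of the two endpoints, $(X_k)_{k\ge 1}$ is i.i.d., and
\[
L^{(n,K)}_t(x_e)=\sum_{k=1}^{N_t}X_k+R_t,
\]
where $N_t:=l^{(n,K)}_t(e)$ and $R_t$ is the local time at $x_e$ accumulated after time $\tau_{N_t}$. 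Applying property~(2) of the Brownian motion on a real tree (stated above Proposition~\ref{prop_def_process}) with $\sigma_1=B^{(n,K)}_{\tau_{k-1}}$, $\sigma_2=B^{(n,K)}_{\tau_k}$ and $\sigma=x_e$, and noting that $b^T(x_e,e_-,e_+)=x_e$ because $x_e$ lies on the geodesic joining $e_-$ to $e_+$, we obtain $E^{G_n}[X_k]=2d^{(n,K)}_{\text{res}}(x_e,B^{(n,K)}_{\tau_k})=d^{(n,K)}_{\text{res}}(e)$.

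Decomposing $X_k$ along the geometric number of re-entries of $B^{(n,K)}$ into $e$ during $[\tau_{k-1},\tau_k]$ and bounding each per-entry contribution via a Ray--Knight-type estimate on the interval $e$ yields $\mathrm{Var}(X_k)\le C d^{(n,K)}_{\text{res}}(e)^2$. The partial sums $M_n:=\sum_{k=1}^n(X_k-d^{(n,K)}_{\text{res}}(e))$ form a square-integrable mean-zero martingale with respect to $(\mathcal{F}_{\tau_k})_k$, and $N_t$ is a stopping time for this filtration since $\{N_t\ge n\}=\{\tau_n\le t\}\in \mathcal{F}_{\tau_n}$. Integrating the exponential tail of Lemma~\ref{l:tailoflocaltime} gives $E^{G_n}[N_t]\le C/d^{(n,K)}_{\text{res}}(e)$, so the $L^2$ optional-stopping inequality yields $E^{G_n}[M_{N_t}^2]\le E^{G_n}[N_t]\,\mathrm{Var}(X_1)\le C d^{(n,K)}_{\text{res}}(e)$. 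The remainder $R_t$ is dominated by the local time at $x_e$ of a single excursion from $B^{(n,K)}_{\tau_{N_t}}$ killed upon reaching the opposite endpoint of $e$, whose expectation is $d^{(n,K)}_{\text{res}}(e)$ by property~(2). Combining Chebyshev for $M_{N_t}$ with Markov for $R_t$,
\[
\mathbf{P}^{(K)}\Big[\big|n^{-1/2}\reff^{G_n}(e)l^{(n,K)}_t(e)-L^{(n,K)}_t(x_e)\big|\ge\epsilon\Big]\le C(\epsilon^{-2}+\epsilon^{-1})\,\mathbf{E}^{(K)}\big[d^{(n,K)}_{\text{res}}(e)\big].
\]
Taking the supremum over $e\in E^\ast(\T^{(n,K)})$ and invoking Lemma~\ref{lem_no_macro_res}, which gives $\sup_e d^{(n,K)}_{\text{res}}(e)\to 0$ in probability, together with the fact that the total resistance-length of $\T^{(n,K)}$ stays bounded in $n$ by Lemma~\ref{lem:graphspatialresistancetreeconvergence}, dominated convergence produces $\sup_e\mathbf{E}^{(K)}[d^{(n,K)}_{\text{res}}(e)]\to 0$, which concludes the proof.

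The main obstacle is the variance bound $\mathrm{Var}(X_k)\le C d^{(n,K)}_{\text{res}}(e)^2$ uniformly in the edge and in the environment. The classical Ray--Knight theorem would at once identify the local time at the midpoint of a single Brownian excursion on an interval of length $d$ as a squared-Bessel-type variable of variance $O(d^2)$; the complication here is that the walk can leak out of $e$ through its starting endpoint and roam through the rest of the tree before returning, with the frequency of such escapes controlled by the resistance from $e_\pm$ to the rest of the tree, and one must check that this leakage does not inflate the variance. The cleanest route is to condition on the (geometric) number of visits to $e$ during $[\tau_{k-1},\tau_k]$ and use the strong Markov property to sum independent per-visit contributions bounded by the single-interval Ray--Knight, very much in the spirit of the moment computations behind Lemma~\ref{l:boundoneta} and Lemma~\ref{l:tailoflocaltime}.
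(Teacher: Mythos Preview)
Your approach coincides with the paper's: both decompose $L^{(n,K)}_t(x_e)$ into i.i.d.\ per-crossing increments $X_k$ with mean $d^{(n,K)}_{\text{res}}(e)$, control the second moment via a Wald-type identity combined with the tail bound of Lemma~\ref{l:tailoflocaltime}, and conclude through Lemma~\ref{lem_no_macro_res}. The paper's treatment of the variance is cleaner and dispels your leakage concern outright: it further decomposes each crossing into a geometric($1/2$) number of excursions from $x_e$ to $\{e_-,e_+\}$, each contributing an independent exponential local-time increment of mean $d^{(n,K)}_{\text{res}}(e)/2$, so that $X_k/E^{G_n}[X_k]$ has a \emph{fixed} law independent of $n$, $e$, and the tree outside $e$; excursions outside $e$ contribute no local time at $x_e$, and the number of returns to $x_e$ during a crossing is governed solely by the symmetric midpoint geometry inside $e$. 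One small technical point: $N_t$ is not a stopping time for $(\mathcal{F}_{\tau_k})_k$ since $\{N_t\le n\}=\{\tau_{n+1}>t\}\notin\mathcal{F}_{\tau_n}$; the paper works instead with the sandwich $\sum_{k\le N_t}X_k\le L^{(n,K)}_t(x_e)\le \sum_{k\le N_t+1}X_k$ and applies Wald to the stopping time $N_t+1$.
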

\begin{proof}[Proof of Lemma \ref{lem:convergenceoflocaltimessingle}:]
  Define $\theta^{n,K,\text{mid}}_0(e):=0$, 
   \[
 \theta^{n,K,\text{side}}_0(e):=\inf\{s\geq0:B^{(n,K)}_s\in\{e_{-},e_{+}\}\},
 \]
  \[
  \theta^{n,K,\text{mid}}_i(e):=\inf\{s\geq \theta^{n,K,\text{mid}}_{i-1}(e):B^{(n,K)}_s=x_e\},
  \]  
  and
   \[
   \theta^{n,K,\text{side}}_i(e):=\inf\{s \geq \theta^{n,K,\text{mid}}_i(e): B^{(n,K)}_s\in\{e_{-},e_+\} \}.
   \]
   That is, $\theta^{n,K,\text{side}}_i$ is the time of the $i$-th visit of $\{e_-,e_+\}$ by  $B^{(n,K)}$, if we regard two consecutive visits as different if they are separated by a visit to $x_e$.
Let
\[
\zeta^{n,K}_i(e):=L^{(n,K)}_{\theta^{n,K,\text{side}}_{i+1}(e)}(x_e)-L^{(n,K)}_{\theta^{n,K,\text{side}}_{i}(e)}(x_e), \quad{i\in\mathbb{N}},
\]
i.e., $\zeta^{n,K}_i(e)$ is the increment of the local time at $x_e$ between $\theta^{n,K,\text{side}}_{i}(e)$ and $\theta^{n,K,\text{side}}_{i+1}(e)$.
 By the strong Markov property, we have that $(\zeta^{n,k}_i(e))_{i\in\N}$ is an i.i.d.~sequence of random variables. Moreover, it is a known fact that the $(\zeta_i^{n,k}(e))_{i\in\N}$ are exponentially distributed with mean
  \begin{equation}\label{eq:meanofzeta}
  E^{G_n}[\zeta_i^{n,k}(e)]=\frac{d^{(n,K)}_{\text{res}}(e_-,e_+)}{2}=\frac{\reff^{G_n}(e)}{2n^{1/2}}.
  \end{equation}
Let $a(0):=0$ and 
\[a(i):=\min\left\{i > a(i-1): B^{(n,K)}_{\theta^{n,K,\text{side}}_{a(i)}(e)}\neq B^{(n,K)}_{\theta^{n,K,\text{side}}_{a(i-1)}(e)}\right\},\qquad i\in\N.\]
  Let 
 \begin{equation}\label{eq:defofchi}
 \chi_i^{n,K}(e):=\sum_{j=a(i-1)+1}^{a(i)}\zeta^{n,K}_j(e), \qquad i\in\N.
 \end{equation}
That is, $\chi_i^{n,K}(e)$ is the increment of the local time at $x_e$ between the $(i-1)$-th and $i$-th undirected crossing of $e.$ 
  By the strong Markov property we have that $a(i)-a(i-1)-1$ is a geometric random variable. Moreover, since $x_e$ is the midpoint of the edge $e$, $a(i)-a(i-1)-1$ is geometric of parameter $1/2$. Therefore
  \begin{equation}\label{eq:expectationofthelocaltime}
 E^{G_n}[\chi_i^{n,K}(e)]=\sum_{j=1}^{\infty}j E^{G_n}[\zeta_i^{n,K}(e)] (1/2)^{j}
 =\frac{\reff^{G_n}(e)}{n^{1/2}},
 \end{equation}
 where the last equality follows from \eqref{eq:meanofzeta}.

We have
 \begin{equation}\label{eq:triangleineq}
 \begin{aligned}
 &P^{G_n}\left[\abs{ n^{-1/2}\reff^{G_n}(e)l^{(n,K)}_t(e)-L^{(n,K)}_t(x_e) }  \geq\epsilon\right]\\
 \leq &P^{G_n}\left[ \left\vert n^{-1/2}\reff^{G_n}(e)l^{(n,K)}_t(e)-\sum_{i=1}^{l^{(n,K)}_t(e)} \chi_i^{n,K}(e) \right\vert  \geq\epsilon/2\right]\\
 + &P^{G_n}\left[ \left\vert L_t^{(n,K)}(x_e)-\sum_{i=1}^{l^{(n,K)}_t(e)} \chi_i^{n,K}(e) \right\vert  \geq\epsilon/2\right]
 \end{aligned}
 \end{equation}
By \eqref{eq:expectationofthelocaltime} and Chebishev's inequality
\begin{equation}
\begin{aligned}
&P^{G_n}\left[ \left\vert n^{-1/2}\reff^{G_n}(e)l^{(n,K)}_t(e)-\sum_{i=1}^{l^{(n,K)}_t(e)} \chi_i^{n,K}(e) \right\vert  \geq\epsilon/2\right] \\
\leq & 4\epsilon^{-2}E^{G_n}\left[\left( \sum_{i=1}^{l^{(n,K)}_t(e)} (\chi_i^{n,K}(e)-E^{G_n}[\chi_i^{n,K}(e)]) \right)^2 \right]
 \end{aligned}
 \end{equation}
 Using display \eqref{eq:secondmoment} in the appendix, we find that the display above is upper bounded by
 \[ 4 \epsilon^{-2}E^{G_n}[l^{(n,K)}_t(e)] E^{G_n}\left[  (\chi_i^{n,K}(e)-E^{G_n}[\chi_i^{n,K}(e)])^2 \right],\]
 which, by \eqref{eq:localtimemoments}, is bounded above by
 \begin{equation}\label{eq:gettingridofdeltanet}
 \begin{aligned}
  &4 \epsilon^{-2} C \reff^{G_n}(e)^{-1} n^{1/2} E^{G_n}\left[  (\chi_i^{n,K}(e)-E^{G_n}[\chi_i^{n,K}(e)])^2 \right]\\
  =&4 \epsilon^{-2} C \reff^{G_n}(e)^{-1} n^{1/2} E^{G_n}[\chi_i^{n,K}(e)]^{2} E^{G_n}\left[\left(\frac{\chi^{n,K}_i(e)}{E^{G_n}[\chi_i^{n,K}(e)]}-1\right)^2\right],
  \end{aligned}
  \end{equation}
for a positive constant $C$. 
 Moreover, by \eqref{eq:defofchi} and the fact that $\zeta^{n,K}_i$ are exponentially distributed,  $E^{G_n}[\chi_i^{n,K}(e)]^{-1}\chi_i^{n,K}(e)$ is a geometric sum (of parameter $1/2$) of independent exponential random variables of mean $1/2$. Therefore the distribution of $E^{G_n}[\chi_i^{n,K}(e)]^{-1}\chi_i^{n,K}(e)$ is independent of $n$ and, consequently, there exists $C$, independent of $n$, such that
 \[ E^{G_n}\left[\left(\frac{\chi^{n,K}_i(e)}{E^{G_n}[\chi_i^{n,K}(e)]}-1\right)^2\right]<C.\]
 
Using the display above and \eqref{eq:expectationofthelocaltime} in \eqref{eq:gettingridofdeltanet} we get that
 \begin{equation}
 P^{G_n}\left[ \left\vert n^{-1/2}\reff^{G_n}(e)l^{(n,K)}_t(e)-\sum_{i=1}^{l^{(n,K)}_t(e)} \chi_i^{n,K}(e) \right\vert  \geq\epsilon/2\right]
  \leq  C n^{-1/2} \reff^{G_n}(e),
 \end{equation}
  for some constant $C$ (which depends on $\epsilon$). By Lemma $\ref{lem_no_macro_res}$, the last term goes to $0$ in $\mathbf{P}^{(K)}$-probability, uniformly over $e\in E^{\ast}(\T^{(n,K)})$. 
 Therefore
  \begin{equation}\label{eq:step1}
 \sup_{e\in E^\ast(\T^{(n,K)})}  P^{G_n}\left[  \left\vert n^{-1/2}\reff^{G_n}(e)l^{(n,K)}_t(e)-\sum_{i=1}^{l^{(n,K)}_t(e)} \chi_i^{n,K}(e) \right\vert  \geq\epsilon/2\right]\to0 \qquad \mathbf{P}^{(K)}-\text{a.s.},  
  \end{equation}
as $n\to\infty$.

 On the other hand, recalling the coupling between $J^{(n,K)}$ and $B^{(n,K)}$ in Section \ref{sect:couplingrwbm} and the definition of $l^{(n,K)}_t(e)$, it follows that
\begin{equation}\label{eq:landL}
\sum_{i=1}^{l^{(n,K)}_t(e)}\chi_i^{n,K}(e)\leq L^{(n,K)}_t(x_e)\leq\sum_{i=1}^{l^{(n,K)}_t(e)+1}\chi_i^{n,K}(e).
\end{equation}
Therefore
\begin{equation}\label{eq:prestep2}
\begin{aligned}
P^{G_n}\left[ \left\vert L_t^{(n,K)}(x_e)-\sum_{i=1}^{l^{(n,K)}_t(e)} \chi_i^{n,K}(e) \right\vert  \geq\epsilon/2\right]
\leq &P^{G_n}\left[  \chi_{l^{(n,K)}_t(e)+1} ^{n,K}(e) \geq \epsilon/2 \right]\\
\leq &P^{G_n}\left[\sup_{i\leq l^{(n,K)}_t(e)+1} \chi_i^{n,K}(e)\geq \epsilon/2\right]\\
\leq&E^{G_n}\left[\sum_{i=1}^{l^{(n,K)}_t(e)+1} 1_{\{ \chi_i^{n,K}(e) \geq \epsilon/2 \}}\right],
\end{aligned}
\end{equation}
which, by Wald's identity equals 
\begin{align*}
&E^{G_n}[l^{(n,K)}_t(e)+1]P^{G_n}[\chi_i^{n,K}(e)\geq\epsilon/2]\\
\leq &4\epsilon^{-2}E^{G_n}[l^{(n,K)}_t(e)+1]E^{G_n}[(\chi_i^{n,K}(e))^2]\\
\leq& 4\epsilon^{-2}(Cn^{1/2}\reff^{G_n}(e)^{-1}+1)\reff^{G_n}(e)^{2}n^{-1} E^{G_n}\left[\left(\frac{\chi_i^{n,K}(e)}{E[\chi_i^{n,K}(e)]}\right)^2\right],
\end{align*}
where we have used \eqref{eq:localtimemoments}, \eqref{eq:expectationofthelocaltime} and Chebyshev's inequality. Moreover, recalling that the distribution of $\frac{\chi_i^{n,k}(e)}{E^{G_n}[\chi_i^{n,k}(e)]}$ does not depend upon $n$ nor $e$ and has finite second moment, we find that there exists a constant $C$ such that the display above is bounded by
\begin{align*}
 4\epsilon^{-2}Cn^{-1/2}\reff^{G_n}(e) + 4\epsilon^{-2}C \reff^{G_n}(e)^2n^{-1}.
\end{align*}
which, by Lemma \ref{lem_no_macro_res}, goes to $0$ in $\mathbf{P}^{(K)}$ probability.  
Therefore
\begin{equation}\label{eq:step2}
\lim_{n\to\infty}\sup_{e\in \T^{(n,K)}}\bold{P}^{(K)}\left[ \left\vert L_t^{(n,K)}(x_e)-\sum_{i=1}^{l^{(n,K)}_t(e)} \chi_i^{n,K}(e) \right\vert  \geq\epsilon/2\right]  \to 0
.
\end{equation}


Putting together \eqref{eq:triangleineq}, \eqref{eq:step1} and \eqref{eq:step2} we finish the proof. 
\end{proof}

\subsubsection{First part of the proof of Proposition~\ref{l:convergenceoflocaltimes}}\label{proof_prop61b}

The main purpose of this section is to prove the following:

\begin{lemma}\label{lem:abstractcoupling} Under \eqref{eq:couplinggeometry} and condition $(R)_{\rho}$, we have that, for all $R\geq0$ and $K\in\N$,
\[
(L^{(n,K)}_t(\Upsilon_{n,K}(x)))_{t\in[0,R],x\in\frak{T}^{(K)}} \stackrel{n\to\infty}{\to}(\sigma_d\rho L^{(K)}_{(\rho\sigma_d)^{-1}t}(x)))_{t\in[0,R],x\in\frak{T}^{(K)}}
\]
in distribution in the space $C([0,R]\times\frak{T}^{(K)},\R_+)$ endowed with the uniform topology, $\mathbf{P}^{(K)}$-a.s.
 \end{lemma}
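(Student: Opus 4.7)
The plan is to deduce the lemma from the convergence of Brownian motions $B^{(n,K)} \to \bar B^{(K)}$ established in Lemma~\ref{lem:BnktoBk} (where $\bar B^{(K)}_t := B^{(K)}_{(\rho\sigma_d)^{-1}t}$) by combining it with uniform regularity estimates for the local times. A direct time-change of the occupation density formula (as already used in the argument for \eqref{eq:barBequivalentdefinition}) shows that the local time of $\bar B^{(K)}$ with respect to $\lambda^{(K)}_{\frak T}$ equals $\bar L_t(x) := \rho\sigma_d L^{(K)}_{(\rho\sigma_d)^{-1}t}(x)$. Therefore, once $\frak T^{(n,K)}$ is identified with $\frak T^{(K)}$ via the homeomorphism $\Upsilon_{n,K}$, the lemma reduces to proving uniform-in-$(t,x)$ convergence of the local time of $B^{(n,K)}$ to that of $\bar B^{(K)}$ on $[0,R]\times\frak T^{(K)}$.

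For the finite-dimensional side, I invoke the Skorokhod representation theorem on the convergence of spatial trees in $D$ (combining \eqref{eq:couplinggeometry} with Lemma~\ref{lem:graphspatialresistancetreeconvergence}) to set up a coupling under which the a.s.\ convergence $(\frak T^{(n,K)}, d^{(n,K)}_{\text{res}}) \to (\frak T^{(K)}, \rho\sigma_d d_{\frak T})$ in $D$ holds, and then Proposition~\ref{prop_metrictoBrownian} yields $\Upsilon_{n,K}^{-1}(B^{(n,K)}) \to \bar B^{(K)}$ uniformly on $[0,R]$, $\mathbf P^{(K)}$-a.s. Moreover $(\Upsilon_{n,K}^{-1})_*\lambda^{(n,K)}_{\text{res}} \to \lambda^{(K)}_{\frak T}$ weakly, since both are normalized Lebesgue measures on trees of the same shape whose edge lengths converge. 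Testing the occupation density formula for $B^{(n,K)}$ against $f\circ\Upsilon_{n,K}^{-1}$, for a continuous $f:\frak T^{(K)}\to\R$, gives
\[
\int_{\frak T^{(K)}} f(x)\, L^{(n,K)}_t(\Upsilon_{n,K}(x))\, d(\Upsilon_{n,K}^{-1})_*\lambda^{(n,K)}_{\text{res}}(x) \;=\; \int_0^t f(\Upsilon_{n,K}^{-1}(B^{(n,K)}_s))\, ds,
\]
whose right-hand side converges a.s.\ to $\int_0^t f(\bar B^{(K)}_s)\, ds = \int_{\frak T^{(K)}} f(x) \bar L_t(x)\, d\lambda^{(K)}_{\frak T}(x)$. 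This yields the desired convergence tested against continuous $f$, and the same argument at finitely many $t$'s delivers joint finite-dimensional convergence.

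To upgrade to uniform convergence in $C([0,R]\times\frak T^{(K)},\R_+)$ I will establish tightness of the local-time fields. Spatial equicontinuity uniformly in $n$ is the decisive ingredient: I plan to derive it from Lemma~\ref{lem:modulusofcontinuity}, which provides a uniform modulus of continuity for the discrete edge local times $n^{-1/2}\reff^{G_n}(e) l^{(n,K)}_t(e)$, together with Lemma~\ref{lem:convergenceoflocaltimessingle}, which shows that these are asymptotically close to the BM local times $L^{(n,K)}_t(x_e)$; a standard three-$\epsilon$ argument along a $\delta$-dense family of edges then transfers the modulus of continuity from the discrete object to $L^{(n,K)}$ itself. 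Temporal equicontinuity is comparatively mild: $t\mapsto L^{(n,K)}_t(x)$ is nondecreasing with $\int L^{(n,K)}_t\, d\lambda^{(n,K)}_{\text{res}} = t$, so its increments can be controlled by short-time sojourn bounds for Brownian motion on a compact resistance tree. Together these two estimates yield tightness, which combined with the finite-dimensional convergence gives the lemma.

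The main technical obstacle is upgrading the pointwise convergence of BM local times at a single edge given by Lemma~\ref{lem:convergenceoflocaltimessingle} into a bound that is truly uniform over all edges of $\T^{(n,K)}$; the natural route, as in the proof of Lemma~\ref{res_2a}, is to chain along a $\delta$-dense family of edges and combine with Lemma~\ref{lem:modulusofcontinuity}, but some care is required to ensure that the number of edges needed does not blow up faster than the error estimate shrinks. A secondary delicate point is extracting the joint $(t,x)$-equicontinuity from the separate space and time estimates in order to work with the uniform topology on $C([0,R]\times\frak T^{(K)},\R_+)$ rather than a weaker product topology.
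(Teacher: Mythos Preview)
Your approach differs substantially from the paper's, and it has a genuine gap.

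The paper does \emph{not} assemble the result from finite-dimensional convergence plus tightness. Instead it invokes Proposition~3.1 of \cite{croydon2012scaling} as a black box: once one checks the bi-Lipschitz comparison $\delta_n^{-1} d^{(n,K)}_{\frak T,\text{res}} \le \sigma_d\rho\, d_{\frak T} \le \delta_n\, d^{(n,K)}_{\frak T,\text{res}}$ with $\delta_n\downarrow 1$ (which follows from Lemma~\ref{lem:graphspatialresistancetreeconvergence} and \eqref{eq:srt2}), Croydon's result delivers uniform convergence of the local times directly, for Brownian motions with respect to the \emph{non-normalized} Lebesgue measures. The rest of the paper's proof is bookkeeping: relating those local times to $L^{(n,K)}$ and $L^{(K)}$ via the deterministic time changes $t\mapsto \Lambda^{(n,K)}_{\text{res}} t$ and $t\mapsto \sigma_d\rho\Lambda^{(K)} t$, and using $\Lambda^{(n,K)}_{\text{res}}\to \sigma_d\rho\Lambda^{(K)}$.

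Your route, by contrast, tries to manufacture the spatial equicontinuity of $L^{(n,K)}$ from Lemmas~\ref{lem:modulusofcontinuity} and~\ref{lem:convergenceoflocaltimessingle}. This does not close. The three-$\epsilon$ step requires, for arbitrary $x$, that $L^{(n,K)}_t(x)$ is close to $n^{-1/2}\reff^{G_n}(e(x))\,l^{(n,K)}_t(e(x))$ \emph{uniformly over all $x$} (equivalently, over all edges). But Lemma~\ref{lem:convergenceoflocaltimessingle} gives only $\sup_e \mathbf P^{(K)}[\,|\cdots|\ge\epsilon\,]\to 0$, with the supremum \emph{outside} the probability; the quantitative bound in its proof is of order $n^{-1/2}\reff^{G_n}(e)$, which when summed over all edges is bounded but does not vanish, so a union bound fails. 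Chaining along a $\delta$-net, as in Lemma~\ref{res_2a}, controls the discrete object at and between net points, and controls $L^{(n,K)}$ \emph{at} net points, but gives you nothing to bridge from $L^{(n,K)}_t(x)$ to $L^{(n,K)}_t(x_{e_i})$ for a nearby net midpoint $x_{e_i}$ --- that step needs exactly the uniform spatial continuity you are trying to prove. Indeed, in the paper's logical order the uniform comparison you need is display~\eqref{eq:convergenceofedgelocaltimes}, and that is proved \emph{using} Lemma~\ref{lem:abstractcoupling} (see \eqref{eq:modulusofcontinuityforltilde}); so your proposed argument is circular within the paper's structure.

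Your temporal equicontinuity sketch is also insufficient: $\int L^{(n,K)}_t\, d\lambda^{(n,K)}_{\text{res}} = t$ controls the \emph{spatial average} of $L^{(n,K)}_{t+h}-L^{(n,K)}_t$, not its supremum over $x$. To get a uniform-in-$x$ small-time increment bound you would again need spatial regularity of $L^{(n,K)}$ uniformly in $n$, which brings you back to the same missing ingredient.
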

 Note that display \eqref{eq:conv3} in Lemma \ref{l:convergenceoflocaltimes} follows immediately from \eqref{eq:convergenceofedgelocaltimes} and the lemma above. The techniques of this (sub)section are somewhat different from the rest of Section \ref{section_cvg_local_time} due to the continuous nature of the processes involved in the main result. On a first reading, the reader can skip the proof of the lemma above and go directly to Section \ref{proof_prop61a} for a more fluent reading. We choose to put Lemma \ref{lem:abstractcoupling} here since we will use it in Section \ref{proof_prop61a}.
 
 For the proof of the Lemma above we will need to introduce some notation.
 Recalling the definition at~\ref{eq:defofupsilon}, we define a distance $d^{(n,K)}_{\frak{T},\text{res}}$ on $\frak{T}^{(K)}$ as
 \[d^{(n,K)}_{\frak{T},\text{res}}(x,y):=d^{(n,K)}_{\text{res}}(\Upsilon_{n,K}(x),\Upsilon_{n,K}(y)).\]
  Let
 \begin{equation}\label{eq:defofbarL}
 \bar{L}^{(n,K)}_t(x):=L^{(n,K)}_t(\Upsilon_{n,K}(x))
 \end{equation}
 and 
 \begin{equation}
 \bar{B}^{(n,K)}_t:=\Upsilon_{n,K}^{-1}(B^{(n,K)}_t).
 \end{equation}
 
 Lemma \ref{lem:abstractcoupling} relies on the following result.
\begin{lemma}\label{lem:localtimeexpression}
 The process $\bar{B}^{(n,K)}$ is the Brownian motion in $(\frak{T}^{(K)}, d^{(n,K)}_{\frak{T},\text{res}},\lambda^{(n,K)}_{\frak{T},\text{res}})$ according to Proposition \ref{prop_def_process}, where $\lambda^{(n,K)}_{\frak{T},\text{res}}$ is the Lebesgue measure associated to $d^{(n,K)}_{\frak{T},\text{res}}$ normalized to become a probability measure.
Also $(\bar{L}^{(n,K)}_t(x))_{t\geq0,x\in\frak{T}^{(K)}}$ is the local time of $\bar{B}^{(n,K)}$ with respect to $\lambda^{(n,K)}_{\frak{T},\text{res}}$.
 \end{lemma}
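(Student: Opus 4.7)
The plan is to observe that the map $\Upsilon_{n,K}$ is, essentially by the definition of $d^{(n,K)}_{\frak{T},\text{res}}$, a root-preserving isometry between $(\frak{T}^{(K)},d^{(n,K)}_{\frak{T},\text{res}})$ and $(\frak{T}^{(n,K)},d^{(n,K)}_{\text{res}})$, and that it pushes $\lambda^{(n,K)}_{\frak{T},\text{res}}$ onto $\lambda^{(n,K)}_{\text{res}}$ (both are Lebesgue probability measures on trees with identical edge-length structure). Once this is in place, both halves of the lemma will follow from the fact that the Brownian motion on a compact real tree is completely determined by its metric-measure structure (Proposition~\ref{prop_def_process}), together with a change-of-variables argument for local times.

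For the first claim, I would verify the two Aldous-type defining properties stated above Proposition~\ref{prop_def_process} for $\bar{B}^{(n,K)}$ in $(\frak{T}^{(K)},d^{(n,K)}_{\frak{T},\text{res}},\lambda^{(n,K)}_{\frak{T},\text{res}})$ by transferring them through $\Upsilon_{n,K}$. Since $\Upsilon_{n,K}$ is a homeomorphism, the process $\bar{B}^{(n,K)}_t=\Upsilon_{n,K}^{-1}(B^{(n,K)}_t)$ is strongly Markov with continuous sample paths; hitting times of a point $x\in \frak{T}^{(K)}$ by $\bar{B}^{(n,K)}$ coincide with those of $\Upsilon_{n,K}(x)$ by $B^{(n,K)}$, and the branch-point operator $b^{\frak{T}^{(K)}}$ is intertwined with $b^{\frak{T}^{(n,K)}}$ by $\Upsilon_{n,K}$ (both metrics are trees and $\Upsilon_{n,K}$ preserves the tree structure). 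The isometry property then gives property (1) directly, and for property (2) one uses in addition that the pushforward of $\lambda^{(n,K)}_{\frak{T},\text{res}}$ by $\Upsilon_{n,K}$ is $\lambda^{(n,K)}_{\text{res}}$. Reversibility with respect to $\lambda^{(n,K)}_{\frak{T},\text{res}}$ is likewise inherited from reversibility of $B^{(n,K)}$ with respect to $\lambda^{(n,K)}_{\text{res}}$.

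For the second claim, it suffices to check the defining identity in Definition~\ref{def:localtime}. For any Borel set $A\subseteq \frak{T}^{(K)}$ and any $t\geq 0$,
\[
\mathrm{Leb}\{s\leq t: \bar{B}^{(n,K)}_s\in A\}=\mathrm{Leb}\{s\leq t: B^{(n,K)}_s\in \Upsilon_{n,K}(A)\}=\int_{\Upsilon_{n,K}(A)} L^{(n,K)}_t(y)\,\lambda^{(n,K)}_{\text{res}}(dy),
\]
where the second equality uses that $L^{(n,K)}$ is the local time of $B^{(n,K)}$ with respect to $\lambda^{(n,K)}_{\text{res}}$. Applying the change of variables $y=\Upsilon_{n,K}(x)$ and using $(\Upsilon_{n,K})_\ast \lambda^{(n,K)}_{\frak{T},\text{res}}=\lambda^{(n,K)}_{\text{res}}$, the right-hand side equals
\[
\int_A L^{(n,K)}_t(\Upsilon_{n,K}(x))\,\lambda^{(n,K)}_{\frak{T},\text{res}}(dx)=\int_A \bar{L}^{(n,K)}_t(x)\,\lambda^{(n,K)}_{\frak{T},\text{res}}(dx),
\]
which is exactly the density characterization of the local time of $\bar{B}^{(n,K)}$ with respect to $\lambda^{(n,K)}_{\frak{T},\text{res}}$.

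There is no real obstacle here: the whole content is that $\Upsilon_{n,K}$ intertwines the two metric-measure trees. The only point where one must be careful is the preliminary check that $(\Upsilon_{n,K})_\ast \lambda^{(n,K)}_{\frak{T},\text{res}}=\lambda^{(n,K)}_{\text{res}}$, which follows from the linearity of $\Upsilon_{n,K}$ along edges and the fact that the normalizing constants (the total $d^{(n,K)}_{\frak{T},\text{res}}$-length of $\frak{T}^{(K)}$ and the total $d^{(n,K)}_{\text{res}}$-length of $\frak{T}^{(n,K)}$) are equal by the definition of $d^{(n,K)}_{\frak{T},\text{res}}$.
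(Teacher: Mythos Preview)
Your proposal is correct and follows essentially the same approach as the paper: both verify the Aldous-type defining properties above Proposition~\ref{prop_def_process} via the isometry $\Upsilon_{n,K}$, and both prove the local-time claim by the identical change-of-variables computation using $(\Upsilon_{n,K})_\ast \lambda^{(n,K)}_{\frak{T},\text{res}}=\lambda^{(n,K)}_{\text{res}}$. Your write-up is in fact more explicit on the first part, where the paper simply asserts that ``all properties are straightforward to verify.''
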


 \begin{proof}
 The first part of the claim follows from the fact that  $\bar{B}^{(n,K)}$ satisfies all the properties in the definition of the Brownian motion in $(\frak{T}^{(K)},d^{(n,K)}_{\frak{T},\text{res}},\lambda^{(n,K)}_{\frak{T},\text{res}})$ given above Proposition \ref{prop_def_process}.
All properties are straightforward to verify.
 
 The part of the local times is proved as follows. Let $A$ be a Borelian of $\frak{T}^{(K)}$
 \begin{align*}
 &\text{Leb}\{s\leq t: \bar{B}^{(n,K)}_s\in A\}=\text{Leb}\{s\leq t: B^{(n,K)}_s\in \Upsilon_{n,K} (A)\}\\
 =&\int_{\Upsilon_{n,K}(A)}L_t^{(n,K)}(x)\lambda^{(n,K)}_{\text{res}}(dx)=\int_{A}L_t^{(n,K)}(\Upsilon_{n,K}(y))\lambda^{(n,K)}_{\frak{T},\text{res}}(dy)\\
 \end{align*}
 where the third equality follows from the change of variables $x=\Upsilon^{-1}_{n,K}(y)$, and the fact that $\lambda^{(n,K)}_{\frak{T},\text{res}}(\Upsilon^{-1}_{n,K}(\cdot))=\lambda^{(n,K)}_{\text{res}}(\cdot)$.
 \end{proof}

 \begin{proof}[Proof of Lemma \ref{lem:abstractcoupling}] 
First, we will use a result of \cite{croydon2012scaling} which gives conditions for the convergence of local times in graph-trees. After that we will show that  Lemma \ref{lem:localtimeexpression} allows us to prove the lemma from the results of the first step.

\emph{First step: Application of a general result concerning the convergence of local times.}

We start by stating the general convergence result of \cite{croydon2012scaling}. Let $T$ be a graph spatial tree and $d$, $d_n,n\in\N$ be distances in $T$ which are compatible with its topology.
Let $\lambda$, $\lambda_n, n\in\N$ be the non-normalized Lebesgue measures in $T$ associated to $d,d_n,n\in\N$ respectively. 
Also, let $B, B^n,n\in\N$ be the Brownian motions in $(T,d,\lambda),(T,d_n,\lambda_n),n\in\N$ respectively. Finally, let $L_t(x),L^n_t(x),n\in\N$ be the local times of $B,B^{n},n\in\N$ with respect to the measures $\lambda,\lambda_n,n\in\N$ respectively. 

 Proposition 3.1 of \cite{croydon2012scaling} states that if there exists a sequence $\delta_n\downarrow 1$ as $n\to\infty$ such that, for $n$ large enough.
\begin{equation}
\delta_n^{-1}d_n(x,y) \leq  d(x,y)\leq \delta_nd_n(x,y) \quad\forall x,y \in T,
\end{equation}
then we have that
 \begin{equation}\label{aeq:tildes!}
 (L^n_t(x))_{x\in T,t\in[0,R]}\stackrel{n\to\infty}{\to}(L_t(x))_{x\in T,t\in[0,R]}.
 \end{equation}
 
 To be able to apply the result above, we need that for processes to be defined as Brownian motions with respect to the non-normalized Lebesgue measure. Therefore, let $\tilde{B}^{(K)}$ be the Brownian motion on $(\frak{T}^{(K)},\sigma_d\rho d_{\frak{T}},\sigma_d\rho\Lambda^{(K)}\lambda_{\frak{T}}^{(K)})$, where $\Lambda^{(K)}$ is the total $d_{\frak{T}}$-length of $\frak{T}$. Note that $\sigma_d\rho\Lambda^{(K)}\lambda_{\frak{T}}^{(K)}$ is the non-normalized Lebesgue measure associated to the distance $\sigma_d\rho d_{\frak{T}}$. Similarly, let $\tilde{B}^{(n,K)}$ be the Brownian motion on $(\frak{T}^{(K)},d_{\frak{T},\text{res}}^{(n,K)},\Lambda^{(n,K)}_{\text{res}}\lambda^{(n,K)}_{\frak{T},\text{res}})$, where $\Lambda^{(n,K)}_{\text{res}}$ is the total $d^{(n,K)}_{\frak{T},\text{res}}$-length of $\frak{T}^{(K)}$. Let $\tilde{L}^{(K)}$ be the local time of $\tilde{B}^{(K)}$ with respect to $\sigma_d\rho\Lambda^{(K)}\lambda_{\frak{T}}^{(K)}$ and $\tilde{L}^{(n,K)}$ be the local time of $\tilde{B}^{(n,K)}$ with respect to the measure $\Lambda^{(n,K)}_{\text{res}}\lambda^{(n,K)}_{\frak{T},\text{res}}$. 
By Proposition 3.1 of \cite{croydon2012scaling} stated above if, $\mathbf{P}^{(K)}$-almost surely, there exists a sequence $\delta_n\downarrow 1$ as $n\to\infty$ such that, for $n$ large enough,
\begin{equation}\label{eq:homogenizationofR}
\delta_n^{-1}d^{(n,K)}_{\frak{T},\text{res}}(x,y) \leq \sigma_d\rho d_{\frak{T}}(x,y)\leq \delta_nd^{(n,K)}_{\frak{T},\text{res}}(x,y) \quad\forall x,y \in\frak{T}^{(K)},
\end{equation}
then 
 \begin{equation}\label{eq:tildes!}
 (\tilde{L}^{(n,K)}_t(x))_{x\in\frak{T}^{(K)},t\in[0,R]}\stackrel{n\to\infty}{\to}(\tilde{L}^{(K)}_t(x))_{x\in\frak{T}^{(K)},t\in[0,R]}
 \end{equation}
 in distribution in $C(\frak{T}^{(K)}\times [0,R],\R_+)$, for all $R>0$, $\mathbf{P}^{(K)}$-almost surely.
Now we focus on proving \eqref{eq:homogenizationofR}.
Recall that $\frak{T}^{(K)}$ and $\frak{T}^{(n,K)}$ are composed of a finite number of edges  $(e(i))_{i=1,\dots,s}$ and $(e^n(i))_{i,\dots,s}$ respectively, where $s$ is the number of line segments in the decomposition of $\frak{T}^{(K)}$ and $\frak{T}^{(n,K)}$. First, we will treat the case when $x,y$ lie in the same line segment $e(i)=[a_i,b_i]$ of $\frak{T}^{(K)}$. Let $\alpha_x$ and $\alpha_y$ be the $d_{\frak{T}^{(K)}}$-distances to $a_i$ of $x$ and $y$ respectively.  It follows from the definition of $\Upsilon_{n,K}$ in \eqref{eq:defofupsilon} that $\Upsilon_{n,K}(x), \Upsilon_{n,K}(y)$ are the points which lie in the edge $e^n(i)=[\tilde{a}_i,\tilde{b}_i]$ at distances $\alpha_x \frac{d^{(n,K)}_{\text{res}}(\tilde{a}_i,\tilde{b}_i)}{d_{\frak{T}}(a_i,b_i)}$ (reps. $\alpha_y\frac{d^{(n,K)}_{\text{res}}(\tilde{a}_i,\tilde{b}_i)}{d_{\frak{T}}(a_i,b_i)}$) of $\tilde{a}_i$. Therefore, 
 \[d^{(n,K)}_{\frak{T},\text{res}}(x,y)=|\alpha_x-\alpha_y| \frac{d^{(n,K)}_{\text{res}}(\tilde{a}_i,\tilde{b}_i)}{d_{\frak{T}}(a_i,b_i)}= d_{\frak{T}}(x,y)  \frac{d^{(n,K)}_{\text{res}}(\tilde{a}_i,\tilde{b}_i)}{d_{\frak{T}}(a_i,b_i)}. \] 
  By \eqref{eq:srt2}, 
  \[
 \abs{\frac{d^{(n,K)}_{\text{res}}(\tilde{a}_i,\tilde{b}_i)}{ d_{\frak{T}}(a_i,b_i)}-\sigma_d\rho}\stackrel{n\to\infty}{\to}0, \quad \mathbf{P}^{(K)}\text{-almost surely.}\] 
 This proves our claim in this case. In general, take $x,y$ lying in different line segments, the same argument as above gives that 
 \begin{equation}\label{eq:lipschitznorm}
 \min_{i=1,\dots,s} \frac{d^{(n,K)}_{\text{res}}(\tilde{a}_i,\tilde{b}_i)}{d_{\frak{T}}(a_i,b_i)}\leq \frac{d^{(n,K)}_{\frak{T}}(x,y)}{ d_{\frak{T}}(x,y)}\leq\max_{i=1,\dots,s} \frac{d^{(n,K)}_{\text{res}}(\tilde{a}_i,\tilde{b}_i)}{  d_{\frak{T}}(a_i,b_i)} .
 \end{equation}
 Therefore again, the result follows from \eqref{eq:srt2}.
 We have established \eqref{eq:homogenizationofR} which, as we saw in~\eqref{aeq:tildes!}, implies \eqref{eq:tildes!}. 
 This finishes the first part of the proof.
 
 \emph{Second step: From local times with respect to non-normalized measures to local times with respect to normalized measures.}

By virtue of Lemma \ref{lem:localtimeexpression}, we know that $\bar{B}^{(n,K)}$ is the Brownian motion in $(\frak{T}^{(K)}, d_{\frak{T},\text{res}}^{(n,K)}, \lambda^{(n,K)}_{\frak{T},\text{res}})$ and $\bar{L}^{(n,K)}$ is its local time with respect to $\lambda^{(K)}_{\frak{T},\text{res}}$. We start by relating $\tilde{L}^{(n,K)}$ to $\bar{L}^{(n,K)}$. First, observe that it follows from the definition of Brownian motion give above Proposition \ref{prop_def_process} that 
\begin{equation}\label{eq:supertramp}
(\bar{B}^{(n,K)}_t)_{t\geq0}\stackrel{d}{=}(\tilde{B}^{(n,K)}_{\Lambda_{\text{res}}^{(n,K)}t})_{t\geq0}.
\end{equation}
Moreover, we have that for any $A$ Borelian of $\frak{T}^{(K)}$,
\begin{equation}\label{eq:supertramp1}
\text{Leb}\{s\leq t:\bar{B}^{(n,K)}_s\in A\}=\int_A\bar{L}^{(n,K)}_t\lambda^{(n,K)}_{\text{res}}(dx).
\end{equation}
On the other hand
\begin{equation}\label{eq:supertramp2}
\begin{aligned}
\text{Leb}\{s\leq t:\bar{B}^{(n,K)}_s\in A\}&\stackrel{d}{=}\text{Leb}\{s\leq t:\tilde{B}^{(n,K)}_{\Lambda_{\text{res}}^{(n,K)}s}\in A\}\\
&=(\Lambda_{\text{res}}^{(n,K)})^{-1}\text{Leb}\{s\leq\Lambda_{\text{res}}^{(n,K)} t:\tilde{B}^{(n,K)}_{s}\in A\}\\
&=(\Lambda_{\text{res}}^{(n,K)})^{-1}\int_A\tilde{L}^{(n,K)}_{\Lambda_{\text{res}}^{(n,K)} t} \Lambda^{(n,K)}_{\text{res}}\lambda^{(n,K)}_{\text{res}}(dx)\\
&=\int_A\tilde{L}^{(n,K)}_{\Lambda_{\text{res}}^{(n,K)} t} \lambda^{(n,K)}_{\text{res}}(dx),
\end{aligned}
\end{equation}
where the first equality follows from $\eqref{eq:supertramp}$, the second equality is trivial and the third equality follows because $\tilde{L}^{(n,K)}$ is the local time of $\tilde{B}^{(n,K)}$ with respect to $\Lambda^{(n,K)}_{\text{res}}\lambda^{(n,K)}_{\text{res}}$. It follows from \eqref{eq:supertramp1} and \eqref{eq:supertramp2} that
\begin{equation}\label{eq:fafifein}
(\bar{L}^{(n,K)}_t(x))_{x\in\frak{T}^{(K)},t\geq0}\stackrel{d}{=}(\tilde{L}^{(n,K)}_{\Lambda^{(n,K)}_{\text{res}} t}(x))_{x\in\frak{T}^{(K)},t\geq0}.
\end{equation}
Similarly, let $\hat{B}^{(K)}$ is the Brownian motion in $(\frak{T}^{(K)},\sigma_d\rho d_{\frak{T}}, \lambda_{\frak{T}}^{(K)})$ and $\hat{L}^{(K)}$ be its local time with respect to $\lambda^{(K)}_{\frak{T}}$. By the same reasoning leading to \eqref{eq:fafifein} we get that
\begin{equation}\label{eq:fafifeintkg}
(\hat{L}^{(K)}_t(x))_{x\in\frak{T}^{(K)},t\geq0} \stackrel{d}{=} (\tilde{L}^{(K)}_{\sigma_d\rho \Lambda^{(K)}t})_{x\in\frak{T}^{(K)},t\geq 0}.
\end{equation} 
 On the other hand, by Lemma \ref{lem:graphspatialresistancetreeconvergence} we have that 
 \begin{equation}\label{eq:susss}
 \Lambda^{(n,K)}_{\text{res}}\stackrel{n\to\infty}{\to}\sigma_d\rho\Lambda^{(K)}.
 \end{equation} From \eqref{eq:susss}, \eqref{eq:tildes!}, \eqref{eq:fafifein} and \eqref{eq:fafifeintkg}, it can be deduced that 
 \begin{equation}\label{eq:fafifo}
( \bar{L}^{(n,K)}_t(x))_{x\in\frak{T}^{(K)},t\in[0,R]}\stackrel{n\to\infty}{\to}(\hat{L}^{(K)}_t(x))_{x\in\frak{T}^{(K)},t\in[0,R]}
 \end{equation}
 in distribution in $C(\frak{T}^{(K)}\times [0,R],\R_+)$, for all $R>0$.
 
 It remains to relate $\hat{L}^{(K)}$ to $L^{(K)}$. By the definition of Brownian motion given above Proposition \ref{prop_def_process} we have that 
\begin{equation}
(B^{(K)}_t)_{t\geq0}\stackrel{d}{=}(\hat{B}^{(n,K)}_{\sigma_d\rho t})_{t\geq0}.
\end{equation}
Moreover, we have that, for any Borelian $A$ of $\frak{T}^{(K)}$,
\begin{equation}\label{eq:channnes}
\text{Leb}\{s\leq t: B^{(K)}_s\in A\}=\int_A L^{(K)}_t(x) \lambda_{\frak{T}}^{(K)},
\end{equation}
 while, on the other hand,
\begin{equation}\label{eq:channnes2}
\begin{aligned}
\text{Leb}\{s\leq t:B^{(K)}_s\in A\}&\stackrel{d}{=}\text{Leb}\{s\leq t:\hat{B}^{(n,K)}_{\sigma_d\rho s}\in A\}\\
&=(\sigma_d\rho)^{-1}\text{Leb}\{s\leq\sigma_d\rho t:\hat{B}^{(n,K)}_s\in A\}\\
&=(\sigma_d\rho)^{-1}\int_A\hat{L}^{(n,K)}_{\sigma_d\rho t}(x) \lambda^{(K)}_{\frak{T}}(dx),
\end{aligned}
\end{equation}
where the first equality follows from \eqref{eq:channnes}.
From \eqref{eq:channnes2},
 \begin{equation}\label{eq:theultimatefafifo}
 (\hat{L}^{(K)}_t(x))_{x\in\frak{T}^{(K)},t\geq0}\stackrel{d}{=} (\sigma_d\rho L^{(K)}_{(\sigma_d\rho)^{-1}t})_{x\in\frak{T}^{(K)},t\geq0}.
 \end{equation}
\noindent The claim of the lemma follows by displays \eqref{eq:fafifo}, \eqref{eq:defofbarL} and \eqref{eq:theultimatefafifo}.

 \end{proof}

\subsubsection{Second part of the proof of Proposition~\ref{l:convergenceoflocaltimes}}\label{proof_prop61a}

In this section we will prove the first two statements in Proposition \ref{l:convergenceoflocaltimes}, i.e.~displays \eqref{eq:convergenceofedgelocaltimes} and \eqref{eq:convergenceofvertexlocaltimes}. Recall from Section \ref{proof_prop61a} that display \eqref{eq:conv3} in Lemma \ref{l:convergenceoflocaltimes} follows from \eqref{eq:convergenceofedgelocaltimes} and Lemma \eqref{lem:abstractcoupling}. Therefore, Lemma \ref{l:convergenceoflocaltimes} will be proved after we have showed \eqref{eq:convergenceofedgelocaltimes} and \eqref{eq:convergenceofvertexlocaltimes}.

\begin{proof}

 We will split the proof in two parts. First we will do the proof of display \eqref{eq:convergenceofedgelocaltimes} and then, as a consequence, we will obtain \eqref{eq:convergenceofvertexlocaltimes}.
 
 \emph{Proof of display \eqref{eq:convergenceofedgelocaltimes}}:\\
Recall the definition of $\Upsilon_{n,K}$ from \eqref{eq:defofupsilon}.
It is easy to see that, as a consequence of Lemma \ref{lem:graphspatialresistancetreeconvergence}, the Lipschitz norm of $\Upsilon_{n,k}^{-1}$ converges to $(\sigma_d\rho)^{-1}$ as $n\to\infty$. Therefore $d^{(n,K)}_{\text{res}}(x,y)\leq \delta$ implies $d_{\frak{T}}(\Upsilon_{n,K}^{-1}(x),\Upsilon_{n,K}^{-1}(y)) \leq C \delta$ for $n$ large enough for some $C$ which is independent of $n$.
Therefore 
\begin{equation}\label{eq:lipschitz}
\sup_{\stackrel {x,y:}{d^{(n,K)}_{\text{res}}(x,y)}\leq\delta}\abs{L^{(n,K)}_t(x)-L^{(n,K)}_t(y)}\leq\sup_{\stackrel{x,y:}{d_{\frak{T}}(\Upsilon_{n,K}^{-1}(x),\Upsilon_{n,K}^{-1}(y))}\leq C\delta }\abs{L^{(n,K)}_t(x)-L^{(n,K)}_t(y)}.
\end{equation}
Let us set
\begin{equation}\label{eq:defoftildelk}
\tilde{L}^{(K)}_t(x):=\sigma_d\rho L^{(K)}_{(\rho\sigma_d)^{-1}t}(x),\qquad \forall x\in\frak{T}^{(K)},t\geq0.
\end{equation}
By \eqref{eq:lipschitz},
\begin{equation}\label{eq:arzelaascoliforlocaltime}
\begin{aligned}
&\mathbf{P}^{(K)}\left[ \sup_{\stackrel {x,y:}{d^{(n,K)}_{\text{res}}(x,y)}\leq\delta}\abs{L^{(n,K)}_t(x)-L^{(n,K)}_t(y)}\geq\epsilon \right]\\
 \leq &\mathbf{P}^{(K)}\left[ \sup_{\stackrel {x,y:}{d_{\frak{T}}(\Upsilon_{n,K}^{-1}(x),\Upsilon_{n,K}^{-1}(y))}\leq C\delta}\abs{L^{(n,K)}_t(x)-L^{(n,K)}_t(y)}\geq\epsilon \right]\\
\leq &\mathbf{P}^{(K)}\left[ \sup_{\stackrel {x,y:}{d_{\frak{T}}(\Upsilon_{n,K}^{-1}(x),\Upsilon_{n,K}^{-1}(y))}\leq C\delta}\abs{\tilde{L}^{(K)}_t(\Upsilon_{n,K}^{-1}(x))-\tilde{L}^{(K)}_t(\Upsilon_{n,K}^{-1}(y))}\geq\epsilon/3 \right]\\
&\qquad \qquad \qquad+2\mathbf{P}^{(K)}\left[\sup_{x\in \frak{T}^{(n,K)}}\abs{L_t^{(n,K)}(x)-\tilde{L}^{(K)}_t(\Upsilon_{n,K}^{-1}(x))} \geq \epsilon/3 \right] .
\end{aligned}
\end{equation}

By Lemma \ref{lem:abstractcoupling},  
\[\mathbf{P}^{(K)}\left[\sup_{x\in \frak{T}^{(n,K)}}\abs{L_t^{(n,K)}(x)-\tilde{L}^{(K)}_t(\Upsilon_{n,K}^{-1}(x))} \geq \epsilon/3 \right] 
\stackrel{n\to\infty}{\to}0.\]
Also, since $\tilde{L}^{(K)}_t$ is continuous in the space variable, we have that 
\[\lim_{\delta\to0}\limsup_{n\to\infty}\mathbf{P}^{(K)}\left[ \sup_{\stackrel {x,y:}{d_{\frak{T}}(\Upsilon_{n,K}^{-1}(x),\Upsilon_{n,K}^{-1}(y))}\leq C\delta}\abs{\tilde{L}^{(K)}_t(\Upsilon_{n,K}^{-1}(x))-\tilde{L}^{(K)}_t(\Upsilon_{n,K}^{-1}(y))}\geq\epsilon/3 \right]=0.\]
The two displays above and \eqref{eq:arzelaascoliforlocaltime} imply that 
\begin{equation}\label{eq:modulusofcontinuityforltilde}
\lim_{\delta\to0}\limsup_{n\to\infty}\mathbf{P}^{(K)}\left[ \sup_{\stackrel {x,y:}{d^{(n,K)}_{\text{res}}(x,y)}\leq\delta}\abs{L^{(n,K)}_t(x)-L^{(n,K)}_t(y)}\geq\epsilon \right]=0.
\end{equation}

Let $K$ be fixed. By Lemma \ref{lem:graphspatialresistancetreeconvergence} we have that, for each $n\in\N$ and $\delta>0$, we can choose a $\frac{\delta}{2}$-net $A_{\delta,n}=(x_i)_i$, $x_i\in \frak{T}^{(n,K)}$ of $(\frak{T}^{(n,K)},d^{(n,K)}_{\text{res}})$ (i.e., all points of $\frak{T}^{(n,K)}$ are at $d^{(n,K)}$-distance smaller than $\frac{\delta}{2}$ of some $x_i$.) such that, for each $\delta$,  $\abs{A_{\delta,n}}$ is bounded uniformly in $n$. Therefore, recalling that by virtue of Lemma \ref{lem_no_macro_res}, we have that for $n$ large enough $\mathbf{P}^{(K)}[\max_{e\in E^\ast(\T^{(n,K)})}n^{-1/2}\reff^{G_n}(e)\geq \delta/4]\to0$ as $n\to\infty$. Therefore
\begin{equation}\label{eq:doomgetthecash}
\begin{aligned}
&\mathbf{P}^{(K)}\left[\sup_{x\in \frak{T}^{(n,K)}}\abs{n^{-1/2} \reff^{G_n}(e(x))l^{(n,K)}_{s}(e(x))-L^{(n,K)}_{s}(x)}\geq \epsilon\right]\\
\leq &\mathbf{P}^{(K)}\left[\sup_{x,e':d^{(n,K)}_{\text{res}}(e(x),e')\leq \delta}  \abs{n^{-1/2}\reff^{G_n}(e(x))l^{(n,K)}_{s}(e(x))-n^{-1/2}\reff^{G_n}(e')l^{(n,K)}_{s}(e')} \geq \epsilon /3\right]\\
&\qquad\qquad+ \sum_{x_i\in A_n^\delta}  \mathbf{P}^{(K)}\left[\abs{ n^{-1/2} \reff^{G_n}(e(x_i))l^{(n,K)}_{s}(e(x_i))-L^{(n,K)}_{s}(x_i)} \geq \epsilon /3\right]\\
&\qquad\qquad+ \mathbf{P}^{(K)}\left[\sup_{\stackrel{x,y\in \T^{(n,K)}}{d^{(n,K)}_{\text{res}}(x,y)}\leq \delta} \abs{ L^{(n,K)}_{s}(x)-L^{(n,K)}_{s}(y)} \geq \epsilon /3\right]+o(1),
\end{aligned}
\end{equation}
where $o(1):=\mathbf{P}^{(K)}[\max_{e\in E^\ast(\T^{(n,K)})}n^{-1/2}\reff^{G_n}(e)\geq \delta/4]\to0$ as $n\to\infty$.
Let $\eta>0$ be arbitrary, the proof will follow after we have showed that
\begin{equation}\label{eq:cowboydoom}
\limsup_{n\to\infty}\mathbf{P}^{(K)}\left[\sup_{x\in \frak{T}^{(n,K)}}\abs{n^{-1/2} \reff^{G_n}(e(x))l^{(n,K)}_{s}(e(x))-L^{(n,K)}_{s}(x)}\geq \epsilon\right]\leq \eta.
\end{equation}
By Lemma \ref{lem:modulusofcontinuity} we have that there exists $\delta_1>0$  such that for all $\delta<\delta_1$ we have that 
\[
\limsup_{n\to\infty}\mathbf{P}^{(K)}\left[\sup_{x,e':d^{(n,K)}_{\text{res}}(e(x),e')\leq \delta}  \abs{n^{-1/2}\reff^{G_n}(e(x))l^{(n,K)}_{s}(e(x))-n^{-1/2}\reff^{G_n}(e')l^{(n,K)}_{s}(e')} \geq \epsilon /3\right]\leq \frac{\eta}{2}.
\]
On the other hand, by \eqref{eq:modulusofcontinuityforltilde}, there exists $\delta_2>0$ such that  for all $\delta<\delta_2$ we have that 
\[
\limsup_{n\to\infty}\mathbf{P}^{(K)}\left[\sup_{\stackrel{x,y\in \T^{(n,K)}}{d^{(n,K)}_{\text{res}}(x,y)}\leq \delta} \abs{ L^{(n,K)}_{s}(x)-L^{(n,K)}_{s}(y)} \geq \epsilon /3\right]\leq \frac{\eta}{2}.
\]
 Finally, for any fixed $\delta$, $\abs{A_{\delta,n}}$ remains bounded in $n$ and therefore, by virtue of Lemma \ref{lem:convergenceoflocaltimessingle}
 \[
 \limsup_{n\to\infty}\sum_{x_i\in A_n^\delta}  \mathbf{P}^{(K)}\left[\abs{ n^{-1/2} \reff^{G_n}(e(x_i))l^{(n,K)}_{s}(e(x_i))-L^{(n,K)}_{s}(x_i)} \geq \epsilon /3\right]=0.
 \]
 Taking $\delta=\min\{\delta_1,\delta_2\}$ in \eqref{eq:doomgetthecash} and using the last three displays we get \eqref{eq:cowboydoom}.
  This finishes the proof of \eqref{eq:convergenceofedgelocaltimes}.

\emph{Proof of display \eqref{eq:convergenceofvertexlocaltimes}:}\\ 
Since $l^{(n,K,\leftarrow)}_{t}(e(x))$ equals either $l^{(n,K)}_{t}(e(x))/2$ or $(l^{(n,K)}_{t}(e(x))-1)/2$, it follows from display \eqref{eq:convergenceofedgelocaltimes} and Lemma \ref{lem_no_macro_res} that
\begin{equation}
\lim_{n\to\infty}\mathbf{P}^{(K)}\left[\sup_{v\in V^\circ(\T^{(n,K)})}\abs{  \frac{d^{(n,K)}_{\text{res}}(e^+(v))}{2}l^{(n,K,\leftarrow)}_{t}(e^+(v))-L^{(n,K)}_{t}(v)}\geq \epsilon \right]=0.
\end{equation}
Moreover, since $\frac{d_{\text{res}}^{(n,K)}(e^-(v))}{d_{\text{res}}^{(n,K)}(e^-(v))+d_{\text{res}}^{(n,K)}(e^+(v))}\leq1$, it follows from the display above that
\begin{equation}
\begin{aligned}
\lim_{n\to\infty}\mathbf{P}^{(K)}\bigg[&\sup_{x\in V^\circ(\T^{(n,K)})}\bigg|\frac{d_{\text{res}}^{(n,K)}(e^-(v))d^{(n,K)}_{\text{res}}(e^+(v))}{2(d_{\text{res}}^{(n,K)}(e^-(v))+d_{\text{res}}^{(n,K)}(e^+(v)))} l^{(n,K,\leftarrow)}_{t}(e^+(v))\\
&-\frac{d_{\text{res}}^{(n,K)}(e^-(v))}{d_{\text{res}}^{(n,K)}(e^-(v))+d_{\text{res}}^{(n,K)}(e^+(v))}L^{(n,K)}_{t}(v)\bigg|\geq \epsilon \bigg]=0.
\end{aligned}
\end{equation}
An analogous argument yield that
\begin{equation}
\begin{aligned}
\lim_{n\to\infty}\mathbf{P}^{(K)}\bigg[&\sup_{x\in V^\circ(\T^{(n,K)})}\bigg|\frac{d_{\text{res}}^{(n,K)}(e^-(v))d^{(n,K)}_{\text{res}}(e^+(v))}{2(d_{\text{res}}^{(n,K)}(e^-(v))+d_{\text{res}}^{(n,K)}(e^+(v)))} l^{(n,K,\rightarrow)}_{t}(e^-(v))\\
&-\frac{d_{\text{res}}^{(n,K)}(e^+(v))}{d_{\text{res}}^{(n,K)}(e^+(v))+d_{\text{res}}^{(n,K)}(e^+(v))}L^{(n,K)}_{t}(v)\bigg|\geq \epsilon \bigg]=0.
\end{aligned}
\end{equation}
Summing the terms inside the absolute value in the two displays above, and recalling \eqref{eq:vertupdown}, we get display \eqref{eq:convergenceofvertexlocaltimes}.
\end{proof}

 \section{Asymptotic linearity of the time change}\label{s:AnKislinear}
 
 The goal of this section is to prove Proposition~\ref{l:Ankislinear}.
 
\subsection{Averaged time change}\label{sect:averagedtimechange}
We introduce an averaged version  $\hat{A}^{(n,K)}$ of the time change  $A^{(n,K)}$ (which was introduced below~\eqref{eq:coupling}) which is more tractable.
Let $\tau(x)$ be a random variable having the distribution of $A^{(n,K)}_{j+1}-A^{(n,K)}_j$ conditioned on $J^{(n,K)}_j=x$. That is, $\tau(x)$ has the law of $\min\{j\geq0: X^{G_n}_j\in V^\ast(\T^{(n,K)})\setminus\{ x \}\}$ conditioned on $X^{G_n}_0=x$. 
 Let $\hat{A}^{(n,K)}(0):=0$ and
 \begin{equation}\label{eq:defofhatank}
 \hat{A}^{(n,K)}(m):=\sum_{x\in V^\ast(\T^{(n,K)})}E^{G_n}[\tau(x)]l^{(n,K,\text{vert})}_{m}(x).
 \end{equation} 
 where $l^{(n,K,\text{vert})}_m$ is as in \eqref{eq:defoflvert}. We will need to introduce another approximation of $A^{(n,K)}$. We proceed to prepare its definition.
 For any edge $e=(e^-,e^+)\in E^*(\T^{(n,K)})$ (where $e^-$ is an ancestor of $e^+$ in $\T^{(n,K)}$), let $G_n(e)$ be the subgraph of $G_n$ whose set of edges is
 \begin{equation}\label{eq:defofgn1}
 E(G_n(e)):=\{(x,y)\in E(G_n): \pi^{(n,K)}(x)=e^- \text{ and }\pi^{(n,K)}(x)\neq x \}
  \end{equation}
  and its set of vertices is given by 
  \begin{equation}\label{eq:defofgn2}
  V(G_n(e)):=\{v\in V(G_n):\exists e=(e^-,e^+) \in E(G_n(e)) \text{ with } v\in \{e^-,e+\}\}.
  \end{equation} 
  
  That is, $G_n(e)$ is the sausage corresponding to the edge $e$ (see Figure 4) and we also want to emphasize the link between $E(G_n(e))$ and the measure $\mu$ introduced in Section~\ref{sect_mu}.

  We are ready to introduce the second approximation of $A^{(n,K)}$ as
\[\tilde{A}^{(n,K)}(m):=\sum_{e\in E^\ast(\T^{(n,K)})} 2 \reff^{G_n}(e) \abs{E(G_n(e))} l^{(n,K,\leftrightarrow)}_m(e).\]
Proposition~\ref{l:Ankislinear} is a direct consequence of the three following results:
 \begin{lemma}\label{lem:aproximationofank} For each $t\geq0$ and $\epsilon>0$,
 \[\lim_{K\to\infty}\limsup_{n\to\infty}\mathbf{P}^{(K)}\left[n^{-3/2}\abs{A^{(n,K)}(m(t))-\hat{A}^{n,K}(m(t))}\geq\epsilon\right]=0.\]
  \end{lemma}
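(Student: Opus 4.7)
The plan is to identify a martingale hidden inside $A^{(n,K)}(m)-\hat A^{(n,K)}(m)$ and then bound its $L^2$ norm by a careful use of the commute time formula together with conditions $(S)$ and $(V)$.

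First, I set $\tau_j := A^{(n,K)}(j+1)-A^{(n,K)}(j)$ and let $\mathcal F_j := \sigma(J^{(n,K)}_0,\tau_0,\dots,J^{(n,K)}_{j-1},\tau_{j-1},J^{(n,K)}_j)$. By the strong Markov property of $X^{G_n}$ at the stopping times $A^{(n,K)}(j)$, conditionally on $\mathcal F_j$ the random variable $\tau_j$ has the law of $\tau(J^{(n,K)}_j)$. Consequently
\[
M_m := A^{(n,K)}(m)-\hat A^{(n,K)}(m) = \sum_{j=0}^{m-1}\bigl(\tau_j - E^{G_n}[\tau(J^{(n,K)}_j)]\bigr)
\]
is a square-integrable martingale. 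Moreover $m(t)$ is measurable with respect to the $\sigma$-algebra generated by $B^{(n,K)}$ (hence by $J^{(n,K)}$ via Lemma~\ref{lem:JtildeJ}) and is therefore a stopping time for $(\mathcal F_j)$. Doob's $L^2$ estimate combined with optional stopping and Chebyshev yields
\[
\mathbf P^{(K)}\bigl[n^{-3/2}|M_{m(t)}|\geq\epsilon\bigr] \le \epsilon^{-2}n^{-3}\,E\Bigl[\sum_{x\in V^\ast(\T^{(n,K)})}\mathrm{Var}^{G_n}[\tau(x)]\,l^{(n,K,\mathrm{vert})}_{m(t)}(x)\Bigr].
\]

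Second, I will bound $\mathrm{Var}^{G_n}[\tau(x)]$ via the standard Markov-chain estimate
\[
E^{G_n}[\tau(x)^2]\le 2\,E^{G_n}[\tau(x)]\cdot \max_{y}E^{G_n}_y[\tau(x)],
\]
the maximum being over vertices $y$ in the bubbles adjacent to $x$. Since $\tau(x)$ is dominated by the cover time of the at most three bubbles incident to $x$, the commute time formula applied in each such bubble gives
\[
\max_y E^{G_n}_y[\tau(x)] \le C\max_{e\sim x} |E(G_n(e))|\cdot \reff^{G_n}(e) \le C\,\bigl(\max_{e\in E^*(\T^{(n,K)})}|E(G_n(e))|\bigr)\cdot \Delta^{(n,K)}_{G_n},
\]
where I used $\reff^{G_n}(e)\le d_{G_n}(e^-,e^+)\le\Delta^{(n,K)}_{G_n}$. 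Plugging in and summing against the vertex local times gives
\[
\sum_{x}\mathrm{Var}^{G_n}[\tau(x)]\,l^{(n,K,\mathrm{vert})}_{m(t)}(x) \le C\,\Delta^{(n,K)}_{G_n}\cdot\max_{e}|E(G_n(e))|\cdot \hat A^{(n,K)}(m(t)),
\]
because $\sum_x E^{G_n}[\tau(x)]\,l^{(n,K,\mathrm{vert})}_{m(t)}(x) = \hat A^{(n,K)}(m(t))$ by definition.

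Third, each factor on the right is controllable in the appropriate regime. Condition $(S)$ gives $\Delta^{(n,K)}_{G_n}=o(n^{1/2})$ in probability as $K\to\infty$, uniformly in $n$. The quantity $\hat A^{(n,K)}(m(t))$ is $O_{\mathbf P^{(K)}}(n^{3/2})$: indeed, using the commute time formula $E^{G_n}[\tau(x)]$ is close to $2\reff^{G_n}(e)|E(G_n(e))|$ and the local time sums can be estimated exactly as in the companion lemma $n^{-3/2}\tilde A^{(n,K)}(m(t))\to \nu t$, which relies on the convergence of local times from Proposition~\ref{l:convergenceoflocaltimes} and on $(V)_\nu$. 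Finally, the maximum bubble volume $\max_e|E(G_n(e))|$ is $o(n)$ in $\mathbf P^{(K)}$-probability as $K\to\infty$: applying Lemma~\ref{epsnet} to make every edge of $\mathfrak T^{(n,K)}$ short (of skeleton-length at most $\delta$) and then applying condition $(V)_\nu$ to the telescoping differences $\nu\lambda^{(n,K)}(\overrightarrow{\T_x^{(n,K)}})-\nu\lambda^{(n,K)}(\overrightarrow{\T_y^{(n,K)}})\approx \mu^{(n,K)}(\overrightarrow{\T_x^{(n,K)}})-\mu^{(n,K)}(\overrightarrow{\T_y^{(n,K)}})$ for neighbouring $x,y$, one extracts mass estimates on individual bubbles that tend to $0$ as $\delta\to 0$, hence as $K\to\infty$. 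Multiplying the three factors gives $o_{\mathbf P^{(K)}}(n^3)$, and the lemma follows.

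The main obstacle I expect is the last point: extracting from condition $(V)$ (which is stated for cumulative subtree masses) a bound on the mass of a \emph{single} bubble. It requires combining $(V)_\nu$, the density statement Lemma~\ref{epsnet}, condition $(G)$ (to show that the Lebesgue mass carried by any single edge of $\mathfrak T^{(n,K)}$ is asymptotically at most of order $K^{-1}$), and a telescoping argument over the chain of neighbouring skeleton-vertices separating two endpoints of a bubble.
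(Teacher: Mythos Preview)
Your martingale setup and the overall second-moment strategy are the same as the paper's. The difference lies in how the variance of $\tau(x)$ is bounded, and the paper's choice sidesteps precisely the obstacle you flag at the end.

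The paper does not use the generic inequality $E[\tau(x)^2]\le 2E[\tau(x)]\max_y E_y[\tau(x)]$. Instead it proves a dedicated estimate (Corollary~\ref{variance_bound} in the appendix) of the shape
\[
E^{G_n}[\tau(x)^2]\ \le\ C\,|E(G_n(x))|^2\,\mathrm{diam}_{G_n}(G_n(x))\,\min\{\reff^{G_n}(e^-(x)),\reff^{G_n}(e^+(x))\}.
\]
This factor of $\reff$ cancels against the bound $E^{G_n}[l^{(n,K,\mathrm{vert})}_{m(t)}(x)]\le Cn^{1/2}/\reff$ coming from the local-time tail (Lemma~\ref{l:tailoflocaltime}), leaving
\[
\epsilon^{-2}n^{-3}\sum_{x}|E(G_n(x))|^2\,\mathrm{diam}_{G_n}(G_n(x))\cdot Cn^{1/2}
\ \le\ C\epsilon^{-2}n^{-5/2}\,\Delta^{(n,K)}_{G_n}\sum_x |E(G_n(x))|^2.
\]
At this point the paper uses the crude but decisive inequality $\sum_x |E(G_n(x))|^2\le (\sum_x |E(G_n(x))|)^2\le C|E(G_n)|^2=O(n^2)$, which only needs the total volume to be $O(n)$ (an immediate consequence of $(V)_\nu$). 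The final bound is $Cn^{-1/2}\Delta^{(n,K)}_{G_n}$, and condition $(S)$ finishes.

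Compared with your route, the paper therefore never needs $\max_e|E(G_n(e))|=o(n)$, never appeals to the companion lemmas for an $O_{\mathbf P}(n^{3/2})$ bound on $\hat A^{(n,K)}(m(t))$, and keeps the argument self-contained. Your approach can be pushed through (the bubble-mass bound is extractable from $(V)_\nu$ plus $\delta$-density, and the $O_{\mathbf P}(n^{3/2})$ bound on $\hat A$ follows from the local-time tails without invoking Lemma~\ref{lem:hatankislinear}), but note that the Chebyshev step as you wrote it needs an \emph{expectation} of the random upper bound, so you should either take expectations directly using $E[l^{(n,K,\mathrm{vert})}]\le Cn^{1/2}/\reff$ or run Chebyshev conditionally on $J^{(n,K)}$ and then use bounded convergence.
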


  \begin{lemma}\label{lem:commutetime}
 For all $t\geq 0,\epsilon>0$ and $K>0$,
\[
\limsup_{n\to\infty}\mathbf{P}^{(K)}\left[n^{-3/2} \abs{\tilde{A}^{(n,K)}(m(t))-\hat{A}^{(n,K)}(m(t))}\geq \epsilon\right]=0.
\]
\end{lemma}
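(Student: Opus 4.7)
The proof is based on the commute time formula: for each edge $e = (e^-, e^+) \in E^*(\T^{(n,K)})$, viewing $G_n(e)$ as an electrical network with unit conductances, $E_{e^-}^{G_n(e)}[T_{e^+}] + E_{e^+}^{G_n(e)}[T_{e^-}] = 2 \reff^{G_n}(e) |E(G_n(e))|$. The quantity $\tilde{A}^{(n,K)}(m)$ thus represents, edge by edge, the product (number of commutes across $e$) times (expected commute duration), while $\hat{A}^{(n,K)}(m)$ is the vertex-by-vertex accounting (number of visits at $x$) times (expected sojourn $E^{G_n}[\tau(x)]$). Both expressions are natural approximations to the total walk time $A^{(n,K)}(m)$, and the task is to show they agree up to $o(n^{3/2})$.

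My plan is, for each $x \in V^*(\T^{(n,K)})$, to decompose $E^{G_n}[\tau(x)] = \sum_{e \ni x} \theta_e(x)$, where $e \ni x$ means $x \in \{e^-, e^+\}$ and $\theta_e(x)$ denotes the expected amount of time the walk spends inside $G_n(e)$ between time $0$ and $\tau(x)$ given $X_0^{G_n} = x$. This decomposition is immediate from the cut-point structure of $\T^{(n,K)}$, since each unit of time spent inside some sausage is attributable to a unique incident edge. Substituting into $\hat{A}^{(n,K)}$ and regrouping by edges yields
\[
\hat{A}^{(n,K)}(m) = \sum_{e \in E^*(\T^{(n,K)})} \left( l^{(n,K,\text{vert})}_m(e^-) \theta_e(e^-) + l^{(n,K,\text{vert})}_m(e^+) \theta_e(e^+) \right).
\]
The central identity, obtained by applying the commute time formula inside the sausage $G_n(e)$ (and invoking Lemma \ref{res_equiv_tnk} to identify the effective resistance in the star-triangle setup), is
\[
\theta_e(e^-) + \theta_e(e^+) = 2 \reff^{G_n}(e) |E(G_n(e))|.
\]
Using the tree structure of $\T^{(n,K)}$ together with the exponential tail estimate on crossing local times (Lemma \ref{l:tailoflocaltime}) and the concentration bound of Lemma \ref{lem:premaximalinequality}, one then shows that replacing $l^{(n,K,\text{vert})}_m(e^\pm)$ by $l^{(n,K,\leftrightarrow)}_m(e)$, combined over the two endpoints of each edge, produces an error of order $o(n^{3/2})$ in probability after summation.

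The main obstacle is this last step, the asymptotic balancing of vertex-visit counts with edge-commute counts. A single visit to a branching vertex $x$ contributes to $l^{(n,K,\text{vert})}_m(x)$ once, yet may precede an excursion into any of the several sausages incident to $x$, whereas $l^{(n,K,\leftrightarrow)}_m(e)$ only records commutes across the specific edge $e$. The resolution is to split each visit at $x$ according to the incident edge attributable to the subsequent excursion, and to use that, in expectation, the allocation of sojourn time $\theta_e(x)$ across sausages is consistent with the allocation of commute counts across incident edges. Error control uses Lemma \ref{lem_no_macro_res} to guarantee $\max_{e} \reff^{G_n}(e) = o(n^{1/2})$, so that boundary terms arising from incomplete excursions near time $m(t)$ remain negligible once summed over the $O(K)$ edges of $\T^{(n,K)}$.
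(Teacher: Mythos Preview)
Your ``central identity'' $\theta_e(e^-) + \theta_e(e^+) = 2\reff^{G_n}(e)|E(G_n(e))|$ is \emph{false}, and this is the main gap. The commute time formula gives $E_{e^-}^{G_n(e)}[T_{e^+}] + E_{e^+}^{G_n(e)}[T_{e^-}] = 2\reff^{G_n}(e)|E(G_n(e))|$, where the expectations are for the walk \emph{confined} to $G_n(e)$. But $\theta_e(e^-)$ is the expected time the walk on the \emph{full graph} $G_n$ spends in $G_n(e)$ before $\tau(e^-)$; since $e^-$ generically has other neighbours in $\T^{(n,K)}$, the walk may exit via one of those before ever reaching $e^+$, so $\theta_e(e^-) < E_{e^-}^{G_n(e)}[T_{e^+}]$ strictly. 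A concrete counterexample: take single-edge sausages along a path $v' - a - b - v''$ with $e = (a,b)$. Then $\theta_e(a) = \theta_e(b) = 1/2$ while the commute time is $2$.

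Correspondingly, your proposed replacement $l^{(n,K,\text{vert})}_m(e^\pm) \approx l^{(n,K,\leftrightarrow)}_m(e)$ is also off: for $v \in V^\circ(\T^{(n,K)})$ one has $l^{(n,K,\text{vert})}_{m(t)}(v) = l^{(n,K,\rightarrow)}_t(e^-(v)) + l^{(n,K,\leftarrow)}_t(e^+(v))$, which is asymptotically $2l^{(n,K,\leftrightarrow)}_t(e)$, not $l^{(n,K,\leftrightarrow)}_t(e)$. The two errors do not cancel in general because the ratio $\theta_e(e^-)/E_{e^-}^{G_n(e)}[T_{e^+}]$ depends on the relative resistances of the edges incident to $e^-$.

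What the paper does instead is introduce the weights $p(v) = \reff^{G_n}(e^-(v))/(\reff^{G_n}(e^-(v)) + \reff^{G_n}(e^+(v)))$ and $q(v)=1-p(v)$, and prove the non-trivial excursion-theoretic identity
\[
E^{G_n}[\tau(v)] = p(v)\,E^{G_n}[\tau^+(v)] + q(v)\,E^{G_n}[\tau^-(v)]
\]
(equation~\eqref{eq:fkng}), where $\tau^\pm(v)$ are the one-way crossing times of the walk confined to the corresponding sausage. This is combined with Lemma~\ref{lem:discretediscrete}, which shows $p(v)\,l^{(n,K,\text{vert})}_{m(t)}(v) \approx l^{(n,K,\leftrightarrow)}_t(e^+(v))$ and $q(v)\,l^{(n,K,\text{vert})}_{m(t)}(v) \approx l^{(n,K,\leftrightarrow)}_t(e^-(v))$ via Proposition~\ref{l:convergenceoflocaltimes}. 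The $p,q$ weights are precisely what makes the vertex-to-edge accounting match, and they are absent from your sketch. (In fact $\theta_e(e^-) = p(e^-)E^{G_n}[\tau^+(e^-)]$, so your decomposition can be made to work, but only after proving this --- which is the same work as \eqref{eq:fkng} --- and then using Lemma~\ref{lem:discretediscrete} rather than a direct replacement.)

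A minor point: $\T^{(n,K)}$ has order $n^{1/2}$ edges, not $O(K)$; you are conflating it with $\mathfrak{T}^{(n,K)}$.
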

 \begin{lemma}\label{lem:hatankislinear}
 For each $t\geq0$ and $\epsilon>0$, 
\[\lim_{K\to\infty}\limsup_{n\to\infty}\mathbf{P}^{(K)}\left[\abs{n^{-3/2}\tilde{A}^{n,K}(m(t))-\nu t}\geq \epsilon \right]=0,\]
where $\nu$ is that of condition $(V)_\nu$. 
\end{lemma}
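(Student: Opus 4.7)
The plan is to identify $n^{-3/2}\tilde{A}^{(n,K)}(m(t))$ with a Riemann-sum approximation to $\nu\int_{\frak{T}^{(K)}}\tilde{L}^{(K)}_t(y)\,\lambda^{(K)}_{\frak{T}}(dy)$, where $\tilde{L}^{(K)}_t(y):=\rho\sigma_d L^{(K)}_{(\rho\sigma_d)^{-1}t}(y)$. By Definition~\ref{def:localtime} applied to $A=\frak{T}^{(K)}$, we have $\int L^{(K)}_s\,d\lambda^{(K)}_{\frak{T}}=s$, so the target integral equals exactly $\nu t$. To begin, I would use that successive crossings of an edge alternate direction up to an $O(1)$ boundary effect, so $2l^{(n,K,\leftrightarrow)}_{m(t)}(e)=l^{(n,K)}_t(e)+O(1)$, and that $n^{-1/2}\reff^{G_n}(e)=d^{(n,K)}_{\text{res}}(e)$ for $e\in E^\ast(\T^{(n,K)})$, so that
\[
n^{-3/2}\tilde{A}^{(n,K)}(m(t))=\sum_{e\in E^\ast(\T^{(n,K)})}\bigl[d^{(n,K)}_{\text{res}}(e)\,l^{(n,K)}_t(e)\bigr]\cdot n^{-1}|E(G_n(e))|+o_{\mathbf{P}^{(K)}}(1).
\]
The $O(1)$ boundary error contributes at most $(\max_{e}d^{(n,K)}_{\text{res}}(e))\cdot n^{-1}\sum_e |E(G_n(e))|$, which vanishes by Lemma~\ref{lem_no_macro_res} combined with the fact that the total rescaled volume $n^{-1}\sum_e|E(G_n(e))|$ is $O_{\mathbf{P}^{(K)}}(1)$ under $(V)_\nu$ (with at most $O(1)$ overcounting due to the bounded branching of the skeleton).

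The second step is to apply \eqref{eq:conv3} of Proposition~\ref{l:convergenceoflocaltimes} to replace the bracketed factor. Uniformly over $e$ and any chosen representative $x_e\in e$,
\[
d^{(n,K)}_{\text{res}}(e)\,l^{(n,K)}_t(e)=\tilde{L}^{(K)}_t\bigl(\Upsilon_{n,K}^{-1}(x_e)\bigr)+\varepsilon_n(e),\qquad \sup_e|\varepsilon_n(e)|\to 0\text{ in probability}.
\]
Multiplying by $n^{-1}|E(G_n(e))|$ and summing, the total error is $\sup_e|\varepsilon_n(e)|\cdot O_{\mathbf{P}^{(K)}}(1)=o_{\mathbf{P}^{(K)}}(1)$, and the problem reduces to showing that $\sum_{e}\tilde{L}^{(K)}_t(\Upsilon_{n,K}^{-1}(x_e))\cdot n^{-1}|E(G_n(e))|$ converges in probability to $\nu t$. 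The weights $n^{-1}|E(G_n(e))|$ naturally assemble into the measure $\mu^{(n,K)}$, so the sum is essentially $\int \tilde{L}^{(K)}_t\circ\Upsilon_{n,K}^{-1}\,d\mu^{(n,K)}$.

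The main obstacle is then to replace $\mu^{(n,K)}$ by $\nu\lambda^{(n,K)}$ using condition $(V)_\nu$, which provides only the uniform closeness of the cumulative functionals $\mu^{(n,K)}(\overrightarrow{\T^{(n,K)}_x})\approx \nu\lambda^{(n,K)}(\overrightarrow{\T^{(n,K)}_x})$. My approach is a summation-by-parts (Abel) scheme against a sufficiently fine refinement of the skeleton. By Lemma~\ref{epsnet}, for any $\eta>0$ I can draw additional uniform points $V^n_{K+1},\dots,V^n_{K'}$ to produce a $\delta$-dense partition of $\T^{(n,K)}$, and by Lemma~\ref{lem:graphspatialresistancetreeconvergence} the map $\Upsilon_{n,K}^{-1}$ is close to a linear isometry so that $\tilde L^{(K)}_t\circ\Upsilon_{n,K}^{-1}$ inherits the uniform continuity of $\tilde L^{(K)}_t$ on the compact tree $\frak{T}^{(K)}$. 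On each small segment the integrand varies by less than $\eta$, while $(V)_\nu$ applied at the two endpoints gives that the $\mu^{(n,K)}$-mass of the segment differs from $\nu$ times its $\lambda^{(n,K)}$-length by less than $\epsilon$; summing over the $O(K')$ segments replaces $\mu^{(n,K)}$ by $\nu\lambda^{(n,K)}$ with total error $O(\eta+K'\epsilon)$. Pushing the resulting integral forward through $\Upsilon_{n,K}^{-1}$ (whose image measure converges to $\lambda^{(K)}_{\frak{T}}$ by Lemma~\ref{lem:graphspatialresistancetreeconvergence}) and invoking the local time identity yields $\nu\rho\sigma_d\int L^{(K)}_{(\rho\sigma_d)^{-1}t}\,d\lambda^{(K)}_{\frak{T}}=\nu t$. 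The hardest part is this two-scale balance: refining finely enough to exploit continuity of the limiting local time while keeping the partition coarse enough that $(V)_\nu$ applies uniformly at its endpoints without blowing up the cumulative error.
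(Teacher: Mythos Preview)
Your approach is correct and is essentially the same strategy as the paper's, with only organizational differences. The paper also partitions the skeleton into finitely many pieces of small diameter (it takes a fixed $\delta$-covering of $\frak T$ and transports it via $\Upsilon_{n,K}$, rather than drawing extra points $V^n_{K+1},\dots,V^n_{K'}$), uses the uniform local-time convergence to replace $d^{(n,K)}_{\rm res}(e)l^{(n,K)}_t(e)$ by a continuous local time constant on each piece (packaged there as a separate Lemma~\ref{lem:gluethetimes}), and then invokes $(V)_\nu$ piecewise via the subtree identity $I_i=\overrightarrow{\T^{(n,K)}_x}\setminus\bigl(\cup_j\overrightarrow{\T^{(n,K)}_{x_j}}\bigr)$, which plays the role of your Abel summation. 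The one substantive difference is which exact local-time identity is used: the paper compares directly to $\nu\int_{\T^{(n,K)}}L^{(n,K)}_t\,d\lambda^{(n,K)}_{\rm res}=\nu t$ at the prelimit level, whereas you route through $\tilde L^{(K)}$ and the identity $\int L^{(K)}_s\,d\lambda^{(K)}_{\frak T}=s$ at the limit level. The paper's choice is marginally cleaner because it avoids the extra pushforward step, where you would still need to argue (via condition $(R)$ or Lemma~\ref{res_2a}) that $(\Upsilon_{n,K}^{-1})_*\lambda^{(n,K)}$ is close to $\lambda^{(K)}_{\frak T}$, since $\Upsilon_{n,K}$ is linear with respect to $d^{(n,K)}_{\rm res}$ rather than $d^{(n,K)}$; this is the only place your sketch is slightly underspecified.
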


\subsubsection{Lemma~\ref{lem:aproximationofank}}

For any $v\in V^\ast(\T^{(n,K)})$, consider $G_n(v)$ the subgraph of $G_n$ consisting of all the vertices of $G_n$  which can be attained from $v$ through edges of $G_n$  without crossing $V^\ast(\T^{(n,K)})\backslash\{v\}$.
 For the proof of Lemma \ref{lem:aproximationofank} we will need the following auxiliary result which states that the time spent in transitions involving vertices which are not in $V^\circ(\T^{(n,K)})$ is negligible.

Let $e\in E^\ast(\T^{(n,K)})$. Recall the definition of $G_n(e)$ from \eqref{eq:defofgn1} and \eqref{eq:defofgn2}. Let $\tau^{\rightarrow}(e)$ (resp. $\tau^{\leftarrow}(e)$) the time spent by a random walk on $G_n(e)$ on reaching $v_2$ starting from $v_1$ (resp. $v_1$ starting from $v_2$).
Let $\tau (e)$ be a random variable having the distribution of $\tau^{\rightarrow}(e)+\tau^{\leftarrow}(e)$, assuming that $\tau^{\rightarrow}(e)$ is independent from $\tau^{\leftarrow}(e)$.
 
 Consider $\tilde{I}$ the subset of $\frak{T}^{(n,K)}$ corresponding to the edges in $E(\T^{(n,K)})\backslash E^\ast(\T^{(n,K)})$.
 \begin{lemma}\label{lem:vcircle} For all $\epsilon>0$ $t\geq 0$,
 \begin{enumerate}
\item\[\lim_{K\to\infty} \limsup_{n\to\infty} \mathbf{P}^{(K)}\left[n^{-3/2} \sum_{v\in V^\ast(\T^{(n,K)})\backslash V^\circ(\T^{(n,K)})} l^{(n,K,\text{vert})}_{m(t)}(v)E^{G_n}[\tau(v)]\geq \epsilon\right]=0.\]
\item \[\lim_{K\to\infty} \limsup_{n\to\infty} \mathbf{P}^{(K)}\left[n^{-3/2} \sum_{e\in E^\ast(\T^{(n,K)})\backslash E^\circ(\T^{(n,K)}))} l^{(n,K,\rightarrow)}_{t}(e)E^{G_n}[\tau(e)]\geq \epsilon\right]=0.\]
\item 
\[\limsup_{n\to\infty} {\bf P}^{(K)}\left[\int_{\tilde{I}} L^{(n,K)}_t(x) \lambda_{\text{res}}^{(n,K)}(dx)\geq \epsilon\right]=0.
\]
\end{enumerate}
 \end{lemma}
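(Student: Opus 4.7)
All three parts of Lemma~\ref{lem:vcircle} concern the negligibility of contributions from the ``bad'' portion of the skeleton $\T^{(n,K)}$: the $O(K)$ triangle-edges created by the star-triangle transformation (Part (3)), and the $O(K)$ cut-points or edges adjacent to leaves or artificial vertices (Parts (1) and (2)). The strategy is to reduce each estimate to (i) the smallness of sausage diameters, controlled by condition $(S)$, and (ii) the smallness of the volume measure $\mu^{(n,K)}$ on the bad set, controlled by condition $(V)_\nu$.

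For Part (3), the construction in Section~\ref{sect_constr_tnk} gives that each triangle-edge $(w, v_{x,y,z})$ has $d_{\T^{(G,K)}}$-length equal to a half-sum of the form $(d_G(w,y')+d_G(w,z')-d_G(y',z'))/2$, with $x,y,z$ lying in a common bubble of $G_n$, and hence at most $\Delta^{(n,K)}_{G_n}$. By Rayleigh's monotonicity $d^{(n,K)}_{\text{res}}\leq d^{(n,K)}$ and the fact, via Lemma~\ref{lem:graphspatialresistancetreeconvergence}, that the total $d^{(n,K)}_{\text{res}}$-length of $\T^{(n,K)}$ converges to a positive limit, we obtain $\lambda^{(n,K)}_{\text{res}}(\tilde I) \leq CK\Delta^{(n,K)}_{G_n}/n^{1/2}$, which vanishes by $(S)$. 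The bound $\int_{\tilde I} L^{(n,K)}_t\, d\lambda^{(n,K)}_{\text{res}} \leq \sup_x L^{(n,K)}_t(x)\cdot \lambda^{(n,K)}_{\text{res}}(\tilde I)$ together with the uniform-in-$x$ boundedness in probability of $L^{(n,K)}_t$---a consequence of Lemma~\ref{lem:abstractcoupling} and continuity of the limiting local time on the compact $K$-CRT---then yields the result.

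For Parts (1) and (2), the key tool is the commute-time identity \cite{chandra1996electrical}: $E^{G_n}[\tau(e)] = 2\reff^{G_n}(e)|E(G_n(e))|$ for $e\in E^\ast(\T^{(n,K)})$, and for $v\in V^\ast(\T^{(n,K)})$ adjacent to $k\leq 2$ bubbles $G_n(e_1),\ldots,G_n(e_k)$, $E^{G_n}[\tau(v)] \leq \sum_{i=1}^k 2\reff^{G_n}(e_i)|E(G_n(e_i))|$. Substituting and dividing by $n^{3/2}$, each sum is dominated by
\[
C\sum_{e\text{ bad}} \bigl[n^{-1/2}\reff^{G_n}(e)\, l^{(n,K,\rightarrow)}_t(e)\bigr]\cdot \mu^{(n,K)}(\{e_-\}).
\]
By display \eqref{eq:convergenceofedgelocaltimes} the first factor is uniformly in $e$ close to $\tfrac12 L^{(n,K)}_t(x_e)$, which is bounded in probability; thus it remains to control $\sum_{e\text{ bad}}\mu^{(n,K)}(\{e_-\})$ in the $\lim_K\limsup_n$ sense.

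The main obstacle is this final step. Expressing $\mu^{(n,K)}(\{e_-\})$ as the telescoping difference $\mu^{(n,K)}(\overrightarrow{\T^{(n,K)}_{e_-}}) - \sum_{y}\mu^{(n,K)}(\overrightarrow{\T^{(n,K)}_{y}})$ over children $y$ of $e_-$, and applying $(V)_\nu$ to each term (noting $\lambda^{(n,K)}$ assigns mass zero to single vertices), we replace $\mu^{(n,K)}(\{e_-\})$ by $\nu$ times the sum of descending-edge lengths from $e_-$, up to an additive $O(\epsilon)$ error per vertex. Since the bad set has at most $O(K)$ elements and each adjacent edge has $d^{(n,K)}$-length bounded by $\Delta^{(n,K)}_{G_n}/n^{1/2}$, the leading term is of order $\nu K\Delta^{(n,K)}_{G_n}/n^{1/2}$, vanishing by $(S)$ in the iterated limit; the cumulative $(V)$-error $O(K\epsilon)$ is handled by selecting $\epsilon\to 0$ appropriately as $K\to\infty$, as permitted by condition $(V)_\nu$. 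Together these yield the claim.
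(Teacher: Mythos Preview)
Your overall strategy—commute-time formula for $E^{G_n}[\tau(\cdot)]$, tightness of rescaled local times, and smallness of the bad-set volume via $(V)_\nu$—matches the paper's proof closely. Two points deserve correction, however.

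For Part~(3), your appeal to condition $(S)$ is not sufficient. Part~(3) is a statement for \emph{fixed} $K$ (only $\limsup_{n\to\infty}$ appears), while $(S)$ controls $n^{-1/2}\Delta^{(n,K)}_{G_n}$ only in the iterated limit $\lim_K \sup_n$; it says nothing about $n\to\infty$ for fixed $K$, so your bound $CK\,n^{-1/2}\Delta^{(n,K)}_{G_n}$ need not vanish. The paper instead bounds the resistance of each edge in $E(\T^{(n,K)})\setminus E^\ast(\T^{(n,K)})$ via the star--triangle relations and Lemma~\ref{lem_no_macro_res} (which uses $(G)$ and $(R)$, not just $(S)$), obtaining $\lambda_{\text{res}}^{(n,K)}(\tilde I)\to 0$ as $n\to\infty$ for each fixed $K$; the conclusion then follows exactly as you describe via boundedness of $\sup_x L^{(n,K)}_t(x)$.

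For Parts~(1)--(2), two issues. First, your commute-time bound at a branching-point vertex $v$ is imprecise: $G_n(e)$ is defined only for $e\in E^\ast$, but in $\T^{(n,K)}$ such $v$ is adjacent to the artificial vertex $v_{x,y,z}$, so your formula $E^{G_n}[\tau(v)]\leq\sum_i 2\reff^{G_n}(e_i)|E(G_n(e_i))|$ is not directly available. The paper collapses the three $G^{(n,K)}$-neighbors $v_1,v_2,v_3$ of $v$ to a single vertex $v'$ and applies the commute-time formula in that graph, giving $E^{G_n}[\tau(v)]\leq 2|E(G_n(v))|\,\reff^{G_n}(v,v')$ with $\reff^{G_n}(v,v')\leq \min_i \reff^{G_n}(v,v_i)$ by Rayleigh; the local-time factor is then controlled via Lemma~\ref{l:tailoflocaltimetrifurcation}. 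Second—and this is the point you yourself flag—your handling of the cumulative $(V)$-error ``$O(K\epsilon)$'' by letting $\epsilon\to 0$ with $K$ is not justified: condition $(V)_\nu$ provides no rate in $K$, so a diagonal choice $\epsilon_K\to 0$ with $K\epsilon_K\to 0$ cannot be extracted from it. The paper follows the same route (Lemma~\ref{lem_no_macro_res} to show $\lambda^{(n,K)}$ of the relevant edges vanishes for fixed $K$, then $(V)_\nu$ to pass to $\mu^{(n,K)}$) and is terse at this very step; but you should be aware that your explicit fix does not close the gap.
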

 \begin{proof}[Proof of Lemma \ref{lem:vcircle}] 
 \emph{Proof of display (1):}
 
 Note that vertices in  $V^\ast(\T^{(n,K)})\backslash V^\circ(\T^{(n,K)})$ appear only at branching points and leaves of $\T^{(n,K)}$. For every branching point of $\T^{(n,K)}$ there are $3$ vertices of $V^\ast(\T^{(n,K)})\backslash V^\circ(\T^{(n,K)})$ (the vertices of the triangle involved in the corresponding star-triangle transformation) while for every leaf there is only $1$ vertex of $V^\ast(\T^{(n,K)})\backslash V^\circ(\T^{(n,K)}))$. Therefore, since $\frak{T}^{(K)}$ has a finite number of branching points and leaves, it follows from \eqref{eq:couplinggeometry} that, for $K$ fixed, the set $V^\ast(\T^{(n,K)})\backslash V^\circ(\T^{(n,K)})$ is finite and its cardinality is uniformly bounded on $n$. Hence, to prove part $(1)$ it suffices to show that
 \begin{equation}\label{eq:singlesingle}
 \lim_{K\to\infty}\limsup_{n\to\infty}{\bf P}^{(K)}\left[n^{-3/2} l^{(n,K,\text{vert})}_{m(t)}(v)E^{G_n}[\tau(v)]\geq \epsilon \right]=0
 \end{equation}
 for any $v\in V^\ast(\T^{(n,K)})\backslash V^\circ(\T^{(n,K)})$ leaf or branching point.
 
  First, we will do  the case when $v$ corresponds to a branching point.
 In this case there will be three edges $e_1=(v,v_1),e_2=(v,v_2),e_3=(v,v_3)$ of $V(G^{(n,K)})$ that are adjacent to $v$ with $e_2$ and $e_3$ in $E(G^{(n,K)})\backslash E^\ast(\T^{(n,K)})$ and $e_1$ in $E^\ast(\T^{(n,K)})$, see Figure 7.

\begin{figure}
  \includegraphics[width=0.6\linewidth]{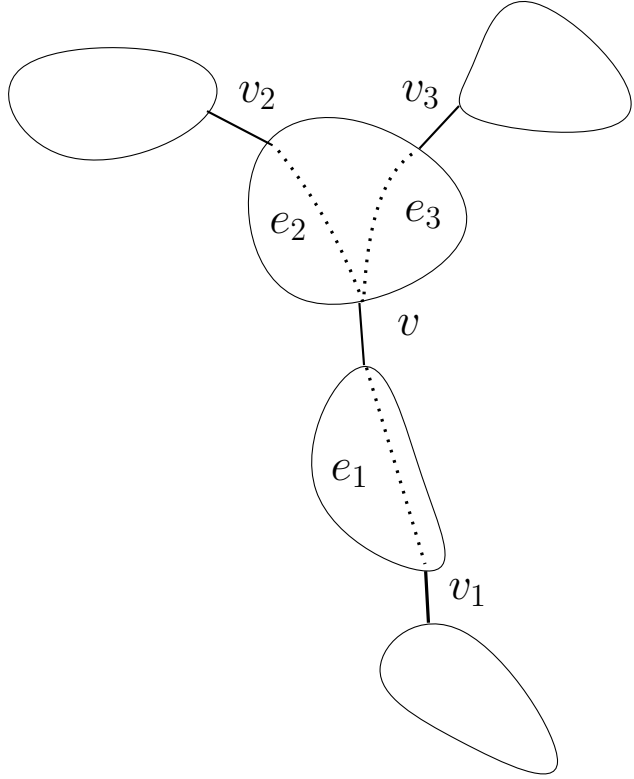}
  \caption{The neighbourhood of branching points}
\end{figure}

Consider the graph obtained by identifying the points $v_1,v_2,v_3$ in $G_n(v)$. Call $v'$ be the vertex obtained by the identification of $v_1,v_2,v_3$ and $\tau(v\leftrightarrow v')$ be the commute time between $v$ and $v'$ (i.e., the time that it takes to a random walk on $G_n(v)$ to go from $v$ to $v'$ and then to return to $v$). Clearly, 
\begin{equation}\label{eq:ctf}
E^{G_n}[\tau(v)]\leq E^{G_n}[\tau(v\leftrightarrow v')]=2|E(G_n(v))|\reff^{G_n}(v,v'),
\end{equation}
where the last equality follows by the commute time formula.
Therefore, \eqref{eq:singlesingle} will follow if we prove that
\begin{equation}\label{eq:boundonlocaltimetoneglect}
\lim_{M\to\infty}\limsup_{n\to\infty}{\bf P}^{(K)}\left[  n^{-1/2}\reff^{G_n}(v,v') l^{(n,K,\text{vert})}_{m(t)}(v)\geq M \right]=0
 \end{equation}
and 
 \begin{equation}\label{eq:boundoncardinalitytoneglect}
 \lim_{K\to\infty}\limsup_{n\to\infty}{\bf P}^{(K)}\left[ n^{-1}|E(G_n(v)|\geq \epsilon\right]=0.
 \end{equation}
 We will first do the proof of \eqref{eq:boundonlocaltimetoneglect}.
 By Rayleigh's monotonicity principle,
 \begin{equation}\label{eq:afterdeath}
 \reff^{G_n}(v,v')\leq \min_{i=1,2,3}\{\reff^{G_n}(v,v_i)\}.
 \end{equation}
Let us assume without loss of generality that the orientation of $e_i,i=1,2,3$ that $l^{(n,K,\rightarrow)}_t(e_i)$ counts is the one pointing away from $v$.
Therefore
\begin{equation}\label{eq:ltdbp}
 \abs{l^{(n,K,\text{vert})}_{m(t)}(v)-l^{(n,K,\rightarrow)}_t(e_1)-l^{(n,K,\rightarrow)}_t(e_2)-l^{(n,K,\rightarrow)}_t(e_3)}\leq 1,
 \end{equation}
  It follows from Lemma \ref{l:tailoflocaltimetrifurcation} that, for all $K\in\N$,
 \[\lim_{M\to\infty}\limsup_{n\to\infty}{\bf P}^{(K)}\left[n^{-1/2}\reff^{G_n}(e_i) l^{(n,K,\rightarrow)}_{t}(e_i)\geq M \right] =0,\]
for all $\epsilon>0$, $i=1,2,3$. 
It is not hard to see from the display above that 
 \[
 \begin{aligned}
 \lim_{M\to\infty}\limsup_{n\to\infty}{\bf P}^{(K)}\Bigg[&3n^{1/2} \min_{i=1,2,3}\{\reff^{G_n}(v,v_i)\}\times\\
 & (l^{(n,K,\rightarrow)}_{t}(e_1)+l^{(n,K,\rightarrow)}_{t}(e_2)+l^{(n,K,\rightarrow)}_{t}(e_3))\geq M\Bigg] =0,
 \end{aligned}
 \]
 for all $\epsilon>0$.
 Therefore, using displays \eqref{eq:afterdeath} and \eqref{eq:ltdbp} we can deduce \eqref{eq:boundonlocaltimetoneglect}.
 

Now we focus on the proof of \eqref{eq:boundoncardinalitytoneglect}. Recall that $e_1,e_2,e_3$ are the three edges of $V(G^{(n,K)})$ which are adjacent to $v$, with $e_2$ and $e_3$ in $E(G^{(n,K)})\backslash E^\ast(\T^{(n,K)})$ and $e_1$ in $E^\ast(\T^{(n,K)})$, see Figure 7. 
By Lemma \ref{lem_no_macro_res}, we have that
  \[\lim_{n\to\infty}{\bf P}^{(K)}\left[ n^{-1/2} \max\{\reff^{G_n}(e_1),\reff^{G_n}(e_2),\reff^{G_n}(e_3)\} \geq\eta \right]=0,\]
  for all $\eta>0$.
Let $e_4,e_5,e_6$ be the edges of $E(\T^{(n,K)})\backslash E^\ast(\T^{(n,K)})$ introduced by the star-triangle transformation corresponding to $v$. Without loss of generality we can assume that $e_4$ is the one which is adjacent to $v$, $e_5$ is adjacent to $v_2$ and $e_6$ is adjacent to $v_3$. Then, by the star-triangle transformation, we have that $d^{(n,K)}_{\text{res}}(e_2)=n^{-1/2}\reff^{G_n}(e_2)=n^{-1/2}\reff^{G_n}(e_4)+n^{-1/2}\reff^{G_n}(e_5)$ and $d^{(n,K)}_{\text{res}}(e_3)=n^{-1/2}\reff^{G_n}(e_3)=n^{-1/2}\reff^{G_n}(e_4)+n^{-1/2}\reff^{G_n}(e_6)$.
   Therefore by the display above, we have that
    \begin{equation}\label{eq:bgbdt}
    \lim_{n\to\infty}{\bf P}^{(K)}\left[   \lambda^{(n,K)}_{\text{res}}(e_1\cup e_4\cup e_5\cup e_6) \geq \eta\right]=0,
    \end{equation}
    for all $\eta>0$.
  Hence, it is not hard to see from condition (R) that
  \[\lim_{n\to\infty}{\bf P}^{(K)}\left[ \lambda^{(n,K)}(e_1\cup e_4\cup e_5 \cup e_6) \geq \eta\right]=0,\]
  for all $\eta >0$.
  The display above states that the Lebesgue measure of the portion of $\frak{T}^{(n,K)}$ which corresponds to $G_n(v)$ (i.e., $e_1\cup e_4\cup e_5\cup e_6$) is negligible. Since our sequence of graphs satisfy condition (V), this implies that 
 \[
 \lim_{n\to\infty} {\bf P}^{(K)}[\mu^{(n,K)}(e_1\cup e_4\cup e_5\cup e_6)\geq \eta]=0,
 \]
 for all $\eta>0$.
   Recalling that $\mu^{(n,K)}(e_1\cup e_4\cup e_5\cup e_6)$ corresponds to the edge cardinality of $G_n(v)$ scaled by $n^{-1}$, we get  \eqref{eq:boundoncardinalitytoneglect}. This shows \eqref{eq:singlesingle} for the case when $v$ is a branching point.

 To finish the proof of claim (1) of the lemma, it remains to show \eqref{eq:singlesingle} when $v$ is a leaf of $\T^{(n,K)}$. Let $e=(v',v)$ be the unique edge of $\T^{(n,K)}$ which is adjacent to $v$. By the commute time formula we have that
 \[
 E^{G_n}[\tau(v)]\leq E^{G_n}[\tau(v\leftrightarrow v')]\leq2 \reff^{G_n}(e)\abs{E(G_n(v))}.
 \]
 By arguments analogous to those leading to \eqref{eq:boundonlocaltimetoneglect} we find that
 \[
\lim_{M\to\infty}\limsup_{n\to\infty} P^{G_n}[n^{-1/2}l^{(n,K,\text{vert})}_{m(t)}(e)\reff^{G_n}(e)\geq M]=0.
 \]
 On the other hand, imitating the deduction of \eqref{eq:boundoncardinalitytoneglect} we get that 
\[\lim_{K\to\infty}\limsup_{n\to\infty}\mathbf{P}^{(K)}\left[n^{-1}\abs{E(G_n(v))}\geq \epsilon\right]=0,\]
for all $\epsilon>0$. We can use the three displays above to deduce \eqref{eq:singlesingle} by the same arguments as when $v$ is a branching point. This finishes the proof of the claim (1) of the lemma.

\emph{Proof of display (2):}\\
The same arguments used for the proof of claim (1) of the lemma can be used without major modifications. We omit the proof.

\emph{Proof of display (3):}
It follows from Lemma \ref{lem:abstractcoupling}  that 
\begin{equation}
\lim_{M\to\infty}\limsup_{n\to\infty}P^{G_n}\left[\sup_{x\in\tilde{I}}L^{(n,K)}_t(x)\geq M\right]=0.
\end{equation}
From \eqref{eq:bgbdt} we get that 
\[
\lim_{n\to\infty} \mathbf{P}^{(K)}\left[ \lambda_{\text{res}}^{(n,K)}(\tilde{I})\geq \eta\right]=0,
\]
for all $\eta>0$.
The result follows from the two displays above.
  \end{proof}

 \begin{proof}[Proof of Lemma \ref{lem:aproximationofank}]
 Let $\mathcal{F}_m$ be the $\sigma$-algebra generated by $X^{G_n}$ up to time $A^{(n,K)}(m)$ and by the Brownian motion $B^{(n,K)}$ up to time $h^{(n,K)}_m$ (recall the definition of $h^{(n,K)}_m$ from \eqref{eq:defofhnk}). Let $S^{(n,K)}(0)=0$ and $T^{n,K}(m)$ be the time spent by $X^{G_n}$ in the $m$-th transition of $J^{(n,K)}$, that is
 \begin{equation}\label{eq:defoftnk}
 S^{(n,K)}(m)=A^{(n,K)}(m)-A^{(n,K)}(m-1), \qquad m\in\N.
 \end{equation}
 
  Let 
 \begin{equation}\label{eq:defofmm}
 M_m:=\sum_{i=1}^m \left(S^{(n,K)}(i)- E^{G_n}[S^{(n,K)}(i)\vert \mathcal{F}_{i-1}] \right).
 \end{equation}
  Define
 \[M^{(2)}_m:=M_m^2-\sum_{i=1}^m E^{G_n}[(S^{(n,K)}(i)-E^{G_n}[S^{(n,K)}(i)\vert \mathcal{F}_{i-1}])^2\vert \mathcal{F}_{i-1}].\]
 The first part of the proof consist in showing that the $M^{(2)}$ is a martingale to which we can apply the optional stopping theorem at $m(t)$ (defined at~\eqref{eq:defofmt})to obtain 
 \begin{equation}\label{eq:ostm2}
 E^{G_n}[M^{(2)}_{m(t)}]=0.
 \end{equation}
 In the second part of the proof we will use the display above to control the variance of $A^{(n,K)}(m(t))-\hat{A}^{(n,K)}(m(t))$ and deduce the lemma.
 
 \emph{Proof of display \eqref{eq:ostm2}}:\\
\noindent Our first claim is that $(M^{(2)}_m)_{m\in\N}$ is a martingale (with respect to the filtration $(\mathcal{F}_{m})_{m\in\N}$). That is, we want to show that
\[E^{G_n}[M_{m+1}^{(2)}-M_m^{(2)}\vert \mathcal{F}_m]=0.\]
Therefore, it suffices to establish that
\[E^{G_n}[M_{m+1}^2-M_m^2-E^{G_n}[(T_{m+1}-E^{G_n}[T_{m+1}\vert \mathcal{F}_m])^2\vert \mathcal{F}_m]\vert  \mathcal{F}_m ]=0,\]
or, equivalently
\begin{equation}\label{eq:kariola2}
E^{G_n}\left[M_{m+1}^2-M_m^2\vert \mathcal{F}_m\right]=E^{G_n}\left[(T_{m+1}-E^{G_n}[T_{m+1}\vert \mathcal{F}_m])^2\vert \mathcal{F}_m\right].
\end{equation}
Let us observe that
\begin{equation}\label{eq:kariola}
 \begin{aligned}
 &E^{G_n}[M_m(T_{m+1}-E^{G_n}[T_{m+1}\vert \mathcal{F}_m])\vert \mathcal{F}_m]\\=&M_mE^{G_n}[T_{m+1}-E^{G_n}[T_{m+1}\vert \mathcal{F}_m]\vert \mathcal{F}_m]\\=&0.
 \end{aligned}
 \end{equation}
where the first equality follows from the fact that $M_m$ is $\mathcal{F}_m$-measurable.
On the other hand
 \begin{align*}
 &E^{G_n}\left[M_{m+1}^2-M_m^2\vert \mathcal{F}_m\right]\\=&E^{G_n}\left[(M_m+(T_{m+1}-E^{G_n}[T_{m+1}\vert \mathcal{F}_m]))^2-M_m^2\vert \mathcal{F}_m \right]\\
 =& E^{G_n}\left[2M_m(T_{m+1}-E^{G_n}[T_{m+1}\vert \mathcal{F}_m]) + (T_{m+1}-E^{G_n}[T_{m+1}\vert \mathcal{F}_m])^2\vert \mathcal{F}_m\right]\\
 =&E^{G_n}\left[(T_{m+1}-E^{G_n}[T_{m+1}\vert \mathcal{F}_m])^2\vert \mathcal{F}_m\right],
 \end{align*}
 where the last equality follows from the identity \eqref{eq:kariola}. This establishes \eqref{eq:kariola2} and therefore  $(M^{(2)}_m)_{m\in\N}$ is a martingale with respect to $(\mathcal{F}_{m})_{m\in\N}$.  
Moreover $m(t)$ is a stopping time relative to $(\mathcal{F}_i)_{i\in\N}$.
Next, we will show that the pair $(M^{(2)},m(t))$ satisfies the hypotheses of the optional stopping theorem.

From \cite[Theorem 2.2, \S 7]{bookdoob1953stochastic}) it can be seen that the optional stopping theorem holds provided that the following three conditions hold
\begin{enumerate}
\item $m(t) < \infty$, $P^{G_n}$-almost surely,
\item $E^{G_n}[M^{(2)}_{m(t)}]<\infty$,
\item $E^{G_n}[M^{(2)}_m1_{\{m(t)>m\}}]\to0$ as $m\to\infty$.
\end{enumerate} 

To prove condition $(3)$ we use the Cauchy-Schwarz inequality to write
\begin{equation}\label{eq:C-S}
E^{G_n}\left[M^{(2)}_m1_{\{m(t)>m\}}\right]\leq E^{G_n}\left[ (M^{(2)}_m)^2\right]^{1/2}P^{G_n}[m(t)\geq m]^{1/2}.
\end{equation}
We have
\begin{equation}\label{eq:durango}
\begin{aligned}
&E^{G_n}\left[(M^{(2)}_m)^2 \right]=E^{G_n}\left[ \left(M_m^2-\sum_{i=1}^m E^{G_n}[(S^{(n,K)}(i)-E^{G_n}[S^{(n,K)}(i)\vert \mathcal{F}_{i-1}])^2\vert \mathcal{F}_{i-1}] \right)^2 \right]\\
&\leq 2E^{G_n}\left[M_m^4\right]+2E^{G_n}\left[ \left(\sum_{i=1}^m E^{G_n}[(S^{(n,K)}(i)-E^{G_n}[S^{(n,K)}(i)\vert \mathcal{F}_{i-1}])^2\vert \mathcal{F}_{i-1}]\right)^2 \right].
\end{aligned}
\end{equation}
We will start controlling $E^{G_n}[M_m^4]$.
\begin{equation}
E^{G_n}\left[M_m^4 \right]=E^{G_n}\left[\left(\sum_{i=1}^m \left(S^{(n,K)}(i)- E^{G_n}[S^{(n,K)}(i)\vert \mathcal{F}_{i-1}] \right)\right)^4 \right].
\end{equation}
The right hand side of the display above is composed by the sum of $m^4$ terms of the form
\begin{equation}
E^{G_n}\left[ \prod_{j=1}^4 \left(S^{(n,K)}(i_j)- E^{G_n}[S^{(n,K)}(i_j)\vert \mathcal{F}_{i_{j}-1}] \right)\right].
\end{equation}
By repeated use of the Cauchy-Schwarz inequality we get that
\begin{equation}
\begin{aligned}
&E^{G_n}\left[ \prod_{j=1}^4 \left(S^{(n,K)}(i_j)- E^{G_n}[S^{(n,K)}(i_j)\vert \mathcal{F}_{i_{j}-1}] \right)\right]\\
\leq&E^{G_n}\left[ \prod_{j=1}^2 \left(S^{(n,K)}(i_j)- E^{G_n}[S^{(n,K)}(i_j)\vert \mathcal{F}_{i_{j}-1}] \right)^2\right]^{1/2}E^{G_n}\left[ \prod_{j=3}^4 \left(S^{(n,K)}(i_j)- E^{G_n}[S^{(n,K)}(i_j)\vert \mathcal{F}_{i_{j}-1}] \right)^2\right]^{1/2}\\
\leq&\prod_{j=1}^4 E^{G_n}\left[ \left(S^{(n,K)}(i_j)- E^{G_n}[S^{(n,K)}(i_j)\vert \mathcal{F}_{i_{j}-1}] \right)^4\right]^{1/4}.
\end{aligned}
\end{equation} 
By virtue of Proposition \ref{prop:finitemoments}, all the terms appearing above are finite and moreover
\[\max_{i_1,i_2,i_3,i_4}E^{G_n}\left[ \prod_{j=1}^4 \left(S^{(n,K)}(i_j)- E^{G_n}[S^{(n,K)}(i_j)\vert \mathcal{F}_{i_{j}-1}] \right)\right]<\infty.
\] 
Hence, there exists $C$ such that
\begin{equation}\label{eq:loscuates}
E^{G_n}[M_m^4]<m^4C.
\end{equation}
Similarly, 
\[
E^{G_n}\left[ \left(\sum_{i=1}^m E^{G_n}[(S^{(n,K)}(i)-E^{G_n}[S^{(n,K)}(i)\vert \mathcal{F}_{i-1}])^2\vert \mathcal{F}_{i-1}]\right)^2 \right]
\]
is composed by the sum of $m^2$ terms of the form
\begin{equation}
 \prod_{j=1}^2E^{G_n}[(S^{(n,K)}(i_j)-E^{G_n}[S^{(n,K)}(i_j)\vert \mathcal{F}_{i_j-1}])^2\vert \mathcal{F}_{i_j-1}].
\end{equation}
By virtue of Proposition \ref{prop:finitemoments}, all those terms are finite  and moreover
\[
\max_{i_1,i_2}\prod_{j=1}^2E^{G_n}[(S^{(n,K)}(i_j)-E^{G_n}[S^{(n,K)}(i_j)\vert \mathcal{F}_{i_j-1}])^2\vert \mathcal{F}_{i_j-1}]<\infty.
\]
Therefore, there exists $C$ such that
\begin{equation}\label{eq:lostigres}
E^{G_n}\left[ \left(\sum_{i=1}^m E^{G_n}[(S^{(n,K)}(i)-E^{G_n}[S^{(n,K)}(i)\vert \mathcal{F}_{i-1}])^2\vert \mathcal{F}_{i-1}]\right)^2 \right]\leq Cm^2.
\end{equation}
Hence, from displays \eqref{eq:durango}, \eqref{eq:loscuates}, \eqref{eq:lostigres} we get that there exists $C$ with
\begin{equation}\label{eq:comounrayo}
E^{G_n}\left[(M^{(2)}_m)^2 \right]\leq Cm^4.
\end{equation}
Now we have to get an upper bound for $P^{G_n}[m(t)\geq m]$.
It follows from \eqref{eq:defofmt} that 
\begin{equation}
m(t)=\sum_{e\in E^\ast(\T^{(n,K)})} l^{(n,K)}_t(e).
\end{equation}
Hence
\begin{align*}
&P^{G_n}[ m(t)\geq m]\leq P^{G_n}\left [ \bigcup_{e\in E^\ast(\T^{(n,K)})} \left\{ l^{(n,K)}_t(e)\geq \frac{m}{\abs{E(\T^{(n,K)})}}\right\}   \right ]\\
&\leq \sum_{e\in E^\ast(\T^{(n,K)})} P^{G_n}\left[  l^{(n,K)}_t(e)\geq \frac{m}{\abs{E(\T^{(n,K)})}}\right].
\end{align*}
Therefore, since $\abs{E(\T^{(n,K)})}$ is finite for fixed $K,n$, it follows from Lemma~\ref{l:tailoflocaltime} and Lemma~\ref{l:tailoflocaltimetrifurcation} that there exists $c,C>0$ independent of $m$ such that
\begin{equation}\label{eq:martin}
P^{G_n}[ m(t)\geq m]\leq C \exp(-cm).
\end{equation}
From displays \eqref{eq:C-S}, \eqref{eq:comounrayo} and \eqref{eq:martin} we get that there exists $c',C'>0$ with
\begin{equation}\label{eq:lajauladeoro}
E^{G_n}\left[M^{(2)}_m 1_{\{m(t)\geq m\}}\right] \leq C'\exp(-c'm). 
\end{equation}
The display above implies condition $(3)$.
On the other hand 
\begin{equation}
E^{G_n}[M^{(2)}_{m(t)}]=\sum_{i=0}^{\infty} E^{G_n}[M^{(2)}_m 1_{\{m(t)=m\}} ].
\end{equation}
Hence, by \eqref{eq:lajauladeoro} we get condition $(2)$.
Finally, condition $(1)$ follows from \eqref{eq:martin}. We have shown that the optional stopping theorem holds for the pair $(M^{(2)}, m(t))$. Therefore, we have deduced display \eqref{eq:ostm2}.

\emph{Control of the variance of $A^{(n,K)}(m(t))-\hat{A}^{(n,K)}(m(t))$:}\\
 We start by noticing that~\eqref{eq:ostm2} implies
  \begin{equation}\label{eq:compensator}
 E^{G_n}\left[M^2_{m(t)}\right]=E^{G_n}\left[\sum_{i=1}^{m(t)} E^{G_n}[(S^{(n,K)}(i)-E^{G_n}[S^{(n,K)}(i)\vert\mathcal{F}_{i-1}])^2\vert \mathcal{F}_{i-1}]\right].
 \end{equation}
 Recall the definitions of $M_m$ and $S^{(n,K)}$ in \eqref{eq:defofmm} and \eqref{eq:defoftnk} respectively. Recall also the definition of $\tau(x)$ in the first paragraph of Section \ref{sect:averagedtimechange} and the definition of $l^{(n,K,\text{vert})}_m$ from \eqref{eq:defoflvert}. We have
  \begin{align*}
 M_{m}&=A^{(n,K)}(m)-\sum_{i=1}^{m} E^{G_n}[S^{(n,K)}(i)\vert \mathcal{F}_{i-1}]\\ 
&=A^{(n,K)}(m)-\sum_{x\in V^\ast(\T^{(n,K)})} l^{(n,K,\text{vert})}_m(x) E^{G_n}[\tau(x)]\\
&= A^{(n,K)}(m)-\hat{A}^{(n,K)}(m).
 \end{align*}
Hence,
 \begin{align}\label{end_of_the_world}
 &P^{G_n}\left[n^{-3/2}\abs{A^{(n,K)}(m(t)) -\hat{A}^{(n,K)}(m(t))}\geq\epsilon \right] \\ = & P^{G_n} \left[n^{-3/2}\abs{M_{m(t)}}\geq\epsilon\right]\\ \nonumber
 \leq & n^{-3} \epsilon^{-2}E^{G_n}\left[ M^{2}_{m(t)}\right]\\=&n^{-3}\epsilon^{-2}  E^{G_n}\left[\sum_{i=1}^{m(t)} E^{G_n}[(S^{(n,K)}(i)-E^{G_n}[S^{(n,K)}(i)\vert \mathcal{F}_{i-1}])^2\vert \mathcal{F}_{i-1}]\right],
 \end{align}
 where in the first inequality we have used Chebyshev's inequality and in the last equality we have used \eqref{eq:compensator}.
 The display above equals 
  \[\epsilon^{-2}n^{-3} E^{G_n}\left[\sum_{x\in V^\ast(\T^{(n,K)})} l^{(n,K,\text{vert})}_{m(t)}(x)E^{G_n}[(\tau(x)-E^{G_n}[\tau(x)])^2]\right].\]
 On the other hand, 
 \begin{equation}\label{eq:robertocarlos}
 E^{G_n}[(\tau(x)-E^{G_n}[\tau(x)])^2] \leq  2E^{G_n}[\tau(x)^2]+2 E^{G_n}[\tau(x)]^2 \leq 4 E^{G_n}[\tau(x)^2].
 \end{equation}
 
 If $x\in V^\circ(\T^{(n,K)})$, using Corollary \ref{variance_bound} (and recalling the definition of $e^+(v),e^-(v)$ from the paragraph above \eqref{eq:sopracontrariar}) we get that the right hand side of \eqref{eq:robertocarlos} is bounded above by
 \[C \abs{E(G_n(x))}^2 \text{diam}_{G_n}(G_n(x)) \min\{\reff^{G_n}(e^-(x)),\reff^{G_n}(e^+(x))\}
 \]
 for some constant $C$. Hence 
 \begin{equation}\label{eq:pericolospalotes}
 \begin{aligned}
 &E^{G_n} \left[\sum_{x\in V^\circ(\T^{(n,K)})}l^{(n,K,\text{vert})}_{m(t)}(x)E^{G_n}[(\tau(x)-E^{G_n}[\tau(x)])^2]\right]\\
 =&\sum_{x\in V^\circ(\T^{(n,K)})}E^{G_n} [l^{(n,K,\text{vert})}_{m(t)}(x)]E^{G_n}[(\tau(x)-E^{G_n}[\tau(x)])^2] \\
 =&\sum_{x\in V^\circ(\T^{(n,K)})}E^{G_n} [l^{(n,K,\rightarrow)}_{t}(e^-(x))+l^{(n,K,\leftarrow)}_{t}(e^+(x))]E^{G_n}[(\tau(x)-E^{G_n}[\tau(x)])^2]\\
  \leq &C \sum_{x\in V^\circ(\T^{(n,K)})} E^{G_n}[l^{(n,K,\rightarrow)}_{t}(e^-(x))+l^{(n,K,\leftarrow)}_{t}(e^+(x))]\\ &\abs{E(G_n(x))}^2 \text{diam}_{G_n}(G_n(x))\min\{\reff^{G_n}(e^-(x)),\reff^{G_n}(e^+(x))\},
\end{aligned}
\end{equation}
where the second equality follows from \eqref{eq:vertupdown}.
 By  virtue of Lemma \ref{l:tailoflocaltime} we have that, $\mathbf{P}^{(K)}$ a.s.,
 \[n^{-1/2}\sup_{x\in V^{\circ}(\T^{(n,K)})} \{ \min\{\reff^{G_n}(e^-(x)),\reff^{G_n}(e^+(x))\}E^{G_n}[l^{(n,K,\rightarrow)}_t(e^-(x))+l^{(n,K,\leftarrow)}_{t}(e^+(x))]\}\] is bounded uniformly in $n$ and $K$.
  Therefore, $\mathbf{P}^{(K)}$-a.s., there exists a constant $C$ such that the right hand side of \eqref{eq:pericolospalotes} is bounded by
 \[C n^{1/2}\sum_{x\in V^\circ(\T^{(n,K)})} \abs{E(G_n(x))}^2 \text{diam}(G_n(x)).\]
 A similar analysis, which we omit, can be carried for vertices $v\in V^\ast(\T^{(n,K)})\backslash V^\circ(\T^{(n,K)})$ (using the same ideas in the proof of display (1) of Lemma \ref{lem:vcircle}). This, together with the display above allows to get that
 \begin{equation}
 \begin{aligned}
 &E^{G_n} \left[\sum_{x\in V^\ast(\T^{(n,K)})}l^{(n,K,\text{vert})}_{m(t)}(x)E^{G_n}[(\tau(x)-E^{G_n}[\tau(x)])^2]\right]\\
 \leq & C n^{1/2}\sum_{x\in V^\ast(\T^{(n,K)})} \abs{E(G_n(x))}^2 \text{diam}(G_n(x))
 \end{aligned}
 \end{equation}
 
  Moreover, since $\sup_{x\in V^*(\T^{(n,K)})}\text{diam}(G_n(x))\leq 2\Delta^{(n,K)}_{G_n}$, (where $\Delta^{(n,K)}_{G_n}$ is as in \eqref{eq:defofdeltaintr} ) the display above is bounded by
\begin{equation}\label{eq:inmenso}
 C n^{1/2}\Delta^{(n,K)}_{G_n}\sum_{x\in V^\ast(\T^{(n,K)})} \abs{E(G_n(x))}^2 . 
 \end{equation}
On the other hand, since $\sum_{x\in V^\ast(\T^{(n,K)})} \abs{E(G_n(x))}\leq 2\abs{E(G_n)} $, it follows that 
\[\sum_{x\in V^\ast(\T^{(n,K)})} \abs{E(G_n(x))}^2 \leq 4\abs{E(G_n)}^2.\]
Moreover, by condition $(V)_{\nu}$ we have that,
\begin{equation}\label{eq:totaledgecardinality}
n^{-1}\abs{E(G_n)}\to\nu.
\end{equation}
 Hence, there exists $C'$ such that $\abs{E(G_n)}^2\leq C'n^2$. 
Therefore \eqref{eq:inmenso} is upper bounded by
\[C n^{5/2} \Delta^{(n,K)}_{G_n} \]
for some constant $C$. Summarizing our argument since~\eqref{end_of_the_world}, we find that
\begin{equation}\label{eq:pe}
P^{G_n}\left[n^{-3/2}\abs{ A^{(n,K)}(m(t))-\hat{A}^{(n,K)}(m(t))}\geq \epsilon\right]\leq C n^{-1/2}\Delta^{(n,K)}_{G_n}
\end{equation}
for some constant $C$.
But, since the $G_n$ verifies condition $(S)$ , 
\[\lim_{K\to\infty}\limsup_{n\to\infty}\mathbf{P}^{(K)}\left[n^{-1/2}\Delta^{(n,K)}_{G_n}\geq\epsilon\right]=0\] for all $\epsilon>0$. Therefore, by \eqref{eq:pe} we find that
\begin{equation}
\lim_{K\to\infty}\limsup_{n\to\infty}\mathbf{P}^{(K)}\left[n^{-3/2}\abs{A^{(n,K)}(m(t))-\hat{A}^{(n,K)}(m(t))}\geq \epsilon\right]=0,
\end{equation} 
and that finishes the proof of the lemma.
 \end{proof}

\subsubsection{Lemma~\ref{lem:commutetime}}
Recall the definition of $e^+(v),e^{-}(v)$ from the paragraph above \eqref{eq:sopracontrariar}.
For any $v\in V^\circ(\T^{(n,K)})$, let 
\[
p(v):=\frac{\reff^{G_n}(e^-(v))}{\reff^{G_n}(e^-(v))+\reff^{G_n}(e^+(v))} \quad \text{and}\quad q(v):=1-p(v).
\]
For the proof of Lemma \ref{lem:commutetime} we will make use of the following auxiliary result:
\begin{lemma}\label{lem:discretediscrete}
For all $t\geq0,\epsilon>0$, and $K>0$,
\begin{equation}\label{eq:fofitos1}
 \mathbf{P}^{(K)}\left[\sup_{v\in V^\circ(\T^{(n,K)})}d^{(n,K)}_{\text{res}}(e^+(v))\abs{ p(v)l^{(n,K,\text{vert})}_{m(t)}(v) -l^{(n,K,\leftrightarrow)}_t (e^+(v))}\geq \epsilon \right]\stackrel{n\to\infty}{\to}0,
\end{equation}
and
\begin{equation}\label{eq:fofitos2}
 \mathbf{P}^{(K)}\left[\sup_{v\in V^\circ(\T^{(n,K)})}d^{(n,K)}_{\text{res}}(e^-(v))\abs{ q(v)l^{(n,K,\text{vert})}_{m(t)}(v)-l^{(n,K,\leftrightarrow)}_t (e^-(v))}\geq \epsilon\right]\stackrel{n\to\infty}{\to}0.
\end{equation}
\end{lemma}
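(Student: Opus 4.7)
\textbf{Proof plan for Lemma \ref{lem:discretediscrete}.} The strategy is to derive both \eqref{eq:fofitos1} and \eqref{eq:fofitos2} directly from the uniform approximations of the discrete local times by the continuous local time $L^{(n,K)}$ that were proved in Proposition~\ref{l:convergenceoflocaltimes}, rather than by a direct Bernoulli/martingale argument at each vertex (which would require a delicate union bound over the very numerous vertices of $V^\circ(\T^{(n,K)})$). The algebraic identity that links the two scales is
\[
2\reff^{(n,K,\text{vert})}(v) \;=\; \frac{2\,d^{(n,K)}_{\text{res}}(e^-(v))\,d^{(n,K)}_{\text{res}}(e^+(v))}{d^{(n,K)}_{\text{res}}(e^-(v))+d^{(n,K)}_{\text{res}}(e^+(v))} \;=\; 2p(v)\,d^{(n,K)}_{\text{res}}(e^+(v))\;=\;2q(v)\,d^{(n,K)}_{\text{res}}(e^-(v)),
\]
which is immediate from the definitions of $p(v),q(v)$ and \eqref{eq:sopracontrariar}.

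I would prove \eqref{eq:fofitos1} as follows. For each $v\in V^\circ(\T^{(n,K)})$, pick a point $x_v$ lying on $e^+(v)$ (say its midpoint); then $d^{(n,K)}_{\text{res}}(x_v,v)\leq d^{(n,K)}_{\text{res}}(e^+(v))$, and by Lemma~\ref{lem_no_macro_res} the maximum over $e$ of $d^{(n,K)}_{\text{res}}(e)$ tends to $0$ in probability. Combining this with the modulus-of-continuity estimate \eqref{eq:modulusofcontinuityforltilde} established in the proof of Proposition~\ref{l:convergenceoflocaltimes} yields $\sup_v |L^{(n,K)}_t(x_v)-L^{(n,K)}_t(v)|\to 0$ in probability. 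Inserting this into \eqref{eq:convergenceofedgelocaltimes} (applied at $x=x_v$) gives
\[
\sup_{v\in V^\circ(\T^{(n,K)})} \abs{d^{(n,K)}_{\text{res}}(e^+(v))\,l^{(n,K)}_t(e^+(v))-L^{(n,K)}_t(v)} \;\longrightarrow\; 0
\]
in $\mathbf{P}^{(K)}$-probability, and subtracting \eqref{eq:convergenceofvertexlocaltimes} then produces
\[
\sup_{v\in V^\circ(\T^{(n,K)})} \abs{d^{(n,K)}_{\text{res}}(e^+(v))\,l^{(n,K)}_t(e^+(v)) - 2\reff^{(n,K,\text{vert})}(v)\,l^{(n,K,\text{vert})}_{m(t)}(v)} \;\longrightarrow\; 0.
\]
Using the identity above to rewrite $2\reff^{(n,K,\text{vert})}(v)=2p(v)d^{(n,K)}_{\text{res}}(e^+(v))$ and factoring out $d^{(n,K)}_{\text{res}}(e^+(v))$, this becomes
\[
\sup_{v} d^{(n,K)}_{\text{res}}(e^+(v)) \abs{l^{(n,K)}_t(e^+(v))-2p(v)\,l^{(n,K,\text{vert})}_{m(t)}(v)} \;\longrightarrow\; 0.
\]
Finally, since $l^{(n,K)}_t(e)=l^{(n,K,\rightarrow)}_t(e)+l^{(n,K,\leftarrow)}_t(e)=2 l^{(n,K,\leftrightarrow)}_t(e)+O(1)$ and the $O(1)$ term is absorbed in the prefactor $d^{(n,K)}_{\text{res}}(e^+(v))$ (which vanishes uniformly by Lemma~\ref{lem_no_macro_res}), we obtain \eqref{eq:fofitos1} after dividing by $2$.

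Display \eqref{eq:fofitos2} is proved by exactly the same scheme, using $e^-(v)$ in place of $e^+(v)$, a point $x_v$ on $e^-(v)$, and the identity $2\reff^{(n,K,\text{vert})}(v)=2q(v)d^{(n,K)}_{\text{res}}(e^-(v))$ in the final algebraic step. The main obstacle is precisely the uniformity in $v\in V^\circ(\T^{(n,K)})$: a direct estimate for each fixed $v$ (via Wald's identity for the Bernoulli exit variables at $v$) does not tensorize well because $|V^\circ(\T^{(n,K)})|\to\infty$. This is bypassed here by leveraging that Proposition~\ref{l:convergenceoflocaltimes} and its modulus-of-continuity byproduct \eqref{eq:modulusofcontinuityforltilde} already provide uniform-in-space control, so the entire argument reduces to the pointwise algebraic identity between $\reff^{(n,K,\text{vert})}(v)$ and $p(v),q(v)$.
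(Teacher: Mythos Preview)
Your proposal is correct and follows essentially the same approach as the paper's own proof: both pick the midpoint $x_v$ of $e^+(v)$, invoke \eqref{eq:convergenceofedgelocaltimes} and \eqref{eq:convergenceofvertexlocaltimes} from Proposition~\ref{l:convergenceoflocaltimes}, use Lemma~\ref{lem_no_macro_res} together with the spatial continuity of $L^{(n,K)}$ to pass from $x_v$ to $v$, then exploit the algebraic identity $2\reff^{(n,K,\text{vert})}(v)=2p(v)\,d^{(n,K)}_{\text{res}}(e^+(v))$ and finally replace $l^{(n,K)}_t$ by $2l^{(n,K,\leftrightarrow)}_t$ up to an $O(1)$ error absorbed by the vanishing prefactor. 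Your write-up is in fact slightly more explicit than the paper's in pinpointing \eqref{eq:modulusofcontinuityforltilde} as the source of the needed uniform continuity.
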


\begin{proof}[Proof of Lemma \ref{lem:discretediscrete}]
For all $v\in V^\circ(\T^{(n,K)})$, let $x_v^+$ be the midpoint of $e^+(v)$.
It follows from display \eqref{eq:convergenceofedgelocaltimes} in Proposition \ref{l:convergenceoflocaltimes}  that, for any $\epsilon>0$
\begin{equation}\label{eq:fresh2}
\lim_{n\to\infty}\mathbf{P}^{(K)}\left[\sup_{v\in V^\circ(\T^{(n,K)})}\abs{d^{(n,K)}_{\text{res}}(e^+(v))l^{(n,K)}_t(e^+(v))-L^{(n,K)}_t(x^+_v)}\geq \epsilon\right]=0.
\end{equation}
Also, from \eqref{eq:convergenceofvertexlocaltimes}, 
\begin{equation}\label{eq:fresh1}
\lim_{n\to\infty}\mathbf{P}^{(K)}\left[\sup_{v\in V^\circ(\T^{(n,K)})}\abs{2\reff^{(n,K,\text{vert})}(v)l_t^{(n,K,\text{vert})}(v)-L^{(n,K)}_t(v)}\geq \epsilon\right]=0,
\end{equation}
for all $\epsilon>0$.
By Lemma \ref{lem_no_macro_res} we get that 
\[
\lim_{n\to\infty}\mathbf{P}^{(K)}\left[\sup_{v\in V^\circ(\T^{(n,K)})} d^{(n,K)}_{\text{res}}(x^+_v,v)\geq \epsilon \right]=0.
\]
 Therefore, by the continuity of $L^{(n,K)}$,  
\begin{equation}\label{eq:fresh}
\lim_{n\to\infty}\mathbf{P}^{(K)}\left[\sup_{v\in V^\circ(\T^{(n,K)})}\abs{ L_t^{(n,K)}(x^+_v)-L_t^{(n,K)} (v)}\geq \epsilon\right]=0.
\end{equation}
By displays \eqref{eq:fresh2}, \eqref{eq:fresh1} and \eqref{eq:fresh}, we get that
\begin{equation}
\lim_{n\to\infty}\mathbf{P}^{(K)}\left[\sup_{v\in V^\circ(\T^{(n,K)})}\abs{2\reff^{(n,K,\text{vert})}(v)l_t^{(n,K,\text{vert})}(v)-d^{(n,K)}_{\text{res}}(e^+(v))l^{(n,K)}_t(e^+(v))}\geq \epsilon\right]=0,
\end{equation}
which is equivalent to
\begin{equation}\label{eq:fresh3}
\lim_{n\to\infty}\mathbf {P}^{(K)}\left[\sup_{v\in V^\circ(\T^{(n,K)})}\abs{d_{\text{res}}^{(n,K)}(e^+(v))\left(2p(v)l_t^{(n,K,\text{vert})}(v)-l^{(n,K)}_t(e^+(v))\right)}\geq \epsilon\right]=0.
\end{equation}
From the equation above and the fact that $l^{(n,K)}_t(e)$ equals either $2l^{(n,K,\leftrightarrow)}_t(e)$ or $2l^{(n,K,\leftrightarrow)}_t(e)-1$, we get  display \eqref{eq:fofitos1}. Display \eqref{eq:fofitos2} is proved similarly.
\end{proof}

\begin{proof}[Proof of Lemma \ref{lem:commutetime}]
For any $v\in V^\circ(\T^{(n,K)})$, let $X^{G_n,v,+}$ be a random walk on $G_n(e^+(v))$ started at $v$ (where $G_n(e^+(v))$ is defined in \eqref{eq:defofgn1} and \eqref{eq:defofgn2}). Let $v^+\in V^*(\T^{(n,K)})$ be the unique vertex such that $(v,v+1)\in E^\ast(\T^{(n,K)})$ and $v\prec v^+$. The uniqueness is guaranteed since $v\in V^\circ(\T^{(n,K)})$. Also, set $v^-$ as the unique vertex in $V^*(\T^{(n,K)})$ with $(v^-,v)\in \T^{(n,K)}$ and $v^-\prec v$.  Set
\begin{equation}
\tau^+(v):=\min\{m> 0: X^{G_n,v,+}_m=v^+ \}.
\end{equation}
Also, let $X^{G_n,v,-}$ be a random walk on $G_n(e^-(v))$ started at $v$ and
\begin{equation}
\tau^-(v):=\min\{m> 0: X^{G_n,v,-}_m=v^- \}.
\end{equation}

The first part of the proof consists in the proving that for $v\in V^\circ(\T^{(n,K)})$
\begin{equation}\label{eq:fkng}
E^{G_n}[\tau(v)]= p(v) E^{G_n} [\tau^+(v)] + q(v) E^{G_n}[\tau^-(v)]. 
\end{equation}

\emph{First part, proof of \eqref{eq:fkng}}:\\
In order to prove the previous equation, we will describe mathematically a decomposition of the excursions from $v$ which are depicted in Figure 8.
\begin{figure}
  \includegraphics[width=\linewidth]{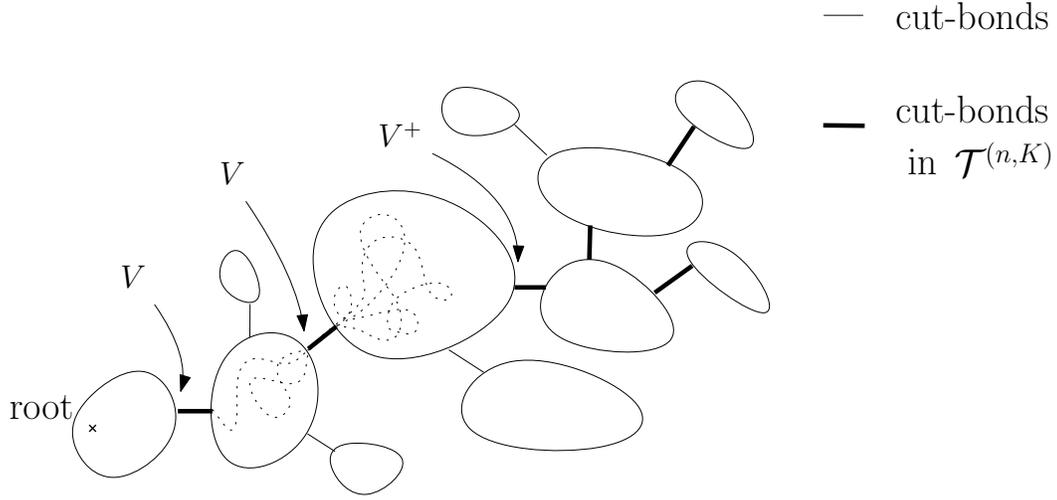}
  \caption{Excursions from $v$ before exiting the sausages adjacent to $v$}
\end{figure}

Let $G(v)$ be the number of times that $X^{G_n}$ returns to $v$ before hitting $\{v^-,v^+\}$. By the strong Markov property for $X^{G_n}$, it follows that $G(v)$ has a geometric distribution. Moreover, the strong Markov property allows to decompose $\tau(v)$ as 
\begin{equation}\label{eq:decompositionoffkng}
\tau(v)=\tau^{\text{out}}(v)+\sum_{i=1}^{G(v)} \tau_i^{\circlearrowleft}(v),
\end{equation}
where $(\tau_i^{\circlearrowleft}(v))_{i\in\N}$ is an i.i.d.~sequence of random variables having the distribution of $T^+(v)$ (defined in \eqref{eq:defofTT+}) conditioned on 
\[\{X^{G_n}_0=v\}\cap\left\{T^+(v)<T(\{v^+,v^-\})\right\}\]
and $\tau^{\text{out}}(v)$ has the distribution of $T( \{v^-,v^+\})$ conditioned on 
\[\{X^{G_n}_0=v\}\cap\left\{T(\{v^-,v^+\})<T^+(v)\right\}.\]
From \eqref{eq:decompositionoffkng} we get
\begin{equation}\label{eq:fkng1}
E^{G_n}[\tau(v)]=E^{G_n}[G(v)]E^{G_n}[\tau^{\circlearrowleft}_1(v)]+E^{G_n}[\tau^{\text{out}}(v)]. 
\end{equation}
We can write 
\begin{equation}\label{eq:fkng2}
\begin{aligned}
E^{G_n}[\tau^{\text{out}}(v)]=&p(v)E^{G_n}[\tau^{\text{out}}(v)\vert T(v^+)<T^+(v)]\\+&q(v)E^{G_n}[\tau^{\text{out}}(v)\vert T(v^-)<T^+(v)],
\end{aligned}
\end{equation}
Since $G(v)$ has a geometric distribution of parameter 
\[p=P^{G_n}_v[T(\{v^-,v^+\}) <T^+(v)]\] we get that
\begin{equation}
E^{G_n}[G(v)]=\frac{1}{p}-1=\frac{\pi(v)}{C_{\text{eff}}(v,\{v^-,v^+\})}-1,
\end{equation}
where $\pi(v)$ is the degree of $v$ in $G_n$ and $C_{\text{eff}}(v,\{v^-,v^+\})$ is the electrical conductance between $v$ and $\{v^-,v^+\}$.
Furthermore, we can decompose $E^{G_n}[\tau^{\circlearrowleft}_1(v)]$ as 
\begin{equation}\label{eq:fkng3}
\begin{aligned}
E^{G_n}[\tau^{\circlearrowleft}_1(v)]=&P^{G_n}_v[X^{G_n}_1\in G_n(e^+(v))\vert T^+(v)<T(\{v^-,v^+\}) ] E^{G_n}[\tau^{e,+}(v) ]\\+&P^{G_n}_v[X^{G_n}_1\in G_n(e^-(v))\vert T^+(v)<T(\{v^-,v^+\}) ] E^{G_n}[\tau^{e,-}(v) ],
\end{aligned}
\end{equation}
where $\tau^{e,+}(v)$ has the distribution of $\tau^{\circlearrowleft}_1(v)$ conditioned on $\{X^{G_n}_1\in G_n(e^+(v))\}$
and $\tau^{e,-}(v)$ has the distribution of $\tau^{\circlearrowleft}_1(v)$ conditioned on $\{X^{G_n}_1\in G_n(e^-(v))\}$.

Moreover, since the degree of $v$ in $G_n(e^+(v))$ is $1$,
\begin{equation}\label{eq:fkng3.3}
P^{G_n}_v[X^{G_n}_1 \in G_n(e^+(v))]=\frac{1}{\pi(v)}.
\end{equation}
Moreover
\begin{equation}\label{eq:fkng3.6}
\begin{aligned}
&P^{G_n}_v[T^+(v)<T(\{v^-,v^+\})\vert X^{G_n}_1 \in G_n(e^+(v))]\\
=&P^{G_n}_v[X^{G_n,v,+} \text{ returns to }v \text{ before hitting }v^+\text{ in } G_n(e^+(v))]\\=&1-C_{\text{eff}}(v,v^-),
\end{aligned}\end{equation}
where in the last equality we have used that the degree of $v$ in $G_n(e^+(v))$ is $1$.

On the other hand, 
\begin{equation}\label{eq:fkng4}
\begin{aligned}
&P^{G_n}_v[X^{G_n}_1 \in G_n(e^+(v))\vert T^+(v)<T(\{v^-,v^+\}) ]\\
=&\frac{P^{G_n}_v[X^{G_n}_1 \in G_n(e^+(v));T^+(v)<T(\{v^-,v^+\}) ]} {P^{G_n}_v[ T^+(v)<T(\{v^-,v^+\})]}\\
=&\frac{P^{G_n}_v[X^{G_n}_1 \in G_n(e^+(v))]P^{G_n}_v[T^+(v)<T(\{v^-,v^+\})\vert X^{G_n}_1 \in G_n(e^+(v))]}{P^{G_n}_v[ T^+(v)<T(\{v^-,v^+\})]} \\
= &\frac{1}{\pi(v)}(1-C_{\text{eff}}(v,v^-))\left(1-\frac{C_{\text{eff}}(v,\{v^-,v^+\})}{\pi(v)}\right)^{-1}\\
= & \frac{1-C_{\text{eff}}(v,v^-)}{\pi(v)-C_{\text{eff}}(v,\{v^-,v^+\})},
\end{aligned}
\end{equation}
where, in the third equality we have used \eqref{eq:fkng3.3} and \eqref{eq:fkng3.6}.
Similarly, using that the degree of $v$ in $G_n(e^-(v))$ is $\pi(v)-1$, we have \begin{equation}\label{eq:fkng5}
\begin{aligned}
&P^{G_n}_v[X^{G_n}_1\in G_n(e^-(v))\vert T^+(v)<T(\{v^-,v^+\}) ]\\
= &\frac{\pi(v)-1}{\pi(v)}\left(1-\frac{C_{\text{eff}}(v,v^+)}{\pi(v)-1}\right)\left(1-\frac{C_{\text{eff}}(v,\{v^-,v^+\})}{\pi(v)}\right)^{-1}\\
= & \frac{\pi(v)-1-C_{\text{eff}}(v,v^+)}{\pi(v)-C_{\text{eff}}(v,\{v^-,v^+\})}.
\end{aligned}
\end{equation}
 An elementary computation using \eqref{eq:fkng1}, \eqref{eq:fkng2}, \eqref{eq:fkng3} and \eqref{eq:fkng5} yields
\begin{equation}\label{eq:fkng10}
\begin{aligned}
E^{G_n}[\tau(v)]&=p(v)E^{G_n}_v[\tau^{\text{out}}(v)\vert T(v^+)<T^+(v)]\\&+q(v)E^{G_n}[\tau^{\text{out}}(v)\vert T(v^-)<T^+(v)]\\
&+\frac{1-C_{\text{eff}}(v,v^-)}{C_{\text{eff}}(v,\{v^-,v^+\}))}E^{G_n}[\tau^{e,+}(v) ]\\&+\frac{\pi(v)-1-C_{\text{eff}}(v,v^+)}{C_{\text{eff}}(v,\{v^-,v^+\})} E^{G_n}[\tau^{e,-}(v) ]
\end{aligned}
\end{equation}

We can perform a decomposition similar to \eqref{eq:decompositionoffkng} with $\tau^+(v)$ in place of $\tau(v)$: Let $G^+(v)$ be the number of returns of $X^{G_n,v,+}$ to $v$ before hitting $v^+$. Then we have that
\begin{equation}
\tau^+(v)=\tau^{+,\text{out}}(v)+\sum_{i=1}^{G^+(v)}\tau^{+,\circlearrowleft}_i(v),
\end{equation}
where $(\tau_i^{+,\circlearrowleft}(v))_{i\in\N}$ is an i.i.d.~sequence of random variables having the distribution of $\min\{j>0: X^{G_n,v,+}_j=v\}$ conditioned on
\[\left\{\min\{j>0: X^{G_n,v,+}_j=v\}<\min\{j>0: X^{G_n,v,+}_j=v^+\}\right\}.\]
and $\tau^{+,\text{out}}(x)$ has the distribution of $\min\{j>0: X^{G_n,v,+}_j=v^+\}$ conditioned on
\[\left\{\min\{j>0: X^{G_n,v,+}_j=v^+\}<\min\{j>0: X^{G_n,v,+}_j=v\}\right\}.\]
Hence, we have
\begin{equation}
E^{G_n}[\tau^+(v)]=E^{G_n}[\tau^{+,\text{out}}(v)]+E^{G_n}[G^+(v)]E^{G_n}[\tau^{+,\circlearrowleft}_1(v)].
\end{equation}
 Moreover $\tau^{+,\text{out}}(v)$ is distributed as $\tau^{\text{out}}(v)$ conditioned on $T(v^+)<T(v^-)$ and $\tau^{+,\circlearrowleft}_1$ is distributed as $\tau^{e,+}(v)$. Therefore
\begin{equation}\label{eq:fkng6}
E^{G_n}[\tau^+(v)]=E^{G_n}[\tau^{\text{out}}(v)\vert T(v^+)<T(v^-)]+E^{G_n}[G^+(v)]E^{G_n}[\tau^{e,+}(v)].
\end{equation}
An analogous decomposition for $\tau^-(v)$ yields
\begin{equation}\label{eq:fkng7}
E^{G_n}[\tau^-(v)]=E^{G_n}[\tau^{\text{out}}(v)\vert T(v^-)<T(v^+)]+E^{G_n}[G^-(v)]E^{G_n}[\tau^{e,-}(v)]
\end{equation}
where $G^-(v)$ is a geometric random variable of parameter $C_{\text{eff}}(v,v^-)/(\pi(v)-1)$.
Moreover
\begin{equation}\label{eq:fkng8}
E^{G_n}[G^+(v)]=\frac{1}{C_{\text{eff}}(v,v^-)}-1 \quad \text{ and } \quad E^{G_n}[G^-(v)]=\frac{\pi(v)-1}{C_{\text{eff}}(v,v^-)}-1.
\end{equation}
Also
\begin{equation}\label{eq:fkng9}
p(v)=\frac{C_{\text{eff}}(v,v^+)}{C_{\text{eff}}(v,\{v^+,v^-\})} \quad \text{ and } \quad q(v)=\frac{C_{\text{eff}}(v,v^-)}{C_{\text{eff}}(v,\{v^+,v^-\})}.
\end{equation}
An elementary computation using \eqref{eq:fkng10}, \eqref{eq:fkng6}, \eqref{eq:fkng7}, \eqref{eq:fkng8} and \eqref{eq:fkng9} yields \eqref{eq:fkng}.

\emph{Second part, application of the commute time formula:}\\
Let 
\[
\begin{aligned}
\bar{A}^{(n,K)}(m)=&\sum_{v\in  V^\circ(\T^{(n,K)})} p(v)l^{(n,K,\text{vert})}_m(v) E^{G_n}[\tau^+(v)]\\
+&\sum_{v\in  V^\circ(\T^{(n,K)})} q(v)l^{(n,K,\text{vert})}_m(v) E^{G_n}[\tau^-(v)]
\end{aligned}
\]
Recall the definition of $\hat{A}^{(n,K)}$ from \eqref{eq:defofhatank}. Using \eqref{eq:fkng} and displays (1) and (2) of Lemma \ref{lem:vcircle} we get
\begin{equation}\label{eq:mm}
\begin{aligned}
\lim_{K\to\infty}\limsup_{n\to\infty}\mathbf{P}\Bigg[&
\Bigg|\hat{A}^{(n,K)}(m(t))-\bar{A}^{(n,K)}(m(t))\Bigg|\geq \epsilon\Bigg]=0,
\end{aligned}
\end{equation}
for all $\epsilon>0$.
On the other hand, by virtue of the commute time formula, for any $e=(e^-,e^+)\in E^*(\T^{(n,K)})$ with $e^-\prec e+$, we have that
\[E^{G_n}[\tau^+(e^-)]+ E^{G_n}[\tau^-(e^+)]=2\reff^{G_n}(e) \abs{E(G_n(e))},\]
which allow us to rewrite $\tilde{A}^{(n,K)}(m)$ (which was introduce above Lemma~\ref{lem:aproximationofank})
\begin{equation}\label{eq:mmm}
\begin{aligned}
\tilde{A}^{(n,K)}(m(t))=&\sum_{e\in E^*(\T^{(n,K)})} l^{(n,K,\leftrightarrow)}_t(e) E^{G_n}[\tau^+(e^-)]\\
+&\sum_{e\in E^*(\T^{(n,K)})} l^{(n,K,\leftrightarrow)}_t(e) E^{G_n}[\tau^-(e^+)].
\end{aligned}
\end{equation}
Let us denote
\begin{equation}
\begin{aligned}
R^{(n,K)}(m)=&\sum_{e\in E^*(\T^{(n,K)})} \left(p(e^-)l^{(n,K,\text{vert})}_m(e^-) -l^{(n,K,\leftrightarrow)}_t(e)\right) E^{G_n}[\tau^+(e^-)]\\
+&\sum_{e\in E^*(\T^{(n,K)})} \left(q(e^+)l^{(n,K,\text{vert})}_m(e^+) -l^{(n,K,\leftrightarrow)}_t(e)\right) E^{G_n}[\tau^-(e^+)].
\end{aligned}
\end{equation}
Therefore, (using display (2) of Lemma \ref{lem:vcircle} to neglect the error term) we have that 
\begin{equation}\label{eq:antman}
\tilde{A}^{(n,K)}(m(t))-\bar{A}^{(n,K)}(m(t))=R^{(n,K)}(m(t))+o(n,K),
\end{equation}
where $o(n,K)$ is a term that satisfy 
\[\lim_{K\to\infty} \limsup_{n\to\infty}\mathbf{P}^{(K)}[o(n,K)n^{-3/2}\geq \epsilon]=0\] for all $\epsilon>0$.
Let us write $E^{G_n}[\tau^{\leftrightarrow}(e)]:=E^{G_n}[\tau^+(e^-)]+E^{G_n}[\tau^-(e^+)]$.
 Lemma \ref{lem:discretediscrete} and \eqref{eq:mm} imply that, for all $\epsilon>0$,
\begin{equation}
\mathbf{P}^{(K)}\left[\abs{R^{(n,K)}(m(t))}\geq \epsilon\sum_{e\in E^*(\T^{(n,K)})} \frac{E^{G_n} [\tau^{\leftrightarrow}(e)]}{d^{(n,K)}_{\text{res}}(e)}\right]\stackrel{n\to\infty}{\to}0,
\end{equation}
which is equivalent to
\begin{equation}\label{eq:gimmetheloot}
\mathbf{P}^{(K)}\left[\abs{R^{(n,K)}(m(t))}\geq \epsilon n^{1/2}\sum_{e\in E^*(\T^{(n,K)})} \frac{E^{G_n} [\tau^{\leftrightarrow}(e)]}{\reff^{(n,K)}(e)}\right]\stackrel{n\to\infty}{\to}0,
\end{equation}
for all $\epsilon>0$.
By the commute time formula we have that 
\[\frac{E^{G_n} [\tau^{\leftrightarrow}(e)]}{\reff^{(n,K)}(e)}=2\abs{E(G_n(e))},\]
Therefore, from display \eqref{eq:gimmetheloot} we get that
\[
\mathbf{P}^{(K)}\left[\abs{R^{(n,K)}(m(t))}\geq \epsilon 2 n^{1/2}\sum_{e\in E^*(\T^{(n,K)})} \abs{E(G_n(e))}\right]\stackrel{n\to\infty}{\to}0,
\]
for all $\epsilon>0$.
Moreover, by display \eqref{eq:totaledgecardinality} we get that there exists $C>0$ such that
\begin{equation}
\mathbf{P}^{(K)}\left[\abs{R^{(n,K)}(m(t))}\geq \epsilon C2 n^{3/2}\right]\stackrel{n\to\infty}{\to}0,
\end{equation}
for all $\epsilon>0$. That, together with \eqref{eq:mm} and \eqref{eq:antman} finishes the proof of the lemma.
\end{proof}

\subsubsection{Lemma~\ref{lem:hatankislinear}}

Before presenting the proof of Lemma \ref{lem:hatankislinear}, we will need two more preparatory lemmas.
Let 
\begin{equation}
A^{(K)}(t):=\int_{\frak{T}} L_t(x) \lambda^{(K)}_{\text{res}}(dx)
\end{equation}
and 
\begin{equation}
\tau^{(K)}(t):=\inf\{s\geq0: A^{(K)}(s)\geq t\}.
\end{equation}
It is not hard to verify that $(B^{(K)}_t)_{t\geq0}$ is distributed as $(B_{\tau^{(K)}(t)})_{t\geq0}$. This observation implies the following result whose proof is left to the reader.
\begin{lemma}\label{lem:klocaltime} For all $K>0$,
\[
L^{(K)}_t(x)=L_{\tau^{(K)}(t)}(x).
\] 
\end{lemma}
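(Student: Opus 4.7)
The plan is to verify that $L_{\tau^{(K)}(t)}(\cdot)$ satisfies the defining property of $L^{(K)}_t(\cdot)$ given in Definition~\ref{def:localtime}: for every Borel $A\subseteq \frak{T}^{(K)}$,
\[
\text{Leb}\{s\leq t: B^{(K)}_s\in A\}=\int_A L^{(K)}_t(x)\,\lambda^{(K)}_{\frak{T}}(dx).
\]
Since the local time is uniquely characterized by this identity (and, by the joint continuity in $t$ and $x$ recorded in Section~\ref{sect_KISE}, then pointwise), it suffices to establish the same equation with $L^{(K)}_t$ replaced by $L_{\tau^{(K)}(t)}$.

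First I would use the observation stated immediately before the lemma, that $(B^{(K)}_t)_{t\geq 0}$ is distributed as $(B^{CRT}_{\tau^{(K)}(t)})_{t\geq 0}$, and couple the two processes so the equality holds pathwise. The left-hand side above then reads
\[
\int_0^t 1_{\{B^{CRT}_{\tau^{(K)}(s)}\in A\}}\,ds.
\]
Next I would apply the Stieltjes change of variable $u=\tau^{(K)}(s)$, $s=A^{(K)}(u)$, which is legitimate since $A^{(K)}$ is continuous and non-decreasing and $\tau^{(K)}$ is its right-continuous inverse, to rewrite this quantity as
\[
\int_0^{\tau^{(K)}(t)} 1_{\{B^{CRT}_u\in A\}}\,dA^{(K)}(u).
\]

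The key step is then to unfold $A^{(K)}(u)=\int_{\frak{T}^{(K)}} L_u(y)\,\lambda^{(K)}_{\frak{T}}(dy)$ and invoke Fubini to obtain $dA^{(K)}(u)=\int_{\frak{T}^{(K)}} dL_u(y)\,\lambda^{(K)}_{\frak{T}}(dy)$. At this point I would exploit the classical support property of the local time: for each fixed $y$, the random measure $dL_u(y)$ is carried by $\{u:B^{CRT}_u=y\}$, so $1_{\{B^{CRT}_u\in A\}}\,dL_u(y)=1_{\{y\in A\}}\,dL_u(y)$. Interchanging the order of integration yields
\[
\int_{\frak{T}^{(K)}} 1_{\{y\in A\}}\,L_{\tau^{(K)}(t)}(y)\,\lambda^{(K)}_{\frak{T}}(dy)=\int_A L_{\tau^{(K)}(t)}(y)\,\lambda^{(K)}_{\frak{T}}(dy),
\]
which is the identity we needed.

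The main technical obstacle is the careful justification of the Stieltjes change of variable together with the Fubini/occupation manipulations of $dL_u(y)$, since $u\mapsto L_u(y)$ is only continuous (not absolutely continuous) in $u$. The joint continuity of $(u,y)\mapsto L_u(y)$ makes these manipulations standard, but they have to be spelled out to close any measure-theoretic gap. Once this is done, uniqueness of the density of the occupation measure with respect to $\lambda^{(K)}_{\frak{T}}$ yields the pointwise identity $L^{(K)}_t(x)=L_{\tau^{(K)}(t)}(x)$ for all $x\in \frak{T}^{(K)}$.
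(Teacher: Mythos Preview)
Your proposal is correct and is precisely the argument the paper has in mind: the paper states that the lemma follows from the time-change identity $(B^{(K)}_t)_{t\geq 0}=(B^{CRT}_{\tau^{(K)}(t)})_{t\geq 0}$ and explicitly leaves the proof to the reader. Your occupation-time computation via the Stieltjes change of variable and the support property of $dL_u(y)$ is the standard way to fill in these details, and the technical points you flag (justifying the change of variable with a non-strictly-increasing $A^{(K)}$, and the Fubini step) are routine once the joint continuity of $L$ is in hand.
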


The proof of the first display of the following lemma is display $(26)$ in \cite{Croydon_crt}. The second display follows directly from the first one.
\begin{lemma}\label{lem:conttimechange} For all $\epsilon>0$, we have that
\[
\lim_{K\to\infty} \mathbf{P}^{(K)}\left[\abs{A^{(K)}(t)-t}\geq \epsilon \right]=0
\]
and
\[
\lim_{K\to\infty} \mathbf{P}^{(K)} \left[\abs{\tau^{(K)}(t)-t}\geq\epsilon\right]=0.
\]
\end{lemma}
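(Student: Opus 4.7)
The plan has two parts: the first display is essentially the content of display $(26)$ in \cite{Croydon_crt}, which I would cite directly, while the second display follows from the first by a standard inversion argument.

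For the first display, I would begin by observing that applying Definition~\ref{def:localtime} to $B^{CRT}$ on $(\mathfrak{T},d_{\mathfrak{T}},\mu^{\mathfrak{T}})$ with $A=\mathfrak{T}$ yields the identity $t=\int_{\mathfrak{T}} L_t(x)\,\mu^{\mathfrak{T}}(dx)$. Thus the claim $A^{(K)}(t)\to t$ in probability reduces to showing
\[
\int_{\mathfrak{T}} L_t(x)\,\lambda^{(K)}_{\mathfrak{T}}(dx)\longrightarrow \int_{\mathfrak{T}} L_t(x)\,\mu^{\mathfrak{T}}(dx),\qquad K\to\infty.
\]
Since $x\mapsto L_t(x)$ is a.s.~continuous and bounded on the compact real tree $\mathfrak{T}$ (cf.~Lemma~3.3 of \cite{Croydon_crt}), this convergence in turn follows from the weak convergence $\lambda^{(K)}_{\mathfrak{T}}\Rightarrow\mu^{\mathfrak{T}}$ of the normalized length measure on the $K$-skeleton to the mass measure on $\mathfrak{T}$. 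The latter rests on the fact that the spanning points $(U_i)_{i\in\N}$ are i.i.d.~$\mu^{\mathfrak{T}}$-distributed and on the compactness of $\mathfrak{T}$: a covering argument identical to the one used in the proof of Lemma~\ref{epsnet0} yields $\sup_{x\in\mathfrak{T}} d_{\mathfrak{T}}(x,\mathfrak{T}^{(K)})\to 0$ a.s., which, combined with the explicit construction of $\lambda^{(K)}_{\mathfrak{T}}$ from uniform points along $\mathfrak{T}^{(K)}$, gives the required weak convergence. All of this is precisely what display $(26)$ in \cite{Croydon_crt} asserts, so I would simply invoke it.

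For the second display, I would use that $s\mapsto A^{(K)}(s)$ is non-decreasing (immediate from $L_t(x)\geq 0$) and continuous (from joint continuity of $(t,x)\mapsto L_t(x)$). Fix $\epsilon>0$. The inclusions
\[
\{\tau^{(K)}(t)>t+\epsilon\}\subseteq\{A^{(K)}(t+\epsilon)<t\},\qquad \{\tau^{(K)}(t)<t-\epsilon\}\subseteq\{A^{(K)}(t-\epsilon)\geq t\}
\]
hold by the definition of $\tau^{(K)}$ and the monotonicity/continuity above. Applying the first display at the fixed times $t\pm\epsilon$ gives $A^{(K)}(t\pm\epsilon)\to t\pm\epsilon$ in $\mathbf{P}^{(K)}$-probability; in particular, both events on the right-hand side above have probability tending to $0$ as $K\to\infty$, which yields the second display by a union bound.

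The main obstacle is genuinely the first display, and it sits in \cite{Croydon_crt} rather than in the present paper. Its proof there relies on two nontrivial inputs: the (uniform) continuity of the local times of Brownian motion on the CRT, which requires Kolmogorov-type moment bounds for processes on real trees, and a quantitative statement that the $K$-skeleton becomes dense in $\mathfrak{T}$ strongly enough for its length measure to converge weakly to $\mu^{\mathfrak{T}}$. Once those two facts are taken as given, the manipulations above are routine.
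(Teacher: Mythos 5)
Your proposal is correct and follows essentially the same route as the paper, which likewise obtains the first display by citing display $(26)$ of \cite{Croydon_crt} and notes that the second display follows directly from the first via the monotone-inverse argument you spell out. The extra detail you supply (the inclusions $\{\tau^{(K)}(t)>t+\epsilon\}\subseteq\{A^{(K)}(t+\epsilon)<t\}$ and $\{\tau^{(K)}(t)<t-\epsilon\}\subseteq\{A^{(K)}(t-\epsilon)\geq t\}$, plus the sketch of why the normalized length measure on the $K$-skeleton converges to the mass measure) is consistent with what the cited reference provides.
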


Let
\[D(n,K,\delta):=\{(x,e)\in \frak{T}^{(n,K)}\times E(\T^{(n,K)}): d^{(n,K)}_{\text{res}}(x,e)\leq \delta \},\]
where
\[
d^{(n,K)}_{\text{res}}(x,e):=d^{(n,K)}_{\text{res}}(x,e_+)\vee d^{(n,K)}_{\text{res}}(x,e_-) 
\]
and $e_+,e_-$ are the endpoints of $e$ $(e=(e_-,e_+))$.

\begin{lemma}\label{lem:gluethetimes}
\[
\lim_{\delta\to 0}\limsup_{K\to\infty}\limsup_{n\to\infty}\mathbf{P}^{(K)}\left[\sup_{(x,e)\in D(n,K,\delta)} \abs{d_{\text{res}}^{(n,K)}(e)l^{(n,K,\leftrightarrow)}_t(e)-L^{(n,K)}_t(x)} \geq \epsilon \right]=0
\]
\begin{proof}[Proof of Lemma \ref{lem:gluethetimes}]
By Lemma \ref{lem:klocaltime} and  Lemma~\ref{lem:conttimechange}, we get that
\begin{equation}\label{eq:titotitanium}
\lim_{K\to\infty}\mathbf{P}^{(K)}\left[\sup_{x\in\frak{T}}\abs{L_t(x)-L^{(K)}_t(x)} \geq \epsilon \right]=0,
\end{equation}
for all $\epsilon>0$.

Let \[\tilde{L}_t(x):=\sigma_d\rho L_{(\sigma_d\rho)^{-1}t}(x).\]
Then we have
\begin{equation}
\begin{aligned}
&\mathbf{P}^{(K)}\left[\sup_{(x,e)\in D(n,K,\delta)} \abs{d^{(n,K)}_{\text{res}}(e)l^{(n,K,\leftrightarrow)}_t(e)-L^{(n,K)}_t(x)} \geq \epsilon \right]\\
\leq &\mathbf{P}^{(K)}\left[\sup_{e\in E(\T^{(n,K)})} \abs{d^{(n,K)}_{\text{res}}(e)l^{(n,K,\leftrightarrow)}_t(e)-\tilde{L}_t(\Upsilon^{-1}_{n,K}(x_e))} \geq \epsilon/3 \right]\\
+&\mathbf{P}^{(K)}\left[\sup_{x,y:d^{(n,K)}_{\text{res}}(x,y)\leq \delta} \abs{\tilde{L}_t(\Upsilon_{n,K}^{-1}(x))-\tilde{L}_t(\Upsilon_{n,K}^{-1}(y))} \geq \epsilon/3 \right]\\
+&\mathbf{P}^{(K)}\left[\sup_{x\in \frak{T}^{(n,K)}} \abs{\tilde{L}_t(\Upsilon_{n,K}^{-1}(x))-L^{(n,K)}_t(x)} \geq \epsilon/3 \right],
\end{aligned} 
\end{equation}
where we have used that if $(x,e)\in D(n,K,\delta)$, then $d^{(n,K)}_{\text{res}}(x_e,x)\leq\delta$.
By the third display in Proposition \ref{l:convergenceoflocaltimes}, \begin{equation}\label{eq:tito1}
\lim_{n\to\infty}\mathbf{P}^{(K)}\left[\sup_{e\in E(\T^{(n,K)})} \abs{d^{(n,K)}_{\text{res}}(e)l^{(n,K,\leftrightarrow)}_t(e)-\tilde{L}_t(\Upsilon^{-1}_{n,K}(x_e))} \geq \epsilon/3 \right]=0.
\end{equation}
Moreover, since $\tilde{L}$ is continuous in the space variable and the Lipschitz norm of $\Upsilon^{-1}_{n,K}$ converges to $(\sigma_d\rho)^{-1}$ as $n\to\infty$ (see Lemma \ref{lem:graphspatialresistancetreeconvergence}), we have that
\begin{equation}\label{eq:tito2}
\lim_{\delta\to0}\mathbf{P}^{(K)}\left[\sup_{x,y:d^{(n,K)}_{\text{res}}(x,y)\leq \delta} \abs{\tilde{L}_t(\Upsilon^{-1}_{n,K}(x))-\tilde{L}_t(\Upsilon^{-1}_{n,K}(y))} \geq \epsilon/3 \right]=0.
\end{equation}
Finally, by Lemma \ref{lem:abstractcoupling} and \eqref{eq:titotitanium}  we get that
 \begin{equation}\label{eq:tito3}
\lim_{K\to\infty}\limsup_{n\to\infty}\mathbf{P}^{(K)}\left[\sup_{x\in \frak{T}^{(n,K)}} \abs{\tilde{L}_t(\Upsilon^{-1}_{n,K}(x))-L^{(n,K)}_t(x)} \geq \epsilon/3 \right]=0.
 \end{equation}
 The proof follows from displays \eqref{eq:titotitanium}, \eqref{eq:tito1}, \eqref{eq:tito2} and \eqref{eq:tito3}.
\end{proof}
\end{lemma}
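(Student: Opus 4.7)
The plan is to prove the claim by a three-term triangle inequality, introducing the continuous-time local time $\tilde L_t(x) := \sigma_d\rho\, L_{(\sigma_d\rho)^{-1}t}(x)$ on the CRT $\mathfrak{T}$ (where $L$ is the local time of $B^{CRT}$ from Proposition~\ref{propdef_BISE}) as an intermediary between the discrete quantity on the left and $L^{(n,K)}_t(x)$ on the right. For $(x,e)\in D(n,K,\delta)$, the midpoint $x_e$ of $e$ satisfies $d^{(n,K)}_{\text{res}}(x,x_e)\leq \delta$, so I would split
\begin{align*}
&\abs{d^{(n,K)}_{\text{res}}(e)l^{(n,K,\leftrightarrow)}_t(e)-L^{(n,K)}_t(x)} \\
&\qquad \leq \abs{d^{(n,K)}_{\text{res}}(e)l^{(n,K,\leftrightarrow)}_t(e)-\tilde L_t(\Upsilon^{-1}_{n,K}(x_e))} \\
&\qquad\quad + \abs{\tilde L_t(\Upsilon^{-1}_{n,K}(x_e))-\tilde L_t(\Upsilon^{-1}_{n,K}(x))} \\
&\qquad\quad + \abs{\tilde L_t(\Upsilon^{-1}_{n,K}(x))-L^{(n,K)}_t(x)},
\end{align*}
and handle each of the three suprema separately, passing first $n\to\infty$, then $K\to\infty$, then $\delta\to 0$.

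The first supremum (over all $e\in E(\T^{(n,K)})$) vanishes in probability as $n\to\infty$ for each fixed $K$, as a direct consequence of the third display \eqref{eq:conv3} in Proposition~\ref{l:convergenceoflocaltimes}, after identifying $l^{(n,K,\leftrightarrow)}_t(e)$ with $l^{(n,K)}_t(e)/2$ up to a deterministic $O(1)$ error whose contribution is killed by the prefactor $d^{(n,K)}_{\text{res}}(e)\to 0$ furnished by Lemma~\ref{lem_no_macro_res}. For the second supremum, Lemma~\ref{lem:graphspatialresistancetreeconvergence} ensures that the Lipschitz constant of $\Upsilon^{-1}_{n,K}:(\T^{(n,K)},d^{(n,K)}_{\text{res}})\to (\mathfrak{T}^{(K)},d_{\mathfrak{T}})$ converges to $(\sigma_d\rho)^{-1}$, so $d^{(n,K)}_{\text{res}}(x,x_e)\leq \delta$ implies $d_{\mathfrak{T}}(\Upsilon^{-1}_{n,K}(x),\Upsilon^{-1}_{n,K}(x_e))\leq C\delta$ for $n$ large. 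Since $\tilde L_t$ inherits almost-sure space continuity from the joint continuity of $L$ on the compact real tree $\mathfrak{T}$, the modulus of continuity of $\tilde L_t$ on $\mathfrak{T}$ drives this term to $0$ as $\delta\to 0$, uniformly in $(x,e)\in D(n,K,\delta)$.

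The third supremum uses the nested limits in $n$ and $K$ together. For fixed $K$, Lemma~\ref{lem:abstractcoupling} provides uniform convergence in probability of $L^{(n,K)}_t(\Upsilon_{n,K}(y))$ to $\sigma_d\rho L^{(K)}_{(\sigma_d\rho)^{-1}t}(y)$ over $y\in \mathfrak{T}^{(K)}$; hence after $n\to\infty$ the third term reduces to $\sup_{y\in \mathfrak{T}^{(K)}} \sigma_d\rho\,|L^{(K)}_s(y)-L_s(y)|$ at the scaled time $s=(\sigma_d\rho)^{-1}t$. I would then invoke Lemma~\ref{lem:klocaltime}, identifying $L^{(K)}_s(y)=L_{\tau^{(K)}(s)}(y)$, together with Lemma~\ref{lem:conttimechange}, which gives $\tau^{(K)}(s)\to s$ in probability, and exploit the joint continuity of $L$ on the compact set $\mathfrak{T}\times[0,R]$ to convert the temporal convergence $\tau^{(K)}(s)\to s$ into uniform convergence in $y\in\mathfrak{T}^{(K)}\subseteq \mathfrak{T}$ as $K\to\infty$.

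The main obstacle is the last step: passing from the one-time convergence $L^{(K)}_s(y)\to L_s(y)$ for each $y$ to a uniform estimate over the varying subtree $\mathfrak{T}^{(K)}$. This is defused by the fact that a modulus of continuity for $L_\cdot(\cdot)$ on the compact object $\mathfrak{T}\times[0,R]$ is available as a single almost-sure random quantity independent of $K$; once $|\tau^{(K)}(s)-s|$ is made small with high probability, the uniform control over $\mathfrak{T}^{(K)}$ follows at no extra cost. The rest of the argument is essentially bookkeeping of the three limits $n\to\infty$, $K\to\infty$, $\delta\to 0$ in the correct order.
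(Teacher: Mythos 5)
Your proposal follows essentially the same route as the paper's proof: the identical three-term triangle inequality through the intermediary $\tilde L_t(x)=\sigma_d\rho L_{(\sigma_d\rho)^{-1}t}(x)$, with the first term handled by display \eqref{eq:conv3} of Proposition~\ref{l:convergenceoflocaltimes}, the second by the continuity of $\tilde L$ and the Lipschitz convergence of $\Upsilon_{n,K}^{-1}$ from Lemma~\ref{lem:graphspatialresistancetreeconvergence}, and the third by Lemma~\ref{lem:abstractcoupling} combined with Lemmas~\ref{lem:klocaltime} and~\ref{lem:conttimechange}. Your added remarks on the $l^{(n,K,\leftrightarrow)}$ versus $l^{(n,K)}$ bookkeeping and on upgrading $\tau^{(K)}(s)\to s$ to uniform control via the joint continuity of $L$ are just slightly more explicit versions of steps the paper leaves implicit.
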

\begin{proof}[Proof of Lemma \ref{lem:hatankislinear}]

By the proof of Lemma \ref{epsnet0},  for each $\delta>0$ there exists a finite covering of $\frak{T}$ by balls of radius $\delta$. Moreover, by the tree-geometry of $\frak{T}$, it is easy to note that each closed connected set in $\frak{T}$ has a point which is the closest to the root. Let $D_i^{\delta}, i=1,\dots,n(\delta)$ be a delta covering of $\frak{T}$ and $x^{\delta}_i$ be the point in the closure of $D_i^{\delta}$ which is the closest to the root. Obviously $D_i^{\delta}, i=1,\dots,n(\delta)$ will also be a $\delta$-covering of $\frak{T}^{(K)}$, for all $K\geq0$ and, furthermore, if $D_i^\delta\cap \frak{T}^{(K)}\neq \emptyset$, then $x^\delta_i\in\frak{T}^{(K)}$. Let $I_i\subset \frak{T}^{(n,K)}$ be the image of $D_i^\delta \cap \frak{T}^{(K)}$ under the homeomorphism $\Upsilon_{n,K}$, where $\Upsilon_{n,K}$ is as in \eqref{eq:defofupsilon}. Since the Lipschitz norm of $\Upsilon_{n,K}$ converges to $\sigma_d\rho$ as $n\to\infty$ (see Lemma \ref{lem:graphspatialresistancetreeconvergence}), we have that there exists $C$ such that
\begin{equation}\label{eq:nomama}
\sup_{i=1,\dots n(\delta)} \sup_{x,y\in I_i} d^{(n,K)}_{\text{res}}(x,y)\leq C\delta.
\end{equation}
Moreover, by Lemma \ref{lem_no_macro_res} we can redefine the sets $I_i$ such that each edge $e\in E^\ast(\T^{(n,K)})$ is completely contained in one of the $I_i$ (i.e., each $I_i$ will be a union of edges of $\T^{(n,K)}$), $\cup_{i=1}^{n(\delta)} I_i=\frak{T}^{(n,K)}\backslash \tilde{I}$ (where $\tilde{I_i}$ is as in display (3) of Lemma \ref{lem:vcircle}) and display \eqref{eq:nomama} holds for $n$ large enough.
 To start the proof we recall that, since $L^{(n,K)}$ is the local time of $B^{(n,K)}$ with respect to $\lambda^{(n,K)}_{\text{res}}$, we have
\begin{equation}\label{eq:robertocarlos2}
\int_{\T^{(n,K)}} L^{(n,K)}_t(x) \lambda^{(n,K)}_{\text{res}}(dx)=t.
\end{equation}
Recalling display (3) of Lemma \ref{lem:vcircle} we get that
\begin{equation}
\lim_{n\to\infty}\mathbf{P}^{(K)}\left[ \abs{\int_{\T^{(n,K)}\backslash\tilde{I}} L^{(n,K)}_t(x) \lambda^{(n,K)}_{\text{res}}(dx)-t}\geq \epsilon\right]=0.
\end{equation}
We can regard the measure $\mu^{(G_n,K)}$ as defined over the edges of $\T^{(n,K)}$ instead of the vertices. For any $e=(v_1,v_2)\in E^\ast(\T^{(n,K)})$ with $v_1\prec v_2$, we let $\mu^{(n,K)}(e):=\mu^{(n,K)}(v_1)$.  
Therefore, to prove the lemma it is enough to show that
\begin{equation}
\lim_{K\to\infty}\limsup_{n\to\infty} \mathbf{P}^{(K)}\left[\abs{n^{-3/2}\tilde{A}^{n,K}(t)-\nu \int_{\T^{(n,K)}\backslash\tilde{I}} L^{(n,K)}_t(x) \lambda^{(n,K)}_{\text{res}}(dx)} \geq \epsilon \right]=0,
\end{equation}
for all $\epsilon>0$.
Using \eqref{eq:robertocarlos2} and recalling that $\mu^{(n,K)}$ is the edge cardinality scaled by a factor $n^{-1}$, we can write
\begin{align*}
&\abs{n^{-3/2}\tilde{A}^{n,K}(t)-\nu \int_{\T^{(n,K)}\backslash\tilde{I}} L^{(n,K)}_t(x) \lambda^{(n,K)}_{\text{res}}(dx) }\\
\leq&\sum_{i=1}^{n(\delta)}\abs{\int_{I_i}n^{-1/2}\reff^{G_n}(e)l^{(n,K,\leftrightarrow)}_t(e)\mu^{(n,K)}(de)-\nu\int_{I_i}L^{(n,K)}_t(x)\lambda^{(n,K)}_{\text{res}}(dx)}\\
=&\sum_{i=1}^{n(\delta)}\abs{\left[\int_{I_i}n^{-1/2}\reff^{G_n}(e)l^{(n,K,\leftrightarrow)}_t(e)\mu^{(n,K)}(de)\right]-\nu L^{(n,K)}_t(x_i)\lambda^{(n,K)}_{\text{res}}(I_i)},
\end{align*}
where in the last equality we have used the mean value theorem for integrals and $x_i$ is some point in $I_i$. The last display is bounded above by
\begin{equation}\label{eq:plop}
\begin{aligned}
 &\sum_{i=1}^{n(\delta)} \int_{I_i}\abs{ n^{-1/2}\reff^{G_n}(e)l^{(n,K,\leftrightarrow)}_t(e)- L_t^{(n,K)}(x_i) }\mu^{(n,K)}(de)\\
+&\sum_{i=1}^{n(\delta)}L^{(n,K)}_t(x_i)\abs{\nu \lambda^{(n,K)}_{\text{res}}(I_i)-\mu^{(n,K)}(I_i)}.
\end{aligned}
\end{equation}
By Lemma \ref{lem:gluethetimes}, and \eqref{eq:nomama}  for all $\eta> 0$, we can choose $\delta$ such that
\begin{equation}
\lim_{K\to\infty}\limsup_{n\to\infty}\mathbf{P}^{(K)}\left[\sup_{i=1,\dots ,n(\delta)}\sup_{e\in I_i}\abs{n^{-1/2}\reff^{G_n}(e)l^{(n,K,\leftrightarrow)}_t(e)- L_t^{(n,K)}(x_i)}\leq \epsilon/2\right]\leq \eta.
\end{equation}

 Therefore, using that the total mass of $\mu^{(n,K)}$ is smaller than one, we get that 
 \begin{equation}\label{eq:chaneca}
 \lim_{K\to\infty}\limsup_{n\to\infty}\mathbf{P}^{(K)}\left[\sum_{i=1}^{n(\delta)} \int_{I_i}\abs{ n^{-1/2}\reff^{G_n}(e)l^{(n,K,\leftrightarrow)}_t(e)- L_t^{(n,K)}(x_i) }\mu^{(n,K)}(de)\leq \epsilon/2\right]\leq \eta.
 \end{equation}
On the other hand, by condition $(V)_{\nu}$ we have that, letting $z^{\delta}_i$ be the image of $x^\delta_i$ under $\Upsilon_{n,K}$, we have that 
\[
\lim_{K\to\infty}\limsup_{n\to\infty}\mathbf{P}^{(K)}\left[\abs{\mu^{(n,K)}\left(\overrightarrow{\T^{(n,K)}_{z^{\delta}_i} } \right) - \nu \lambda^{(n,K)}\left(\overrightarrow{\T^{(n,K)}_{z^{\delta}_i} } \right)}\geq \epsilon/4 \right] =0
\]
for all $i=1,\dots,n(\delta)$, $\epsilon>0$.   
Moreover, it can be deduced from \eqref{eq:couplinggeometry} and Lemma \ref{lem:graphspatialresistancetreeconvergence} that
\[
\lim_{K\to\infty}\limsup_{n\to\infty}\mathbf{P}^{(K)}\left[\abs{\lambda^{(n,K)}\left(\overrightarrow{\T^{(n,K)}_{z^{\delta}_i} } \right) - \lambda^{(n,K)}_{\text{res}}\left(\overrightarrow{\T^{(n,K)}_{z^{\delta}_i} } \right)}\geq \epsilon/4 \right] =0
\]
for all $i=1,\dots,n(\delta)$, $\epsilon>0$, where we used condition $(R)$ to say that the distance and the resistance distance are proportional which implies that the normalized measures $\lambda^{(n,K)}$ and $\lambda^{(n,K)}_{\text{res}}$.

Therefore, from the two displays above,
\begin{equation}\label{eq:nomorelabelsplease}
\lim_{K\to\infty}\limsup_{n\to\infty}\mathbf{P}^{(K)}\left[\abs{\mu^{(n,K)}\left(\overrightarrow{\T^{(n,K)}_{z^{\delta}_i} } \right) - \nu \lambda^{(n,K)}_{\text{res}}\left(\overrightarrow{\T^{(n,K)}_{z^{\delta}_i} } \right)}\geq \epsilon/2 \right] =0
\end{equation}
for all $i=1,\dots,n(\delta)$, $\epsilon>0$.   
Note that each $I_i$ can be expressed as
\begin{equation}\label{eq:morelabels!}
I_i=\overrightarrow{\T^{(n,K)}_{x} }\backslash\left(\cap_{i=1}^k \overrightarrow{\T^{(n,K)}_{x_i} } \right)
\end{equation}
for some $k$ and $x,x_i\in \frak{T}^{(n,K)}$. Since, from \eqref{eq:nomorelabelsplease}, we can control the difference between $\mu^{(n,K)}$ and $\lambda^{(n,K)}_{\text{res}}$-measures for the sets of the form $\overrightarrow{\T^{(n,K)}_{x} }, x\in\frak{T}^{(n,K)}$ and by display \eqref{eq:morelabels!} the sets $I_i$ can be expressed as (finite) differences and intersections of sets of the said type, it follows that for each $\epsilon>0$,
\[\lim_{K\to\infty}\limsup_{n\to\infty}\mathbf{P}^{(K)}\left[\abs{\mu^{(n,K)}(I_i) - \nu \lambda^{(n,K)}_{\text{res}}(I_i))} \geq \epsilon /2\right] =0\]
for all $i=1,\dots,n(\delta)$. This, and the fact that $L^{(n,K)}_t$ is uniformly bounded in $K$ and $n$ gives that
\[\lim_{K\to\infty}\limsup_{n\to\infty}\mathbf{P}^{(K)}\left[\sum_{i=1}^{n(\delta)}L^{(n,K)}_t(x_i)\abs{\nu \lambda^{(n,K)}_{\text{res}}(I_i)-\mu^{(n,K)}(I_i)}\geq \epsilon/2 \right]=0.\]
The display above, together \eqref{eq:plop} and the fact that $\eta$ is arbitrary in \eqref{eq:chaneca}, proves the lemma. 
\end{proof}

\appendix

\section{Variance estimate on the commute time}

In this section, we shall prove a formula which leads to an upper-bound the second moment of the commute time in a general electrical network (see~\cite{lyons2005probability} for an introduction to this field).

Recall that in a finite electrical network $(G,c(\cdot))$ the invariant measure is given by $\pi(x)=\sum_{x\sim y} c(x,y)$. 

In this section we denote $T_x:=\min\{n \geq 0, X_n=x\}$, the hitting time of $x\in G$ for the random walk $(X_n)_{n\geq 0}$ naturally associated to the aforementioned electrical network.

\begin{proposition}\label{var_bound}
Let $(G,c(\cdot))$ be a finite connected electrical network. Fix two distinct vertices $x,y\in G$, then
\[
\frac{E_x[T_y^2]+E_y[T_x^2]}{(E_x[T_y]+E_y[T_x])^2}\leq 2 \frac{E_{\pi}\bigl[R_{\text{eff}}(x, \cdot)+R_{\text{eff}}(\cdot, y)\bigr]}{R_{\text{eff}}(x, y)},
\]
where
\[
E_{\pi}\bigl[R_{\text{eff}}(x, \cdot)+R_{\text{eff}}(\cdot, y)\bigr]:=\sum_{z\in G} \Bigl(\frac{\pi(z)}{\sum_{z\in G} \pi(z)}\Bigr)\bigl(R_{\text{eff}}(x, z)+R_{\text{eff}}(z, y)\bigr).
\]
\end{proposition}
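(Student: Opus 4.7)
The plan is to combine the Poisson-equation expansion of the second moment of $T_y$ with two standard electrical network identities: the commute-time identity $E_x[T_y]+E_y[T_x]=m(G)R_{\text{eff}}(x,y)$ (where $m(G):=\sum_z\pi(z)$), and the Green's function evaluation $G_y(x,x)=\pi(x)R_{\text{eff}}(x,y)$, where $G_y(z,w):=E_z[\#\{k<T_y:X_k=w\}]$ denotes the Green's function of the chain killed at $y$.

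First, setting $g(z):=E_z[T_y^2]$ and $f(z):=E_z[T_y]$, a one-step conditioning on $X_1$ (using $(T_y-1)\stackrel{d}{=}T_y$ under $P_{X_1}$ when $X_1\ne y$) shows that $g$ solves the Poisson equation $(I-P)g(z)=2f(z)-1$ on $z\ne y$, with boundary condition $g(y)=0$. Inverting via $G_y$ and using $\sum_w G_y(z,w)=f(z)$ yields
\begin{equation*}
E_x[T_y^2]=2\sum_w G_y(x,w)\,E_w[T_y]-E_x[T_y]\le 2\sum_w G_y(x,w)\,E_w[T_y].
\end{equation*}

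The key quantitative input will be the bound
\begin{equation*}
G_y(x,w)\le\pi(w)R_{\text{eff}}(x,y).
\end{equation*}
To derive it, I will invoke reversibility $\pi(x)G_y(x,w)=\pi(w)G_y(w,x)$, combine it with the excursion decomposition $G_y(w,x)=P_w[T_x<T_y]\cdot G_y(x,x)$ (visits to $x$ before $T_y$ starting from $w$ require first hitting $x$, after which the chain restarts at $x$), and use $G_y(x,x)=1/P_x[T_y<T_x^+]=\pi(x)R_{\text{eff}}(x,y)$. The inequality $P_w[T_x<T_y]\le 1$ then yields the claim. The important feature is that this reversibility-based bound produces a factor $R_{\text{eff}}(x,y)$ (rather than $R_{\text{eff}}(w,y)$), which is precisely what will make the averages come out correctly.

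Combining with the commute-time bound $E_w[T_y]\le E_w[T_y]+E_y[T_w]=m(G)R_{\text{eff}}(w,y)$ gives $E_x[T_y^2]\le 2\,m(G)\,R_{\text{eff}}(x,y)\sum_w\pi(w)R_{\text{eff}}(w,y)$, and analogously $E_y[T_x^2]\le 2\,m(G)\,R_{\text{eff}}(x,y)\sum_w\pi(w)R_{\text{eff}}(w,x)$. Summing and dividing by $(E_x[T_y]+E_y[T_x])^2=m(G)^2R_{\text{eff}}(x,y)^2$ produces exactly the stated bound. No step presents a real obstacle; the proof is a direct assembly of standard electrical network identities. The only subtle point is choosing the \emph{correct} way to exploit reversibility when bounding $G_y(x,w)$, so that the resulting factor $R_{\text{eff}}(x,y)$ pairs with the commute-time bound on $E_w[T_y]$ (supplying $R_{\text{eff}}(w,y)$) to assemble the combination $R_{\text{eff}}(x,w)+R_{\text{eff}}(w,y)$ inside the $\pi$-average after symmetrization.
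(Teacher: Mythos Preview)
Your proof is correct and takes a genuinely different route from the paper's.

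The paper decomposes $T_y=\sum_z N(z)$ into local-time contributions, bounds $E_x[N(z)^2]$ via the geometric-like structure of $N(z)$ (so that $E_x[N(z)^2]\le 2E_x[N(z)]\,\pi(z)R_{\text{eff}}(z,y)$), and then combines these through the $L^2$ triangle (Minkowski) inequality followed by Cauchy--Schwarz to obtain $E_x[T_y^2]\le 2E_x[T_y]\sum_z\pi(z)R_{\text{eff}}(z,y)$. You instead use the exact Poisson/Green identity $E_x[T_y^2]=2\sum_w G_y(x,w)E_w[T_y]-E_x[T_y]$, then bound each factor separately: $G_y(x,w)\le\pi(w)R_{\text{eff}}(x,y)$ via reversibility and $G_y(x,x)=\pi(x)R_{\text{eff}}(x,y)$, and $E_w[T_y]\le m(G)R_{\text{eff}}(w,y)$ via the commute-time formula. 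Both routes land on the same pair of inequalities (after the paper also crudely bounds $E_x[T_y]\le m(G)R_{\text{eff}}(x,y)$ at the end) and thus the same final statement. Your argument is arguably cleaner in that it avoids the Minkowski/Cauchy--Schwarz step entirely and works from an exact second-moment identity; the paper's local-time decomposition, on the other hand, generalizes more transparently to higher moments (as exploited in the subsequent fourth-moment bound).
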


By using the commute time formula $E_x[T_y]+E_y[T_x]=\Bigl(\sum_{z\in G} \pi(z)\Bigr)R_{\text{eff}}(x, y)$ (see~\cite{chandra1996electrical}) and noticing that, by Rayleigh's monotonicity principle, for any $z,z'\in G$ we have $R_{\text{eff}}(z,z')\leq \text{diam}(G)$ in any graph with unit conductances, we obtain the following corollary.
\begin{corollary}\label{variance_bound}
Let us consider a simple random walk on a finite connected graph $G$. Fix two distinct vertices $x,y\in G$, then
\[
E_x[T_y^2]+E_y[T_x^2]\leq 16 \abs{E(G)}^2\text{diam}(G) R_{\text{eff}}(x, y).
\]
\end{corollary}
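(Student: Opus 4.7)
The plan is to deduce the corollary directly from Proposition \ref{var_bound} together with two elementary ingredients already highlighted in the paragraph preceding the statement: the commute time formula and Rayleigh's monotonicity principle (both standard, see \cite{lyons2005probability,chandra1996electrical}).

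First, I would rewrite Proposition \ref{var_bound} in the form
\[
E_x[T_y^2]+E_y[T_x^2]\leq 2\,\bigl(E_x[T_y]+E_y[T_x]\bigr)^2\cdot\frac{E_{\pi}\bigl[R_{\text{eff}}(x,\cdot)+R_{\text{eff}}(\cdot,y)\bigr]}{R_{\text{eff}}(x,y)}.
\]
Then I would substitute the commute time formula: for a simple random walk on $G$ the invariant measure $\pi(\cdot)$ is the vertex degree, so $\sum_{z\in G}\pi(z)=2\abs{E(G)}$, which gives
\[
E_x[T_y]+E_y[T_x]=2\abs{E(G)}\,R_{\text{eff}}(x,y),
\]
and therefore $(E_x[T_y]+E_y[T_x])^2=4\abs{E(G)}^2 R_{\text{eff}}(x,y)^2$. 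One factor of $R_{\text{eff}}(x,y)$ will cancel against the $R_{\text{eff}}(x,y)$ appearing in the denominator in Proposition \ref{var_bound}, leaving a single power of $R_{\text{eff}}(x,y)$ on the right-hand side, which is what we want.

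Next I would handle the averaged resistance term. By Rayleigh's monotonicity principle, on a graph with unit conductances the effective resistance between any two vertices is bounded above by the graph distance between them, and hence by $\text{diam}(G)$. Consequently
\[
E_{\pi}\bigl[R_{\text{eff}}(x,\cdot)+R_{\text{eff}}(\cdot,y)\bigr]\leq 2\,\text{diam}(G).
\]
Combining this with the previous display yields
\[
E_x[T_y^2]+E_y[T_x^2]\leq 2\cdot 4\abs{E(G)}^2 R_{\text{eff}}(x,y)^2\cdot\frac{2\,\text{diam}(G)}{R_{\text{eff}}(x,y)}=16\abs{E(G)}^2\,\text{diam}(G)\,R_{\text{eff}}(x,y),
\]
which is exactly the claim. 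There is no genuine obstacle here: all the analytic work has been absorbed into Proposition \ref{var_bound}, and the corollary is a matter of (i) rearranging that inequality, (ii) invoking the commute time identity to convert the quadratic commute-time factor into $4\abs{E(G)}^2R_{\text{eff}}(x,y)^2$, and (iii) invoking Rayleigh monotonicity to produce the $\text{diam}(G)$ factor. The only small point to double-check is the numerical constant $16=2\cdot 4\cdot 2$, which comes from the product of the prefactor $2$ in Proposition \ref{var_bound}, the $4$ from $(2\abs{E(G)})^2$, and the $2$ from bounding $R_{\text{eff}}(x,\cdot)+R_{\text{eff}}(\cdot,y)$ by $2\,\text{diam}(G)$.
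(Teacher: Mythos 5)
Your proposal is correct and follows exactly the route the paper intends: the paper derives the corollary in the sentence preceding its statement by substituting the commute time formula (with $\sum_z\pi(z)=2\abs{E(G)}$) into Proposition~\ref{var_bound} and bounding each effective resistance by $\text{diam}(G)$ via Rayleigh's monotonicity principle. Your accounting of the constant $16=2\cdot 4\cdot 2$ matches the paper's bound.
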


Let us prove Proposition~\ref{var_bound}
\begin{proof}
First, we will estimate $E_x[T_y^2]$. In order to do this, we introduce for $z\in G$ the random variable $N(z)=\text{card}\{i\leq T_y,\ X_i=z\}$. Obviously, we have
\begin{equation}\label{var_com0}
T_y=\sum_{z\in G} N(z).
\end{equation}

This means that
\begin{equation} \label{var_com1}
E_x[T_y^2]=E_x\Bigl[\bigl(\sum_{z\in G} N(z)\bigr)^2\Bigr] \leq \Bigl(\sum_{z\in G} E_x\bigl[ N(z)^2\bigr]^{1/2}\Bigr)^2,
                    \end{equation}
                    by the $L^2$-triangular inequality.

Now, let us notice that for any $k\geq 1$
\begin{equation}\label{eq:Nisgeometric}
P_x[N(z)=k]=P_x[T_z<T_y](1-P_z[T_y<T_z^+])^{k-1}P_z[T_y<T_z^+].
\end{equation}

Then, an elementary computation shows that
\[
E_x[ N(z)]=\frac{P_x[T_z<T_y]}{P_z[T_y<T_z^+]},
\]
and
\begin{align}\label{var_com2}
E_x\bigl[ N(z)^2\bigr]=P_x[T_z<T_y] \frac{1+P_z[T_y<T_z^+]}{(P_z[T_y<T_z^+])^2} & \leq 2\frac{P_x[T_z<T_y]}{(P_z[T_y<T_z^+])^2}\\\nonumber &=2\frac{E_x[ N(z)]}{P_z[T_y<T_z]}.
\end{align}

We rewrite $(P_z[T_y<T_z^+])^{-1}=\pi(z)R_{\text{eff}}(z, y)$ and use~\eqref{var_com2} in~\eqref{var_com1} to see that
\begin{align*}
E_x[T_y^2] &\leq 2 \Bigl(\sum_{z\in G} \bigr(E_x[ N(z)]\pi(z)R_{\text{eff}}(z, y)\bigr)^{1/2}\Bigr)^2 \\
                     &\leq 2  \Bigl(\sum_{z\in G} E_x[ N(z)]\Bigr)\Bigl(\sum_{z\in G} \pi(z)R_{\text{eff}}(z, y)\Bigr) \\
                     &\leq 2 E_x[T_y] \Bigl(\sum_{z\in G} \pi(z)R_{\text{eff}}(z, y)\Bigr),
                     \end{align*}
where we used the Cauchy-Schwarz inequality and also~\eqref{var_com0}.

A similar formula can be obtained with $E_y[T_x^2]$. Combining those two formulas and using the  commute time formula (see~\cite{chandra1996electrical}), we obtain the result.
\end{proof}
\begin{proposition}\label{prop:finitemoments} Let $G$ be a finite, connected graph. For any $x,y\in G$ we have that
\[E_x[T_y^4]<\infty\]
\end{proposition}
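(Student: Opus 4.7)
The plan is to show that $T_y$ has exponential tails under $P_x$, from which all polynomial moments (and in particular the fourth) are immediately finite. Since $G$ is finite and connected, both its diameter $D:=\text{diam}(G)$ and its maximum degree $\Delta:=\max_{z\in G}\abs{\{w:w\sim z\}}$ are finite. For any vertex $z\in G$, connectivity gives a simple path $z=z_0,z_1,\ldots,z_k=y$ with $k\leq D$, and the probability that the simple random walk started at $z$ follows this specific path in its first $k$ steps is at least $\Delta^{-D}$. Hence, setting $\delta:=\Delta^{-D}>0$, we obtain the uniform lower bound
\[
\inf_{z\in G}P_z[T_y\leq D]\geq \delta.
\]

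Next, I would apply the strong Markov property at the times $D,2D,3D,\ldots$. On the event $\{T_y>jD\}$, the walk is at some vertex $X_{jD}\in G\setminus\{y\}$, and by the bound above, the conditional probability of hitting $y$ during the next $D$ steps is at least $\delta$. Iterating this yields
\[
P_x[T_y>kD]\leq (1-\delta)^k\qquad\text{for all }k\in\N.
\]
This exponential tail bound implies
\[
E_x[T_y^4]=\int_0^{\infty}4t^3 P_x[T_y>t]\,dt\leq C\sum_{k=0}^{\infty}(kD)^3(1-\delta)^k<\infty,
\]
and the proposition follows.

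There is no real obstacle here; the statement is essentially a standard consequence of the finiteness of $G$ and the uniform positive-probability lower bound on hitting $y$ within a bounded number of steps. The only thing to be mindful of is to phrase the uniform lower bound on $P_z[T_y\leq D]$ correctly, so that the Markov-property iteration goes through for every possible location of $X_{jD}$.
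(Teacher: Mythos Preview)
Your proof is correct; the exponential-tail argument via the uniform lower bound $\inf_z P_z[T_y\le D]\ge \Delta^{-D}$ and the Markov-property iteration is a standard and clean way to see that all moments of $T_y$ are finite.

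The paper takes a different route: it reuses the decomposition $T_y=\sum_{z\in G}N(z)$ from the proof of Proposition~\ref{var_bound}, applies the $L^4$ triangle inequality to get $E_x[T_y^4]\le\bigl(\sum_z E_x[N(z)^4]^{1/4}\bigr)^4$, and then observes from \eqref{eq:Nisgeometric} that each $N(z)$ is stochastically dominated by a geometric random variable, hence has finite fourth moment. Your approach is more self-contained and in fact yields exponential (not just polynomial) moments for $T_y$; the paper's approach has the virtue of recycling the local-time decomposition already in place, but gives no more than what is stated.
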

\begin{proof}[Proof of Proposition \ref{prop:finitemoments}]
Using the same reasoning and notation as in the proof of Proposition \ref{var_bound} we get that
\begin{align}
&E_x[T_y^4]=E_x\left[\left(\sum_{z\in G} N_z\right)^4\right].\\
&\leq \left(\sum_{z\in G} E_x\left[ N_z^4 \right]^{1/4}\right)^{4},
\end{align}
where we have used the $L^4$ triangular inequality.
Therefore, to prove the lemma it us enough to show that
\[ E_x\left[ N_z^4 \right]<\infty\]
for all $x,z\in G$. But the display above follows easily, since by equation \eqref{eq:Nisgeometric}, $N_z$ is stochastically dominated by a geometric random variable.
\end{proof}

\section{Fourth moment bound}
Let $(X_k)_{k\in\N}$ be a sequence of i.i.d.~sequence of random variables adapted to a filtration $(\mathcal{F}_k)_{k\in\N}$ and $\tau$ a stopping time with respect to  $(\mathcal{F}_k)_{k\in\N}$.  Denote $m_k:=E[X_1^k]$ and $\Psi(z):=E[\exp(z X_1)]$.
\begin{lemma}\label{lem:martingaleinequality} 
Assume that for all $z$ in a neighborhood of $0$ we have
 \begin{equation}\label{eq:optionalstopping}
 E\left[ \Psi(z)^{-\tau}\exp\left(z\sum_{i=1}^{\tau}X_i\right)\right]=1.  
 \end{equation}
 Then we have 
\begin{equation}\label{eq:firstmoment}
E\left[\sum_{i=1}^{\tau}X_i\right]=m_1E[\tau],
\end{equation}
If, in addition, $m_1=0$, then there exists $C$ not depending on the distribution of $(X_i)_{i\in\N}$ and $\tau$ such that 
\begin{equation}\label{eq:fourthmomentfinal}
E\left[\left(\sum_{i=1}^{\tau}X_i\right)^4\right] \leq C \left (m_2^2E[\tau^2] + m_4E[\tau]\right).
 \end{equation}
\end{lemma}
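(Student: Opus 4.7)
The plan is to exploit the fact that the family $M_k(z) := \Psi(z)^{-k}\exp(z\sum_{i=1}^k X_i)$ is a martingale in $k$ for every fixed $z$ where $\Psi(z)<\infty$, and that the hypothesis \eqref{eq:optionalstopping} says precisely that optional stopping holds at $\tau$: $E[M_\tau(z)]=1=E[M_0(z)]$ throughout a neighborhood of $z=0$. Thus $z\mapsto E[M_\tau(z)]$ is a constant function of $z$, so in particular all its $z$-derivatives at $0$ vanish. The proof strategy is to compute those derivatives by Taylor-expanding $\log M_\tau(z)$ around $z=0$ in terms of the cumulants of $X_1$, and then read off the moment identities.

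First I would justify that the derivatives of $z\mapsto E[M_\tau(z)]$ at $z=0$ can be obtained by differentiating inside the expectation. This is the only non-algebraic step and is the main technical obstacle, because one has to rule out a ``boundary contribution'' from the random number of terms $\tau$. Since $\Psi$ is $C^\infty$ on a neighborhood of $0$, the hypothesis~\eqref{eq:optionalstopping} gives a Cauchy-integral representation of $E[M_\tau(z)]$ that is already smooth (indeed real-analytic) in $z$ there; applying this representation with a small contour around $0$ expresses every derivative at $0$ as the expectation of the corresponding derivative of $M_\tau(z)$ at $0$. This gives the key identity
\[
E\!\left[\frac{d^n}{dz^n}M_\tau(z)\Big|_{z=0}\right]=0 \quad \text{for every } n\geq 1.
\]

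Writing $S_k:=\sum_{i=1}^k X_i$ and $a(z):=\log\Psi(z)$, we have $M_k(z)=\exp\bigl(zS_k-k\, a(z)\bigr)$, and $a^{(j)}(0)=\kappa_j$ is the $j$-th cumulant of $X_1$. Using Bell-polynomial (Fa\`a di Bruno) expansions, $\frac{d^n}{dz^n}M_k(z)|_{z=0}=B_n(S_k-k\kappa_1,\,-k\kappa_2,\,-k\kappa_3,\dots,-k\kappa_n)$. Applied with $n=1$ this immediately yields $E[S_\tau-\tau\kappa_1]=0$, which is~\eqref{eq:firstmoment}. Now assume $m_1=\kappa_1=0$, so $\kappa_2=m_2$, $\kappa_3=m_3$, $\kappa_4=m_4-3m_2^2$. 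Using $B_4(x_1,x_2,x_3,x_4)=x_1^4+6x_1^2 x_2+3x_2^2+4x_1x_3+x_4$, the identity $E[B_4]=0$ at $k=\tau$ reads
\[
E[S_\tau^4]=6m_2\,E[\tau S_\tau^2]-3m_2^2\,E[\tau^2]+4m_3\,E[\tau S_\tau]+m_4\,E[\tau]-3m_2^2\,E[\tau],
\]
so dropping the (non-positive) $-3m_2^2 E[\tau^2]$ and $-3m_2^2 E[\tau]$ terms gives
\[
E[S_\tau^4]\leq 6m_2\,E[\tau S_\tau^2]+4|m_3|\,E[\tau|S_\tau|]+m_4\,E[\tau].
\]

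It then remains to estimate the two mixed moments. By Cauchy--Schwarz, $E[\tau S_\tau^2]\leq E[\tau^2]^{1/2}E[S_\tau^4]^{1/2}$ and $E[\tau|S_\tau|]\leq E[\tau^2]^{1/2}E[S_\tau^2]^{1/2}$. For the second, apply $n=2$ to obtain $E[S_\tau^2]=m_2 E[\tau]$, and use $|m_3|\leq(m_2 m_4)^{1/2}$ by Cauchy--Schwarz on $X_1$. Combining, and denoting $X:=E[S_\tau^4]$, we obtain
\[
X\leq 6m_2 E[\tau^2]^{1/2}X^{1/2}+4m_2 m_4^{1/2}\bigl(E[\tau]E[\tau^2]\bigr)^{1/2}+m_4 E[\tau].
\]
Two applications of $ab\leq \tfrac12 a^2+\tfrac12 b^2$ (the first to absorb $X^{1/2}$ into $\tfrac12 X$, the second on the middle term) yield $X\leq C\bigl(m_2^2 E[\tau^2]+m_4 E[\tau]\bigr)$ for an absolute constant $C$, which is exactly~\eqref{eq:fourthmomentfinal}. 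The core of the argument is thus: martingale $+$ optional stopping $+$ cumulant expansion $+$ Cauchy--Schwarz; the only genuine subtlety is the differentiation-under-the-expectation step justified by analyticity of $E[M_\tau(z)]$ on a neighborhood of $0$.
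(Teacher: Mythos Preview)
Your proof is correct and follows essentially the same route as the paper: differentiate the identity $E[M_\tau(z)]=1$ at $z=0$ to obtain the moment relations (the paper does this directly, you phrase it via Bell polynomials and cumulants, arriving at the identical fourth-moment identity), then use Cauchy--Schwarz on the mixed terms together with $|m_3|\le (m_2 m_4)^{1/2}$ and the second-moment identity $E[S_\tau^2]=m_2E[\tau]$ to close the inequality. The only cosmetic difference is in the last algebraic step: the paper solves the quadratic inequality $x^2\le \alpha x+\beta$ via $x^2\le 4\alpha^2+2\beta$ and then AM--GM, whereas you absorb $X^{1/2}$ with $ab\le\tfrac12 a^2+\tfrac12 b^2$; both yield the same bound up to constants, and your explicit attention to justifying differentiation under the expectation (which the paper leaves implicit) is a welcome addition.
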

\noindent Note that the inequality in display \eqref{eq:fourthmomentfinal} is the same (modulo a constant) as the one we would have gotten if $\tau$ were independent of $(X_i)_{i\in\N}$. 
 \begin{proof}
Differentiating the moment generating function in $\eqref{eq:optionalstopping}$ with respect to $z$ (and evaluating at $z=0$) we get 
\[
E\left[\sum_{i=1}^{\tau}X_i\right]=m_1E[\tau],
\]
which proves the first claim of the Lemma. To prove the remaining claim we computer higher order derivatives, and using the relation $m_1=0$ we find
\begin{equation}\label{eq:secondmoment}
E\left[\left(\sum_{i=1}^{\tau}X_i\right)^2\right]=m_2E[\tau].
\end{equation}

\begin{align*}\label{eq:fourthmoment}
E\left[\left(\sum_{i=1}^{\tau}X_i\right)^4\right]&=6m_2E\left[\tau\left(\sum_{i=1}^{\tau}X_i\right)^2\right] +4m_3 E\left[\tau\sum_{i=1}^{\tau} X_i \right]\\
&\qquad +(m_4-3m_2^2)E[\tau]-3m_2^2E[\tau^2].
\end{align*}

Applying Holder's inequality to the right hand side of the last display above (and dropping the negative terms) we get a quadratic inequality in terms of $E^{\omega}[(\sum_{i=1}^{\tau}X_i)^4]^{1/2}$, of the form $x^2\leq \alpha x + \beta$ with
\[\alpha= 6m_2 E^\omega[\tau^2]^{1/2}\]
and
\[
\beta= 4m_3E[\tau^2]^{1/2}E\left[\left(\sum_{i=1}^{\tau} X_i\right)^2\right]^{1/2}+m_4E[\tau].
\]

We can see that $x^2\leq 4\alpha^2+2\beta$ (by looking at the cases $x\leq 2\alpha$ and $x\geq 2 \alpha$ which implies that $x\leq \beta/\alpha$). In our context, this inequality reads 
 \begin{equation}\label{eq:betterthanborodin}
E\left[\left(\sum_{i=1}^{\tau}X_i\right)^4\right] \leq 144m_2^2E[\tau^2] +8m_3E[\tau^2]^{1/2}E\left[\left(\sum_{i=1}^{\tau} X_i \right)^2\right]^{1/2}+2m_4E[\tau].
 \end{equation}
 
Using Holder's inequality we see that $m_3\leq m_2^{1/2}m_4^{1/2}$. That, plus formula \eqref{eq:secondmoment} yields that the second summand in the right hand side above is bounded by
\[
8m_2m_4^{1/2} E[\tau^2]^{1/2}E[\tau]^{1/2},
\]
which is smaller than the geometric mean between the first and third summands of the right hand side of \eqref{eq:betterthanborodin}. Finally, since geometric means are smaller than arithmetic means we get from \eqref{eq:betterthanborodin} that there exists $C$ such that
 \begin{equation}
E\left[\left(\sum_{i=1}^{\tau}X_i\right)^4\right] \leq C \left (m_2^2E[\tau^2] + m_4E[\tau]\right),
 \end{equation}
 which proves the lemma.

\end{proof}

\section*{Glossary of notations}

\vspace{0.5cm}

\def\qq{&}
\begin{center}
\halign{
#\quad\hfill&#\quad\hfill&\quad\hfill#\cr\
$\mathfrak{T}$ \qq The Continuum random tree (CRT) \fff{crt}
$\phi_{\mathfrak{T}}(\mathfrak{T})$ \qq  integrated super-Brownian excursion (ISE) \fff{ise}
$B^{ISE}$  \qq Brownian motion on the ISE \fff{bise}
$B^{CRT}$  \qq Brownian motion on the CRT \fff{bise}
$\mathfrak{T}^{(K)}$ \qq $K$-CRT \fff{kcrt}
$B^{(K)}$ \qq Brownian motion on $K$-CRT \fff{bcrtk}
$B^{K-ISE}$\qq Brownian motion on $K$-ISE \fff{bisek} 
$L^{(K)}$ \qq Local time of the $B^{K-ISE}$ \fff{ltk}
$(G_n,(V_i^n)_{i\in \N})_{n\in \N}$ \qq sequence of random augmented graphs \fff{gv}
$\T^{(n,K)}$ \qq  $K$-skeleton of our graphs $G_n$ \fff{mass}
$\text{root}^*$ \qq root of the skeleton of $\T^{(n,K)}$ \fff{rootstar}
$\mathfrak{T}^{(n,K)}$ \qq  version of $\T^{(n,K)}$ with fewer vertices \fff{mass}
$V^*(\T^{(n,K)})$ \qq  vertices of $\T^{(n,K)}$ corresponding to cut-points \fff{mass}
$V^\circ(\T^{(n,K)})$ \qq vertices of $\T^{(n,K)}$ that are not leaves or branching points\fff{vcirc}
$E^*(\T^{(n,K)})$ \qq  edges of $\T^{(n,K)}$  between vertices of $V^*(\T^{(n,K)})$ \fff{mass}
$\phi^{(n,K)}$ \qq random embedding of $\T^{(n,K)}$ \fff{mass}
$v^{(n,K)}(x)$ \qq volume of the sausage attached at $x\in V^*(\T^{(n,K)})$ \fff{mass}
$\reff^{(n,K)}(e)$ \qq resistance of the edge $e \in E(\T^{(n,K)})$ \fff{mass}
$\pi^{(n,K)}(x)$ \qq projection of $x\in G_n$ onto $\T^{(n,K)}$ \fff{mass}
$d^{(n,K)}(\cdot,\cdot)$ \qq rescaled intrinsic distance on $\T^{(n,K)}$ \fff{mass}
 $d^{(n,K)}_{\text{res}}(\cdot,\cdot)$ \qq rescaled resistance distance on $\T^{(n,K)}$ \fff{mass}
  $\mu^{(n,K)}$ \qq volume measure on $\T^{(n,K)}$ \fff{mass}
  $\lambda^{(n,K)}_{\text{res}}$ \qq the unit Lebesgue measure on $\T^{(n,K)}$ w.r.t.~resistance \fff{lambdares}
  $\lambda_{T}$  \qq renormalized Lebesgue measure on $T$ \fff{lambdaT}
$\Delta^{(n,K)}_{\Z^d}$ \qq maximal $\Z^d$-diameter of sausages \fff{delta1}
$\Delta^{(n,K)}_{\Z^d}$ \qq maximal $\omega_n$-diameter of sausages \fff{delta2}
$B^{(n,K)}$ \qq Brownian motion on $\mathfrak{T}^{(n,K)}$  with resistance metric \fff{bnk}
$h^{(n,K)}_m$ \qq successive visits of $B^{(n,K)}$ to $V^*(\T^{(n,K)})$ \fff{h}
$h^{(n,K)}_m$ \qq successive visits of $X^{G_n}$ to $V^*(\T^{(n,K)})$ \fff{ank}
$\overrightarrow{\T^{(n,K)}_{x}}$ \qq are the descendants of $x$ in $\T^{(n,K)}$ \fff{arrowt}

\cr}\end{center}

\bibliographystyle{plain}
\bibliography{RWBRW}

\end{document}